\numberwithin{equation}{section} 
\theoremstyle{plain}
\newtheorem{theorem}[equation]{Theorem}
\newtheorem{proposition}[equation]{Proposition}
\newtheorem{corollary}[equation]{Corollary}
\newtheorem{lemma}[equation]{Lemma}
\newtheorem{conjecture}[equation]{Conjecture}
\newtheorem{sublemma}[equation]{Sublemma}
 \newtheorem{claim}[equation]{Claim}
\theoremstyle{definition}
\newtheorem{definition}[equation]{Definition}
\newtheorem{defn}[equation]{Definition}
\newtheorem{notation}[equation]{Notation}
\newtheorem{hypothesis}[equation]{Hypothesis}
\newtheorem{example}[equation]{Example}
\theoremstyle{remark}
\newtheorem{remark}[equation]{Remark}
\newcommand{\DMO}{\DeclareMathOperator}
\newcommand{\beq}{\begin{equation}}
\newcommand{\eeq}{\end{equation}}
\newcommand{\ang}[1]{\langle #1 \rangle}
\newcommand{\bbar}[1]{\overline{#1}}
\providecommand{\abs}[1]{\lvert#1\rvert}
\DeclareMathOperator{\id}{{injdim}}
\DeclareMathOperator{\Hom}{{Hom}}
\DeclareMathOperator{\End}{{End}}
\DeclareMathOperator{\Ext}{{Ext}}
\DeclareMathOperator{\Tor}{Tor}
\DeclareMathOperator{\Proj}{Proj}
\DeclareMathOperator{\codim}{codim}
\DeclareMathOperator{\pd}{pd}
\DeclareMathOperator{\Ker}{Ker}
\DeclareMathOperator{\coker}{Coker}
\DeclareMathOperator{\im}{Im}
\DeclareMathOperator{\chrr}{char}
\newcommand{\Skl}{{S\hskip -.5 pt kl}}
\newcommand{\QVB}{{Q_{V\hskip -1.5pt dB}}}
\DeclareMathOperator{\GK}{GKdim}
\DeclareMathOperator{\gr}{gr}
\DeclareMathOperator{\Div}{Div}
\DeclareMathOperator{\Ann}{Ann}
\newcommand{\lann}{\ell\text{-}\Ann}
\newcommand{\mc}{\mathcal}
 \DeclareMathOperator{\Fr}{Fr}
 \newcommand{\scr}{\mathscr}
 \newcommand{\Kk}{\scr{K}}
 \newcommand{\Ss}{\scr{S}}
 \newcommand{\Xx}{\scr{X}}
 \newcommand{\Yy}{\scr{Yy}}
 \newcommand{\Cc}{\scr{C}}
 \newcommand{\Dd}{\scr{D}}
 \newcommand{\Mm}{\scr{M}}
  \newcommand{\Ll}{\scr{L}}
    \newcommand{\Ii}{\scr{I}}
   \newcommand{\Aa}{\scr{A}}
      \newcommand{\Bb}{\scr{B}}
      \newcommand{\Ee}{\scr{E}}
         \newcommand{\Nn}{\scr{N}}
   \newcommand{\Pp}{\scr{P}}
   \newcommand{\Zz}{\scr{Z}}
\newcommand{\kk}{{\Bbbk}}
\newcommand{\ZZ}{{\mathbb Z}}
\newcommand{\PP}{{\mathbb P}}
\newcommand{\NN}{{\mathbb N}}
\newcommand{\mb}{\mathbb}
\newcommand{\wt}{\widetilde}
\newcommand{\sB}{\mc{B}}
\newcommand{\sL}{\mc{L}}
\newcommand{\sO}{\mc{O}}
\DeclareMathOperator{\rgr}{gr-\!}
\DeclareMathOperator{\rmod}{mod-\!}
\DeclareMathOperator{\rqgr}{qgr-\!}
\DeclareMathOperator{\GKdim}{GKdim}
\DeclareMathOperator{\SK}{Skl_4}
\newcommand{\Tinf}{T_{\infty}}
\newcommand{\ssm}{\smallsetminus}
\newcommand{\uHom}{\underline{\Hom}}
\newcommand{\uEnd}{\underline{\End}}
\DMO{\hilb}{hilb}
\DMO{\gldim}{gldim}
\DMO{\Tot}{Tot}
\DeclareMathOperator{\Sing}{Sing}
 \newcommand{\wc}{\widecheck}
\newcommand{\wh}{\widehat}
 \newcommand{\too}{\longrightarrow}
\DMO{\grk}{grk}
\DMO{\pdim}{pdim}
\DMO{\Soc}{Soc}
\newcommand{\injdim}{\id}
\title{Some noncommutative minimal surfaces}
\author{D. Rogalski,  S. J. Sierra, and J. T. Stafford}
\address{(Rogalski)
Department of Mathematics, UCSD, La Jolla, CA 92093-0112, USA. }
\email{drogalski@ucsd.edu}
 \address{(Sierra) School of Mathematics, 
University of Edinburgh, Edinburgh EH9 3FD, Scotland.}
\email{s.sierra@ed.ac.uk}
\address{(Stafford) School of Mathematics,  The University of Manchester,   Manchester M13 9PL,
England.}
\email{Toby.Stafford@manchester.ac.uk}
\thanks{The first author was partially supported by the NSF grant DMS-1201572 and the NSA grant H98230-15-1-0317.}
 \thanks{The second author was  partially   supported by EPSRC grant EP/M008460/1.}
\date{\today}
\subjclass[2010]{Primary: 14A22,  16P40,    16S38, 16W50; Secondary:  14H52, 14E30.}
\keywords{Noncommutative projective geometry,   Sklyanin algebras,
noncommutative   minimal models}
\begin{document}

 \begin{abstract}  In the ongoing programme to classify noncommutative projective surfaces 
 (connected graded noetherian domains of Gelfand-Kirillov 
  dimension three) a natural question is to determine the minimal models within any birational class.

 \noindent 
  In this paper we show that the generic noncommutative projective plane   (corresponding to the three 
  dimensional Sklyanin algebra)  as well as  noncommutative analogues of  $\mathbb{P}^1\times\mathbb{P}^1$ and the 
  more general  Van den Bergh quadrics
   satisfy very strong minimality conditions. Translated 
  into an algebraic question, where one is interested in a maximality condition, we prove the following result.
  
  \smallskip
{\bf Theorem A:} {\it Let $R$ be  a   Sklyanin algebra or  a Van den Bergh quadric  that is infinite dimensional over its centre
 and let  $A\supseteq R$ be any 
connected graded noetherian maximal order, with the same graded quotient 
ring as $R$. Then, up to taking Veronese rings, $A$ is isomorphic to $R$.} 

\smallskip
\noindent
Let $T$ be an elliptic algebra (that is, the coordinate ring of a noncommutative surface containing an elliptic curve).  Then,   
under an  appropriate  homological  condition,   we prove that  \emph{every connected graded noetherian overring  of $T$ is obtained by blowing 
down finitely many lines (line modules) of self-intersection~$(-1)$.}
 \end{abstract} 

  \maketitle

  \tableofcontents

 \section{Introduction}\label{section-intro}

 The classification  of noetherian, connected graded domains $R$ of Gelfand-Kirillov dimension 3 (or the corresponding noncommutative  
 surfaces, written $\rqgr R$) is one of the major open problems in noncommutative algebraic geometry.  The classification
 has been solved in many particular cases and those solutions have led to some fundamental advances in the subject; 
 see, for example, \cite{ATV1990,  RSSlong, KRS, SV, VdB-blowups,VdB3}. In \cite{Ar}, 
 Artin conjectured that, birationally at least, there is a short list of such surfaces. More precisely, the corresponding division rings of rational functions are either: (1)  finite over their centre; (2) Ore extensions of $\kk(C)$ for a curve $C$; or (3) the 
 division ring associated to      a Sklyanin algebra $\Skl$, as defined  in Example~\ref{eg:sklyanin}.
 Artin's conjecture is completely open, but  this then leaves the question of classifying the algebras in each birational class. Case (1) was largely resolved in \cite{KRS, RS, S-surfclass},   while it is expected that case (2) will be answered by an amalgam of the methods developed  cases (1) and (3).  Thus a fundamental part of the classification problem, and the motivating question for this paper  is:  
 
 \centerline{\em \quad What are the connected graded noetherian algebras $R$ that are birational to a Sklyanin algebra $\Skl$?}    
 
 \noindent In this paper, such algebras are always assumed to be infinite dimensional over their centres.

 A natural approach to this problem is to follow the commutative classification of rational surfaces, which we briefly review.
  Here, one first classifies the {\em minimal models}:  smooth projective surfaces $X$ with the property that  any birational morphism from $X$ to a smooth projective surface $Y$ is   an isomorphism \cite[p.~175]{Shafarevich}.
It is a consequence of Zariski's Main Theorem and Castelnuovo's contraction criterion that:
\begin{theorem}\label{ithm:A}
A smooth projective surface $X$ is a minimal model if and only if $X$ contains no lines of self-intersection $(-1)$.
\end{theorem}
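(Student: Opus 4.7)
The plan is to establish the two implications separately, combining the two classical tools named immediately before the statement: Castelnuovo's contraction criterion and Zariski's Main Theorem, together with its standard consequence that every birational morphism between smooth projective surfaces factors as a sequence of blow-ups of closed points.

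For the implication ``$X$ minimal $\Rightarrow$ no $(-1)$-lines'' I would argue by contrapositive. Assuming that $X$ contains a smooth rational curve $E$ with $E^{2}=-1$, Castelnuovo's criterion directly supplies a birational morphism $\pi\colon X\to Y$ to a smooth projective surface $Y$ whose only non-trivial fibre is $E$, contracted to a single point. Since $\pi$ is not an isomorphism, $X$ fails the minimality condition.

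For the converse, I would take an arbitrary birational morphism $f\colon X\to Y$ to a smooth projective surface and aim to prove that $f$ is an isomorphism. Zariski's Main Theorem (applied together with the factorisation theorem for birational morphisms of smooth projective surfaces) writes
\[
f\colon X \,=\, X_{n}\longrightarrow X_{n-1}\longrightarrow\cdots\longrightarrow X_{0}\,=\,Y,
\]
where each intermediate map is the blow-up of a closed point. If $n\geq 1$, the exceptional divisor of the final map $X_{n}\to X_{n-1}$ is a $(-1)$-curve sitting inside $X=X_{n}$, contradicting the hypothesis. Hence $n=0$ and $f$ is an isomorphism, proving that $X$ is minimal.

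The substantive mathematical content sits entirely in the two black-box inputs; assembling them into the stated equivalence is essentially a bookkeeping exercise, and the only place where any care is needed is to check in the forward direction that the $(-1)$-curve produced really lives in $X$ itself with the correct self-intersection, which is automatic since self-intersection is computed on the ambient surface. The theorem is stated here mainly to motivate the noncommutative analogues developed later in the paper, where the hard part is to obtain a noncommutative Castelnuovo contraction and a factorisation of noncommutative birational morphisms.
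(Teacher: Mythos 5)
Your proof is correct and assembles the two classical ingredients exactly as the paper intends (the paper gives no explicit argument, stating only that the theorem is a consequence of Zariski's Main Theorem and Castelnuovo's contraction criterion). The contrapositive argument for the forward direction via Castelnuovo, and the factorisation of an arbitrary birational morphism into monoidal transformations for the converse, is the standard route and is precisely what the paper has in mind.
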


In fact much more is true.   
\begin{theorem}\label{ithm:B}
{\rm (\cite[Corollary~V.5.4]{Ha})}
 If $X$ and $Y$ are smooth projective surfaces, then any birational morphism $X \to Y$ factors as a composition of finitely many monoidal transformations {\rm (}contractions of  lines of self-intersection $(-1)${\rm)}.
\end{theorem}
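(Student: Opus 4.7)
The plan is to prove Theorem~\ref{ithm:B} by induction on the difference of Picard numbers $\rho(X)-\rho(Y)$, using Theorem~\ref{ithm:A} (Castelnuovo's criterion) as the main input. If $f\colon X\to Y$ is an isomorphism there is nothing to prove; otherwise the whole task reduces to extracting a single $(-1)$-curve from the exceptional locus of $f$, peeling off the corresponding monoidal transformation, and inducting.

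First I would analyze the exceptional locus. Since $X$ and $Y$ are smooth projective surfaces and $f$ is birational, Zariski's Main Theorem implies that $f$ is surjective with connected fibres, and that $f^{-1}$ is defined away from a finite set $S \subset Y$; the exceptional set $E = f^{-1}(S)$ is a union of curves $\bigcup_i E_i$, each contracted to a point of $Y$. The key geometric input (due to Mumford) is that the intersection matrix $(E_i \cdot E_j)$ on each connected component of a fibre is negative definite. Combined with the adjunction formula $E_i^2 + E_i \cdot K_X = 2 p_a(E_i) - 2$ and the projection formula $E_i \cdot f^*D = 0$ for any divisor $D$ on $Y$, a short numerical argument forces at least one irreducible component $E_{i_0}$ to satisfy $E_{i_0}^2 = -1$ and $p_a(E_{i_0})=0$, i.e., to be a $(-1)$-curve.

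Next I would apply Theorem~\ref{ithm:A} (Castelnuovo's contraction theorem) to $E_{i_0}$: there is a smooth projective surface $X'$ and a morphism $g\colon X \to X'$ which is the blow-up of a point of $X'$ with exceptional divisor $E_{i_0}$. Because $f$ also contracts $E_{i_0}$ to a point, the rigidity lemma (or the universal property of blowing down) produces a birational morphism $h\colon X' \to Y$ with $f = h \circ g$. Since $\rho(X') = \rho(X)-1 \geq \rho(Y)$, the inductive hypothesis applies to $h$, expressing it as a finite composition of monoidal transformations; composing with $g$ gives the desired factorization of $f$. The induction terminates because $\rho(X) - \rho(Y)$ strictly decreases.

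The main obstacle is the first step, namely guaranteeing the existence of a $(-1)$-curve in the exceptional locus. Once the negative-definiteness of the intersection form on contracted fibres is in hand, adjunction does the rest almost mechanically; but verifying negative-definiteness and controlling the arithmetic genus of each $E_i$ (in particular ruling out singular or higher-genus components among the extremal contributions) is the substantive content. The factorization step and the induction are then essentially formal once a $(-1)$-curve has been found and Castelnuovo's criterion is available.
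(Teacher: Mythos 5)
The paper does not prove this theorem; it is cited directly from Hartshorne (Corollary V.5.4), so there is no ``paper's own proof'' to compare against. Hartshorne's argument is nevertheless worth recalling because it runs in the opposite direction from yours: he fixes a fundamental point $P\in Y$ of $f^{-1}$, proves (Proposition V.5.3) that $f$ necessarily factors through the monoidal transformation of the \emph{target} at $P$, and inducts on the number of exceptional curves. You instead locate a $(-1)$-curve in the \emph{source} $X$, contract it by Castelnuovo, and induct. Both routes are legitimate; Hartshorne's has the advantage of needing only the universal property of blow-ups (no negative-definiteness, no intersection theory on the exceptional fibre), whereas yours produces the $(-1)$-curves explicitly, which is closer in spirit to the way the present paper thinks about contractions.

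The one substantive gap in your write-up is the ``short numerical argument'' producing the $(-1)$-curve. Negative definiteness of $(E_i\cdot E_j)$, adjunction, and the projection formula alone do not suffice: if the discrepancy divisor $D:=K_X-f^*K_Y=\sum a_iE_i$ were zero, the same arithmetic would only force every exceptional $E_i$ to be a $(-2)$-curve, and one obtains no contradiction (this is precisely what happens for a crepant resolution of a rational double point, which your numerics cannot distinguish from the smooth case). To close the argument you need two further facts. First, $D\geq 0$: the natural map $f^*\Omega_Y^1\to\Omega_X^1$ is generically an isomorphism of rank-two locally free sheaves, so its determinant is a nonzero global section of $\omega_X\otimes f^*\omega_Y^{-1}=\sO_X(D)$, forcing $D$ effective and supported on the exceptional locus. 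Second, $D\neq 0$ when $f$ is not an isomorphism: if $D=0$ that determinant is a nowhere-vanishing section of $\sO_X$, so $f^*\Omega_Y^1\to\Omega_X^1$ is an isomorphism, $f$ is \'etale, and a proper birational \'etale morphism of integral schemes is an isomorphism. Once $D>0$ is in hand, negative definiteness gives $D\cdot E_{i_0}<0$ for some $i_0$, hence $K_X\cdot E_{i_0}<0$, and then adjunction together with $E_{i_0}^2<0$ forces $E_{i_0}^2=K_X\cdot E_{i_0}=-1$ and $p_a(E_{i_0})=0$ exactly as you claim. This is the missing ingredient; without it the case $D=0$ is not excluded and the induction does not get started.
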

\begin{theorem}\label{ithm:C}
{\rm (\cite[Theorem~V.5.8, Remark~V.5.8.4]{Ha})}
\begin{enumerate}
\item Any smooth projective surface $X$ has a birational morphism to a minimal model.
\item The minimal rational surfaces are known:  they are $\PP^2$ and Hirzebruch surfaces $\mb F_n$ for $n \neq 1$. 
\end{enumerate}
\end{theorem}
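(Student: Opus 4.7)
The plan is to follow the classical strategy: deduce (1) from Theorem~\ref{ithm:A} together with Castelnuovo's contractibility criterion, and then classify the minimal rational surfaces produced in (1) as either a projective plane or a geometrically ruled surface over $\PP^1$. For part (1), I would proceed by induction on the Picard number $\rho(X)$. If $X$ contains no lines of self-intersection $(-1)$, then by Theorem~\ref{ithm:A} it is already a minimal model and the identity morphism suffices. Otherwise, pick such a line $E$; Castelnuovo's criterion produces a birational morphism $f\colon X\to X'$ to a smooth projective surface $X'$ that collapses $E$ to a point and is an isomorphism elsewhere. Since $f^*\colon \Pic(X')\to \Pic(X)$ is injective with image missing $[E]$, we have $\rho(X')=\rho(X)-1$. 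As $\rho(X)$ is a positive integer, the process terminates, and composing the successive contractions yields the desired birational morphism to a minimal model.

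For part (2), let $X$ be a minimal rational surface; I would dichotomize on whether $-K_X$ is ample. If it is, Noether's formula $K_X^2 + c_2(X) = 12\chi(\struct_X)$ combined with rationality ($\chi(\struct_X)=1$ and vanishing irregularity) constrains $K_X^2$ to a small range, and a pencil/hyperplane argument identifies $X$ with $\PP^2$. Otherwise, a Riemann--Roch computation on $X$ produces a base-point-free pencil whose Stein factorization yields a ruling $\pi\colon X\to C$ over a smooth curve $C$; rationality of $X$ forces $C\cong\PP^1$. The Noether--Enriques theorem shows $\pi$ is generically a $\PP^1$-bundle, and any reducible fiber of such a pencil would contain a $(-1)$-component, contradicting minimality. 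Hence $\pi$ is a Zariski-locally trivial $\PP^1$-bundle, and by Grothendieck's splitting theorem $X \cong \PP(\struct\oplus \struct(n)) = \mb F_n$ for some $n \geq 0$. Finally, $\mb F_1$ is the blow-up of $\PP^2$ at a point and carries a $(-1)$-curve (its negative section), so minimality forces $n \neq 1$.

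The main obstacle is the dichotomy in part (2): showing that a minimal rational surface with $-K_X$ not ample must admit a ruling. The termination in part (1) and the identification of $\mb F_1$ as non-minimal are routine once Castelnuovo's criterion is in hand, but producing the ruling needs substantive input — classically via analysis of $|K_X + H|$ for a judicious ample $H$ — and ultimately leans on the Noether--Enriques characterization of geometrically ruled surfaces. This is where the real work of the classification lies, and it is the step I would expect to take the most care to present rigorously.
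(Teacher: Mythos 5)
The paper gives no proof of this theorem: it is stated as background and cited directly from Hartshorne (Theorem~V.5.8 and Remark~V.5.8.4), so there is no in-paper argument against which to compare your proposal. I will therefore assess it on its own terms.

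Your part~(1) argument is sound and is the standard one: Castelnuovo's contractibility criterion plus descending induction on the Picard number $\rho(X)$, which drops by exactly one under each blowdown and is finite by the N\'eron--Severi theorem, so the process terminates.

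Part~(2), however, has a genuine error. You dichotomize on whether $-K_X$ is ample and assert that, when it is, a Noether-formula constraint plus a pencil argument identifies $X$ with $\PP^2$. This is false: $\mb F_0 = \PP^1\times\PP^1$ is a minimal rational surface with $-K_{\mb F_0} = \mc O(2,2)$ very ample, yet it is not $\PP^2$. Concretely, Noether's formula with $\chi(\sO_X)=1$ gives $K_X^2 = 10 - \rho(X)$; ampleness of $-K_X$ and minimality only pin this to $K_X^2 \in \{8,9\}$, and the case $K_X^2 = 8$ is precisely $\mb F_0$. The anticanonical-ampleness dichotomy therefore does not cleanly separate $\PP^2$ from the Hirzebruch surfaces. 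The standard correct split is on $\rho(X)$: if $\rho(X)=1$ one shows directly that $X\cong\PP^2$ (e.g.\ via $K_X^2=9$ and the resulting structure of $|{-}K_X|$ or $|H|$ with $-K_X=3H$), while if $\rho(X)\geq 2$ one produces an extremal nef class $C$ with $C^2=0$ and $K_X\cdot C<0$, whose linear system gives a fibration to $\PP^1$; minimality rules out reducible fibers, Noether--Enriques gives a geometric ruling, and Grothendieck's splitting theorem then yields $X\cong\mb F_n$, with $n\neq 1$ forced since $\mb F_1$ carries a $(-1)$-curve. Your argument in the second branch is essentially right once the dichotomy is corrected; the first branch needs to be restated so it only captures $\rho(X)=1$.
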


In this paper we move the classification programme for noncommutative surfaces forward by giving noncommutative analogues of Theorems~\ref{ithm:A} and Theorem~\ref{ithm:B} for algebras birational to $\Skl$.

 We note that noncommutative  analogues of blowing up points   are understood (see  \cite{VdB-blowups, R-Sklyanin, RSSlong}) and this has, for example, been used to classify the noetherian subalgebras of $\Skl$ that are birational to that algebra \cite{RSSlong, Hipwood-thesis}. Moreover, a noncommutative analogue of contraction or blowing down  has been  developed in     \cite{ RSSblowdown},  showing in particular that one really can contract  lines (or more formally line modules) of self-intersection $(-1)$.  
Beyond that, what one might call the birational geometry of noncommutative projective surfaces is wide open.  
In particular,  it is important to understand  whether noncommutative  minimal models exist  and, indeed,  how they should be defined.   Whatever the definition, one would expect that the Sklyanin algebra $\Skl$ and Van den Bergh's quadrics $\QVB$ are indeed examples   (see Example~\ref{eg:quadric} and  \cite{VdB3} for the definition).

 The main aim of this paper is to show these algebras are indeed   minimal models; indeed they satisfy a minimality property that is far stronger than is possible in the commutative situation. This is provided by Theorem~A from the abstract.   
The reason why this is described in terms of overrings will be explained below but we will continue to use the geometric notation since we feel this gives the better intuition for the results proved here.

 To formalise these statements we need some definitions.
In the paper, all rings will be algebras over a fixed algebraically closed field $\kk$, while in 
  the introduction we also assume that $\kk$ has characteristic zero. 
A $\kk$-algebra $R$ is \emph{connected graded}\label{cg-defn} or \emph{cg}
 if $R=\bigoplus_{n\geq 0}R_n$ is a finitely generated 
$\mathbb{N}$-graded algebra  with $R_0=\kk$.
For such a ring $R$, the category of graded noetherian right 
$R$-modules will be denoted $\rgr R$ with quotient category $\rqgr R$ \label{rqgr-defn}
 obtained by quotienting out the Serre subcategory 
of finite dimensional modules. An effective intuition    is  to regard  $\rqgr R$ as the category of coherent sheaves on
 the (nonexistent) space $\Proj(R)$. 
The graded quotient ring $Q_{gr}(R)$ of $R$ is obtained by inverting the 
 non-zero homogeneous elements and  two such  domains  $R$ and $ S$ are \emph{birational} if  
 $Q_{gr}(R)_0 \cong Q_{gr}(S)_0$.  
 
 As is explained in \cite{R-Sklyanin, RSSlong},  when one  works with algebras related to $\Skl$ it is  
convenient to work with the following   general class of algebras.  An \emph{elliptic algebra}  \label{ellipticalg-defn}  is a   
 cg domain   
  $T$ containing a distinguished 
central element $g \in T_1$ so that $T/gT$ is isomorphic to a \emph{twisted homogeneous coordinate ring} 
$B=B(E,\mathcal{N},\tau)$,
 where $E$ is  an elliptic curve equipped  with an ample invertible sheaf $\mathcal{N}$ (which we assume in the introduction to have  degree $\geq 3$) 
 and an  infinite order automorphism  $\tau$.  
  See   Section~\ref{GENERALITIES} for more details.  
 For example,  the   Veronese rings  $T = \Skl^{(3)}$  and $T=\QVB^{(2)}$ are  elliptic; the  Veronese ring  is needed to 
 ensure that the central element  has degree one, but this is a fairly harmless change
  since   $\rqgr S\cong\rqgr T$
 for $S= \Skl$, respectively $ \QVB$.
By Lemma~\ref{lem:dP}, a  ring that is commutative    but satisfies all other properties of an elliptic algebra is the anticanonical coordinate ring of a del Pezzo surface, so this is a natural class of algebras to consider.

The ring-theoretic notion of blowing up a ring $R$ in \cite{R-Sklyanin} produces
  a subring $\widetilde{R}\subseteq R$, although as its construction    is not used in this paper  we will not repeat the definition. 
As noted in \cite[Introduction]{R-Sklyanin},  this notion of blowing up is equivalent to Van den Bergh's 
categorical notion \cite{VdB-blowups}.  Subalgebras described in terms of blowups also appear naturally in the commutative case (see Remark~\ref{rem:dP3} for an example).  

The appropriate notion of blowing down  for an elliptic algebra $T$  has been studied in  
\cite{RSSblowdown}. In brief, define a cyclic graded right $T$-module $L$ to be 
a \emph{line module} if has the Hilbert series $(1-t)^{-2}$ of $\kk[x,y]$. Then   one can    
contract (blow down) any line module $L$ satisfying the 
noncommutative notion of self-intersection $(-1)$; that is, 
  $(L\cdot L) = \sum_{n=0}^\infty (-1)^{n+1} \dim \Ext_{\rqgr T}^n(L,L) = -1$. 
  This gives an elliptic algebra $T' \supsetneqq T$  defined by the property that $T'/T$ is a suitable 
  direct sum of   shifts of copies of $L$. Crucially,   $T'$ has a point module can be blown up to recover $T$.  
Conversely, blowing up a point on $T$ constructs a subalgebra $\wt{T}$
with    a   line module $L$ that can be contracted to recover $T$.    
See \cite[Introduction]{RSSblowdown}  for the  results mentioned here.

So, in the noncommutative context,  
 what is the appropriate definition of a minimal model?
Since  contracting a curve corresponds to taking a certain  overring within the graded quotient ring,  
any noncommutative analogue of a 
minimal surface should be an algebra with few overrings.  However, beyond that, it is unclear how best  to translate the commutative definition. Fortunately, it does not matter since the  elliptic algebras of interest  satisfy a very 
  strong maximality property, which we can take as our definition. 

 \begin{defn}\label{min-model defn}  Given an elliptic algebra $T$, with its central element $g\in T_1$, write 
 $T_{(g)}$ for the localisation obtained by inverting the homogeneous elements from $T\smallsetminus gT$. 
 Then a {\em minimal (noncommutative) elliptic surface} is an elliptic algebra $T$ which has the property that, if $R$ is a cg noetherian algebra  with $T \subseteq R \subset T_{(g)}$, then $T=R$.     \end{defn}
 
   As will be seen in Theorem~\ref{ithm:arbitraryoverring},  the restriction to subrings of $T_{(g)}$ is also unnecessary. 
   
Definition~\ref{min-model defn} is so strong that one may wonder if there are any minimal elliptic surfaces at all. 
After all,   every cg commutative noetherian domain  $A$ of Krull dimension $\geq 2$  has  a proper cg noetherian overring---simply adjoin an element of positive degree from its graded quotient ring.
 In contrast, in  the noncommutative setting  minimal elliptic surfaces  do exist,  as our main theorem shows.

 \begin{theorem} {\rm (See Theorem~\ref{thm:3.6}.)}
\label{ithm:3.6} Let $T=\QVB^{(2)}$ be the  second Veronese of a  Van den Bergh quadric   or 
$T=\Skl^{(3)}$, the third Veronese of a quadratic Sklyanin algebra $\Skl$. Then $T$ is a minimal  elliptic surface.
\end{theorem}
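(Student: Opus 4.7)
The plan is to follow the commutative analogy: $\mathbb{P}^2$ and $\mathbb{P}^1\times\mathbb{P}^1$ are minimal rational surfaces precisely because they contain no $(-1)$-curves. I would accordingly show that $\Skl^{(3)}$ and $\QVB^{(2)}$ contain no line modules of self-intersection $-1$ in $\rqgr T$, and deduce from this the absence of proper cg noetherian overrings inside $T_{(g)}$.

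\emph{Step 1: reduce to a blowdown.} Since $g\in T_1$ is central in $T$ and $T\subseteq R\subseteq T_{(g)}$, the element $g$ is central in $R$ as well, and $B=T/gT$ injects into $R/gR$. The overrings of a twisted homogeneous coordinate ring $B(E,\sN,\tau)$ in its graded quotient ring are very rigid, being essentially controlled by the divisor theory of $E$; this strongly constrains $R/gR$, and hence the graded $T$-bimodule $R/T$. My target is to show that some graded shift of $R/T$ is a direct sum of copies of a single line module $L\in\rqgr T$, so that the inclusion $T\hookrightarrow R$ realises the categorical contraction of $L$. By the blowdown theorem of \cite{RSSblowdown}, such a contraction can occur only when $(L\cdot L)=-1$.

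\emph{Step 2: rule out $(-1)$-line modules.} For $T=\Skl^{(3)}$ the line modules descend from line modules of $\Skl$, which are classified and correspond, in the commutative degeneration, to lines of $\mathbb{P}^2$ under the anticanonical embedding by $\mathcal{O}(3)$. For $T=\QVB^{(2)}$ they are inherited from $\QVB$ and correspond to rulings of a quadric surface. In either case the self-intersection $(L\cdot L)=\sum_{n\ge 0}(-1)^{n+1}\dim\Ext^n_{\rqgr T}(L,L)$ is computable from the explicit linear resolution of a line module, and I expect every line module to satisfy $(L\cdot L)\ge 0$, matching the commutative nonnegativity of the self-intersections of lines in $\mathbb{P}^2$ and rulings in $\mathbb{P}^1\times\mathbb{P}^1$. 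Combined with Step~1 this forces $R=T$.

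\emph{Main obstacle.} Step~1 is the crux. The hypothesis supplies only an abstract cg noetherian overring $R$, and I must convert this into a genuine categorical contraction in order to produce a specific $(-1)$-line module. This will require delicate analysis of $R/T$ as a graded $T$-bimodule, use of the Cohen--Macaulay and Auslander-type properties of $T$ that underlie the blowup/blowdown theory of \cite{R-Sklyanin,RSSblowdown}, and a reduction-mod-$g$ argument compatible with the category $\rqgr T$. Step~2, by contrast, should be a concrete calculation against a well-understood classification.
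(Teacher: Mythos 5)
Your strategy diverges from the paper's in two significant ways, and both your Step~1 and Step~2 contain genuine gaps.

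Your Step~1 attempts to show that an arbitrary cg noetherian overring $T\subsetneqq R\subset T_{(g)}$ realises a categorical blowdown, i.e.\ that $R/T$ is (up to shift) a direct sum of copies of a single line module. This would indeed let you invoke the noncommutative Castelnuovo criterion of \cite{RSSblowdown}, but there is no reason for it to hold for a general $R$, and the paper makes no such claim in proving Theorem~\ref{thm:3.6}. The paper's actual mechanism is quite different: one passes to $T^\circ=T[g^{-1}]_0$ and $R^\circ$ and exploits the fact (due to Goodearl \cite{Goodearl}) that over a simple hereditary noetherian ring, every overring is a torsion-theoretic localisation, so $R^\circ/T^\circ$ must contain \emph{all} copies of any simple module it contains at all. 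Via the tilting construction of Lemma~\ref{lem:1.4} and Proposition~\ref{prop:linesonly}, the paper then perturbs a $2$-critical module $M$ with $M^\circ\cong\Ss\subseteq R^\circ/T^\circ$ to produce a module extension $T\subsetneqq L\subset T_{(g)}$ with $L^\circ/T^\circ\cong\Ss$ but with $L_r\supsetneqq T_r$ for some $r\le 0$. Since $L$ must again lie in $\wh{R}=\Phi(R^\circ)$, one contradicts Remark~\ref{rem:2.33}(1) (a finitely graded algebra cannot have negative-degree components). Nothing in your Step~1 replicates this localisation argument, and the ``direct sum of copies of a line module'' claim would simply be false for a general such $R$.

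Your Step~2 is also misconceived. You aim to compute $(L\cdot L)$ for line modules $L$ and show $(L\cdot L)\geq 0$. The paper instead proves (Lemma~\ref{lem:nolines} and Corollary~\ref{cor:nolines}) that $\Skl^{(3)}$, $\Skl'^{(4)}$, and $\QVB^{(2)}$ admit \emph{no line modules whatsoever} — the argument is a purely Hilbert-series computation on Veronese rings of AS-regular algebras with Hilbert series $(1-t)^{-n}$, showing a module with Hilbert series $(1-t)^{-p}$ for $p>1$ cannot exist over the Veronese. This makes Hypothesis~\ref{hyp:2.0}(4) hold vacuously. You also omit the two cases the paper must handle to establish Hypothesis~\ref{hyp:2.0}(3): when $T^\circ$ is simple with an $A_1$ singularity (Section~\ref{A1SINGULARITIES} and Proposition~\ref{prop:3.2}) the paper builds the auxiliary extension $\Xx(\Ss)$ via the singularity category, and when $T^\circ$ is not even simple (the case $\gldim Q^\circ=2$ with a finite-dimensional simple module, cf.\ Lemma~\ref{lem:Smith-VdB}(2)) the paper conjugates by the Morita bimodule $K$ of Corollary~\ref{cor:translation} to transfer the problem to a quadric $\Tinf$ with $\Tinf^\circ$ simple. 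Neither case appears in your outline. The commutative picture from Theorem~\ref{ithm:A} is a sound heuristic, but the proof does not follow the shape you propose.
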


Theorem~\ref{ithm:3.6} is a consequence of   the following noncommutative version of Theorem~\ref{ithm:A}, which   further  justifies our definition of a minimal model.

\begin{theorem} {\rm (See Theorems~\ref{thm:3.5} and \ref{thm111}.)} \label{ithm1}
Let $T$ be an elliptic algebra with   no line modules of self-intersection $(-1)$.   Assume that   
$(T[g^{-1}])_0$ is hereditary.  
Then $T$ is a minimal elliptic surface. \end{theorem}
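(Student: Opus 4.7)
The plan is to prove the contrapositive: starting from a hypothetical proper cg noetherian overring $T \subsetneq R \subset T_{(g)}$, I will construct a line module in $T$ of self-intersection $(-1)$, contradicting the hypothesis. The first step is a reduction. Every element of $R$ has the form $a g^{-n}$ with $a \in T$, so multiplying a finite homogeneous generating set of $R$ by a sufficiently large power of $g$ shows that $M := R/T$ is a finitely generated $g$-torsion graded $T$-bimodule. A noetherian descent through the partially ordered set of cg noetherian subrings of $R$ containing $T$ then allows me to replace $R$ by a minimal overring $R' \supsetneq T$, so that no proper intermediate cg noetherian ring sits strictly between $T$ and $R'$.

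The second step is to pin down the bimodule $M' := R'/T$ using both the hereditary hypothesis and the minimality of $R'$. Filtering $M'$ by the kernels of multiplication by $g$, the successive factors are finitely generated bimodules over $B = T/gT = B(E,\sN,\tau)$, i.e.\ twisted sheaves on the elliptic curve $E$. Because $(T[g^{-1}])_0$ is hereditary with graded quotient ring $Q_{gr}(T)_0$, it is already a hereditary noetherian maximal order in its quotient field and admits no proper noetherian overring inside that field; hence the $g$-inverted picture of $R'$ coincides with that of $T$ in degree zero, and $R'$ differs from $T$ only by structure supported along $E$. Minimality of $R'$ then forces $M'$ to be built from a single irreducible $B$-bimodule layer, of the shape which appears when one contracts a line module.

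The third step is to invoke the blowdown correspondence of \cite{RSSblowdown} to exhibit $R'$ as the contraction of $T$ along a specific line module $L \subset T$; equivalently, $T$ is recovered from $R'$ by blowing up a point module on $R'$. Standard formulae from that paper should then give $(L \cdot L) = \sum_{n \geq 0} (-1)^{n+1} \dim_{\kk} \Ext^n_{\rqgr T}(L,L) = -1$, which is the required contradiction. The main obstacle is this last identification: matching the abstract bimodule $M' = R'/T$ to the concrete quotients arising from the blowdown construction, and then reading off $L$ together with its self-intersection number. The hereditary hypothesis on $(T[g^{-1}])_0$ should be precisely what is needed to control the global dimension of $T$ and of $R'$ away from $g$, so that the Euler characteristic computing $(L\cdot L)$ converges and the blowup/blowdown dictionary can be applied cleanly.
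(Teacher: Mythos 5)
Your proposal rests on a fundamental misreading of the definition of $T_{(g)}$.  In the paper, $T_{(g)}$ is the localisation of $T$ at the completely prime ideal $gT$, obtained by inverting the homogeneous elements of $T\smallsetminus gT$; the element $g$ itself is \emph{not} inverted.  You have instead treated $T_{(g)}$ as if it were $T[g^{-1}]$.  This is fatal to your first step: elements of $R\subset T_{(g)}$ do not have the form $ag^{-n}$ with $a\in T$, and $R/T$ is \emph{not} a $g$-torsion bimodule.  Indeed one checks, using that $gT$ is completely prime, that $T_{(g)}\cap T[g^{-1}]=T$, so if $R/T$ were $g$-torsion then $R\subseteq\wh{T}=T$ (since $T$ is $g$-divisible) and there would be nothing to prove.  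The correct picture, and the engine of the paper's argument, is exactly the opposite: passing to $R^\circ:=(R[g^{-1}])_0\supsetneq T^\circ$, one gets a genuine proper overring of $T^\circ$ inside $\Fr(T^\circ)$, and one analyses that extension.

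Your second step contains a second serious error: you assert that a hereditary noetherian maximal order ``admits no proper noetherian overring inside its quotient field.''  This is false---think of $\ZZ\subset\ZZ[1/2]\subset\QQ$.  In fact the paper's strategy depends precisely on Goodearl's theorem \cite{Goodearl} that every overring of an HNP ring \emph{is} a torsion-theoretic localisation; this classification is used both to force extra modules into $R$ (leading to a contradiction with $R$ being cg when no line module is available, via Proposition~\ref{prop:linesonly} and Proposition~\ref{prop:2.3}) and, in Theorem~\ref{thm:manchester}, to deduce that the blowdown $\wt{T}$ is contained in $R$.  Asserting that overrings of $T^\circ$ are rigid collapses the whole mechanism.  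A third, smaller gap: your ``noetherian descent'' to a minimal proper overring $R'\supsetneq T$ requires the descending chain condition on cg noetherian subrings between $T$ and $R$, which does not follow from $R$ being noetherian; the paper never uses such a reduction.  While your overall template---produce a line module and compute its self-intersection to be $-1$---matches the paper's goal, the actual proof works by comparing the graded module theory of $T$ with the module theory of the localisation $T^\circ$, using Proposition~\ref{prop:linesonly} to find either a line module or a contradiction, and Proposition~\ref{prop:heart} together with Lemmas~\ref{lem:two}, \ref{lem:one} and Proposition~\ref{prop:three} to pin down $(L\cdot L)=-1$.
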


The conditions of this theorem always hold for  $T = \Skl^{(3)}$ and hold generically for  $T=\QVB^{(2)}$.

We note that Theorem~\ref{ithm:3.6} easily lifts to give the analogous result for the original rings $\Skl$ and $\QVB$.

\begin{corollary} {\rm  (See Corollary~\ref{cor:3.56}.) } \label{icor:3.56} 
 Set   $S=\QVB$  or $S=\Skl$, with corresponding central element $g$.   
    If $S\subseteq U\subset S_{(g)}$ for some cg noetherian ring  $U$ then $S=U$. 
 \end{corollary}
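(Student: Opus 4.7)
The plan is to reduce to Theorem~\ref{ithm:3.6} via a Veronese argument, and then lift the conclusion from the Veronese back to $S$.  Let $n=2$ if $S=\QVB$ and $n=3$ if $S=\Skl$, so that $T = S^{(n)}$ is the elliptic algebra appearing in Theorem~\ref{ithm:3.6} and $g\in T_1$.  Given $S\subseteq U\subset S_{(g)}$ with $U$ connected graded noetherian, I pass to $U^{(n)}$.  This is again connected graded noetherian (a standard fact: $U$ is finitely generated as a right $U^{(n)}$-module), contains $T = S^{(n)}$, and is contained in $T_{(g)}$.  For the last point, one checks that $(S_{(g)})^{(n)} = T_{(g)}$: given a homogeneous $s/t\in (S_{(g)})^{(n)}$ with $t\notin gS$ and $\deg s-\deg t\equiv 0\pmod n$, multiply by $t^{n-1}$ to get $(st^{n-1})/t^n$, where both entries lie in $T$ and $t^n\notin gT$ because $S/gS$ is a domain.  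Thus Theorem~\ref{ithm:3.6} applies and gives $U^{(n)} = T = S^{(n)}$.

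The lifting step produces the equality $U=S$ from $U^{(n)}=S^{(n)}$, using centrality of $g$ and the domain property of $B:=S/gS$.  The key observation is the \emph{$g$-cancellation lemma}: if $x\in S_{(g)}$ satisfies $gx\in S$, then $x\in S$; indeed, writing $x = p/q$ with $q\notin gS$ gives $qx' = gp$ for $x'\in S$, and reduction modulo $g$ (where $\bar q\neq 0$ in the domain $B$) forces $\bar{x'}=0$, so that $x'\in gS$ and then $x\in S$.  Now take $u\in U_m$ with $0<m<n$.  Since $u^n\in U_{mn} = S_{mn}\subseteq S$, the image $\bar u$ in $K:=\mathrm{Frac}_{\mathrm{gr}}(B)$ satisfies $\bar u^n\in B$; because $B$ is integrally closed in $K$ (projective normality of $E$ under $\mathcal N$ with $\deg\mathcal N\geq 3$), we conclude $\bar u\in B_m = S_m$.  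Pick a lift $s\in S_m$ with $\bar u=\bar s$ and write $u - s = gv$ for some $v\in (S_{(g)})_{m-n}$, a negative-degree element.  For any $\sigma\in S_{n-m}$, one has $(u-s)\sigma\in U_n = S_n$, so $gv\sigma\in S$ and by the $g$-cancellation lemma $v\sigma\in S_0 = \kk$.  If $v\neq 0$, then because $S_{(g)}$ is a domain the map $\sigma\mapsto v\sigma\colon S_{n-m}\to\kk$ is injective, contradicting $\dim_\kk S_{n-m}\geq 2$ (which holds since $\dim_\kk S_1\geq 3$ for $\Skl$ and $\dim_\kk S_1 = 4$ for $\QVB$).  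Hence $v=0$ and $u=s\in S$.

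For degrees $m\geq n$ not divisible by $n$, the same scheme applies: the reduction $\bar u\in B_m$ lies in the image of $S_m$, and after writing $u=s+gw$ one shows $w\in S$ by recursion, either by iterating the linear-algebra argument (using $gw\cdot\sigma\in U_{(k+1)n} = S_{(k+1)n}$ to feed the key lemma) or, equivalently, by observing that $U$ is finitely generated as a right $S$-module and hence equivalent to $S$ as an order, so that the (known) maximal-order property of $\Skl$ and $\QVB$ — which is in any case subsumed by Theorem A of the paper — forces $U=S$.

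The main obstacle is the lifting step: the Veronese reduction is formal, but passing from $U^{(n)}=S^{(n)}$ to $U=S$ requires genuinely using the centrality of $g$, the domain property of $S/gS$, and normality of the twisted coordinate ring.  Once the base cases $0<m<n$ are handled by the dimension count above, the rest of the induction on degree is routine.
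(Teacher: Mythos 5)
Your Veronese reduction is exactly the paper's (fix $d$ so $T=S^{(d)}$ is elliptic, apply Theorem~\ref{thm:3.6} to $U^{(d)}$), and the "$g$-cancellation lemma" is just the $g$-divisibility of $S$, which is fine. But the lifting step contains a genuine gap. You claim that $u^n\in S_{mn}$ forces $\bar u\in B_m$ because "$B$ is integrally closed in $K=Q_{\mathrm{gr}}(B)$ by projective normality of $E$." This is false for a twisted homogeneous coordinate ring: if $\bar u=fz^m$ in $\kk(E)[z,z^{-1};\sigma]$, then $\bar u^n=f f^{\sigma^m}\cdots f^{\sigma^{m(n-1)}}z^{mn}$ is a product of $\sigma$-translates of $f$, not an honest $n$-th power, and projective normality of the (untwisted) section ring of $E$ says nothing about it. Indeed one can arrange $f\in\kk(E)$ with $(f)+D_m$ not effective (so $\bar u\notin B_m$) yet $\sum_{j=0}^{n-1}\sigma^{-mj}\bigl((f)+D_m\bigr)\geq 0$ (so $\bar u^n\in B_{mn}$); for $m=1,n=2$ and $D_m=3p$ one may take $E_0:=(f)+3p=\sigma^{-1}q-q+\sigma q+2q'$ with $q$ off the orbit of $p$ and $q'$ chosen so $E_0\equiv 3p$ in $\mathrm{Pic}^3(E)$. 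The twist spreads the pole of $f$ to disjoint points, so no contradiction arises from $\bar u^n\in B$ alone.

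What saves the day — and what you came very close to using — is the full ring structure: since $S_{n-m}\subseteq U$, you have $uS_{n-m}\subseteq U_n=S_n$, hence $\bar u\, B_{n-m}\subseteq B_n$. Base-point-freeness of $\sL_{n-m}$ (here $\deg\sL\geq 3$) then does force $\bar u\in B_m$. Your linear-algebra finish with $\sigma\mapsto v\sigma$ is fine once that is established. The paper sidesteps the entire mod-$g$ analysis: from $vS_1^{d-r}\subseteq U^{(d)}=S^{(d)}\subseteq S$ and generation of $S$ in degree one, it deduces that $(vS+S)/S$ is finite-dimensional, and then invokes the Cohen--Macaulay (saturation) property of $S$ — no extensions of $S$ by finite-dimensional modules — to get $v\in S$ in one line. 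Your parenthetical alternative at the end (show $U$ is a finitely generated $S$-module via $U^{(d)}=S^{(d)}$ and \cite[Lemma~4.10]{AS}, conclude $U$ and $S$ are equivalent orders, then invoke the maximal-order property of $S$) is also correct and, notably, does not depend on your flawed integral-closure step at all — it would make a complete proof on its own. As written, however, your main argument has a hole that would need to be patched along one of these lines.
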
 
  
The focus on the localised ring $T_{(g)}$ in Definition~\ref{min-model defn} may seem mysterious.
However, as we next discuss, minimal models still have few overrings without this restriction.
Let $T$ be one of the algebras from Theorem~\ref{ithm:3.6}, or indeed any minimal elliptic surface, with  its central element $g\in T_1$.
  Then there always exist 
 cg noetherian overrings of $T$ with the same graded quotient ring.
  Indeed, for any integer $n\geq 2$ one has
 $T\subsetneqq \kk\ang{T_n g^{1-n}}$. This is of course a rather ``cheap'' counterexample since
 after a change of grading,  $
\kk\ang{T_n g^{1-n}}\cong T^{(n)}$ under the homomorphism $xg^{1-n}\mapsto x$ for $x\in T_n$. As we will see,
 rings like this are essentially the only other cg noetherian overrings of $T$.

In the next result, a $\ZZ$-graded Goldie domain $U$ is a \emph{maximal order}\label{maxorder-defn} if it satisfies the 
 following condition: If $V$   is  a $\ZZ$-graded  
  $\kk$-algebra with $U\subseteq V\subset Q_{gr}(U)$ that satisfies $aVb\subseteq U$ for some 
  $a,b\in U\smallsetminus\{0\}$, then $V=U$. This condition is equivalent to its ungraded analogue and
   is the appropriate noncommutative analogue of an integrally closed domain. We note that elliptic algebras
    are maximal orders (see Remark~\ref{mo-remark}).   
   
\begin{theorem}\label{ithm:arbitraryoverring}
 {\rm (See Theorem~\ref{thm:arbitraryoverring}.)}
  Let $T$ be any minimal elliptic surface, with  central element $g\in T_1$,  and let  $R$
  be a cg noetherian $\kk$-algebra with $T \subseteq R \subset Q_{gr}(T)$.   If $R$
   is a maximal order, 
then  $R^{(m)} \cong   T^{(n)}$ for some integers $m, n\geq 1$.   
   The analogous theorem also holds for the rings $\Skl$ and $\QVB$. 
   \end{theorem}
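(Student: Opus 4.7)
The plan is to exploit the central element $g \in T_1$ to define a $g$-adic valuation on $R$, use this to embed a Veronese $R^{(m)}$ into a Veronese of $T$, and then promote the embedding to an isomorphism using the maximal-order hypothesis.

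First, I would record that $g$ remains central in $R$ and that $R[g^{-1}] = T[g^{-1}] = Q_{gr}(T)$: indeed, every nonzero homogeneous element of $T$ factors uniquely as $u g^k$ with $u$ a unit of the graded local ring $T_{(g)}$, so the assignment $v(r) := k$ defines a $g$-adic $\ZZ$-valued valuation on the homogeneous elements of $Q_{gr}(T)$, which restricts to $R$. If $R \subseteq T_{(g)}$ then Definition~\ref{min-model defn} immediately forces $R = T$ and there is nothing to prove, so I may assume some $r \in R$ has $v(r) < 0$. Put $N_d := -\min\{v(r) : 0 \neq r \in R_d\}$. Multiplicativity of $v$ yields superadditivity $N_{d+e} \geq N_d + N_e$, while finite generation of $R$ by homogeneous elements $r_i \in R_{d_i}$ (chosen to realize the local minima of $v$) gives the matching upper bound $N_d \leq d \cdot \max_i (N_{d_i}/d_i)$. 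Hence $c := \sup_d N_d/d = \max_i N_{d_i}/d_i$ is a finite nonnegative rational number, with equality $N_d = dc$ holding on the infinite set of $d$ divisible by a generator-degree $d_{i^*}$ realizing the maximum. Passing to a suitable Veronese $R^{(m)}$, which is again a cg noetherian maximal order inside the corresponding Veronese of $Q_{gr}(T)$, I may henceforth assume $c \in \ZZ_{\geq 0}$ and $N_d = dc$ for every $d \geq 1$.

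Under these assumptions, the map $\phi_d(r) := r g^{dc}$ is a $\kk$-linear injection $R_d \hookrightarrow T_{(g),\,d(1+c)}$, and centrality of $g$ makes $\phi := \bigoplus_d \phi_d$ an injective graded algebra homomorphism $R \hookrightarrow (T_{(g)})^{(1+c)}$. The crucial step---and the main obstacle---is to show $\phi(R) \subseteq T^{(1+c)}$; equivalently, that $S := R \cap T_{(g)}$ equals $T$. Since $T \subseteq S \subseteq T_{(g)}$ is a chain of graded algebras, Definition~\ref{min-model defn} immediately gives $S = T$ once $S$ is known to be cg noetherian. The natural route is to show that $S$ is finitely generated as a right $T$-module, for example by producing a uniform normal denominator $b \in T \ssm gT$ with $Sb \subseteq T$: then $S \subseteq Tb^{-1}$, a cyclic (hence noetherian) right $T$-module, so $S$ is finitely generated as a right $T$-module, whence noetherian and finitely generated as a $\kk$-algebra. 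The maximal-order hypothesis on $R$ is expected to enter precisely here, in producing or controlling such a uniform denominator; carrying this step out is the principal technical difficulty.

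Once $\phi(R) \subseteq T^{(1+c)}$ is in hand, $\phi(R)$ and $T^{(1+c)}$ are maximal orders in a common graded quotient ring (the latter by Remark~\ref{mo-remark} applied to the elliptic-algebra Veronese). To upgrade the inclusion to an equality, I would argue that any proper inclusion of such maximal orders would, after rescaling by the central element $g$, produce a cg noetherian overring of $T$ inside $T_{(g)}$ strictly larger than $T$, contradicting the minimality of $T$ at the appropriate Veronese level. This yields $\phi(R) = T^{(1+c)}$ and hence $R^{(m)} \cong T^{(n)}$ with $n = m(1+c)$. Finally, the statements for $\Skl$ and $\QVB$ follow by applying the theorem to the minimal elliptic surfaces $\Skl^{(3)}$ and $\QVB^{(2)}$ of Theorem~\ref{ithm:3.6} and transferring back as in Corollary~\ref{icor:3.56}.
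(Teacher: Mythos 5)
Your opening setup---the $g$-adic valuation, superadditivity of $N_d$, passing to a Veronese to achieve $N_d = dc$, and the embedding $\phi\colon R \hookrightarrow (T_{(g)})^{(1+c)}$---mirrors the paper's machinery (the $\phi_j$, the function $h$, the homomorphism $\theta$) and is correct. However there are two genuine problems, one of which you acknowledge and one of which you do not.

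The step you flag as the ``principal technical difficulty''---proving $\phi(R) \subseteq T^{(1+c)}$, equivalently $R \cap T_{(g)} = T$---is indeed the heart of the proof, and you leave it as a gap. But you misidentify where the maximal order hypothesis enters: the paper proves this inclusion using \emph{only} that $R$ is cg noetherian and $T$ is a minimal elliptic surface. The mechanism is Lemma~\ref{lem:1.7prime}, which shows that the $g$-divisible hull $\wh{V[g]}$ of $V = \theta(U^{(K)})$ is a noetherian cg equivalent order to $V$; since $\wh{V[g]} \supseteq T$ and lies in $T_{(g)}$, minimality forces $\wh{V[g]} = T$, whence $V \subseteq T$, and a short argument using $g$-divisibility and the CM property of $T$ then lifts this to $U = \phi_j(R) \subseteq T$. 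No uniform normal denominator is produced, and no maximality is invoked.

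The maximal order hypothesis actually enters only at the very last stage, and there your argument does not work. You propose that a proper inclusion $\phi(R) \subsetneq T^{(1+c)}$ would yield, after rescaling by $g$, a cg noetherian overring of $T$ strictly larger than $T$ inside $T_{(g)}$; but the proper inclusion gives an overring of $\phi(R)$, not of $T$, so minimality of $T$ is not directly contradicted. To apply maximality of $\phi(R)$ itself you would need to know $T^{(1+c)}$ is an \emph{equivalent} order to $\phi(R)$, which is not obvious. The paper circumvents this by defining an explicit intermediate ring $W$ with $W_n = g^{h(n)}T_{n-h(n)}$, showing directly that $W$ is a ring, that $U \subseteq W$, that $\theta(W^{(K)}) = T^{(M)}$, and---crucially, via Equation~\eqref{eq:h}---that $W$ is finitely generated as a $U$-module on both sides, hence an equivalent order. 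Only then does ``$U$ is a maximal order'' yield $U = W$, from which the desired Veronese isomorphism follows. Without constructing $W$ or otherwise proving the equivalent-order claim, your concluding paragraph does not close the argument.
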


In the commutative case, Theorem~\ref{ithm:B} follows from the N\'eron-Severi theorem and Castelnuovo's contraction criterion.
 In Section~\ref{OVERRING}  we use the techniques from the proof of Theorem~\ref{ithm:3.6}, together with the noncommutative version of Castelnuovo's contraction criterion from \cite{RSSblowdown}, to give a noncommutative analogue of this result. 
 Given a graded $T$-module $M$ over an elliptic algebra $T$, we define $T^\circ =T[g^{-1}]_0$ and $M^\circ = M\otimes_TT[g^{-1}]_0$.
 \label{Tcirc-defn}     
 
  \begin{theorem}\label{ithm:manchester}  {\rm (See Corollary~\ref{cor:manchester}.)}
   Let $T$ be an elliptic algebra for which 
   $T^\circ$  is  hereditary and let $T\subseteq R\subset T_{(g)}$ be any noetherian cg overring. Then there is a  module-finite ring 
   extension 
   $R\subseteq R'$ such that   $R'$ is obtained from $T$ by   contracting finitely many line modules $L$ of self-intersection $-1$.  
   \end{theorem}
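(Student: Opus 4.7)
The plan is to prove Theorem \ref{ithm:manchester} by an iterated contraction argument: starting from $T$, one contracts $(-1)$-lines one at a time, at each stage using Theorem \ref{ithm1} to certify the existence of such a line and using the hereditary hypothesis on $T^\circ$ to bound the total number of contractions. As a preliminary step, one can replace $R$ by the maximal order $\widetilde{R}$ that contains $R$ inside $Q_{gr}(T)$; since $T$ is itself a maximal order (Remark \ref{mo-remark}) and $R$ is a cg overring of $T$, standard maximal-order arguments show that $\widetilde{R}$ is module-finite over $R$. The extension $R \subseteq \widetilde{R}$ is of the type allowed in the conclusion, so it suffices to prove the theorem when $R$ is itself a cg noetherian maximal order.

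Next, I would use the hypothesis that $T^\circ$ is hereditary to control the extension $T^\circ \subseteq R^\circ$. Both are noetherian of Gelfand-Kirillov dimension one, and by standard hereditary-overring theory the cokernel of $T^\circ \hookrightarrow R^\circ$ is a finite-length $T^\circ$-module supported on a finite set of simple $T^\circ$-modules. Lifting this back to the graded setting, $R/T$ is concentrated along finitely many height-one primes of $T$ that miss $gT$, and via the point-line correspondence for elliptic algebras each such prime should be cut out by a line module of $T$.

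The induction then runs as follows. If $R = T$ there is nothing to prove. Otherwise $T$ admits a proper cg noetherian overring inside $T_{(g)}$, so by the contrapositive of Theorem \ref{ithm1} some line module $L$ of $T$ has self-intersection $-1$. Choosing such an $L$ whose corresponding prime of $T^\circ$ lies in the support of $R^\circ/T^\circ$, I would contract $L$ via \cite{RSSblowdown} to obtain an elliptic algebra $T_1 \supsetneqq T$ with $T_1 \subseteq \widetilde{R}$, and then iterate with $T_1$ in place of $T$. Each contraction strictly decreases the (finite) support of $R^\circ/T_n^\circ$, so after finitely many steps the process terminates at a ring $R'$ with $R \subseteq R'$ module-finite and $R'$ obtained from $T$ by contracting a finite sequence of $(-1)$-lines.

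The main obstacle is the second step: matching the support of $R^\circ/T^\circ$ at a height-one prime with an actual line module of $T$ of self-intersection $-1$, and verifying that contracting one of them produces a ring still contained in $\widetilde{R}$. This requires combining the point-line correspondence in the elliptic algebra theory with the local-to-global control afforded by the hereditary assumption on $T^\circ$, together with the observation that the hereditary hypothesis propagates to each intermediate $T_n^\circ$ so that Theorem \ref{ithm1} can be reapplied. A secondary issue is to ensure that the chosen $(-1)$-line at each stage is precisely the one that shrinks the support of $R^\circ/T_n^\circ$; this should follow from the localisation behaviour of line-module contractions worked out in \cite{RSSblowdown}.
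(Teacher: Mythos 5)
Your high-level outline — iterate Castelnuovo contractions, use Goodearl's torsion-theoretic characterisation of hereditary overrings and Kuzmanovich's theorem that hereditary propagates — agrees in spirit with the paper. But the argument has a genuine circularity and a genuine gap at exactly the point you flag as "the main obstacle."

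The circularity: you invoke the contrapositive of Theorem~\ref{ithm1} to produce a line module of self-intersection~$-1$. However, Theorem~\ref{ithm1} in the locally hereditary, degree~$\geq 3$ case is Theorem~\ref{thm111}, and the paper \emph{derives} Theorem~\ref{thm111} from Theorem~\ref{thm:manchester} (the un-localised version of the statement you are proving). What is available \emph{before} this section is only Theorem~\ref{thm:3.5}, which requires $T$ to have no line modules at all, not just no $(-1)$-lines. So one cannot treat Theorem~\ref{ithm1} as a black box here without proving it first — and proving it is essentially proving what you want.

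The gap: even granting the existence of \emph{some} $(-1)$-line, you need a $(-1)$-line $L$ whose localisation $L^\circ$ is a specified simple $T^\circ$-module in the socle of $R^\circ/T^\circ$ — that is the one whose contraction lands inside $R$. The paper does this in two steps: Proposition~\ref{prop:linesonly} builds a $2$-critical module $M$ with $M^\circ \cong \Ss$ and shows that either $M$ (up to the $E^{11}$-normalisation) is a line module or Hypothesis~\ref{hyp:2.0} kicks in and Proposition~\ref{prop:2.3} rules out the overring entirely; then Proposition~\ref{prop:heart} proves $\Ext^1_T(L,L) = 0$, hence $(L\cdot L) = -1$, \emph{for this particular line}, by constructing a tower of extensions $X(n) \subseteq R$ with $\dim X(n)_1$ unbounded when $\Ext^1_T(L,L)\neq 0$ (Lemmas~\ref{lem:two}, \ref{lem:one}, Proposition~\ref{prop:three}), contradicting that $R$ is finitely graded. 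Your appeal to a "point-line correspondence" and "local-to-global control" is a gesture at this argument, but contains no replacement for it; this is the technical core of Section~\ref{OVERRING} and cannot be waved away.

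A lesser issue: your preliminary reduction replaces $R$ with a maximal overorder $\widetilde R$ and asserts module-finiteness by "standard maximal-order arguments." A maximal overorder need not exist, and module-finiteness is not automatic even when it does. The paper instead passes to the $g$-divisible hull $\widehat R$ and uses Lemma~\ref{lem:1.7prime} to get that $\widehat R$ is a noetherian cg $R$-module on both sides; this is the correct reduction and is what supplies the module-finite extension $R\subseteq R'$ in the statement.
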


We remark that   finitely generated but non-noetherian cg overrings of  minimal elliptic surfaces $T$ obviously exist (just take the ring generated by $T$ and  a 
homogeneous element of positive degree 
from $T_{(g)}$). However, the resulting ring 
has many unpleasant properties (see, for example,  Proposition~\ref{prop:2.3} and Corollary~\ref{cor:weird}). 
Moreover (except for  trivial overrings like $R = \kk\ang{T_ng^{1-n}}$), the Gelfand-Kirillov dimension $\GKdim R$ must jump;  for example,  if $R$ is a cg
noetherian  ring with $T\subsetneqq R\subset T_{(g)}$, then 
$\GKdim R\geq 4>3=\GKdim T$ (see Section~\ref{GKDIM} and in particular Corollary~\ref{cor:GK5}).

The idea of the proof of  Theorem~\ref{ithm:3.6}   is as follows.  The starting point  is
  that, by Lemma~\ref{lem:nolines}, $T$ has no line modules   to contract. But suppose that $T$ does have 
  a noetherian cg overring $T\subsetneqq R\subset T_{(g)}$. 
Form the localisation 
$R^\circ:=R[g^{-1}]_0\supsetneqq T^\circ:=T[g^{-1}]_0$.    
In the cases of interest, one can always reduce to the case when 
$T^\circ$ is simple and is either hereditary or has a mild singularity. 
For simplicity assume that $T^\circ$ is hereditary; the argument is easiest to explain here but does still give the general idea.    
Also, after possibly replacing $R$ by a harmless, finitely generated extension one can assume that 
$R= \Phi ( R^\circ) =\bigoplus_{n\in \mathbb{Z}} \{a\in (T_{(g)})_n : ag^{-n}\in R^\circ\}$
  (see Lemma~\ref{lem:1.7prime}).   
  
 Now pick  a simple submodule   $\Nn=\mathscr{M}/T^\circ \subseteq R^\circ /T^\circ$.  One can show that 
 $\mathscr{M}=M^\circ$ for some module $M\supseteq T$ such that  $N=M/T$ is a  2-critical $T$-module with $N^\circ \cong \Nn$. 
 Since  $N$ cannot be a line module, one proves   that there are many copies of submodules
  of $N$ inside $Q_{gr}(T)/T$, one of  which, say $N'=M'/T$, is not contained in $R/T$.  
 However, one still has $(N')^\circ\cong \Nn  \cong N^\circ$. Now, as $T^\circ$ is hereditary,  a classic result of Goodearl \cite{Goodearl}
 shows that $R^\circ$ will be a torsion-theoretic localisation of $T^\circ$. This means that  $R^\circ/T^\circ $ will 
 contain all possible copies of $N^\circ$ and in particular $(N')^\circ$.  In other words, $(M')^\circ   \subseteq R^\circ $ 
 and hence $M'   \subseteq \Phi(R^\circ ) = R$, giving the required contradiction. See Section~\ref{MINIMAL} for the details.  This argument  is  modified to work for algebras with $A_1$ singularities in Section~\ref{A1SINGULARITIES} and, as is shown in Section~\ref{ALGEBRAS}, this in turn is enough to prove  the theorem.

 Finally, we note that noncommutative versions of   Theorem~\ref{ithm:C} have yet to be established and are the subject of ongoing work.
 We  believe that the full list of minimal models for the quantum rational case will consist of $\rqgr R$ for 
$R=\Skl$ and $R=\QVB$, as above, together with  their Veronese rings and certain endomorphism algebras; 
  see Remark~\ref{rem:further rings}.     We conjecture  that Theorems~\ref{ithm:3.6} and \ref{ithm:manchester} can be extended to prove the ultimate goal that, given an elliptic algebra $R$, then one can obtain a minimal model $\widehat{R}$ from $R$ by contracting finitely many line modules.   This in turn will prove much of Artin's programme for algebras birational to the Sklyanin algebra $\Skl$. See Conjecture~\ref{final-conj} and Remark~\ref{rem:further rings} for more  details.
    
\section{Generalities}\label{GENERALITIES}

  In this short section, we set up some of the basic notation and results used later in the paper.
  Fix an algebraically closed base field $\kk$ of arbitrary characteristic.  
   Let $T$ be an elliptic algebra with graded quotient ring $Q_{gr}(T) \cong D[g, g^{-1}]$, for the appropriate division ring $D$.  Thus 
$T/gT \cong B :=B(E, \mc{M}, \tau) $  
 is a \emph{twisted homogeneous coordinate ring}, or TCR, over the elliptic curve $E$\label{TCR-defn}
and $\tau$ is an automorphism of $E$ of infinite order (and so  $\tau$ is given by translation by a point of $E$ under the group law).  
Here $B=\bigoplus B_n$, where $B_n=H^0(E, \mc{M}_n)$ and $\mc{M}_n = \mc{M}\otimes\cdots\otimes \mc{M}^{\tau^{n-1}}$, with the natural multiplication.
We say that $E$ is the {\em elliptic curve  associated} to $T$ (or $B$)  and define the  \emph{degree} of $T$  \label{degree-defn}
 to be the degree of the line bundle $\mathcal{M}$.  Unless   stated otherwise, we assume in the body of the paper 
  that $\deg \mc{M} \geq 2$ so that $B$ and $T$ 
are generated in degree~$1$ (see, for example, \cite[Lemma~3.1(2)]{R-Sklyanin}).

 \begin{definition}\label{rem:2.331} First, it is convenient to weaken the concept of a cg algebra.   Define a  $\ZZ$-graded $\kk$-algebra $R$ 
 to be   \emph{finitely graded}   if     $\dim_{\kk}R_n<\infty$ for all $n$ and $R_n\not=0$ for some $n>0$ (the final condition is included since
  it is convenient to exclude rings graded by  $-\NN$). Obviously, apart from $R=k$ itself, 
   cg algebras, as defined in the introduction, are finitely graded. 
\end{definition}

\begin{remark}\label{rem:2.33}  The following observations  will be used several times, usually without further comment. 

(1) Suppose  that  $R$ is a $\ZZ$-graded domain  
with $\dim_{\kk}R_n\geq 2$ for some $n\geq 0$ 
(as is always the case in this paper). If $R$ is not $\NN$-graded, say with $R_{-a}\not=0$, then $(R_{-a})^n(R_n)^a$ contains an 
element $\alpha\in R_0\smallsetminus \kk$ and so $R_0$ contains  the polynomial ring $\kk [\alpha]$. Thus $R$ is not finitely graded. 
Equivalently, if $R$ is finitely graded then  $R$ is necessarily $\NN$-graded with $R_0=\kk$.

(2) In a similar vein,   if $R$ is   a   noetherian $\mathbb{N}$-graded
  $\kk$-algebra with $R_0=\kk$,   then  generators of the $R$-module  $R_{\geq 1}$ also  generate  $R$ as an algebra. Thus $R$ is  cg. \end{remark}

We next review some important homological conditions.  Throughout $\Hom_R(M,N)$ and $\Ext^i_R(M,N)$ will denote 
 the given  groups in the   category of $R$-modules. When $M$ and $N$ are finitely generated $\mathbb{Z}$-graded modules over 
 an $\mathbb{N}$-graded ring $R$,   these carry a natural  $\mathbb{Z}$-gradation.

\begin{definition}\label{def:gor}
Let $A$ be a ring with  $\id(A)<\infty$, in the sense that $A$ has finite injective dimension on both left and right. 
For  a  finitely generated right $A$-module 
 $M$, write   $j(M)= \min\{ r : \Ext^r_{A}(M,A)\not= 0\}$ for the \emph{homological grade} of $M$. \label{grade-defn}
 Then  $M$   is then called 
 \emph{Cohen-Macaulay} (or CM), if 
$\Ext_{A}^j(M,A) = 0$ for all $j\not=j(M)$.  The module $M$ is {\em maximal Cohen-Macaulay} (or MCM) if  it is  CM with $j(M)=0$.  

A ring $A$ with $\id(A)<\infty$   is called  \emph{Auslander-Gorenstein} if 
 the following holds: if $0 \leq p<q$ and  $M$ is a finitely generated $A$-module, then
$\Ext_{A}^p(N,\,A)=0$ for every submodule $N$ of 
$\Ext_{A}^q(M,\,A)$. 
Write $\GKdim(M)$ for the Gelfand-Kirillov dimension of an $A$-module $M$, as in \cite{KL}. \label{GK-defn}
Let $A$ be an noetherian Auslander-Gorenstein $\kk$-algebra with $\GKdim(A)<\infty$. 
The algebra $A$ is called \emph{Cohen-Macaulay} (or CM), provided   \label{CM-defn}
 that  $j(M)+\GKdim(M)=\GKdim(A)$ holds for every finitely generated
 $A$-module $M$.    
Note that  an elliptic algebra is always Auslander-Gorenstein and CM by \cite[Proposition 2.4]{RSSshort}.
  \end{definition}

Let  $M=\bigoplus_{n\in \mathbb{Z}} M_n$  be a  non-zero $\ZZ$-graded module over a   cg ring $R$ of finite Gelfand-Kirillov dimension such that 
$\dim_{\kk}M_n<\infty$ for all $n$. 
Then $M$  is called \emph{$d$-critical} if $\GKdim M = d$ but $\GKdim(M/N)<d$  \label{critical-defn}
  for all non-zero submodules $N\subseteq M$.  In order to avoid repetition, \emph{we always assume that critical $T$-modules are both finitely generated and graded.}   
  Similarly, $M$ is 
 \emph{$d$-pure} if $\GKdim M = \GKdim N=d$ for all non-zero submodules $N\subseteq M$. Note that, by 
  \cite[Theorem~6.14]{KL}, a  $d$-critical $\ZZ$-graded module is automatically $d$-pure.  
  
The   \emph{Hilbert series} of   $M$  is defined to be  $\hilb M=\bigoplus_{n\in \mathbb{Z}}(\dim_{\kk}M_n)t^n.$\label{Hilbert-defn}
 Finally,   $M$ is a  {\em linear module} if $M=M_0R$,   with   $\hilb(M) =   (1-t)^{-p}$ for some $p>0$.
 When $p=1$, respectively $2$,  the module $M$ is   called {\em a point module},  respectively {\em a line module}.\label{linear-defn}
If $R$ is also an elliptic algebra, then $M$ is called  \emph{torsion} or  \emph{Goldie torsion} if, for all $m\in M$ there exists 
$t\in R\smallsetminus\{0\}$ with $mt=0$. We say that $M$ is \emph{$g$-torsion} if for all $m\in M$  one has $m g^n=0$ for some 
$n\geq 0$.  The terms \emph{Goldie torsionfree} and \emph{$g$-torsionfree} then have their expected meaning.
  If $M$ is a graded $R$-module and $n\in \mathbb{Z}$, define $M[n] = \bigoplus_{ j \in \ZZ} M[n]_j$, where $M[n]_j=M_{j+n}$. 

\begin{lemma}\label{lem:Bdual}
 Let $B = B(E, \sL, \tau)$ be a TCR  over  an elliptic curve $E$ with $\deg \sL \geq 2$ and $| \tau | = \infty$.
 \begin{enumerate}
\item $B$ is Auslander-Gorenstein, CM and generated  by $B_1$ as an algebra.
\item We have $\Ext^i_B(\kk, B) = \delta_{i,2}\kk$.
\item Let $M$ be a right (left) point module over $B$.  Then $M$ is CM  and $\Ext^1_B(M, B)[1]$ is a left (right) point module.
\end{enumerate}
\end{lemma}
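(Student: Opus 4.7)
My plan proceeds part by part. Part (1) collects classical properties of twisted homogeneous coordinate rings. Generation in degree one under the hypothesis $\deg \sL \geq 2$ on the elliptic curve $E$ is \cite[Lemma~3.1(2)]{R-Sklyanin}. For the Auslander-Gorenstein and Cohen-Macaulay properties I would appeal to the Artin--Van den Bergh equivalence between $\rqgr B$ and coherent sheaves on $E$, together with the fact that $E$ is a smooth projective curve of genus one whose dualising sheaf is trivial. This forces $B$ to be AS-Gorenstein of injective dimension $2$, and the CM property of $B$ as a graded $\kk$-algebra of Gelfand-Kirillov dimension two follows by standard arguments (for instance, Levasseur's characterisation of the Auslander/CM conditions in the graded setting).

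For part (2), I apply the CM property to the module $M=\kk$: since $\GKdim \kk = 0$ while $\GKdim B = 2$, the CM equality forces $j(\kk) = 2$, giving $\Ext^i_B(\kk,B)=0$ for $i<2$; vanishing for $i>2$ is immediate from $\id B = 2$. For the remaining group I would invoke graded local duality against $H^0_{\mathfrak{m}}(\kk) = \kk$ (with the other local cohomologies of $\kk$ vanishing), which identifies $\Ext^2_B(\kk,B)$ with a one-dimensional graded $\kk$-vector space. An alternative is to note that the Auslander condition forces $\Ext^2_B(\kk,B)$ to be a pure module of $\GKdim=0$, so that simplicity of $\kk$ pins it down to be one-dimensional.

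Part (3) is the substantive one. Since $M$ is a $1$-critical point module, CM yields $j(M)=1$, so all $\Ext^i_B(M,B)$ vanish except $N:=\Ext^1_B(M,B)$, which must be $1$-pure by Auslander-Gorenstein. To identify $N[1]$ with a left point module I would combine the Artin--Van den Bergh equivalence with a Yekutieli--Zhang/Artin--Zhang rigid-dualising-complex computation for $B$: under the equivalence, the right point module $M$ corresponds to a skyscraper sheaf $\kk(p)$ at a point $p\in E$, and Serre duality on $E$ (using $\omega_E = \sO_E$) together with the $\tau$-twist encoded in the dualising complex of $B$ shows that $N[1]$ lifts, under the analogous equivalence for left $B$-modules, to a skyscraper sheaf $\kk(p')$ at a translated point $p'\in E$. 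Reading off the Hilbert series from this identification gives $\hilb N[1] = (1-t)^{-1}$ and cyclicity in degree zero, so $N[1]$ is a left point module. The symmetric argument handles the case where $M$ is a left point module.

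The main obstacle I foresee is the careful bookkeeping in the duality step of part (3): tracking the precise degree shifts so that $[1]$ (rather than some other shift) lands in the point-module subcategory, and identifying which point $p'\in E$ corresponds to $\Ext^1_B(M_p,B)[1]$. If the paper (or the literature on TCRs) records a direct Ext-duality statement for point modules coming out of the Artin--Tate--Van den Bergh point-scheme machinery, that would provide a cleaner route than invoking the full dualising complex.
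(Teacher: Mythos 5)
The paper's proof is entirely citational: part (1) is \cite[Lemma~2.2]{RSSblowdown}, part (2) is \cite[Theorem~6.3]{Lev1992}, and part (3) is \cite[Lemma~3.3]{RSSblowdown} with a remark that the proof there (stated for $\deg\sL\geq 3$) also works when $\deg\sL=2$. So your proposal takes a genuinely different route, attempting to reconstruct the content from Serre duality on $E$ and the Artin--Van den Bergh equivalence rather than invoking these references. That is a legitimate alternative strategy and, for part (3), has the potential advantage of treating $\deg\sL=2$ and $\deg\sL\geq 3$ uniformly without having to revisit the cited argument.

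However, there is a concrete gap in your part (2). Having shown $j(\kk)=2$ and $\id B=2$, you conclude $\Ext^i_B(\kk,B)$ is concentrated in degree $2$, which is correct, but then assert that the Auslander condition together with simplicity of $\kk$ forces $\Ext^2_B(\kk,B)$ to be one-dimensional. That inference is false: the Auslander-Gorenstein and CM conditions alone do not imply $B$ is \emph{AS}-Gorenstein, and in general $\Ext^d_B(\kk,B)$ for a graded CM ring has dimension equal to the Cohen--Macaulay type, which can exceed $1$. Your first alternative via graded local duality is the right idea, but to deduce $\Ext^2_B(\kk,B)\cong\kk$ you need to already know the balanced dualising complex of $B$ is $B'[-2]$ (equivalently, that the Gorenstein parameter vanishes), which is itself the content being claimed and traces back to $\omega_E\cong\sO_E$; you invoke this circularly rather than establishing it. This is precisely the computation that \cite[Theorem~6.3]{Lev1992} supplies. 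Part (3) you acknowledge as incomplete in the degree bookkeeping; there the needed computation (including the identification of which point $p'$ appears) is carried out in \cite[Lemma~3.3]{RSSblowdown}, and your Serre-duality sketch would need to be fleshed out to a comparable level of detail to stand on its own.
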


\begin{proof}  For Part (1), see 
  \cite[Lemma~2.2]{RSSblowdown}, for Part (2)  see  \cite[Theorem~6.3]{Lev1992}
and for Part (3) see \cite[Lemma~3.3]{RSSblowdown} (this final result assumes that $\deg\sL\geq 3$ but the proof also works 
when $\deg\sL=2$).\end{proof}

\begin{lemma} \label{lem:exths}  
Let $T$ be an elliptic algebra. 
Let $M$ be a $2$-pure finitely generated graded $g$-torsionfree $T$-module such that $M/Mg$ is $1$-pure.  
Then  $M/Mg$ has a filtration with shifted  point module sub-factors $\{ P(p_i)[m_i] : 1 \leq i \leq d \}$.  
  Moreover, 
$\Ext^1_T(M, T)$ has Hilbert series $\sum_{i=1}^d s^{m_i + 1}/(1-s)^2$.
\end{lemma}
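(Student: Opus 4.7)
The plan is to first produce the filtration of $M/Mg$ by shifted point modules, and then extract the Hilbert series of $E^1 := \Ext^1_T(M,T)$ via the long exact sequence of $\Ext^*_T(-,T)$ applied to multiplication by $g$.

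\emph{Step 1 (the filtration).} Graded Nakayama forces $M/Mg \neq 0$. As $M/Mg$ is a nonzero finitely generated $1$-pure graded module over $B = T/gT$, and $B$ is CM of $\GKdim 2$, one has $\GKdim(M/Mg) = 1$, and $M/Mg$ admits a finite filtration by $1$-critical $B$-modules. Since $B = B(E, \mathcal{M}, \tau)$ is a TCR over an elliptic curve, the Artin--Van den Bergh identification $\rqgr B \simeq \mathrm{Coh}(E)$ forces each $1$-critical factor to be (up to shift) a point module $P(p_i)[m_i]$.

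\emph{Step 2 ($\Ext$ of shifted point modules over $T$).} From the resolution $0 \to T[-1] \xrightarrow{g} T \to B \to 0$ one reads $\Hom_T(B,T) = 0$, $\Ext^1_T(B,T) \cong B[1]$, and $\Ext^{\geq 2}_T(B,T) = 0$. For any right $B$-module $N$ (viewed as $g$-torsion over $T$), the change-of-rings spectral sequence $E_2^{p,q} = \Ext^p_B(N, \Ext^q_T(B,T)) \Rightarrow \Ext^{p+q}_T(N,T)$ collapses to the row $q = 1$, yielding $\Ext^{q+1}_T(N, T) \cong \Ext^q_B(N, B)[1]$. Applied to $N = P(p_i)[m_i+1]$ and combined with Lemma~\ref{lem:Bdual}(3), this gives $\Ext^j_T(N, T) = 0$ for $j \neq 2$, while $\Ext^2_T(N, T)$ is a shifted left point module of Hilbert series $s^{m_i+1}/(1-s)$. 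Additivity of $\Ext^2$ along the filtration of $(M/Mg)[1]$ (made legitimate by the vanishing of the neighbouring $\Ext$'s) yields
\[
\hilb \Ext^2_T\bigl((M/Mg)[1], T\bigr) = \sum_{i=1}^d \frac{s^{m_i+1}}{1-s}
\]
and $\Ext^j_T((M/Mg)[1], T) = 0$ for $j \neq 2$.

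\emph{Step 3 (the Hilbert series of $\Ext^1$).} Apply $\Ext^*_T(-,T)$ to $0 \to M \xrightarrow{\cdot g} M[1] \to (M/Mg)[1] \to 0$. Since $M$ is $2$-pure over the AG CM algebra $T$ of $\GKdim 3$, $j(M) = 1$ and $\Hom_T(M,T) = 0$. Combined with Step 2, the long exact sequence collapses to
\[
0 \to E^1[-1] \xrightarrow{g} E^1 \to \Ext^2_T((M/Mg)[1], T) \to E^2[-1] \xrightarrow{g} E^2 \to 0,
\]
where $E^i := \Ext^i_T(M,T)$. Surjectivity of $g$ on $E^2$ combined with graded Nakayama ($E^2$ is finitely generated graded, hence bounded below) forces $E^2 = 0$. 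Taking Hilbert series of the resulting short exact sequence yields $(1-s)\hilb E^1 = \sum s^{m_i+1}/(1-s)$, whence the claimed formula.

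The main obstacle is the careful tracking of degree shifts in the change-of-rings computation of Step 2, where the shift in $\Ext^1_T(B,T) \cong B[1]$ combines with the shift arising from Lemma~\ref{lem:Bdual}(3); the rest of the argument is essentially formal once these computations are in place.
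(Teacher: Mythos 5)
Your proof is correct, and its first step (filtering $M/Mg$ by shifted point modules) matches the paper's argument exactly. For the Hilbert series the paper simply cites \cite[Lemma~5.4(2)]{RSSblowdown}, whereas you work it out directly via the change-of-rings spectral sequence $\Ext^p_B(N,\Ext^q_T(B,T))\Rightarrow\Ext^{p+q}_T(N,T)$ and the long exact sequence of $\Ext_T(-,T)$ applied to $0\to M\xrightarrow{\cdot g}M[1]\to(M/Mg)[1]\to 0$; the degree bookkeeping, the vanishing of $\Ext^{\neq 2}_T(P(p_i)[m_i+1],T)$ via Lemma~\ref{lem:Bdual}(3), and the graded-Nakayama argument forcing $E^2=0$ are all sound, so this is essentially the same computation, just carried out self-containedly where the paper outsources it.
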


\begin{remark}\label{div-defn} 
In the lemma above, we write $P(p)$ for the $T/Tg$-point module parameterised by $p \in E$; we shall not need the details of this parameterisation.  
The number $d$ is written $d=d(M).$
\end{remark}

\begin{proof}   Since $M/Mg$ is  $1$-pure, it has a   composition series 
$0 = N_0 \subseteq N_1 \subseteq \dots \subseteq N_d = M$ with  $1$-critical factors  $N_i/N_{i-1}$. 
By  \cite[Lemma 2.8]{RSSshort}, the $1$-critical modules over $B = T/gT$ are just shifted point modules, which proves the first statement.
 The second statement then follows from  \cite[Lemma~5.4(2)]{RSSblowdown} (once again, that result assumes that $\deg\sL\geq 3$, 
  but the proof also works when $\deg\sL=2$).
 \end{proof}

\begin{notation}\label{E11-notation} Given a right  module $M$ over an elliptic algebra $T$, set $E^{11}(M) :=\Ext^1_T(\Ext^1_T(M, T), T)$. 
Recall that  $T^{\circ} := T[g^{-1}]_0 = \bigcup_{n \geq 0} T_n g^{-n}$ and that 
$M^{\circ} := M[g^{-1}]_0\cong  (M \otimes_T T[g^{-1}])_0$.
\end{notation}

\begin{lemma}\label{lem:M-modify}
  Let $M$ be   a $2$-critical, $g$-torsionfree  right  module over   an elliptic algebra $T$.  Then 
  \begin{enumerate}
  \item  $E^{11}(M)$ is the maximal essential extension of $M$ by finite-dimensional modules. Hence
   $E^{11}(M)$ is still $g$-torsionfree and $2$-critical, with $(E^{11}(M))^{\circ} = M^{\circ}$ and    $\Ext^1_T(\kk, E^{11}(M)) = 0$;
\item    $E^{11}(M)$ is CM.  Moreover  $E^{11}(M)/ E^{11}(M)g$ is $1$-pure.
\end{enumerate}
\end{lemma}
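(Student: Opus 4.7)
The plan for (1) is to invoke Auslander--Gorenstein biduality.  Since $T$ is Auslander-Gorenstein and CM of Gelfand--Kirillov dimension~$3$, the standard biduality theorem for AG rings produces, for any finitely generated module $M$ of grade $j$, a natural map $\epsilon\colon M\to E^{jj}(M)$ whose kernel has grade $\geq j+1$ and whose cokernel has grade $\geq j+2$.  Here $j(M)=1$ by CM, so $\ker\epsilon$ has grade $\geq 2$ and vanishes by $2$-purity of $M$, while $Q:=E^{11}(M)/M$ has grade $\geq 3$ and hence is finite-dimensional.  Standard AG theory also gives that $E^{11}(M)$ is itself $2$-pure, so the inclusion $M\subseteq E^{11}(M)$ is essential: any submodule disjoint from $M$ would embed in the finite-dimensional $Q$ and violate $\GKdim=2$.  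For maximality, take any essential extension $M\subseteq N$ with $N/M$ finite-dimensional.  Then $j(N/M)=3$ by CM, so $\Ext^i_T(N/M,T)=0$ for $i\leq 2$ and the long exact sequence yields $\Ext^1_T(N,T)\cong\Ext^1_T(M,T)$, hence $E^{11}(N)=E^{11}(M)$.  Moreover $N$ is $2$-pure (a finite-dimensional submodule of $N$ would meet the $2$-pure $M$ in a finite-dimensional submodule of $M$, forcing it to be zero), so biduality embeds $N$ into $E^{11}(N)=E^{11}(M)$.

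The remaining items of (1) follow directly.  The $g$-torsion part of $E^{11}(M)$ is a submodule disjoint from the $g$-torsionfree essential submodule $M$, so it is zero.  $E^{11}(M)$ is $2$-critical since any nonzero submodule $L$ meets $M$ nontrivially (forcing $\GKdim L=2$), while $E^{11}(M)/L$ sits in an extension whose outer terms have $\GKdim\leq 1$.  The identity $(E^{11}(M))^{\circ}=M^{\circ}$ holds because $Q$ is finite-dimensional and hence $g$-nilpotent.  Finally, a nonzero class in $\Ext^1_T(\kk,E^{11}(M))$ would supply a nonsplit extension $0\to E^{11}(M)\to N'\to\kk\to 0$ that is automatically essential (any submodule of $N'$ missing $E^{11}(M)$ would split off $\kk$), contradicting the maximality just established.

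For (2) set $N:=E^{11}(M)$.  Applying $\Hom(\kk,-)$ to $0\to N[-1]\xrightarrow{g}N\to N/Ng\to 0$ and using $\Hom(\kk,N)=0=\Ext^1_T(\kk,N)$ from (1) forces $\Hom(\kk,N/Ng)=0$, so $N/Ng$ is $1$-pure.  Now $B=T/gT$ is AG and CM of dimension $2$ by Lemma~\ref{lem:Bdual}(1), its $1$-critical modules are shifted point modules by \cite[Lem.~2.8]{RSSshort}, and these are CM by Lemma~\ref{lem:Bdual}(3).  Filtering the finitely generated $1$-pure $B$-module $N/Ng$ by $1$-critical sub-factors and iterating the long exact sequence yields $\Ext^i_B(N/Ng,B)=0$ for $i\neq 1$.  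The Rees change-of-rings isomorphism $\Ext^i_T(L,T)\cong\Ext^{i-1}_B(L,B)[1]$, derived from $0\to T[-1]\xrightarrow{g}T\to B\to 0$ by applying $\Hom_T(-,T)$ to get $\Ext^*_T(B,T)$ concentrated in degree~$1$, then transports this to $\Ext^i_T(N/Ng,T)=0$ for $i\neq 2$.

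To finish, apply $\Hom_T(-,T)$ to $0\to N[-1]\xrightarrow{g}N\to N/Ng\to 0$ and set $X^i:=\Ext^i_T(N,T)$.  The resulting long exact sequence, combined with the vanishing $\Ext^{i+1}_T(N/Ng,T)=0$ for $i=2,3$ (using $\Ext^4_T=0$ since $\id T=3$), shows that multiplication by $g$ is surjective on $X^i$ for $i=2,3$; since each $X^i$ is a finitely generated graded left $T$-module, hence bounded below, graded Nakayama forces $X^2=X^3=0$.  Combined with $X^0=0$ (grade $\geq 1$), this proves $N$ is CM.  The main technical obstacle is the grade bookkeeping in the AG biduality spectral sequence---in particular that $\coker\epsilon$ lies in grade $\geq j+2$ rather than merely $\geq j+1$---together with tracking the internal grading shifts through the Rees isomorphism; once those standard AG-ring facts are in hand, the rest reduces to a long-exact-sequence chase plus graded Nakayama.
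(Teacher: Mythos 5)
Your proof is correct and follows essentially the same chain of ideas as the paper's; the main difference is that where the paper cites external results, you unpack them into full arguments. For part~(1), the paper appeals to Levasseur's biduality (\cite[(4.6.6), Remark~5.8(4)]{Lev1992}), while you re-derive the needed facts (kernel of $\epsilon$ has grade $\geq j+1$, cokernel grade $\geq j+2$, hence finite-dimensional here; maximality via the vanishing $\Ext^{\leq 2}_T(N/M,T)=0$) directly from the Auslander--Gorenstein and Cohen--Macaulay properties. For part~(2), the paper deduces $1$-purity by a brief extension argument and then cites Lemma~\ref{lem:exths} together with \cite[Lemma~5.4]{RSSblowdown} for the CM conclusion, whereas you prove $1$-purity via the $\Hom(\kk,-)$ long exact sequence and then establish CM by hand through the Rees change-of-rings isomorphism $\Ext^i_T(L,T)\cong\Ext^{i-1}_B(L,B)[1]$ plus a long-exact-sequence and graded Nakayama argument. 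Both routes are sound; yours is more self-contained, the paper's more economical. The only point you take for granted without comment is $\id T = 3$, which is standard for elliptic algebras (it follows from $\id B = 2$ and the central regular element $g$), and your grade bookkeeping (e.g.\ $j(N/M)=3$ by CM, so $\Ext^i_T(N/M,T)=0$ for $i\leq 2$) is correct.
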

  
  \begin{proof} (1)  The first statement follows, for example, from  \cite[(4.6.6) and Remark~5.8(4)]{Lev1992}.
   As such, $E^{11}(M)/M$ is annihilated by a power of $g$ 
   and so   $E^{11}(M)^\circ=M^\circ$.  The other two assertions then follow easily.  
  
  (2)   If $E^{11}(M)/E^{11}(M)g$ has a non-zero,  finite dimensional submodule, say $W/E^{11}(M)g$,
then $E^{11}(M) \cong E^{11}(M)g[1] $ has a nontrivial extension by that finite-dimensional module, contradicting Part (1).
 Thus  $E^{11}(M)/ E^{11}(M)g$ is $1$-pure. The  CM condition then  follows from  Lemma~\ref{lem:exths} and
   \cite[Lemma~5.4]{RSSblowdown}. 
  \end{proof}

   \section{Some key  lemmas}\label{keylemmas}
   
   In this section we provide three technical lemmas   that, nevertheless, lie at the heart of the proofs of the main theorems.
   
     The first  lemma is at the heart of the proof of Theorem~\ref{ithm:arbitraryoverring}; it shows that the structure of certain
      modules over an elliptic algebra $T$ can be perturbed without affecting their image in the localised category
       $\rmod T^\circ$.  This is used in Proposition~\ref{prop:linesonly}    to get useful pertubations of $T$ modules. 
       The final result of the section  investigates the relationship between a cg algebra and its $g$-divisible hull, as defined below, which will be important in understanding general rings.

\begin{lemma}\label{lem:1.4}
Let $T$ be an elliptic algebra with a 2-critical,   $g$-torsionfree   $T$-module $M$  such that $M= E^{11}(M)$  
and   $\min \{ i : M_i \neq 0 \} = 0$.  Suppose  that $M/Mg$ has a filtration  
$$0=N(0)/Mg \subsetneqq N(1)/Mg \subsetneqq \cdots \subsetneqq N(d)/Mg =M/Mg$$ with factors being   (unshifted) point modules 
 $\{ P(p_i) :=N(i)/N(i-1) : 1 \leq i \leq d \}$.
  Then the following hold.
 \begin{enumerate}
 \item  $N:=N(1)$ is again $2$-critical CM 
$g$-torsionfree,  with $N^{\circ} = M^\circ$ and   $\min \{ i : N_i \neq 0 \} = 0$. 
\item However,
 now $N/Ng$ has a filtration by   shifted point modules 
$\{ P(q_i)[n_i] : 1 \leq i \leq d \}$ where $n_d = 0$ but $n_i = -1$ for $1 \leq i \leq d-1$.
\end{enumerate}
\end{lemma}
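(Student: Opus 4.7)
The plan is to verify both parts directly from the filtration data, with $N := N(1)$ sandwiched between $Mg$ and $M$.

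For Part (1), since $N$ is a nonzero submodule of the $2$-critical, $g$-torsionfree module $M$, it inherits both properties. The quotient $M/N = M/N(1)$ is filtered by the unshifted point modules $P(p_2),\dots,P(p_d)$; each of these is annihilated by $g$, so $M/N$ is $g$-torsion, and inverting $g$ yields $N^{\circ}=M^{\circ}$. The condition $\min\{i : N_i\neq 0\}=0$ is immediate: $N\supseteq Mg$ satisfies $N_0=(N/Mg)_0=(P(p_1))_0=\kk$ (using that $P(p_1)$ is unshifted), while $N_i=0$ for $i<0$ since $N\subseteq M$. For the CM property, I would apply $\Hom_T(-,T)$ to $0\to N\to M\to M/N\to 0$. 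By hypothesis $M=E^{11}(M)$ is CM with $j(M)=1$ (Lemma~\ref{lem:M-modify}(2)), and each point module $P$ over $T$, viewed as a $B$-module that is CM with $j_B(P)=1$ by Lemma~\ref{lem:Bdual}(3), satisfies $\Ext^i_T(P,T)=0$ for $i\neq 2$; this is a short computation using the exact sequence $0\to T[-1]\xrightarrow{\,\cdot g\,} T\to B\to 0$ together with a change-of-rings argument. Induction along the filtration gives $\Ext^i_T(M/N,T)=0$ for $i\neq 2$, and substituting these vanishings into the long exact sequence forces $\Ext^i_T(N,T)=0$ for $i\neq 1$.

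For Part (2), I would exploit the short exact sequence
$$0\longrightarrow Mg/Ng\longrightarrow N/Ng\longrightarrow N/Mg\longrightarrow 0.$$
The right term is $P(p_1)$ by hypothesis. For the left term, multiplication by $g$ yields a graded isomorphism $M[-1]\xrightarrow{\,\cdot g\,} Mg$ (since $M$ is $g$-torsionfree), which restricts to $N[-1]\cong Ng$ and thus induces $Mg/Ng\cong (M/N)[-1]$. The given filtration of $M/N$ with unshifted point module factors $P(p_2),\dots,P(p_d)$ then shifts to a filtration of $Mg/Ng$ with factors $P(p_2)[-1],\dots,P(p_d)[-1]$; splicing these with the top factor $P(p_1)$ produces a filtration of $N/Ng$ of length $d$ in which the top factor is unshifted and the remaining $d-1$ factors are shifted by $-1$. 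After the relabeling $q_d := p_1$ and $q_i := p_{i+1}$ for $1\le i\le d-1$, this is exactly the statement of Part (2).

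The main obstacle is justifying the CM status of $N$; specifically, one must know that a point module $P$ over $T$ has $\Ext^i_T(P,T)$ concentrated in degree $2$. This is a short standard calculation, but it is the only step in the argument beyond pure bookkeeping. Everything else is controlled by the single observation that multiplication by $g$ shifts a $B$-module filtration by $[-1]$.
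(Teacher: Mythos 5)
Your argument is correct, and Part (2) is essentially the paper's proof in different dress: you write down the short exact sequence $0 \to Mg/Ng \to N/Ng \to N/Mg \to 0$ directly and identify $Mg/Ng \cong (M/N)[-1]$ via multiplication by $g$, while the paper obtains the same identification by tensoring $0\to N\to M\to M/N\to 0$ with $T/Tg$ and reading off $\Tor_1^T(M/N, T/Tg)\cong (M/N)[-1]$. Both give the same filtration of $N/Ng$, and your relabeling of the $q_i$ and $n_i$ matches the statement.

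The genuine difference is in how you establish that $N$ is Cohen--Macaulay in Part (1). You apply $\Hom_T(-,T)$ to $0\to N\to M\to M/N\to 0$, noting that $M$ is CM with $j(M)=1$ and that each $B$-point module in the filtration of $M/N$ is CM over $T$ with $j_T=2$ (a Rees-lemma change-of-rings computation using Lemma~\ref{lem:Bdual}(3)); induction along the filtration then concentrates $\Ext^\bullet_T(M/N,T)$ in degree $2$, and the long exact sequence kills $\Ext^i_T(N,T)$ for $i\neq 1$. The paper instead argues inside the injective hull $I(M)$: since $E^{11}(N)$ is an essential extension of $N$ by finite-dimensional modules and $M=E^{11}(M)$ admits no such nontrivial extensions, one gets $E^{11}(N)\subseteq M$; then $1$-purity of $M/N$ forces $E^{11}(N)=N$, and Lemma~\ref{lem:M-modify}(2) delivers CM. Your route is more computational and makes the role of point modules explicit but requires spelling out the Rees-lemma step; the paper's route is shorter because it re-uses the $E^{11}$ machinery already in place. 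Both are correct, and the remaining bookkeeping ($2$-criticality, $g$-torsionfreeness, $N^\circ=M^\circ$, and $\min\{i:N_i\neq 0\}=0$) agrees with the paper up to minor cosmetic differences (the paper notes $N\supseteq Mg$ implies $N^\circ=M^\circ$ directly rather than observing $M/N$ is $g$-torsion).
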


\begin{proof} (1) 
 We may assume that $d>1$, as the result is trivial otherwise. The module $N$ is trivially $2$-critical and $g$-torsionfree and hence also $2$-pure.   
  By Lemma~\ref{lem:M-modify},
 $E^{11}(N) $ is the largest essential extension of $N$  by finite-dimensional modules, while   $M$ has 
 no nontrivial such extensions. Thus,  using the inclusion  $E^{11}(N)  \subseteq I(N)\subseteq I(M)$,  inside the injective hull $I(M)$, we 
 conclude  that $E^{11}(N)  \subseteq M$. But, by construction,  $M/N$ is filtered by shifted point modules 
and hence is $1$-pure. Thus $E^{11}(N)  = N$. Therefore, by Lemma~\ref{lem:M-modify},   $N$   is    CM.
Since $M \supseteq N \supseteq Mg$, certainly  $M^{\circ} = N^{\circ}$. Since $(N/Mg)_0=P(p_1)_0\not=0$, certainly $N_0\not=0$  and hence $\min \{ i : N_i \neq 0 \} = 0$.

(2)   It remains to understand the shifted point module  filtration of $N/Ng$.   
Recall that $\Tor_1^T(L, T/Tg)[1]\cong \{ x \in L : xg = 0 \}$   for any graded $T$-module $L$ (see, for example,
    \cite[Equation (8.1)]{R-Sklyanin}).  In particular,  $\Tor_1^T(M, T/Tg) = 0$. Thus   the short exact sequence $0\to N\to M\to M/N \to 0$ induces an     exact sequence
\[ \xymatrix{
0 \ar[r] & \Tor_1^T(M/N, T/Tg)  \ar[r] & N/Ng \ar[r] &  M/Mg \ar[r] & M/(N + Mg) \ar[r] & 0. }
\]
By  \cite[Equation (8.1)]{R-Sklyanin}, again, $X :=\Tor_1^T(M/N, T/Tg) \cong (M/N)[-1]$.
Thus, by the definition of $N$, $X$ is filtered  by the point modules $\{ P(p_i)[-1] : 2 \leq i \leq d \}$,
while $(N/Ng)/X \cong (N+Mg)/Mg=N/Mg$ is the point module $P(p_1)$. This defines  the desired filtration of $N/Ng$.
\end{proof}

We now provide a useful application of Lemma~\ref{lem:1.4}.

\begin{proposition} \label{prop:linesonly} 
 Let  $T $ be  an  elliptic algebra, with a finitely graded overring  $T\subsetneqq  R\subset T_{(g)}$.  
Assume that  $T^\circ $ is  simple and fix 
 a simple $T^\circ$-submodule $\Ss$ of $R^\circ/T^\circ$.
Then   there is  
 a $2$-critical $g$-torsionfree $T$-module $M$ with  $\min \{ i : M_i \neq 0 \} = 0$ such that $M^{\circ} = \Ss$. 
 Moreover, in the
  notation of Lemma~\ref{lem:exths} either
   \begin{enumerate}
   \item  $d(M)=1$ and $M$ is a   line module, or 
   \item   $d(M)>1$ in which  case there exists a   module extension $T\subsetneqq L \subset T_{(g)}$ such that 
   $L^\circ/T^\circ \cong \Ss$ but $L_r\supsetneqq T_r$ for some $r\leq 0$.   
   \end{enumerate}
\end{proposition}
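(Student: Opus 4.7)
The plan is to construct $M$ from $\Ss$ via a graded preimage construction, and then to split on $d(M)$ using the point-module filtration of $M/Mg$ from Lemma~\ref{lem:exths}. Case~(1) identifies $M$ as a line module via a Hilbert-series and Nakayama argument; Case~(2) builds $L$ by exploiting non-triviality of an internal-degree-zero $\Ext^1$ group that is available precisely when $d(M) > 1$, invoking Lemma~\ref{lem:1.4} to secure a suitable perturbation.

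For the construction of $M$, let $\Mm \subseteq R^\circ$ be the preimage of $\Ss$.  The graded $T$-submodule $\Phi(\Mm) \subseteq T_{(g)}$ defined by $\Phi(\Mm)_n = \{x \in T_{(g),n} : xg^{-n} \in \Mm\}$ contains $T$ and satisfies $\Phi(\Mm)^\circ = \Mm$.  Since $\Ss$ is simple, it is cyclic over $T^\circ$, so $\Mm$ is finitely generated over $T^\circ$; I pick a finitely generated graded $T$-submodule $M_0 \supseteq T$ of $\Phi(\Mm)$ with $M_0^\circ = \Mm$.  Setting $\widetilde N := M_0/T$, I kill its $g$-torsion and pass to a $2$-critical submodule of GK-dimension~$2$; by simplicity of $\Ss$ this preserves the localisation (any non-$g$-torsion submodule localises to $0$ or to $\Ss$, and the former is ruled out).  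Applying $E^{11}$ via Lemma~\ref{lem:M-modify} and shifting to minimum degree zero produces the required $M$.

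In Case~(1), $d(M) = 1$, Lemma~\ref{lem:exths} gives $M/Mg = P(p_1)[m_1]$, and the minimum-degree condition forces $m_1 = 0$.  From $0 \to M[-1] \xrightarrow{g} M \to M/Mg \to 0$ I read off $\hilb M = 1/(1-t)^2$, and the surjection $M_0 T \twoheadrightarrow M/Mg = P(p_1)$ combined with graded Nakayama forces $M = M_0 T$.  Hence $M$ is cyclic with line-module Hilbert series, so $M$ is a line module.

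For Case~(2), $d(M) > 1$, the goal is to produce $L \subset T_{(g)}$ with $L^\circ/T^\circ \cong \Ss$ and $L_r \supsetneq T_r$ for some $r \leq 0$.  By a preliminary enlargement (replacing $M$ by $E^{11}$ of an appropriate larger module) I may assume $M/Mg$ is filtered by unshifted point modules, so that Lemma~\ref{lem:1.4} produces a $2$-critical submodule $N \subsetneq M$ with $N^\circ = M^\circ$ and $N/Ng$ filtered by $\{P(q_i)[-1]\}_{i<d}$ together with $P(q_d)$.  By Lemma~\ref{lem:exths}, $\hilb \Ext^1_T(N, T) = (d{-}1 + s)/(1-s)^2$, so its internal-degree-zero part has dimension $d-1 \geq 1$; a non-split class yields an extension $0 \to T \to L \to N \to 0$ of internal degree zero, which can be realised as a graded $T$-submodule $L \subset T_{(g)}$ with $L_0 \supsetneq T_0 = k$.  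The remaining properties $T \subsetneq L$ and $L^\circ/T^\circ \cong \Ss$ are immediate.  The main obstacle is this last step: translating an abstract non-split class in $\Ext^1_T(N, T)_0$ into an explicit graded $T$-submodule of $T_{(g)}$ whose degree-zero component strictly exceeds $T_0$, rather than a mere extension of modules.  It is precisely the non-vanishing $\Ext^1_T(N, T)_0 \neq 0$, which fails when $d(M) = 1$, that separates Case~(1) from Case~(2) and underlies the dichotomy.
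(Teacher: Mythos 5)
Your construction of $M$ and your treatment of Case~(1) are in line with the paper (the paper builds $M = T/I$ directly and proves $2$-criticality from the structure of $I$, whereas you pass to a critical submodule; both work). The dichotomy argument via the degree-zero part of $\Ext^1_T(N,T)$ is also the paper's idea. However, there is a genuine gap in Case~(2), and you yourself flag it without resolving it: you assert that a non-split class in $\Ext^1_T(N,T)_0$ ``can be realised as a graded $T$-submodule $L\subset T_{(g)}$'' and then call this ``the main obstacle.'' This step needs an actual argument, and it is where the homological hypotheses on $T$ do real work. One must show two things: (a) that $T$ is \emph{essential} in the extension module $L$, so that $L$ embeds into $Q_{gr}(T)$ — this uses that $T$ is Auslander-Gorenstein and CM (hence $\Ext^1_T(-,T)$ vanishes on modules of GK-dimension $\leq 1$) together with $2$-criticality of $N$, which forces the restriction map $\Ext^1_T(N,T)\to\Ext^1_T(\bar Y,T)$ to be injective for every nonzero submodule $\bar Y\subseteq N$, so a split-over-$\bar Y$ pullback would contradict nonsplitness; and (b) that in fact $L\subseteq T_{(g)}$, which follows because $N$ is $g$-torsionfree but Goldie-torsion, so every element of $L$ can be cleared into $T$ by a homogeneous element of $T\smallsetminus gT$. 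Without both of these, you only have an abstract extension, not the required submodule of $T_{(g)}$.

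A smaller issue: your ``preliminary enlargement'' to force $M/Mg$ to be filtered by \emph{unshifted} point modules before invoking Lemma~\ref{lem:1.4} is neither justified nor necessary. The paper's dichotomy is cleaner: since $\min\{i:(M/Mg)_i\neq 0\}=0$ some shift $m_i$ is $0$; if additionally some $m_j<0$ one proceeds directly to the $\Ext^1$ computation, and only in the remaining case (all $m_i=0$) does one apply Lemma~\ref{lem:1.4}. Claiming one can always reduce to all $m_i=0$ is an overreach, though it is in the easy direction and does not undermine the rest of the argument.
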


\begin{remark} The significance of this result is that, for the algebras $T$  of interest, we will show that 
  $L$  can also be embedded into the given overring $R$. This   contradicts
   Remark~\ref{rem:2.33}(1). Thus $T$ must have line modules which,  in turn, proves much  of 
Theorem~\ref{ithm1}.\end{remark}

\begin{proof}   
Write  $\Ss \cong T^{\circ}/\Ii$ for some right ideal $\Ii$ and set  $I = \bigoplus_{n\in \mathbb{N}}\{ x \in T_n : x g^{-n} \in \Ii\ \}$.  Thus  $I$ is a graded right ideal of $T$ such that $I^{\circ} = \Ii$ 
and so $(T/I)^{\circ} \cong \Ss$.    By construction, $M=T/I$ is   $g$-torsionfree.  If $\GKdim(M)=1$ then, by  the proof of \cite[Proposition~7.5]{ATV2},  $\Ss = M^{\circ}$ would be   finite-dimensional, contradicting the simplicity of $T^\circ$. Thus $\GKdim(M)=2$.  Indeed, we claim
 that $M$ is 2-critical. To see this, suppose that  $M$ has a proper factor $T/J$ with $\GKdim(T/J)=2$; we may assume that $T/J$ is 2-critical.
 Then, by the definition of $I$, this forces  $J^\circ=T^\circ$ and hence $J\supseteq g^nT$ for some $n$. By \cite[Proposition~2.36(vi)]{ATV2},
  $T/J$ has a prime annihilator and hence $J\supseteq gT$.  Since $B=T/gT$ is  $2$-critical, this implies that $gT= J\supset I$. This is
   impossible by the definition of $I$ and implies that   $M$ is 2-critical.

 By  Lemma~\ref{lem:M-modify},  it is harmless to replace $M$ by $E^{11}(M)$ and so $M$ now has the properties described by that lemma.  Note that $Mg\cong M[-1]$ and so $M^\circ\cong M[n]^\circ$ for all $n\in \mathbb{Z}$.  Thus we may also replace $M$ by some shift $M[n]$ and  
  assume that $\min \{ i : M_i \neq 0 \} = 0$.
  
    By Lemma~\ref{lem:exths} $M/Mg$ has a filtration with $d=d(M)$ shifted point module subfactors. If $d=1$,  then $M/Mg$ is a   shifted point module, and so  $M$ has the Hilbert series $s^m/(1-s)^2$ of a shifted line module.  Since $\min \{ i : M_i \neq 0 \} = 0$,  actually $M$  has  Hilbert series $1/(1-s)^2$. Since $M/Mg$ is cyclic, so is $M$ and hence $M$  is a line module.

 So suppose that   $d\geq 2$.  
Let the point modules in the filtration of $M/Mg$ be $\{ P(p_i)[m_i] : 1 \leq i \leq d \}$.   Since $\min \{ i : (M/Mg)_i \neq 0 \} = 0$, necessarily 
$m_i = 0$ for some $i$.  It may be that   $m_j < 0$ for some $j$, which is fine.  If this is not the case, 
then $m_i = 0$ for all $1 \leq i \leq d$.  In this case,  we 
can, by Lemma~\ref{lem:1.4},  replace $M$ by a second module with all of the   properties of $M$ except that now   $m_d = 0$ and 
$m_i < 0$ for $1 \leq i \leq d-1$.

By Lemma~\ref{lem:exths},  $\Ext^1_T(M, T)$ has Hilbert series  $\sum_{i=1}^d s^{m_i+1}/(1-s)^2$.  
By construction, some $m_i < 0$ and so writing this Hilbert series as $\sum_{n \in \mb{Z}} c_n s^n$, we have $c_n > 0$
for some $n \leq 0$.  Fixing such an $n$, there therefore exists  $0 \neq \theta \in \Ext^1_T(M, T)_n = 
\Ext^1_T(M, T[n])_0 = \Ext^1_T(M[-n], T)_0$.  Then $\theta$ corresponds to a (necessarily nonsplit) graded exact sequence 
$0 \to T \to X \to M[-n]\to 0$ for some module $X$.    Now   $T$ is
Auslander-Gorenstein and CM
by \cite[Theorem~6.3]{R-Sklyanin} and so $\Ext^1_T(N, T) = 0$ for any module $N$ with $\GKdim(N) \leq 1$.  
Since $M[-n]$ is 2-critical, it  follows easily  that the 
extension $T \hookrightarrow X$ is  essential. Thus we may embed  $T\subsetneqq X\subsetneqq Q_{gr}(T)$, the graded quotient 
ring of $T$.   Since $M[-n]$ is $g$-torsionfree, but  (Goldie) torsion,  it follows that, for any $x\in X$ there exists $t\in T\smallsetminus gT$ 
such that $xt\in T$. In other words,   $ X\subseteq T_{(g)}$.  

Finally, $X/T \cong M[-n]$, and since $M^{\circ} \cong \Ss$, one also has  $M[-n]^\circ\cong \Ss$.  
  However, $(X/T)_n =M[-n]_n=M_0\not=0$ with $n \leq 0$.  Thus $L=X$ satisfies the conclusions of the proposition.
  \end{proof}

 The main results of this paper will  also cover  non-elliptic algebras and we end the section with some technical results needed for this more general case.

  \begin{lemma}\label{lem:1.6}
Suppose that $A$ is a   cg $\kk$-algebra  
 that is a domain with $\GKdim A=2$  and graded quotient ring $ Q_{gr}(A)$. Assume that  
$Q_{gr}(A) \hookrightarrow G := k(E)[t, t^{-1}; \tau]$ with $|\tau|=\infty. $  

  Let  $\{Q(i) : i\geq 0\} $ be an ascending chain  of graded $A$-sub-bimodules of $G$ that are finitely generated as both left and right $A$-modules. Then the chain is eventually stationary.
\end{lemma}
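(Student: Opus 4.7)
The plan is to combine the noetherianness of $A$ (from the Artin--Stafford structure theorem for cg domains of GK-dimension~$2$) with a finite-rank analysis of $G$ over $K := Q_{gr}(A)$, and then conclude by a Goldie-rank stabilization argument.

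First, the Artin--Stafford theorem implies $A$ is noetherian; in particular it is a graded Goldie domain with graded quotient division ring $K$. As $K$ is a graded division ring, it has the form $D[z^{\pm 1};\rho]$, where $D = K_0$ is a division ring (in fact a field, since it embeds in the commutative ring $k(E)$) and $z \in K_e$ for some $e \geq 1$. Since $D$ and $k(E)$ are both finitely generated extensions of $\kk$ of transcendence degree one, the subfield extension $D \hookrightarrow k(E)$ is finite, and a direct computation---spreading a $D$-basis of $k(E)$ through shifts by $z$ and by $t$---shows that $G$ is finitely generated as both a left and a right $K$-module, of rank at most $R := e\cdot [k(E):D]$.

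Next, apply this to the chain. For each $i$, the right Goldie rank of $Q(i)$ is $\dim_K (Q(i)K) \leq R$, and similarly on the left. Since the chain $\{Q(i)\}$ is ascending, these ranks form weakly increasing bounded sequences and so are eventually constant. After dropping finitely many terms we may assume that all $Q(i)$ share common right and left Goldie ranks; then each quotient $Q(i+1)/Q(i)$ is torsion as a left and as a right $A$-module, and hence has GK-dimension at most one.

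Finally, I would deduce termination of the chain from this bounded GK-dimension on the factors. The cleanest route invokes the further Artin--Stafford identification of $A$ (modulo a finite-dimensional modification) with a twisted homogeneous coordinate ring $B(X,\mathcal{L},\sigma)$ on a projective curve $X$, which translates the chain $\{Q(i)/Q(0)\}$ into an ascending chain of finite-rank coherent $\sigma$-bimodules on $X$; this then stabilizes by noetherianness of coherent sheaves on a projective curve. The main obstacle is precisely this last step: one must translate the abstract $A$-bimodule condition on $Q(i)\subseteq G$ into a geometric bounded-rank coherent-sheaf statement on $X$, and verify that ascending $A$-bimodule chains correspond to ascending chains of such sheaves, so that the curve's noetherianness can be brought to bear.
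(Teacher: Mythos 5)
Your rank-bounding argument, up through the conclusion that each quotient $Q(i+1)/Q(i)$ is torsion of GK-dimension $\leq 1$ after discarding finitely many terms, is sound and reaches the same waypoint as the paper by a slightly different route: the paper cites Schofield's result [Sco, Theorem~34] to conclude that $\Fr(G)$ is finitely generated on both sides over $\Fr(A)$, whereas you analyze the graded division rings directly. The final step, however, is where the proof actually lives, and your proposal has a genuine gap there that is not merely a ``translation to be verified.'' Noetherianness of coherent sheaves on a projective curve controls ascending chains of subobjects \emph{inside a fixed coherent sheaf}; you have no such fixed finitely generated object containing all the $Q(i)$, since $G$ itself is not finitely generated over $A$. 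And ``bounded rank'' offers no help for the torsion quotients $Q(i)/Q(0)$, which all have rank zero and could in principle have unboundedly growing length. So the appeal to noetherianness of $\rmod X$ does not close the argument.

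The paper supplies exactly the uniform bound you are missing, via two ingredients absent from your sketch. First, after embedding $A \subseteq Z := B(E',\mc{L},\tau')$ (and proving $E'$ is elliptic with $|\tau'| = \infty$, which is needed later), it observes that each $X(i) := Q(i)Z/Q(0)Z$ is annihilated on the left by a nonzero two-sided ideal $K(i)$ of $A$, and then invokes [AS, Proposition~6.5(2)] to get $\dim_\kk A/K(i) < \infty$; consequently each $X(i)$ is \emph{finite-dimensional}, not merely of GK-dimension $\leq 1$. Second, since $Z$ is Auslander--Gorenstein and CM and $Q(0)Z$ is torsion-free as a right $Z$-module, [Lev1992, (4.6.6) and Remark~5.8(4)] yields a single largest essential extension $Y$ of $Q(0)Z$ by finite-dimensional right $Z$-modules, and $Y$ is finitely generated over $Z$ and (after a further argument showing $AY \subseteq Y$) a finitely generated left $A$-module. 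The finite-dimensionality of the $X(i)$ forces every $Q(i)Z$, hence every $Q(i)$, into this fixed $Y$, and then noetherianness of $A$ alone gives stabilization. Both the finite-dimensionality reduction and the construction of $Y$ are essential, and neither follows from the curve-theoretic picture you gesture at.
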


\begin{proof}    It does no harm to replace $A$ by some Veronese ring $A^{(n)}$, and thereby assume that 
$A=A^{(n)}$, with $\dim_\kk A_n\geq 2$. We  emphasise that, here, we do not change the grading on $A$, since we cannot replace   $Q(j)$ by its Veronese.  We now claim:

\begin{claim}\label{1.6-claim}
$A\subseteq Z :=B(E',\mc{L},\tau'),$ for some  invertible sheaf $\mc{L}$ over an elliptic curve $E'$ with $\deg \mc{L} \geq 2$ and $|\tau'|=\infty$. 
 This embedding may be chosen so that  $Z$ is a noetherian left $A$-module. 
  \end{claim}

\noindent
 {\it Proof of the claim:} 
  As was true for  $A$, 
  our convention here is that  
 $Z=\bigoplus_{j\geq 0} Z_{nj}$ with $$Z_{nj}=H^0(E'\, , \,  \mc{L} \otimes\mc{L}^{\tau'}\otimes \cdots\otimes \mc{L}^{(\tau')^{j-1}}).$$
 The  hypotheses of   \cite[Theorem~5.9]{AS} are satisfied for $A$, and there is an embedding $A \subseteq Z :=B(E',\mc{L},\tau')\subset Q_{gr}(A)$ for some ample invertible sheaf $\mc{L}$ over some smooth curve $E'$ for which $Z$ is a finitely generated left $A$-module.  Our choice of grading implies  that 
  $Z=Z^{(n)} $ with $\dim_{\kk}Z_n\geq 2$.  Thus $\deg \mc L \geq 2$.

By \cite[Theorem~34]{Sco}, the full division ring of fractions $F :=\Fr(G)$ is  finitely generated as both a left and right module over $F'  :=\Fr(A)$. As in the proof of \cite[Proposition~2.6]{RSS},  it follows that $E'$ must be elliptic with $|\tau'|=\infty$.  
 Thus the claim is proven.\qed
 
 \medskip
We return to the proof of the lemma.  Since the Goldie rank of the $Q(i)$ as    left $A$-modules is bounded above by
 the finite Goldie rank of $F$ as a left $F'$-module, we can remove a finite number of terms and assume that the Goldie rank of  the ${}_AQ(i)$ is
  constant. In particular,  each $X(i) :=Q(i)Z/Q(0)Z$ is   torsion as a left $A$-module. Note that, as ${}_AZ$ is finitely generated, so is 
  each $Q(i)Z$ as a left $A$-module. Similarly, since the $Q(i)$ are finitely generated as right $A$-modules, each $Q(i)Z$ is a finitely 
  generated right $Z$-module. Write  $X(i)=\sum_{j=1}^m x_jZ$ for some $x_j$; thus  $K(i) :=\lann_A(X(i)) =\bigcap_j\lann_A(x_j)\not=0$. Moreover,
 \cite[Proposition 6.5(2)]{AS} implies that   $\dim_{\kk} A/K(i)< \infty$. In particular, as the $X(i)$ are finitely generated left $A/K(i)$-modules
  it follows that each $X(i)$ is finite dimensional. 
 
 Now consider the $Q(i)Z$ as right $Z$-modules.   Since $\deg \mc{L} \geq 2$,  
 Lemma~\ref{lem:Bdual} implies that $Z$
  is Auslander-Gorenstein and CM. By construction, each   $Q(i)Z$ is also   torsionfree as a right $Z$-module. Thus, by \cite[(4.6.6) and Remark~5.8(4)]{Lev1992}, there is a unique largest essential extension  $Y$ of $Q(0)Z$ by  finite dimensional right $Z$-modules. By its construction in \cite{Lev1992}, $Y$ is  finitely generated and hence noetherian as a right $Z$-module.
  As such, $Y/Q(0)Z$ is finite dimensional, say with  right annihilator $L$. Since $Q(0)Z$ is a left $A$-module it follows that,  for any $a\in A$,
   the right $Z$-module  $\bigl(aY+Q(0)Z\bigr)/Q(0)Z$ is also killed by $L$ and hence is also finite dimensional.  Hence $aY\subseteq Y$ 
   and so $Y=AY$ is actually a left $A$-module.    
 In particular, as  $Y/Q(0)Z$ is finite dimensional,   $Y$ is finitely generated as both a right $Z$-module and a left $A$-module.

 Finally, as the $X(i)$ are finite dimensional,     each $Q(i)Z$ and hence each $Q(i)$ lies in $Y$. Since $A$ is noetherian by
 \cite[Theorem~0.4]{AS}, it follows that  the union $ \bigcup_n Q(n)$    is also a noetherian  left $A$-module. Thus, 
the chain  $\{Q(n)\}$ must be eventually stationary.
\end{proof}

\begin{definition}\label{g-divisible-defn}
For any graded vector subspace $X \subseteq T_{(g)}$,  the \emph{$g$-divisible hull of $X$} is defined to be  
\begin{equation}\label{hull-defn}
\widehat{X}=\{t\in T_{(g)} | tg^n\in X \text{ for some } n\in\mathbb{N}\}.\end{equation}
We say that $X$ is {\em $g$-divisible}\label{g-div}
 if $X \cap gT_{(g)} = gX$.   
It is immediate that $\wh{X}$ is $g$-divisible, and if $X$ is $g$-divisible then $\wh{X} =X$. However,  even if (say) $X$ is a   cg $\kk$-algebra,  there is no reason for $\wh{X}$ to be cg or even finitely graded.  

For any elliptic algebra $T$, the fact that $gT$ is completely prime quickly implies that $T$ is $g$-divisible.
\end{definition}

We next prove an important technical result, Lemma~\ref{lem:1.7prime}, which  should be compared with  \cite[Proposition~8.7(2)]{RSSlong}. 
The latter proposition  gives a similar result in the case when $R\subseteq T$.
  We begin with a preliminary result.

\begin{lemma}\label{lem:paleale}
Let $E$ be an elliptic curve, with an infinite order  automorphism  $\sigma$.  
Let $y \in \kk(E)$ and let $V$ be  a $\kk$-subspace of $ \kk(E) $ with $2 \leq \dim V < \infty$.
Then $\dim(V + y V^\sigma) > \dim V$.
\end{lemma}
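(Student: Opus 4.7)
The plan is to argue by contradiction, so suppose $\dim(V + yV^\sigma) = \dim V$. Since $V$ lies in the sum, this will force $yV^\sigma \subseteq V$, and then comparing dimensions (using $y \neq 0$, which we may assume since otherwise the stated inequality already fails) yields $yV^\sigma = V$. My goal is to derive a contradiction with the hypothesis $\dim V \geq 2$ from this equality.

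The natural tool is the minimal containing divisor of $V$. For each $P \in E$ set $n_P = \min\{v_P(f) : f \in V \setminus \{0\}\}$ (finite since $V$ is finite-dimensional) and put $D := -\sum_P n_P [P]$, so that $V \subseteq L(D)$ and $D$ is minimal with this property. By Riemann--Roch on the elliptic curve $E$, the condition $\dim L(D) \geq \dim V \geq 2$ forces $\deg D \geq 2$. Using the elementary identities $v_P(f^\sigma) = v_{\sigma(P)}(f)$ and $v_P(yg) = v_P(y) + v_P(g)$, I will check directly that $D(V^\sigma) = \sigma^* D$ and $D(yV^\sigma) = \sigma^* D - (y)$. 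The identity $yV^\sigma = V$ then translates into $D = \sigma^* D - (y)$, i.e., $D \sim \sigma^* D$ in $\Pic(E)$.

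To conclude, write $\sigma$ as translation by a point $p \in E$, which has infinite order by hypothesis, so that $\sigma^*[R] = [R-p]$. Under the Abel--Jacobi map, a direct computation shows that the class of $\sigma^* D - D$ corresponds to $-(\deg D) \cdot p$ in the group law on $E$. Linear equivalence forces this image to vanish, so $(\deg D) \, p = 0$; since $|\sigma| = \infty$, this gives $\deg D = 0$, contradicting $\deg D \geq 2$.

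The main technical obstacle is the bookkeeping in the middle paragraph: one must be precise about pullback versus pushforward of divisors under $\sigma$, and verify the sign in $D(yV^\sigma) = \sigma^* D - (y)$. The Abel--Jacobi step is then essentially immediate, and is the only place where the infinite-order hypothesis on $\sigma$ is used, which is reassuring evidence that the divisor-theoretic formulation is the natural one.
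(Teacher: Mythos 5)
Your proposal is correct and follows essentially the same approach as the paper's proof: both reduce the claim to showing $yV^\sigma\neq V$, attach the minimal divisor (equivalently the invertible subsheaf $V\cdot\sO_E=\sO(D_V)$) to $V$, note that $\deg D_V\geq\dim V\geq 2$ by Riemann--Roch, and conclude by observing that an infinite-order translation cannot fix the class of a line bundle of positive degree. The paper states this last point directly as $\sO(D_V)^\sigma\not\cong\sO(D_V)$, while you unwind it via the Abel--Jacobi map to get $(\deg D)\,p=0$; these are the same computation, and your divisor bookkeeping ($D(yV^\sigma)=\sigma^*D-(y)$) is correct.
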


\begin{proof}
Write $V \cdot \sO_E$ for the (invertible) subsheaf of the constant sheaf $\kk(E)$ generated by $V$.  
Concretely, $V\cdot \sO_E = \sO(D_V)$, where $D_V = \min \{ D \in \Div(E) : D + (f) \geq 0 \text{ for all } f \in V\}$.
We have $(y V^\sigma) \cdot \sO_E  \cong \sO( D_V)^{\sigma}$.  
As $\deg D_V \geq \dim V \geq 2$ and $\sigma$ is an infinite-order translation, 
$\sO(D_V)^{\sigma} \not \cong \sO(D_V)$, and thus $y V^{\sigma} \neq V$.
The result follows.
\end{proof}

 Two domains $A$ and $A'$ with the same division ring of fractions $\Fr(A)=\Fr(A')$ are \emph{equivalent orders} if there exist 
 nonzero elements $a,b,c,d\in \Fr(A)$ such that  $aAb\subseteq A'$ and $cA'd\subseteq A$.

\begin{lemma}\label{lem:1.7prime}
Suppose that $T$ is an elliptic algebra and let $R$ be a   noetherian cg algebra with 
$R \subset (T_{(g)})^{(n)}$ for some $n \geq 1$. Assume that  $g^n\in R$ but that 
$R \not\subseteq \kk + gT_{(g)}$.   
 Then the following hold.
 \begin{enumerate}
 \item  Both $\wh{R}$ and 
$R':=(\wh{R})^{(n)}$ are noetherian as  left and right $R$-modules, with $g^{mn}R'\subseteq R \subseteq R'$ for some~$m$. 
\item 
In particular, $\wh{R}$ and $R'$ are noetherian   cg $\kk$-algebras while $R'$ is a equivalent order to $R$.  
\end{enumerate}
\end{lemma}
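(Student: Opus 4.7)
The plan is to filter $\wh{R}$ by $R$-sub-bimodules finitely generated over $R$, apply Lemma~\ref{lem:1.6} to an induced chain in $G := T_{(g)}/gT_{(g)}$, and then lift the resulting stabilisation back to $T_{(g)}$ using Krull intersection together with the finite-dimensionality of graded pieces. After regrading $(T_{(g)})^{(n)}$ so that $g^n$ has degree one, we may assume $n=1$, $g \in R \subset T_{(g)}$; the original statements will follow by unravelling. Define
\[F_i := \{x \in T_{(g)} : xg^i \in R\}\qquad (i \geq 0).\]
Since $g$ is central and lies in $R$, the $F_i$ form an ascending chain of graded $R$-sub-bimodules of $T_{(g)}$ with $F_0 = R$ and $\bigcup_i F_i = \wh{R}$. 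Right multiplication by $g^i$ gives a degree-shifting $R$-bimodule isomorphism $F_i \cong I_i := R \cap g^iT_{(g)}$, so by noetherianness of $R$ each $F_i$ is fg over $R$ on both sides. A direct computation yields the key identity $F_i \cap gT_{(g)} = gF_{i+1}$, whence $\pi(F_i) \cong F_i/gF_{i+1}$ under the projection $\pi : T_{(g)}\to G$, and $\{\pi(F_i)\}$ is an ascending chain of graded subspaces of $G$.

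Next, choose a subring $A$ of $G$ containing $\pi(R)$ and a suitable Veronese of $B := T/gT$; one arranges that $A$ is a cg noetherian domain with $\GKdim A = 2$ and $Q_{gr}(A) \hookrightarrow G$, so the hypotheses of Lemma~\ref{lem:1.6} are met. Applying the lemma to a chain of graded $A$-sub-bimodules of $G$ built from $\pi(F_i)$ (finitely generated over $A$ on both sides, using noetherianness of $A$ and bimodule generators inherited from those of $F_i$) yields stabilisation, and with a careful choice of $A$ ensuring that $\pi(F_i)$ itself carries an $A$-bimodule structure this sharpens to $\pi(F_i) = \pi(F_{i_0})$ for all $i \geq i_0$. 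Combining this with $F_i \cap gT_{(g)} = gF_{i+1}$ and $gF_{i_0} \subseteq F_{i_0}$ gives $F_i \subseteq F_{i_0} + g^k F_{i+k}$ for every $k \geq 0$ by iteration. Each $F_{i+k}$ is fg over the cg noetherian $R$, so its graded pieces are finite-dimensional over $\kk$; and since $T_{(g)}$ is noetherian with unique graded maximal ideal $gT_{(g)}$, graded Nakayama applied to $\bigcap_k g^k T_{(g)}$ gives $\bigcap_k g^k T_{(g)} = 0$. Fixing a degree $d$, the descending chain $(F_i)_d \cap (g^k T_{(g)})_d$ of finite-dimensional $\kk$-subspaces of $(F_i)_d$ therefore has zero intersection and eventually vanishes, forcing $(F_i)_d = (F_{i_0})_d$. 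Hence $F_i = F_{i_0}$ for $i \geq i_0$, and $\wh{R} = F_{i_0}$ is fg over $R$ on both sides, with $g^{i_0}\wh{R} \subseteq R$.

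The remaining conclusions follow easily. Both $\wh{R}$ and $R'$ are noetherian as fg modules over the noetherian $R$. Finite-dimensionality of the graded pieces of $\wh{R}$ together with Remark~\ref{rem:2.33}(1) forces $\wh{R}$ to be $\NN$-graded, while $\wh{R}_0$ is then a finite-dimensional subdomain of $T^\circ$ over the algebraically closed $\kk$, hence equals $\kk$; thus $\wh{R}$ (and $R'$) is cg. Unravelling the regrading converts $g^{i_0}\wh{R} \subseteq R$ into $g^{mn} R' \subseteq R$ for some $m$, and the sandwich $R \subseteq R' \subseteq g^{-mn}R$ in $Q_{gr}(T)$ shows $R'$ is an equivalent order to $R$. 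I expect the main obstacle to be the sharpening step in the middle paragraph, passing from the stabilisation of the $A$-bimodule chain produced by Lemma~\ref{lem:1.6} to genuine stabilisation of $\pi(F_i)$ itself, which rests on a careful choice of the auxiliary ring $A$ so that the $\pi(F_i)$ are $A$-bimodules whose one-sided generators remain controlled.
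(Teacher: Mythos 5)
Your overall structure mirrors the paper's: after reduction, your chain $\pi(F_i)$ coincides (in the $n=1$ case) with the paper's chain $Q(i) = \pi(g^{-in}C\cap T_{(g)})$ with $C=R[g]$, and your Krull-intersection argument for lifting stabilisation of $\pi(F_i)$ to stabilisation of $F_i$ is a clean variant of the paper's Sublemma~\ref{sublem:prime11} (which uses a minimal-degree counterexample instead). But there is a genuine gap at the step you yourself flag as ``the main obstacle,'' and it is not merely a technicality that a ``careful choice of $A$'' can smooth over. Enlarging $A$ beyond $\pi(R)$ (say to contain a Veronese of $B=T/gT$) destroys the $A$-bimodule structure of $\pi(F_i)$, since $F_i$ is only an $R$-bimodule, not a $T$-bimodule. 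The only natural choice is $A=\pi(R)$, but then the hypothesis $\GKdim A=2$ of Lemma~\ref{lem:1.6} is not automatic and must be proved. The paper does this via a dichotomy: if every $\dim_\kk Q(i)_j\leq 1$, then $\bigcup Q(i)$ is finitely generated over $\kk[x]$ (for a suitable $x\in\pi(R)$), giving stabilisation directly without Lemma~\ref{lem:1.6}; otherwise some $Q(i)_j$ has dimension $\geq 2$, and then Lemma~\ref{lem:paleale} yields $\GKdim Q(i)\geq 2$, hence $\GKdim\pi(R)\geq 2$, with the reverse inequality from \cite[Theorem~0.1]{AS}. Your proof skips this entirely, so the applicability of Lemma~\ref{lem:1.6} is unestablished in precisely the case that needs work. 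This is the one substantive missing step.

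A smaller point: the regrading reduction to $n=1$ recovers only $R'=(\wh R)^{(n)}$, not $\wh R$ itself, since degrees not divisible by $n$ are invisible after regrading. This is easily patched (e.g., for $0\leq j<n$, multiplication by $g^{n-j}$ embeds $\bigoplus_{d\equiv j}\wh R_d$ into $R'$, so $\wh R$ is finitely generated over $R'$), but your ``unravelling'' sentence asserts the translation without showing it. The paper sidesteps this by working in the original grading with $C=R[g]$, for which $\wh C=\wh R$ holds on the nose.
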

 
 \begin{proof}  Note that Part (2) is an immediate consequence of Part (1) combined with the observation from Remark~\ref{rem:2.33}(2)
  and so only Part (1) needs proof.  
 
 In this result, we regard $(T_{(g)})^{(n)}$ as a subalgebra of $T_{(g)}$ and do not change the grading. 
We start by simplifying the problem.  Since $R \not\subseteq \kk + gT_{(g)}$, there exists some $x\in R_{an}\smallsetminus gT$. 
 For the moment consider $U :=R^{(an)}$, which is also noetherian  by \cite[Lemma~4.10(2)]{AS}. 
 Suppose that $\wh{U}$ is  noetherian as both a left and a right $U$-module. Note that if $r\in R_{ns}$ then 
 $rg^{n\ell}\in U$ for the appropriate $\ell$ and so $R\subseteq \wh{U}$ and hence $\wh{R}= \wh{U}$. Thus $\wh{R}$
  is a noetherian module over both $U$ and $R$.
Therefore, we may replace $R$ by $U$ and $n$ by $an$  and assume that there exists   $x\in R_{n}\smallsetminus gT$.

 Next, set $C :=R[g]$. Since $g^n\in R$, this is certainly a noetherian  $R$-module with $\wh{C}=\wh{R}$.
  In other words, $R'=  \wh{C}^{(n)}$.  Moreover, since $R\subset (T_{(g)})^{(n)}$,  clearly $C^{(n)} = R[g^n]=R$.  
Note, also,   that    
 \begin{equation}\label{prime1}
  \wh{C}^{(n)} = \{x\in (T_{(g)})^{(n)} : g^{n\ell}x\in R \ \text{for some } \ell\geq 1\}.
 \end{equation}
 
 For $x \in T_{(g)}$, let $\overline{x} = x+g T_{(g)} \in T_{(g)}/g T_{(g)}$.
 Set $B = \overline{R} =(R+gT_{(g)})/gT_{(g)}$.
For $i \geq 0$, define 
$$Q(i) := \frac{(g^{-in}C \cap T_{(g)})+ gT_{(g)}}{gT_{(g)}}\ \subseteq \ \frac{T_{(g)}}{T_{(g)}g} .$$
 As $g^n\in R$, clearly  $\overline{C} =   Q(0)$ with    $ Q(j) \subseteq Q(j+1)\subseteq \bigcup_{i\geq 0}Q(i)=\overline{\wh{C}}$ for  $j\geq 0$.

 The next sublemma provides the strategy for the proof of the lemma.
 
 \begin{sublemma}\label{sublem:prime11} In the above notation, suppose that $Q(r)=Q(r+1)$ for $r\geq r_0$. Then
 $\wh{R}=\wh{C}$ is an equivalent order to $C$ and is noetherian as a  one-sided  $C$-module and hence as a one-sided $R$-module. 
   
Moreover,  $R' =\wh{C}^{(n)} $  is an equivalent order to $R$, with   $g^{\ell n}R'\subseteq R\subseteq R'$   
 for some $\ell \geq 1$.
 As such, $R'$ is a noetherian cg $\kk$-algebra.
  \end{sublemma}

\medskip\noindent
{\it Proof of the Sublemma.} The proof is essentially the same as   the second paragraph of the proof of \cite[Proposition~8.7(1)]{RSSlong}, although for the 
reader's convenience we include a proof here.

 To begin with, we  claim that $\wh{C} \cap g^mT_{(g)} = C \cap g^mT_{(g)}$ for all $m\geq \rho= r_0n$.  
  If not, 
there exists    $y := g^mx \in (\wh{C} \cap g^mT_{(g)} ) \smallsetminus C$ for some such $m$.  
Choose  $(y,x)$ with this property for which $x$ has minimal degree. This  ensures that $y\not\in g^{m+1}T_{(g)}$, since 
  otherwise one could write $y=g^{m+1}x'$ with $\deg(x')=\deg(x)-1$. 
 Note that, as $g^mx\in \wh{C}$, certainly   $g^{m+\ell}x\in C$ for some $\ell$, and so $x\in \wh{C}$.
  Therefore,   we can write  $\overline{x} =[x+gT_{(g)}]$ as $\overline{x}=\overline{w}$ for some $w\in g^{-\rho}C\cap T_{(g)}$. 
 Thus $g^mw=g^{m-\rho}(g^{\rho}w)\in C$.  Moreover,  $x-w=gt$ for some $t\in T_{(g)}$  and so 
 $g^{m+1}t=g^mx-g^mw\in g^mT_{(g)} \cap \wh{C}$. Here, $\deg t =\deg x-1$ and so, by the inductive hypothesis for $(y,x)$, we obtain $g^{m+1}t\in C$. In other words, $g^mx = g^mw+g^{m+1}t\in C$; a contradiction. Thus the claim is proven.
 
 By the claim, $I :=\wh{C} \cap g^{\rho}T_{(g)} $ is a non-zero ideal of both $C$ and $\wh{C}$, and so certainly $C$ and $\wh{C}$ are equivalent orders.  
 As $g\wh{C}=\wh{C}\cap gT_{(g)}$, an easy induction shows that 
 $g^{\rho}\wh{C}= \wh{C} \cap g^{\rho}T_{(g)}  = C\cap g^{\rho}T_{(g)}$.  
  Thus,  $g^{\rho}\wh{C}\subseteq I\subset C$ and so $\wh{R}=\wh{C}$ is a noetherian $C$-module and hence a noetherian $R$-module on both sides, while it is cg by Remark~\ref{rem:2.33}. 
  Moreover,  $g^{\rho}R'= g^{\rho}\wh{C}^{(n)} \subseteq C^{(n)}=R\subseteq R'$ and so $R'$ and $R$ are indeed equivalent orders.  \qed

\smallskip

 Returning to the proof of the lemma note that, 
 by the  second paragraph of the proof,   there  exists   $0\not=x \in B_n$. Suppose first that $\dim_\kk Q(i)_j\leq 1$ for all $i,j$. As $B$ and the $Q(i)$ are 
 contained in the domain $T_{(g)}/gT_{(g)}\cong \kk(E)[z,z^{-1};\tau]$, this implies that $B$ contains the domain $\kk[x]$. Moreover,  
 $W :=\bigcup Q(i)$  
  satisfies $\dim_\kk W_n\leq 1$ for all $n\geq 0$. As such,  $W$ is finitely generated  
 as a $k[z]$-module and hence as a $B$-module.  In particular, $Q(r)=Q(r+1)$ for all  $r\gg 0$ and so the lemma follows from
 Sublemma~\ref{sublem:prime11}. (In fact a little more work   shows that this case cannot   happen.)

We may therefore assume that          there exists  $i,j\geq 1$ such that $V := Q(i)_j$ has   $\dim V \geq 2$.
We next show that this implies that  $\GKdim B = \GKdim \overline{C}=2$.  Since $C^{(n)}=R$, clearly 
$(\overline{C})^{(n)} = \overline{R}^{(n)}=B$,
and so it suffices to prove that $\GKdim \overline{C}=2$.   Note that, as $C$ is a noetherian $R$-module, each $Q(i)$ is finitely generated as a
$C$-module and hence as both an $R$-module and a $B$-module on either side. 
Now  $Q(i)_{k+n} \supseteq xQ(i)_k  + Q(i)_k x$ for all $k \in \ZZ$.  
Since each $Q(i)\subseteq  \kk(E)[z,z^{-1};\tau^n]$, we may apply  Lemma~\ref{lem:paleale} with $\sigma=\tau^n$, to show that  $\dim Q(i)_{j+na} \geq \dim V + a$ for all $a\geq 1$. Hence $\GKdim Q(i) \geq 2$.  
As $Q(i)$ is a finitely generated left and right $\overline{C}$-module it follows that $\GKdim B = \GKdim \overline{C}\geq 2$.  

 Conversely, we know that $\overline{C}$ is a noetherian, cg subalgebra of   $T_{(g)}/gT_{(g)}\cong \kk(E)[z,z^{-1};\tau]$, and hence of 
  $ \kk(E)[z;\tau]$.   It is therefore   a finitely generated  algebra by the graded Nakayama's Lemma. Thus, by \cite[Theorem~0.1]{AS},  $\GKdim \overline{C}\leq 2 $. Combined with the last paragraph, this implies that  $\GKdim B = \GKdim \overline{C}=~2$. 
 
We can now apply Lemma~\ref{lem:1.6} to   $A =B$ with $B\subseteq Q(i)\subseteq Q(i+1) \subseteq T_{(g)}/gT_{(g)}\cong k(E)[z,z^{-1}; \tau]$. 
Thus, by that result, $Q(r)=Q(r+1)$ for all  $r\gg 0$ and so we can apply the sublemma. 
Since $R\subseteq R'$, the  lemma follows.   \end{proof}


\section{Overrings of locally simple elliptic algebras}\label{MINIMAL}

In this section we begin to work towards Theorem~\ref{ithm:3.6}.
We fix an elliptic algebra $T\subset T_{(g)}$ with factor $B=T/gT$ and localisation  $T^\circ=T[g^{-1}]_0$ and  then  isolate   conditions that preclude the existence of proper noetherian cg overrings of $T$.   These involve  slightly awkward 
conditions (see Hypothesis~\ref{hyp:2.0}) that will be refined in later sections.

We first comment on the title to this section:  we   define     $T$, or any other elliptic algebra, 
 to be \emph{locally simple}\label{locally simple defn} if the localisation $T^\circ$ is a simple ring.  The intuition behind this term is that 
 if one regards $\rqgr T$ as the category of coherent sheaves on the noncommutative (and non-existent) projective variety $\Proj T$ then 
$\rmod T^\circ$ corresponds to  the analogous category of  sheaves on the   noncommutative affine space $\Proj T\smallsetminus E$. This  is, in turn,  the smallest nonempty open subset of  $\Proj T $.   

\begin{hypothesis}\label{hyp:2.0}  Assume that:
\begin{enumerate}
 \item The ring  $T^\circ$ is a simple domain with division ring of fractions $\Fr(T^\circ)$.
 \item  There is  a fixed $\ZZ$-graded  overring $T\subsetneqq R\subset T_{(g)}$
 and a 
fixed  simple right
$T^\circ$-module   $ \Ss \subseteq  R^\circ/T^\circ$. 
\item  There exists an extension $0\to \Ss\to \Xx(\Ss)\to \Yy\to 0$   of  right $T^\circ$-modules of finite length such that:
  \begin{enumerate}
  \item $\Xx :=\Xx(\Ss)$ has finite projective dimension; 
  \item $\Ss = \Soc(\Xx).$
  \end{enumerate}
    \item $\Ss$ cannot be written as the localisation $\Ss=L^{\circ}$ of a $T$-line module $L$.
\end{enumerate}
\medskip

 We make a few comments about these hypotheses. First, we  do not assume here  that $\Xx(\Ss)$ is unique. However, 
if $\pd(\Ss)<\infty$,  in which case (3)  holds automatically,    we   always  set $\Xx(\Ss)=\Ss$.  
Finally, note  that  the  hypotheses imply that $\Ss$ is essential in $\Xx$.   
 \end{hypothesis}
 
We first note some elementary properties of modules over rings of injective dimension one.

\begin{lemma}\label{lem:2.1new}
Let $A$ be a noetherian (or Goldie) prime ring of injective dimension one.
Then 
\begin{enumerate}
\item Every   torsionfree, finitely generated right  $A$-module $P$ of   finite projective dimension is projective.
\item Every finitely generated    torsionfree   and every  finitely generated     torsion   $A$-module  is CM. 
 \end{enumerate} 
\end{lemma}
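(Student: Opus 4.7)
The plan is to exploit the hypothesis $\id(A)=1$, which forces $\Ext^i_A(-,A)=0$ for all $i\geq 2$, together with the standard fact that any finitely generated torsionfree module over a prime Goldie ring embeds in a free module.  The key observation, used throughout, is that if $M$ is torsionfree and finitely generated and we write $0\to M\to A^m\to C\to 0$, then the long exact $\Ext$ sequence gives $\Ext^1_A(M,A)\cong\Ext^2_A(C,A)=0$.

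Part (2) falls out immediately.  If $M$ is finitely generated torsion, then any map $M\to A$ has image killed by some regular element of $A$ and so is zero, giving $\Hom_A(M,A)=0$; combined with $\Ext^{\geq 2}_A(M,A)=0$, only $\Ext^1$ can be nonzero, and $M$ is CM with $j(M)=1$.  If $M$ is finitely generated torsionfree, then the key observation gives $\Ext^1_A(M,A)=0$, while $\Hom_A(M,A)\neq 0$ (via coordinate projections of the embedding $M\hookrightarrow A^m$), so only $\Hom$ survives and $M$ is CM with $j(M)=0$.

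For Part (1), I would induct on $d=\pd(P)$, with the base case $d=0$ trivial.  For $d\geq 1$, choose a finitely generated free cover $F\twoheadrightarrow P$ with kernel $K$; then $K$ is torsionfree (as a submodule of $F$) and has $\pd(K)=d-1$, so by the inductive hypothesis $K$ is projective.  It therefore suffices to handle the case $d=1$:  we have $0\to K\to F\to P\to 0$ with $F$ and $K$ finitely generated projective, and we must show $P$ is projective.

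This last step is the crux.  From $\Ext^1_A(P,A)=0$ (established in the first paragraph), dualizing yields the exact sequence $0\to P^*\to F^*\to K^*\to 0$ of left $A$-modules; since $K^*$ is projective this splits, giving $F^*\cong P^*\oplus K^*$.  Dualizing once more and using reflexivity of the projectives $F$ and $K$ produces $F\cong P^{**}\oplus K$.  A five-lemma comparison between $0\to K\to F\to P\to 0$ and its bidual $0\to K^{**}\to F^{**}\to P^{**}\to 0$, whose outer natural maps are isomorphisms, then shows that $\eta_P:P\to P^{**}$ is surjective; it is also injective because $P\hookrightarrow A^m$.  Hence $P\cong P^{**}$, which is a direct summand of the projective $F$, and so $P$ itself is projective.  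The only subtlety to verify is that the twice-dualized row is exact, which is automatic from the splitting of the first dualized row.
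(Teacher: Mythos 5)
Your proof is correct. Part (2) is essentially the paper's argument: embed the torsionfree module into a free module with torsion cokernel, then use $\Ext^{\geq 2}(-,A)=0$ and $\Hom(\text{torsion},A)=0$.

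For Part (1) you take a genuinely different route. The paper's proof is a one-step observation: embed $P$ into a free module $A^t$ so that $M=A^t/P$ is torsion, note $n:=\pd M<\infty$, and then recall that at the top degree $\Ext^n_A(M,-)$ is right-exact, so there must be a \emph{free} module $A^m$ with $\Ext^n_A(M,A^m)\neq 0$; since $\injdim A=1$ this forces $n\leq 1$, and hence the syzygy $P$ is projective. In effect the paper is using the standard fact that over a ring of finite injective dimension, any module of finite projective dimension has $\pd\leq\injdim$. Your proof instead inducts on $\pd P$ to reduce to a length-one presentation $0\to K\to F\to P\to 0$ with $K,F$ projective, uses $\Ext^1(P,A)=0$ to split the dualized sequence, and then compares the original sequence with its bidual via reflexivity of $K$ and $F$ and a short-five/snake argument (plus torsionfreeness of $P$ for injectivity of $\eta_P$) to conclude $P\cong P^{**}$ is a summand of a free module. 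Both arguments are valid; the paper's is shorter because the single inequality $\pd\leq\injdim$ for modules of finite projective dimension replaces the whole biduality and reflexivity apparatus. Your approach has the minor virtue that it explicitly exhibits $P$ as reflexive and a direct summand of a free module, which is sometimes useful information, but here it is more machinery than the lemma requires.
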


\begin{proof}
(1)  It does no harm to replace $P$ by some direct sum $P^r$ so that $P$ has Goldie rank equal to an integer   $t$ times  the Goldie rank of $A$.   If $F=\Fr(A)$   we can then  identify $P\subseteq PF= F^t$.
 Clearing denominators on the left gives an embedding of $P$ into the finitely generated, free right $A$-module $ A^t$. Set $M=A^t/P$, which is therefore a torsion module.  Then $n :=\pd(M)<\infty$, say with $\Ext^n_{A}(M,N)\not=0 $ for some finitely generated module $N$. If $A^{m}\twoheadrightarrow N$ then, from the usual long exact sequence in cohomology, 
  $\Ext^n_A(M,A^m)\not=0 $, as well. Thus  $n\leq 1$ by hypothesis and so  $P$  is projective.
  
  (2) If $M$ is a finitely generated   torsion right $A$-module then $\Ext^n_{A}(M,A)\not=0$ if and 
    only if $n=1$.
   If $P$ is a   torsionfree right $A$-module, then it is again harmless to replace $P$ by some $P^r$. Then,  as in the proof of (1)  we can embed $P$ into a free right $A$-module $A^t$ so  that $M=A^t/P$ is torsion.  It follows that $\Ext^n_{A}(P,A)\not=0$ if and 
    only if $n=0$.
  \end{proof}
  
We next apply these properties to localisations of elliptic algebras.

\begin{lemma}\label{lem:2.1} 
Let $T$ be an elliptic algebra.
Then $T^\circ$ is Auslander-Gorenstein and Cohen-Macaulay with $\injdim (T^\circ) \leq 2$.
If in addition $T^\circ$ is simple, then 
\begin{enumerate}
\item  $T^\circ$ has  injective dimension  $\id(T^\circ)=1$.  
\item Every   torsionfree, finitely generated right  $T^\circ$-module $\Pp$ of   finite projective dimension is projective.  
\item  Every finitely generated    torsionfree   and every  finitely generated     torsion   $T^\circ$-module  is CM. 
 
\item In particular, if $T^\circ$ satisfies the conditions of  Hypothesis~\ref{hyp:2.0}, then 
$\pd(\Xx)=1$.
 \end{enumerate}\end{lemma}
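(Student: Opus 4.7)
The plan is to transfer the Auslander-Gorenstein and CM properties from $T$ to $T^\circ$ for the preliminary statement, then exploit simplicity of $T^\circ$ to sharpen the injective-dimension bound in (1), and finally deduce (2)--(4) as direct consequences. For the transfer, I will propagate along $T \to T[g^{-1}] \to T^\circ$. Since $T$ is Auslander-Gorenstein and CM with $\id T = \GKdim T = 3$, any f.g.\ $M$ with $\Ext^3_T(M,T) \neq 0$ has $j(M) = 3$ and hence $\GKdim M = 0$; such $M$ is finite-dimensional, therefore $g$-torsion, so these contributions all vanish on inverting $g$ and $\id T[g^{-1}] \leq 2$. Central Ore localization preserves Auslander-Gorenstein (Ext commutes with central Ore localization), and since $g \in T_1$ is a central homogeneous unit in $T[g^{-1}]$, the graded ring $T[g^{-1}] = T^\circ[g,g^{-1}]$ is strongly $\ZZ$-graded; Dade's equivalence $\rmod T^\circ \simeq \rgr T[g^{-1}]$ then transports Auslander-Gorenstein, CM, and the bound $\id \leq 2$ down to $T^\circ$, where $\GKdim T^\circ = 2$.

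For (1), I use simplicity of $T^\circ$ to exclude $j(M) = 2$. Any simple $T^\circ$-module $N$ is faithful, so $T^\circ \hookrightarrow \End_\kk(N)$; if $\dim_\kk N < \infty$ this would force $\dim_\kk T^\circ < \infty$, contradicting $\GKdim T^\circ = 2$. Consequently every simple has $\GKdim \geq 1$, and no nonzero f.g.\ $T^\circ$-module has $\GKdim = 0$. The CM equation $j(M) + \GKdim(M) = 2$ then forces $j(M) \leq 1$ for all nonzero f.g.\ $M$. If $\Ext^2_{T^\circ}(M,T^\circ) \neq 0$ for some $M$, then $N := \Ext^2(M,T^\circ)$ is nonzero, has $j(N) \geq 2$ by Auslander-Gorenstein and hence $j(N) = 2$ by the preliminary bound, forcing $\GKdim N = 0$ --- a contradiction. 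So $\id T^\circ \leq 1$. To exclude $\id T^\circ = 0$, I show $T^\circ$ is not a division ring: for any $y \in T_1 \setminus \kk g$, the element $yg^{-1} \in T^\circ$ cannot be invertible, because $(yg^{-1})^{-1} \in T^\circ$ would supply $z \in T_m$ with $yz = g^{m+1}$, and iteratively peeling off powers of $g$ in the integral domain $B = T/gT$ would force $y \in \kk g$. A prime right noetherian right self-injective ring is QF by Faith--Walker, hence simple Artinian, hence (being a domain) a division ring; so $\id T^\circ \geq 1$.

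Parts (2) and (3) are then direct applications of Lemma~\ref{lem:2.1new}(1) and (2) to $A = T^\circ$ using (1). For (4), Hypothesis~\ref{hyp:2.0}(3) presents $\Xx$ as a finite-length extension of simple $T^\circ$-modules; each such simple has $j = 1$ by the analysis above, hence is torsion via (3). Thus $\Xx$ is torsion, and therefore non-projective over the domain $T^\circ$: $\pd(\Xx) \geq 1$. A finite f.g.\ projective resolution $0 \to P_n \to \cdots \to P_0 \to \Xx \to 0$, which exists by Hypothesis~\ref{hyp:2.0}(3)(a), has first syzygy $K \subseteq P_0$ that is torsionfree of finite projective dimension, hence projective by (2); so $\pd(\Xx) \leq 1$, giving $\pd(\Xx) = 1$. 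The main delicacies I anticipate are the preliminary transfer step --- in particular justifying $\id T^\circ \leq 2$ cleanly via the combination of Ore localization and Dade's theorem --- and ruling out the collapse of $T^\circ$ to a division ring.
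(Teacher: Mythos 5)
Your overall structure is sound and parts (1)--(4) are correct, but the preliminary transfer step takes a genuinely different route from the paper and contains a slip that needs fixing.

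The paper does not pass through $T[g^{-1}]$ and Dade's equivalence at all. Instead it uses the isomorphism $T^\circ \cong T/(g-1)T$ from \cite{RSS}, equips $T/(g-1)T$ with the filtration induced from the grading on $T$, observes that $\gr T^\circ \cong T/gT = B$, cites Levasseur's theorem that $B$ is Auslander--Gorenstein and CM of injective dimension 2, and then invokes Bj\"ork's filtered-to-unfiltered transfer theorem. Your route via central Ore localization plus Dade's equivalence is a legitimate alternative and is arguably more transparent about where the drop from $\id T = 3$ to $\id T^\circ \leq 2$ comes from, but the paper's route is cleaner in one respect: Bj\"ork's theorem hands you Auslander--Gorenstein and CM for $T^\circ$ in a single package, whereas your route requires you to separately argue that Ore localization preserves Auslander--Gorenstein, that the CM equation survives the $\GKdim$ shift across Dade's equivalence, and that the injective dimension drops. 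These are all true, but each needs its own citation or check.

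The specific slip: you write that any f.g.\ $M$ with $\Ext^3_T(M,T)\neq 0$ has $j(M)=3$, hence $\GKdim M = 0$. This is false (take $M = T \oplus \kk$; then $\Ext^3_T(M,T) \neq 0$ but $j(M)=0$). What you actually want to apply the Auslander--Gorenstein and CM conditions to is $N := \Ext^3_T(M,T)$ itself: the AG condition gives $j(N) \geq 3$, the CM equation then gives $\GKdim N = 0$, so $N$ is finite-dimensional and graded, hence $g$-torsion, hence $N[g^{-1}] = 0$. The conclusion $\id T[g^{-1}]\leq 2$ stands, but the argument must be aimed at $\Ext^3_T(M,T)$ rather than at $M$. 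With that correction the rest goes through.

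Two minor remarks. In part (1), your explicit computation showing $yg^{-1}$ is not a unit is correct but more elaborate than necessary; the quick route (which the paper implicitly uses) is that $\GKdim T^\circ = 2 > 0$ rules out $T^\circ$ being Artinian, and a prime noetherian self-injective ring would be QF hence Artinian. In part (4), the shortest route to ``$\Xx$ is torsion'' is simply that $\Xx$ has finite length and $T^\circ$ is not Artinian, so $\Xx$ cannot contain a copy of $T^\circ$; but your grade-theoretic route works too.
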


 \begin{proof}   By \cite[Lemmas~2.1 and~2.2]{RSS},   $T^\circ\cong T/(g-1)T$, which  
 has a natural filtration $\Lambda$ induced from the graded structure of $T$. As   such,
 the associated graded ring $\gr_\Lambda T\cong T/gT=B$.  By \cite[Theorem~6.6]{Lev1992}, $B$ is 
 Auslander-Gorenstein and  CM of injective dimension 2. Thus, by 
 \cite[Theorem~4.1 and its proof]{Bj},  $T^\circ$ is also 
 Auslander-Gorenstein and  CM with $\id(T^\circ)\leq 2$. 
 
 Now for (1), if  $T^\circ $  is simple, it has no finite dimensional modules, so the CM condition easily implies that
 $ \Ext^2_{T^\circ}(M,T^\circ)=0 $ for all finitely generated right $T^\circ$-modules $M$, and hence that $\id(T^\circ)<2$, as required.
  Parts (2) and (3) follow from (1)  combined with Lemma~\ref{lem:2.1new}, while (4) is immediate from (2).
  \end{proof}
   
   As a partial converse to Lemma~\ref{lem:2.1} we have the following well-known result, for which we could not find an appropriate reference.  
   
   \begin{lemma}\label{lem:hereditary-simple}
   Let $A$ be a finitely generated $\kk$-algebra that is a prime noetherian, Auslander-Gorenstein and CM ring, with $\GKdim(A)=2$. 
  If $A$  is also hereditary, then $A$ is simple.
   \end{lemma}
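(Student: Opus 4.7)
The plan is to argue by contradiction. Suppose $A$ is not simple; as $A$ is noetherian, we may choose a two-sided ideal $I \neq 0$ that is maximal among two-sided ideals, so that $A/I$ is a simple ring. The strategy is to show simultaneously that $\GKdim(A/I)=1$ and that $A/I$ is finite-dimensional over $\kk$, which is the desired contradiction.

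For the first statement, since $A$ is prime noetherian, the ideal $I$ contains a regular element (Goldie's theorem), so $A/I$ is Goldie torsion as a right $A$-module. Because $A_A$ is torsionfree while $A/I$ is torsion, $\Hom_A(A/I, A) = 0$; because $A$ is hereditary, $\Ext^i_A(A/I, A) = 0$ for all $i \geq 2$. The CM condition prevents $\Ext^*_A(A/I, A)$ from vanishing identically on the nonzero module $A/I$, so the only surviving Ext-group is in degree one, whence $j(A/I) = 1$. Applying CM once more, $\GKdim(A/I) = \GKdim(A) - j(A/I) = 1$.

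For the second statement, $A/I$ is a finitely generated $\kk$-algebra of Gelfand-Kirillov dimension one, so by the Small-Stafford-Warfield theorem $A/I$ is a PI algebra. Since $A/I$ is simple and affine PI, the Artin-Tate lemma shows that the centre $Z(A/I)$ is itself a finitely generated $\kk$-algebra over which $A/I$ is module-finite. Simplicity of $A/I$ forces $Z(A/I)$ to be a field, and a finitely generated affine field over the algebraically closed $\kk$ must equal $\kk$ by the Nullstellensatz. Therefore $A/I$ is finite-dimensional over $\kk$, contradicting $\GKdim(A/I) = 1$.

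The main obstacle here is mostly bookkeeping: pinning down that ``hereditary plus $A/I$ torsion'' kills all $\Ext^i_A(A/I,A)$ except for $i=1$, and observing that the Auslander-Gorenstein/CM hypothesis is what rules out $j(A/I)=\infty$. Once these grade computations are in place, the chain \emph{Small-Stafford-Warfield} $\Rightarrow$ \emph{Artin-Tate} $\Rightarrow$ \emph{Nullstellensatz} delivers the contradiction with no further work, and the finite generation of $A$ over $\kk$ is used essentially in each of those three steps.
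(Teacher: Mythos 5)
Your proof is correct, but it takes a genuinely different route from the paper's. The paper picks a nonzero prime $P$, invokes Krause--Lenagan to get $\GKdim(A/P)\leq 1$ (without using the hereditary or CM hypotheses at this stage), then passes to a finite-dimensional factor $\overline{A}$ via the Small--Warfield theorem and PI theory, and finally obtains a contradiction by observing that the CM condition forces $j_A(\overline{A})=2$, hence $\Ext^2_A(\overline{A},A)\neq 0$, violating heredity. You instead pick a \emph{maximal} ideal $I$ (so that $A/I$ is simple), deploy the hereditary and CM hypotheses first to pin down $j(A/I)=1$ and hence $\GKdim(A/I)=1$, and only then run the PI machine (Small--Stafford--Warfield, then Kaplansky/Posner plus Artin--Tate plus the Nullstellensatz) to show $A/I$ is finite-dimensional, contradicting $\GKdim(A/I)=1$. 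The paper's version is shorter because it quotes stock facts ([KL, Prop.~3.15], [KL, Cor.~10.9]) rather than rederiving them, and it channels the contradiction directly through the hereditary hypothesis via the grade of a finite-dimensional module; your version is slightly more self-contained in that it gets the GK-dimension bound from the CM condition itself rather than from an external result. One small point worth noting for tidiness: your step that $A/I$ simple affine PI forces finite-dimensionality over $\kk$ is really Kaplansky's (or Posner's) theorem giving $A/I$ module-finite over $Z(A/I)$, after which Artin--Tate and Zariski's lemma finish; your prose elides Kaplansky but the logic is sound.
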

   
   \begin{proof}  If $A$ is not simple, pick a prime ideal $P\not=0$ of $A$. Then $\GKdim(A/P)\leq 1$ by \cite[Proposition~3.5]{KL}. Thus, $A/P$ satisfies a polynomial identity by \cite{SW}. As such, $A/P$ and hence $A$ has 
   a finite dimensional factor ring $\overline{A}$ \cite[Corollary~10.9]{KL}.  But now the CM condition implies that  $j_A(\overline{A})=2$, whence
   $\Ext^2_A(\overline{A},A) \not= 0$. This contradicts the hereditary assumption.
   \end{proof}

   We next give a general lemma on torsionfree extensions, which we will use several times below.
   
 \begin{lemma}\label{general lemma}
 Let $A$ be a prime right noetherian ring and let $L$ be an essential submodule of the finitely generated right  $A$-module $M$.
 Further suppose that $\Ext^2_A( M/L, A) = 0$.
 Then every torsionfree extension $0 \to A \to X \to L \to 0$ lifts to an extension $0 \to A \to Y \to M \to 0$ and every such $Y$ is torsionfree.
 \end{lemma}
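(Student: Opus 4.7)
The plan is to invoke the standard long exact $\Ext$-sequence for existence, and to use essentiality of $L$ in $M$ for torsionfreeness of the lift.

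First, apply $\Hom_A(-, A)$ to the short exact sequence $0 \to L \to M \to M/L \to 0$ to obtain the piece
$$\Ext^1_A(M, A) \xrightarrow{\rho} \Ext^1_A(L, A) \longrightarrow \Ext^2_A(M/L, A) = 0$$
of the associated long exact sequence. The hypothesis forces $\rho$ to be surjective, so the Yoneda class $[X] \in \Ext^1_A(L, A)$ of the given extension lifts to some class $[Y] \in \Ext^1_A(M, A)$, yielding an extension $0 \to A \to Y \to M \to 0$. Because $\rho$ is computed by pullback along the inclusion $L \hookrightarrow M$, the preimage $X' \subseteq Y$ of $L$ sits in an extension $0 \to A \to X' \to L \to 0$ that represents $[X]$, so $X' \cong X$ as extensions of $L$ by $A$.

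For the torsionfreeness statement, let $Y$ be any such lift of $X$ and let $X' \subseteq Y$ denote the preimage of $L$, which is isomorphic to $X$ as above and hence is torsionfree. Since $A$ is prime right noetherian, Goldie's theorem gives $A$ a simple artinian classical right quotient ring; in particular $A_A$ is torsionfree with respect to its regular elements. Write $T(Y)$ for the torsion submodule of $Y$. Then $T(Y) \cap A = 0$, so the composite $T(Y) \hookrightarrow Y \twoheadrightarrow M$ is injective. If $T(Y) \neq 0$, essentiality of $L$ in $M$ forces the image of $T(Y)$ to meet $L$ in a nonzero submodule. Lifting back to $Y$, this produces a nonzero torsion element of $X'$, contradicting the torsionfreeness of $X' \cong X$. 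Hence $T(Y) = 0$ and $Y$ is torsionfree.

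The argument is essentially routine; no step looks like it will present real difficulty. The only mildly delicate point is the identification of the pullback of $Y$ along $L \hookrightarrow M$ with the original extension $X$ (rather than merely with some extension of $L$ by $A$), which however is automatic from the fact that $\rho$ is pullback of extension classes.
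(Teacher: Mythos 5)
Your proof is correct and follows essentially the same route as the paper: surjectivity of the connecting map from $\Ext^2_A(M/L,A)=0$ to lift the extension class, then an essentiality argument to rule out a nonzero torsion submodule of $Y$. The only cosmetic difference is that you intersect the torsion submodule of $Y$ with $A$ (using that $A$ is Goldie torsionfree) before invoking torsionfreeness of $X$, whereas the paper intersects it directly with $X$; both routes reach the same contradiction.
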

 
 \begin{proof}  Let $\alpha:  \Ext^1_A(M, A) \to \Ext^1_A(L, A)$ be the map induced from the inclusion $L \subseteq M$.
As $\alpha $ is surjective by  the assumption on $M/L$, if we regard $X$ as an element of $\Ext^1_A(L, A)$, then there exists $Y \in \alpha^{-1}(X)$.  
We may assume that   $A \subseteq X \subseteq Y$.
Let $Z$ be the torsion submodule of $Y$, and suppose $Z \neq 0$.
Because $X$ is torsionfree, $Z \cap X   = 0$.
Thus $(Z+A)\cap X = (Z \cap X)+A = A$.
Now $Y/A \supseteq (Z+A)/A \neq 0$ and   $X/A \cong L$ is essential in $Y/A \cong M$, from which  
it follows that  $\bigl((Z+A)/A\bigr) \cap X/A \neq 0$. Thus $(Z+A) \cap X \supsetneqq A$, a contradiction.
\end{proof}

 \begin{lemma}\label{lem:2.2}  Assume that $T\subsetneqq R$ satisfy  
 Hypothesis~\ref{hyp:2.0}.   
 Let $R^\circ \supseteq \Cc\supset T^\circ$ be a  (torsionfree) $T^\circ$-module 
 such that $\Cc/T^\circ\cong \Ss$.    
 Then   there exists a projective $T^\circ$-module  $\wc{\Cc} $
 such that $\Fr(T^\circ)\supset \wc{\Cc} \supseteq \Cc$ 
  and  $\wc{\Cc}/T^\circ\cong \Xx(\Ss) = \Xx$.   \end{lemma}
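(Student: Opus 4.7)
The plan is to construct $\wc{\Cc}$ in three steps: lift the extension class that defines $\Cc$, verify projectivity of the resulting module, and realize it compatibly inside $\Fr(T^\circ)$. To begin, the short exact sequence $0 \to T^\circ \to \Cc \to \Ss \to 0$ represents an element $\eta \in \Ext^1_{T^\circ}(\Ss, T^\circ)$. I apply Lemma~\ref{general lemma} with $A = T^\circ$, $L = \Ss$ and $M = \Xx$ to lift $\eta$ to an extension $0 \to T^\circ \to \wc{\Cc} \to \Xx \to 0$ in which $\Cc$ sits as the preimage of $\Ss$. The hypothesis of Lemma~\ref{general lemma} requires essentiality of $\Ss$ in $\Xx$ (which holds because $\Ss = \Soc(\Xx)$, as noted after Hypothesis~\ref{hyp:2.0}) together with the vanishing $\Ext^2_{T^\circ}(\Xx/\Ss, T^\circ) = \Ext^2_{T^\circ}(\Yy, T^\circ) = 0$. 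But Lemma~\ref{lem:2.1}(1) gives $\id(T^\circ) = 1$, so this vanishing is automatic. Lemma~\ref{general lemma} also ensures that $\wc{\Cc}$ is torsionfree.

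For projectivity, note that $\wc{\Cc}$ is finitely generated (since both $T^\circ$ and $\Xx$ are) and torsionfree. Lemma~\ref{lem:2.1}(4) gives $\pd(\Xx) = 1$, so the $\Ext$ long exact sequence applied to the extension yields $\Ext^n_{T^\circ}(\wc{\Cc}, -) = 0$ for $n \geq 2$ and hence $\pd(\wc{\Cc}) \leq 1$. Lemma~\ref{lem:2.1}(2) then forces $\wc{\Cc}$ to be projective.

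Finally, to realize $\wc{\Cc}$ inside $\Fr(T^\circ)$ in a manner extending the given inclusion $\Cc \subseteq R^\circ \subseteq \Fr(T^\circ)$, observe that $\Ss$ and $\Xx$ are torsion $T^\circ$-modules, as they have finite length over the simple domain $T^\circ$. Hence $\Cc$ and $\wc{\Cc}$ have Goldie rank one, so tensoring with $\Fr(T^\circ)$ shows that both $\Hom_{T^\circ}(\wc{\Cc}, \Fr(T^\circ))$ and $\Hom_{T^\circ}(\Cc, \Fr(T^\circ))$ are $1$-dimensional as left $\Fr(T^\circ)$-vector spaces, and the restriction map between them is nonzero, hence an isomorphism. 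Consequently there is a unique $T^\circ$-linear map $f \colon \wc{\Cc} \to \Fr(T^\circ)$ extending $\Cc \hookrightarrow \Fr(T^\circ)$; as $\wc{\Cc}$ is torsionfree of rank one the map $f$ is automatically injective, and identifying $\wc{\Cc}$ with $f(\wc{\Cc})$ yields the required chain $\Fr(T^\circ) \supset \wc{\Cc} \supseteq \Cc$. The main subtlety lies in this last step, where one must ensure that the abstract $\Ext$-lift is compatible with the already fixed embedding $\Cc \subset \Fr(T^\circ)$; the rank-one and $\Hom$-dimension-one computation handles this cleanly.
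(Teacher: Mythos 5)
Your proof is correct and follows essentially the same route as the paper: lift the extension via Lemma~\ref{general lemma} using $\id(T^\circ)=1$, deduce $\pd(\wc{\Cc})\leq 1$ from $\pd(\Xx)=1$, and invoke Lemma~\ref{lem:2.1}(2) to conclude projectivity. Your final paragraph fills in the paper's one-sentence assertion that the abstract lift $\Mm$ embeds in $\Fr(T^\circ)$ compatibly with the given embedding of $\Cc$; the rank-one $\Hom$ computation is sound, and one could equivalently just observe that $\Fr(T^\circ)$ is an injective hull of $T^\circ$ while $\Cc\subset\wc{\Cc}$ is essential (as $\wc{\Cc}/\Cc\cong\Yy$ is torsion and $\wc{\Cc}$ is torsionfree).
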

 
 \begin{proof} If $\Cc$ is projective then $\pd(\Ss)<\infty$ and $\Xx = \Ss$.  Thus   $\wc{\Cc}=\Cc$, which  is automatically 
 projective, as required. So assume that $\Cc$ is not projective. 
  
   Take the short exact sequence 
 $0\too \Ss\  \buildrel{a}\over\too \   \Xx\  \buildrel{b}\over\too \   \Yy\ \too  0
 $ given by  Hypothesis~\ref{hyp:2.0}, noting that $\Ss$ is essential in $\Xx$. By  Lemma~\ref{lem:2.1} ,
  $A=T^\circ$  has injective dimension 1 and so, by Lemma~\ref{general lemma},
   $\Cc$ lifts to an extension $0 \to T^\circ \to \Mm \to \Xx \to 0$, in which  $\Mm$ is torsionfree.
   This also  implies that there is a natural embedding $\Mm\hookrightarrow  \Fr(T^\circ)$ extending the 
  embedding of $\Cc$.   
 By  Lemma~\ref{lem:2.1},  $\wc{\Cc}=\Mm$    is projective.    
 \end{proof}

\begin{notation}\label{not:2.31}    Let $\Mm$ be a $T^\circ$-submodule of $\Fr(T^\circ)$. 
Following \cite[Section~7, p.2099]{RSSlong} we define
$ \Phi  \Mm :=\bigoplus_{n\in \mathbb{Z}} (\Phi \Mm)_n$, where $(\Phi \Mm)_n
:= \{a\in (T_{(g)})_n : ag^{-n}\in \Mm\}$.   As $T$ is   $g$-divisible, it is immediate that $T=\Phi(T^\circ)$ 
and hence that $\Phi \Mm$ is a $T$-module. We remark that \cite{RSSlong} used $\Omega$ in place of 
$\Phi$, but in this paper $\Omega$ will be reserved for another more longstanding construction. 

Let $X$ be a $T$-submodule of $T_{(g)}$ and recall the definition of the $g$-divisible hull $\wh{X}$ from Definition~\ref{g-divisible-defn}.  We note that $\wh{X} = \Phi(X^\circ)$. 
\end{notation}

We are now ready to prove the  first main   result on the non-existence of finitely graded  
 overrings; indeed the   main theorems from the introduction will ultimately reduce to this case.  We note that the idea of the proof originates in Goodearl's result  \cite[Theorem~5]{Goodearl} that overrings of HNP rings  are localisations.

\begin{proposition}\label{prop:2.3} 
Assume that $T\subsetneqq R$ satisfies the conditions of Hypothesis~\ref{hyp:2.0}. 
\begin{enumerate}
\item If $R $ is $g$-divisible, then $\dim R_0 = \infty$.
\item If $R$ is finitely graded, then $R$ is not noetherian.  
\end{enumerate}
\end{proposition}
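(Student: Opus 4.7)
The plan is to deduce Part~(2) from Part~(1). Indeed, if $R$ is finitely graded and noetherian with $T\subsetneq R$, then by Remark~\ref{rem:2.33}(2) $R$ is cg. Passing to a Veronese $R^{(n)}$ containing $g^n$ and applying Lemma~\ref{lem:1.7prime}, one obtains a cg noetherian $g$-divisible overring $R'$ of $T^{(n)}$ with $T^{(n)}\subsetneq R'\subseteq (T_{(g)})^{(n)}$. Since $T^\circ$ is unchanged by the Veronese construction, Hypothesis~\ref{hyp:2.0} still applies to the pair $T^{(n)}\subsetneq R'$, so Part~(1) yields $\dim R'_0=\infty$, contradicting that $R'$ is cg.

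For Part~(1), assume $R$ is $g$-divisible, so $R=\widehat R=\Phi(R^\circ)$. The first simplification is that it suffices to exhibit a single element $x\in R_r\smallsetminus\kk g^r$ for some $r\leq 0$: then $y:=xg^{-r}$ lies in $R_0\smallsetminus\kk$, and since $T_{(g)}^\circ$ is a domain, $\kk[y]\subseteq R_0$ is automatically infinite-dimensional. To produce such an $x$, let $\Cc\subseteq R^\circ$ be the preimage of $\Ss$, so $\Cc/T^\circ\cong\Ss$; by Lemma~\ref{lem:2.2}, $\Cc$ is contained in a projective $T^\circ$-module $\wc{\Cc}\subseteq\Fr(T^\circ)$ with $\wc{\Cc}/T^\circ\cong\Xx$. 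On the other hand, Hypothesis~\ref{hyp:2.0}(4) rules out the line-module case of Proposition~\ref{prop:linesonly}, so case~(2) of that proposition furnishes an extension $T\subsetneq L\subset T_{(g)}$ with $L^\circ/T^\circ\cong\Ss$ and $L_r\supsetneq T_r$ for some $r\leq 0$. It then suffices to show $L^\circ\subseteq R^\circ$: by $g$-divisibility, $L=\Phi(L^\circ)\subseteq\Phi(R^\circ)=R$, and the required $x$ may be taken from $L_r\subseteq R_r$.

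The main obstacle is establishing $L^\circ\subseteq R^\circ$. The plan is to apply Lemma~\ref{lem:2.2} a second time, taking $L^\circ$ in place of $\Cc$, producing a projective $\wh{L^\circ}\supseteq L^\circ$ with $\wh{L^\circ}/T^\circ\cong\Xx$. Schanuel's lemma together with projectivity inside $\Fr(T^\circ)$ identifies $\wh{L^\circ}$ with $\wc{\Cc}$ as $T^\circ$-modules, yielding an embedding $\iota\colon L^\circ\hookrightarrow\wc{\Cc}$. To transport this inside $R^\circ$ one must extend the given inclusion $\Cc\hookrightarrow R^\circ$ to a $T^\circ$-homomorphism $\wc{\Cc}\to R^\circ$; the obstruction sits in $\Ext^1_{T^\circ}(\Yy,R^\circ)$, and controlling it is the delicate step, using $\id(T^\circ)=1$ from Lemma~\ref{lem:2.1} together with the finite projective dimension of $\Xx$ from Hypothesis~\ref{hyp:2.0}(3). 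This mechanism is the non-hereditary analogue of the Goodearl localisation argument sketched in the introduction.
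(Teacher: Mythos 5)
Your reduction of Part~(2) to Part~(1) is essentially the paper's, except that you pass to a Veronese $R^{(n)}$ before invoking Lemma~\ref{lem:1.7prime}. This is unnecessary (the lemma already applies with $n=1$, since $g\in T\subseteq R$) and actually introduces a subtle issue: once you replace $T$ by $T^{(n)}$ you must re-verify that Hypothesis~\ref{hyp:2.0} and all the machinery of Section~\ref{MINIMAL} apply, which tacitly requires $T^{(n)}$ to be an elliptic algebra, and for $n\geq 2$ the quotient $T^{(n)}/g^nT^{(n)}$ is not a twisted homogeneous coordinate ring (it is strictly larger than $(T/gT)^{(n)}$). The paper avoids this entirely by applying Lemma~\ref{lem:1.7prime} to $R$ directly and replacing $R$ by $\wh R$.

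The genuine gap is in Part~(1). Your plan --- apply Lemma~\ref{lem:2.2} a second time to $L^\circ$, invoke Schanuel, and then extend $\Cc\hookrightarrow R^\circ$ to a map $\wc{\Cc}\to R^\circ$ --- does not go through, for several compounding reasons. First, Schanuel's lemma only gives $\wh{L^\circ}\oplus T^\circ\cong\wc{\Cc}\oplus T^\circ$; cancellation of the projective $T^\circ$ is not automatic over a noncommutative noetherian domain, so you have not produced an isomorphism $\wh{L^\circ}\cong\wc{\Cc}$, only stable isomorphism. Second, even granting an abstract isomorphism $\wh{L^\circ}\cong\wc{\Cc}$, there is no reason for the induced embedding $\iota\colon L^\circ\hookrightarrow\wc{\Cc}$ to restrict to the identity on $T^\circ$, so composing with a hypothetical extension $\eta\colon\wc{\Cc}\to R^\circ$ of $\Cc\hookrightarrow R^\circ$ would yield some $T^\circ$-module map $L^\circ\to R^\circ$, not the inclusion $L^\circ\subseteq R^\circ$ inside $\Fr(T^\circ)$. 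Third, the ``delicate step'' of controlling the obstruction in $\Ext^1_{T^\circ}(\Yy,R^\circ)$ is not carried out and is not obviously controllable: $R^\circ$ is an arbitrary overring, and the injective dimension of $T^\circ$ and $\pd(\Xx)<\infty$ give you information about $\Ext$ into $T^\circ$, not into $R^\circ$. There is in fact no reason why $\wc{\Cc}$ should admit a $T^\circ$-linear map into $R^\circ$ extending the inclusion of $\Cc$, and trying to force one is the wrong mechanism.

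The paper's argument sidesteps all of this. It applies Lemma~\ref{lem:2.2} to both $\Cc$ and $\Dd:=D^\circ$ (what you call $L^\circ$), obtaining projective modules $\wc{\Cc},\wc{\Dd}\subseteq\Fr(T^\circ)$ with a chosen isomorphism $\chi\colon\wc{\Cc}/T^\circ\to\wc{\Dd}/T^\circ$. Projectivity of $\wc{\Cc}$ then lifts $\chi\circ\pi_1$ through the surjection $\pi_2\colon\wc{\Dd}\to\wc{\Dd}/T^\circ$ to a map $\xi\colon\wc{\Cc}\to\wc{\Dd}$; since both live in $\Fr(T^\circ)$, $\xi$ is left multiplication by some $x\in\Fr(T^\circ)$, and the commutativity of the diagram forces $\xi(T^\circ)\subseteq T^\circ$, i.e.\ $x\in T^\circ$. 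Comparing socles then gives $\Dd=x\Cc+T^\circ$, and now the \emph{ring} structure of $R^\circ$ --- not an $\Ext$-vanishing --- does all the work: $x\in T^\circ\subseteq R^\circ$ and $\Cc\subseteq R^\circ$, so $\Dd\subseteq R^\circ$. This is the step your proposal is missing.
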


\begin{proof}    
   (1)
  It is immediate that $\Phi (R^\circ) = \widehat{R}=R$.
Now let $T^\circ \subset \ \Cc\subset R^\circ$ be a $T^\circ$-module with 
$\Cc/T^\circ\cong \Ss$. By Hypothesis~\ref{hyp:2.0}(4),   Proposition~\ref{prop:linesonly}(2) applies and
 provides a  $T$-module extension 
$T\subsetneqq D\subset T_{(g)} $
with   $D^\circ/T^\circ \cong \Ss$,
and so that $D_r\supsetneqq T_r$ for some $r\leq 0$. 
Let $\Dd := D^\circ$.

Apply Lemma~\ref{lem:2.2}. This provides extensions 
$$ T^\circ \subsetneqq \Cc \subset \wc{\Cc} \subset \Fr(T^\circ)
\qquad \text{and}\qquad
T^\circ \subset \Dd \subset \wc{\Dd} \subset \Fr(T^\circ),$$
such that $\Cc /T^\circ = \Soc(\wc{\Cc}/T^\circ)$ 
and  $\Dd /T^\circ = \Soc(\wc{\Dd}/T^\circ)$ and  there is an isomorphism $$\chi:\wc{\Cc}/T^\circ \ \buildrel{\cong}\over{\too}\  \Xx\ \buildrel{\cong}\over{\too}\   \wc{\Dd} /T^\circ.$$
(If $\Ss$ and hence $\Cc$ have finite projective dimension then  
$\Xx=\Ss$ whence
  $\wc{\Cc}=\Cc$ and $\wc{\Dd}=\Dd$.  In this case the desired properties hold tautologically.)

Let $\pi_1: \wc{\Cc} \to \wc{\Cc}/T^\circ$ and $\pi_2: \wc{\Dd} \to \wc{\Dd}/T^\circ$ be the natural projections. 
 By  Lemma~\ref{lem:2.2}, $\wc{\Cc}$ is projective, and so   the isomorphism $\chi$ lifts  to a $T^\circ$-module homomorphism   $\xi: \wc{\Cc} \to 
\wc{\Dd}$  making the following diagram commute.
\begin{equation}\label{2.35}
 \xymatrix{  \wc{\Cc}/T^\circ  \ar[r]^\chi & \wc{\Dd}/T^\circ   \\
 \wc{\Cc}   \ar@{.>}[r]^\xi\ar[u]^{\pi_1} & \wc{\Dd} \ar[u]^{\pi_2}
 } 
\end{equation}
As $\wc{\Cc} $ and $\wc{\Dd}$ are submodules of $\Fr(T^\circ)$, 
$\xi$ is given by left multiplication  by an  element $x\in \Fr(T^\circ).$  
Since  $\chi\pi_1(T^\circ)=0$  the  commutativity of \eqref{2.35}    
implies that  $\pi_2\xi(T^\circ)=0$ and
 $\xi(T^\circ)\subseteq T^\circ$. In other words, $x\in T^\circ$.
 
 Next, since $\Cc/T^\circ = \Soc(\wc{\Cc}/T^\circ)$, and $\pi_2\xi = \chi \pi_1$ is surjective, 
   the map $\xi$ must map $\Cc$ to 
 the preimage of the simple socle  $ \Soc(\wc{\Dd}/T^\circ)=\Dd/T^\circ$.  Consequently,
  $\Dd/T^\circ= (x\Cc+T^\circ)/T^\circ.$
 In particular, since $R^\circ $ is a ring,
  $\Dd = (x\Cc+T^\circ) \subseteq xR^\circ + T^\circ \subseteq R^\circ.$ 
  Since $\Dd = D^\circ$, it follows that  $D \subseteq \Phi(R^\circ) = R$, and so, by Remark~\ref{rem:2.33}(1), 
   $\dim R_0 = \infty$.   
   
   (2) 
Suppose that $R$ is finitely graded  and    noetherian. 
Then, by  Lemma~\ref{lem:1.7prime}, $\widehat{R}$ is a finitely generated right (and left) 
$R$-module and so   $\widehat{R}$ is also finitely graded. Moreover,  by construction, $\wh{R}^\circ = R^\circ$ and so Hypothesis~\ref{hyp:2.0} holds for $\wh{R}$.   Thus we can  apply Part (1) to $\wh{R}$  to  show that $\wh{R}_0$ is infinite dimensional;  a contradiction.
    \end{proof}
       
  We remark that in the penultimate sentence of  the proof of Part (1) of Proposition~\ref{prop:2.3} 
  one needs only that $R^\circ$ is a $T^\circ$-bimodule.  
  In other words, the argument above proves the following  statement. Since the result will not be used in the paper, the details are left to the interested reader. 
  
  \begin{corollary}\label{lem:bimodule}
   Assume that $T\subsetneqq R$ satisfies the conditions of Hypothesis~\ref{hyp:2.0}.
Let $\Ee \subseteq \Fr(T^\circ)$ be the maximal extension of $T^\circ$ by a direct sum of copies of $\Ss$, and note that $\Ee$ is a $T^\circ$-bimodule.
Then $\Ee/T^\circ$ is simple as a $T^\circ$-bimodule. \qed
\end{corollary}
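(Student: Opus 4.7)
The plan is to adapt the argument of Proposition~\ref{prop:2.3}(1), but apply it symmetrically to every pair of simple submodules of $\Ee/T^\circ$ isomorphic to $\Ss$, rather than to a single specially-constructed pair coming from Proposition~\ref{prop:linesonly}.

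Since $\Ee/T^\circ$ is by construction a direct sum of copies of $\Ss$, it is semisimple as a right $T^\circ$-module. Thus any nonzero $T^\circ$-sub-bimodule $\Mm$ of $\Ee/T^\circ$ contains at least one simple right submodule, necessarily of the form $\Cc/T^\circ$ for some $T^\circ \subsetneqq \Cc \subseteq \Ee$ with $\Cc/T^\circ \cong \Ss$. Since $\Ee/T^\circ$ is generated as a right module by its simple submodules, it will suffice to show that every other simple right submodule $\Dd/T^\circ \cong \Ss$ of $\Ee/T^\circ$ also lies in $\Mm$.

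To establish this, I would apply Lemma~\ref{lem:2.2} to both $\Cc$ and $\Dd$, producing projective extensions $\wc{\Cc}, \wc{\Dd} \subseteq \Fr(T^\circ)$ with $\wc{\Cc}/T^\circ \cong \Xx \cong \wc{\Dd}/T^\circ$ and with respective socles $\Cc/T^\circ$ and $\Dd/T^\circ$.  (Although the statement of Lemma~\ref{lem:2.2} mentions the fixed $R^\circ$ from Hypothesis~\ref{hyp:2.0}, its proof uses only that the input is a torsionfree extension of $T^\circ$ by $\Ss$ inside $\Fr(T^\circ)$.) By projectivity of $\wc{\Cc}$, the composition $\wc{\Cc}/T^\circ \cong \Xx \cong \wc{\Dd}/T^\circ$ lifts to a $T^\circ$-linear map $\xi : \wc{\Cc} \to \wc{\Dd}$, which, since both sit inside $\Fr(T^\circ)$, must be given by left multiplication by some $x \in \Fr(T^\circ)$. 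The commutative-diagram argument around~\eqref{2.35} then forces $x \in T^\circ$ and shows that $\xi$ maps $\Cc$ into the preimage of $\Dd/T^\circ = \Soc(\wc{\Dd}/T^\circ)$, so that $\Dd \subseteq x\Cc + T^\circ$. Hence $\Dd/T^\circ \subseteq T^\circ \cdot (\Cc/T^\circ) \subseteq \Mm$, as desired.

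The main obstacle is bookkeeping: one must verify carefully that Lemma~\ref{lem:2.2} and the ensuing diagram chase really depend only on the ambient division ring $\Fr(T^\circ)$ and not on the specified overring $R^\circ$ from Hypothesis~\ref{hyp:2.0}. Once this is confirmed the proof is essentially a symmetric rerun of Proposition~\ref{prop:2.3}(1), and, notably, it invokes neither Hypothesis~\ref{hyp:2.0}(4) nor Proposition~\ref{prop:linesonly}, since we do not need to produce any extension $D$ of $T$ outside of $R$.
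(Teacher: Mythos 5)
Your proof is correct and is essentially the detailed version of the one‑line argument the paper itself suggests in the sentence immediately preceding the statement, namely to re‑run the diagram chase of Proposition~\ref{prop:2.3}(1) using only the $T^\circ$‑bimodule structure of the ambient module. Your observations that Lemma~\ref{lem:2.2} applies to any torsionfree extension of $T^\circ$ by $\Ss$ inside $\Fr(T^\circ)$, and that neither Hypothesis~\ref{hyp:2.0}(4) nor Proposition~\ref{prop:linesonly} is actually invoked, are both accurate and make the argument cleaner than a literal re‑run.
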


      
\section{Algebras with  \texorpdfstring{$A_1$}{LG}  singularities}\label{A1SINGULARITIES}

In the next  section we will show that Proposition~\ref{prop:2.3}  can be applied to show that  the quadric algebras  $Q$ defined 
by Van den Bergh in \cite{VdB3}  are indeed minimal elliptic surfaces.   The corresponding algebra $Q^\circ$ will either have finite 
global dimension or be simple with a mild singularity. In this section we prepare for the latter  case by studying arbitrary elliptic algebras 
$T$ for which $T^\circ$ has such a singularity. The relevant  definitions are as follows. As usual, given a right (left) module $M$ over a ring $A$ we write $M^*$ for the left (right) $A$-module $\Hom_A(M,A)$.

\begin{definition}\label{sing-defn}
  Given a noetherian ring $A$ of finite injective dimension 
  define the \emph{singularity category $\Sing(A)$} as in \cite[Section~I.5, p.46]{AB} (where it is called the Stabilised Category)   and write  Hom   groups  in $\Sing(A)$
by $\uHom_{A}(M,N)$, for   right $A$-modules $M$ and $N$.   
By \cite[Theorem~4.4.1]{Buchweitz},
 $\Sing(A)$ can be identified with the
 quotient category           
 \[\Sing(A) \simeq \{ \mbox{maximal Cohen-Macaulay (MCM) modules} \} / \{\mbox{projective modules}\}.\]
For  right   MCM   modules $M, N$ we have, by \cite[2.1]{Buchweitz},
\[\uHom_A(M,N) = \Hom_A(M, N)/ \{\mbox{maps   factoring through a projective module}\} =  \Hom_A(M, N)/NM^*,\]
where  $NM^*$ is the natural image of $N\otimes M^*$ inside $\Hom_A(M,N)$. 
 By \cite[Theorem~4.4.1]{Buchweitz},  $\Sing(A)$ is triangulated with the inverse of the homological shift $\Sigma$ given by the syzygy functor
  $\Omega$.   

We say that $A$ has an {\em $A_1$ singularity} if $\Sing(A) $
is triangle equivalent to the category  of $\kk$-vector spaces $\operatorname{Vect}$ (necessarily with trivial homological shift).  
If $\chrr \kk \neq 2$, then   the  Kleinian singularity $\kk[[x,y]]^{C_2}$ has an $A_1$ singularity, whence the name.
 \end{definition}

In the next few results, we give some basic properties of rings with $A_1$ singularities.  

\begin{remark}\label{rem:3.1}
Observe that if an algebra $A$ 
 has an $A_1$ singularity, then there exists 
  an MCM right $A$-module $\Mm$  (equivalently, by Lemma~\ref{lem:2.1new},   a finitely generated torsionfree module  $\Mm$),
   so that
\begin{enumerate}
\item[(a)] $\Mm$ corresponds to $\kk$ under the equivalence $\Sing(A) \simeq \operatorname{Vect}$,  
\item[(b)]  $\uEnd_{A}(\Mm) = \End_{A}(\Mm)/\Mm \Mm^* \cong \kk$,  and 
\item[(c)]  the first syzygy  $\Omega \Mm \cong \Mm$ in $\Sing(A)$. 
\end{enumerate}
We refer to $\Mm$ as the {\em generator of $\Sing(A)$}.
\end{remark}

\begin{lemma}\label{lem:3.3}
Let $A$ be a prime noetherian ring of injective dimension 1 that is either hereditary or has an $A_1$ singularity.
Let $\Nn$ be a  non-zero, finitely generated,  torsionfree right $A$-module.
Let $\Mm$ generate $\Sing(A)$ if $\Sing(A)$ is nontrivial, or else let $\Mm = A$.
Then  the following hold.
\begin{enumerate}
\item  There exist an integer $s\geq 0$ and finitely generated  projective right $A$-modules $\Aa$ and $\Bb$ such that 
$\Nn\oplus \Aa\cong  \Bb\oplus \Mm^{(s)}.$
\item    There exist a projective right $A$-module $\Ll$ and a short exact sequence 
$0\to \Nn\to \Ll\to \Nn\to 0$.   In particular, this is true for $\Nn=\Mm$.
\end{enumerate}
\end{lemma}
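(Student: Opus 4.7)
Both parts will be proven by treating the hereditary and $A_1$-singularity cases in parallel, after observing that in either case $\Nn$ is MCM by Lemma~\ref{lem:2.1new}(2). In the hereditary case, every module has projective dimension $\leq 1$, so $\Sing(A)=0$; applying Lemma~\ref{lem:2.1new}(1) to $\Nn$ shows it is projective, and both parts are then trivial: take $s=0$, $\Aa=0$, $\Bb=\Nn$ for (1), and $\Ll=\Nn\oplus\Nn$ with the split sequence for (2). So assume $A$ has an $A_1$ singularity, with generator $\Mm$ of $\Sing(A)$.

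For Part (1), since $\Sing(A)\simeq \operatorname{Vect}$, every object of $\Sing(A)$ is isomorphic to $\Mm^{(s)}$ for a unique $s\geq 0$, and the class of $\Nn$ in $\Sing(A)$ must therefore be $\Mm^{(s)}$ for some such $s$. Because $\id(A)\leq 1$, Buchweitz's theorem identifies $\Sing(A)$ with the stable category of MCM modules, in which isomorphism amounts to stable isomorphism; hence there exist finitely generated projective modules $\Aa, \Bb$ with $\Nn\oplus \Aa\cong \Mm^{(s)}\oplus \Bb$, as required.

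For Part (2), the crucial input is that because the equivalence $\Sing(A)\simeq \operatorname{Vect}$ necessarily has trivial homological shift, the syzygy functor $\Omega=\Sigma^{-1}$ is isomorphic to the identity on $\Sing(A)$, so in particular $\Omega\Nn\cong \Nn$ in $\Sing(A)$. Choose a syzygy sequence $0\to \Omega\Nn\to P\to \Nn\to 0$ with $P$ projective; the same stable-isomorphism argument as in (1) then produces projectives $Q,Q'$ with $\Omega\Nn\oplus Q\cong \Nn\oplus Q'$, giving a short exact sequence
\[ 0\to \Nn\oplus Q' \to P\oplus Q \to \Nn\to 0. \]
Pushing out along the first projection $\Nn\oplus Q'\twoheadrightarrow \Nn$ yields a commutative diagram of short exact sequences whose bottom row is the desired self-extension $0\to \Nn\to \Ll\to \Nn\to 0$ with $\Ll=(P\oplus Q)/Q'$. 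The pushout square also exhibits $\Ll$ as the cokernel of an embedding $Q'\hookrightarrow P\oplus Q$, so $\pd(\Ll)\leq 1$, while $\Ll$ is torsionfree as an extension of two torsionfree modules. Lemma~\ref{lem:2.1new}(1) then forces $\Ll$ to be projective.

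The main obstacle is Part (2), specifically lifting the purely homological identity $\Omega\Nn\cong \Nn$ in $\Sing(A)$ to a genuine self-extension of $\Nn$ whose middle term is actually projective rather than merely of finite projective dimension. The pushout construction combined with Lemma~\ref{lem:2.1new}(1) achieves exactly this.
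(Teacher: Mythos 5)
Your proof is correct and follows essentially the same strategy as the paper: for part (1) reduce to a statement in $\Sing(A)$ and pass to a stable isomorphism (the paper cites \cite[Proposition~1.44(4)]{AB} for this step, which you invoke as Buchweitz's identification), and for part (2) use $\Omega\Nn\cong\Nn$ in $\Sing(A)$ plus the stable isomorphism to build a self-extension with middle term of projective dimension $\leq 1$, then apply Lemma~\ref{lem:2.1new}(1). Your pushout $(P\oplus Q)/Q'$ is the same module as the paper's $\Ll=(\Pp\oplus\Aa)/\alpha(\Bb)$, just described as a pushout rather than a direct quotient.
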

 
\begin{proof}
We may assume that $\Sing(A)$ is nontrivial.

 (1)  By Remark~\ref{rem:3.1}(a)  there exists $s\geq 0$ such that 
 $\Nn\cong \Mm^{(s)}$ in $\Sing(A)$.  Now apply
  \cite[Proposition~1.44(4)]{AB}.
  
  (2)   The result is trivial if $\Nn$ is projective, so assume not. Pick an isomorphism 
  $\Nn\cong \Mm^{(s)}$ in $\Sing(A)$ by Part (1) and note that $s\geq 1$ as $\Nn$ is not projective.  
 Choose a surjection $\chi:\Pp \to \Nn$ where $\Pp$ is a finitely generated projective module  and set $\Nn'=\Ker(\chi)$.
 By   Remark~\ref{rem:3.1}(c), $\Nn' \cong (\Omega \Mm)^{(s)} \cong \Mm^{(s)} \cong \Nn$ in $\Sing(A)$.
 Thus there are projective modules $\Aa$ and $ \Bb$ so that $\Nn' \oplus \Aa \cong \Nn \oplus \Bb$.
 
From the short exact sequence $0 \to \Nn' \to \Pp \to \Nn \to 0$ we obtain a short exact sequence
\[ 0 \too \Nn' \oplus \Aa \too \Pp \oplus \Aa \too \Nn \too 0.\]
By the previous paragraph, this induces
an exact sequence 
 \[ 0\too \Nn \oplus \Bb \ \buildrel{\alpha}\over\too\  \Pp \oplus \Aa \ \buildrel{\beta}\over\too\  \Nn  \to 0.\]
Set  
$\Ll   :=(\Pp \oplus \Aa)/\alpha(\Bb)$.
Then  
 \[ \Ll\ \supseteq \Zz\ := \  \frac{   \alpha(\Nn) \oplus \alpha(\Bb)  }{\alpha(B) } \ \cong \ \Nn
\qquad \text{with}\qquad 
\Ll/\Zz \ \cong\  \frac{ \Pp\oplus \Aa }{ \alpha(\Nn\oplus \Bb) }
\ \cong\   \Nn.\]
Finally, by its initial construction $\pd(\Ll)\leq 1$ and from the last  displayed equation it follows  that $\Ll$ is torsionfree. Thus, 
by Lemma~\ref{lem:2.1new},  $\Ll$ is   projective, as required.
\end{proof}

\begin{lemma}\label{lem:3.4} 
Let $A$ be a prime noetherian ring of injective dimension 1 that is either hereditary or has an $A_1$ singularity. Let $\Cc$ be a  right $A $-module  with $A \subset \Cc \subset \Fr(A)$ and  such that $\Cc/A \cong \Ss$ is a
simple right $A$-module.

Then either $\pd(\Ss )<\infty$ or there exists a finitely generated right $A $-module $\Xx=\Xx(\Ss)$ of finite projective dimension for which there exists an
 extension \[0\too \Ss\too \Xx\too \Ss\too 0.\]
   \end{lemma}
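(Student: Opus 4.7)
The plan is to dispose of the hereditary case trivially and then build $\Xx$ explicitly when $A$ has an $A_1$ singularity. If $A$ is hereditary then $\gldim A \leq 1$, so $\pd(\Ss) \leq 1$ and the first alternative holds. So assume $A$ has an $A_1$ singularity and $\pd(\Ss) = \infty$. From the defining sequence $0 \to A \to \Cc \to \Ss \to 0$, Lemma~\ref{lem:2.1new}(1) forces $\Cc$ to be non-projective (else $\Cc$ would be projective, giving $\pd(\Ss) \leq 1$). Since $\Cc$ is finitely generated and torsionfree, Lemma~\ref{lem:3.3}(2) applies and produces a finitely generated projective right $A$-module $\Ll$ together with a short exact sequence $0 \to \Cc \xrightarrow{\iota} \Ll \xrightarrow{\pi} \Cc \to 0$.

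Next I will carve $\Xx$ out of $\Ll$ by quotienting by a projective submodule that accounts for ``one copy of $A$'' on each side. Set $B := \pi^{-1}(A) \subseteq \Ll$. The sequence $0 \to \iota(\Cc) \to B \to A \to 0$ splits because $A_A$ is free, so I may write $B = \iota(\Cc) \oplus \widetilde{A}$ with $\widetilde{A} \cong A$. Since $\iota(A) \subseteq \iota(\Cc)$ and $\iota(\Cc) \cap \widetilde{A} = 0$, the submodule $K := \iota(A) + \widetilde{A} \subseteq \Ll$ is a direct sum, so $K \cong A \oplus A$ and in particular $K$ is projective. Set $\Xx := \Ll/K$.

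To verify the conclusion I just read off the filtration $K \subseteq B \subseteq \Ll$. The top quotient is $\Ll/B \cong \Cc/A = \Ss$, while the middle factor satisfies
$$B/K = (\iota(\Cc) + \widetilde{A})/(\iota(A) + \widetilde{A}) \cong \iota(\Cc)/\iota(A) \cong \Ss$$
by the second isomorphism theorem, using $\iota(\Cc) \cap (\iota(A) + \widetilde{A}) = \iota(A) + (\iota(\Cc) \cap \widetilde{A}) = \iota(A)$. This gives the required extension $0 \to \Ss \to \Xx \to \Ss \to 0$, and $\pd(\Xx) \leq 1$ is immediate from the short exact sequence $0 \to K \to \Ll \to \Xx \to 0$ with $K$ and $\Ll$ both projective.

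The main content of the argument lies entirely in Lemma~\ref{lem:3.3}(2): once the ``doubled projective cover'' $0 \to \Cc \to \Ll \to \Cc \to 0$ is available, everything else is bookkeeping and uses nothing more than projectivity of $A$ itself. The $A_1$-singularity hypothesis is used only to produce that sequence, which is precisely where the failure of $\Cc$ to be projective must be rectified.
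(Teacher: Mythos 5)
Your proposal is correct and follows essentially the same construction as the paper: apply Lemma~\ref{lem:3.3}(2) to get the self-extension $0 \to \Cc \to \Ll \to \Cc \to 0$ with $\Ll$ projective, lift the copy of $A$ in the quotient back into $\Ll$, and set $\Xx = \Ll/(\text{direct sum of the two copies of }A)$. Your use of the modular law to identify $B/K \cong \Ss$ is a slightly cleaner way to see the bottom factor is $\Ss$ than the paper's indirect argument (which deduces $\Zz \cong \Ss$ from $\Zz$ being a nonzero image of the simple $\Ss$), but the content is the same.
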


\begin{proof} We may assume that $\pd(\Ss)=\infty$. 
By Lemma~\ref{lem:3.3}(2) we have a short exact sequence 
\[ 0\too \Cc_1 \ \buildrel{\alpha}\over\too\    \Ll \ \buildrel{\beta}\over\too\    \Cc_2\too 0\] with $\Ll$ projective and the $\Cc_i$ being
 the two copies of $\Cc$. The natural inclusion of $A$ into   $\Cc_2$ lifts to a monomorphism $\theta: A \to \Ll$. Thus if $\iota(A )$ is
  the given copy of $A $ inside $\Cc_1$ then $\iota(A)\oplus \theta(A)\subset \Ll$. 
Clearly,
\[ \frac{\Ll}{ \bigl(\iota(A)\oplus \theta(A)\bigr) +\Cc_1 } \ \cong\ \frac{\Cc_2}{A } \ = \ \Ss,\]
while \[\Zz\ := \ \frac{  \bigl(\iota(A)\oplus \theta(A)\bigr) +\Cc_1}{\iota(A)\oplus \theta(A)}\]
is a homomorphic image of $\Cc_1/\iota(A)\cong \Ss.$  Since $\Ss$ is simple and $\Zz\not=0$,
 this implies that $\Zz\cong \Ss$. 

Finally, consider $\Xx  := \Ll/\left(\iota(A)\oplus \theta(A)\right)$. Then, clearly $\pd(\Xx) \leq 1$, while the last paragraph ensures that there is the required  short exact sequence $0\to \Ss\to \Xx\to \Ss\to 0$.
\end{proof}

 Applying the results of this section to elliptic algebras we obtain our first result on minimal elliptic surfaces:

\begin{theorem}\label{thm:3.5}  
   Let $T$    be an elliptic algebra with   localisation  $T^\circ$.      
      Assume either  that $T^\circ$ is hereditary or that   $T^\circ$ is simple with an  $A_1$ singularity.  
        Further assume that $T$ has no line modules.  Then $T$ is a minimal elliptic surface.    
  
    More generally, let $T \subseteq R \subset  T_{(g)}$ be a graded overring. Then:
    \begin{enumerate}
\item  if  either $\wh{R}$ is finitely graded or $R$ is both finitely graded and    noetherian, then $R = T$; 
\item  if $ T \neq R$ and $R = \wh{R}$ is $g$-divisible, then $\dim R_0 = \infty$.\end{enumerate}
\end{theorem}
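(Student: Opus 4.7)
The plan is to verify Hypothesis \ref{hyp:2.0} for the pair $(T, R')$, where $R' = \wh{R}$ for the first half of (1) and $R' = R$ for the second half of (1) and for (2), and then to quote Proposition \ref{prop:2.3}.

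First I would establish that $T^\circ$ is simple of injective dimension one. By Lemma \ref{lem:2.1}, $T^\circ$ is prime noetherian, Auslander-Gorenstein and Cohen-Macaulay with $\GKdim T^\circ = 2$, and it is finitely generated as a $\kk$-algebra since $T$ is. In the $A_1$ case simplicity is assumed; in the hereditary case Lemma \ref{lem:hereditary-simple} provides it. Lemma \ref{lem:2.1}(1) then yields $\injdim(T^\circ) = 1$.

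Next, assuming $T \neq R$ (hence $T \neq R'$), I would check Hypothesis \ref{hyp:2.0}(2)--(4) for the pair $(T,R')$. Since $T$ is $g$-divisible, $\Phi(T^\circ) = T$, while $\Phi((R')^\circ) = R'$ by the $g$-divisibility of $R'$; hence $(R')^\circ \supsetneqq T^\circ$ and one may pick a simple $T^\circ$-submodule $\Ss$ of $(R')^\circ/T^\circ$. Item (4) is immediate: $\Ss$ cannot equal $L^\circ$ for a line module $L$ because $T$ has no line modules. For item (3), take $\Xx = \Ss$ when $\pd(\Ss)<\infty$; otherwise apply Lemma \ref{lem:3.4} to $A = T^\circ$ and the preimage $\Cc \subset \Fr(T^\circ)$ of $\Ss$ in $(R')^\circ$, producing a module $\Xx$ of finite projective dimension fitting into $0 \to \Ss \to \Xx \to \Ss \to 0$, so one may take $\Yy = \Ss$. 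This extension cannot split, for a splitting would make $\Ss$ a summand of $\Xx$ and so force $\pd(\Ss) < \infty$. Since $\Xx/\Ss \cong \Ss$ is simple, the only submodules of $\Xx$ are $0,\Ss,\Xx$, so $\Soc(\Xx) = \Ss$.

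Finally, apply Proposition \ref{prop:2.3}. In case (2), $R = \wh{R}$ is $g$-divisible, so part (1) of Proposition \ref{prop:2.3} applied to $R' = R$ directly gives $\dim R_0 = \infty$. For the first half of (1), were $\wh{R} \neq T$ finitely graded, Proposition \ref{prop:2.3}(1) applied to $R' = \wh{R}$ would give $\dim \wh{R}_0 = \infty$, a contradiction; hence $\wh{R} = T$ and so $R = T$. For the second half of (1), if $R \supsetneqq T$ is finitely graded and noetherian, Proposition \ref{prop:2.3}(2) applied to $R' = R$ contradicts noetherianity, so $R = T$. The minimal surface statement is the special case of (1) in which $R$ is cg noetherian. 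The point that requires the most care --- rather than presenting any deep obstacle --- is verifying $\Soc(\Xx) = \Ss$ when $\pd(\Ss) = \infty$, which depends crucially on the \emph{self}-extension shape produced by Lemma \ref{lem:3.4} rather than a general two-step extension.
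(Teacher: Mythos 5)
Your proof is correct and takes essentially the same route as the paper's (very terse) argument: establish that $T^\circ$ is simple and of injective dimension one, apply Lemma~\ref{lem:3.4} to manufacture $\Xx(\Ss)$ when $\pd(\Ss)=\infty$, verify Hypothesis~\ref{hyp:2.0}, and then invoke Proposition~\ref{prop:2.3}. You helpfully supply details the paper leaves implicit, notably that the self-extension from Lemma~\ref{lem:3.4} is nonsplit and hence that $\Soc(\Xx)=\Ss$. One small slip worth flagging: your identity $\Phi((R')^\circ)=R'$ requires $R'$ to be $g$-divisible, which is not given when $R'=R$ in the second half of part~(1); however the weaker containment $\Phi(R^\circ)=\wh{R}\supseteq R\supsetneqq T=\Phi(T^\circ)$, which uses only the $g$-divisibility of $T$, already forces $R^\circ\supsetneqq T^\circ$, so the verification of Hypothesis~\ref{hyp:2.0}(2) goes through.
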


\begin{remark}\label{rem:111}  If $T^\circ$ is hereditary  and $\deg T\geq 3$, then  the conclusion of  theorem still holds provided $T$ has no line modules of self-intersection $(-1)$. See Theorem~\ref{thm111} for the details.
\end{remark}
 
\begin{proof} Note that $T^\circ$ is a finitely generated $\kk$-algebra by \cite[Lemma~2.1]{RSS}.
Thus,  if $T^\circ$ is hereditary, then $T^\circ$ is also simple by Lemma~\ref{lem:hereditary-simple}.

Pick  a simple $T^\circ$-submodule  $\Ss 
\subseteq  R^\circ/T^\circ$. If $\Ss$ has infinite projective dimension then, by Lemma~\ref{lem:3.4}, the conditions of
Hypothesis~\ref{hyp:2.0} are  satisfied.   If $\pd(\Ss)<\infty$, then  
Hypothesis~\ref{hyp:2.0} is immediate. In either case, the result  follows from Proposition~\ref{prop:2.3}. 
 \end{proof}

We note the following curious consequence of the theorem, which seems to us a strikingly weird result: 

\begin{corollary}\label{cor:weird}
Let $T$ be as in Theorem~\ref{thm:3.5} and let $R $ be the subalgebra of $T_{(g)}$ generated by $T$ and $x$,  for any homogeneous element $x\in T_{(g)} \ssm gT_{(g)}$ of positive degree. 
Then:
\begin{enumerate}\item  R is not noetherian;
\item $\wh{R}_0$ is infinite dimensional.\qed  
\end{enumerate}
\end{corollary}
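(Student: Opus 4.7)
The plan is to apply Theorem~\ref{thm:3.5} twice to the overring $R = \kk\langle T, x\rangle$, after verifying that $R$ is a proper connected graded extension of $T$. Note first that the corollary is nontrivial only when $x \notin T$ (otherwise $R = T$ is noetherian and $\wh{R}_0 = \kk$), so I assume this throughout. Since $x$ is homogeneous of positive degree and $T$ is cg, the algebra $R$ is finitely generated over $\kk$ by elements of positive degree with $R_0 = \kk$; hence $R$ is $\NN$-graded with finite-dimensional graded components, so $R$ is cg (in particular finitely graded), and $T \subsetneq R \subseteq T_{(g)}$.

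For part (1), suppose for contradiction that $R$ is noetherian. Then Theorem~\ref{thm:3.5}(1) applies to the finitely graded noetherian overring $R$ of $T$ inside $T_{(g)}$ and forces $R = T$, contradicting $x \in R \ssm T$. Hence $R$ is not noetherian.

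For part (2), observe that the $g$-divisible hull $\wh{R}$ is by construction $g$-divisible with $T \subsetneq R \subseteq \wh{R} \subseteq T_{(g)}$. In the generic case $\wh{R} \subsetneq T_{(g)}$, I apply Theorem~\ref{thm:3.5}(2) with $\wh{R}$ in place of $R$: the hypotheses $T \neq \wh{R}$ and $\wh{R} = \wh{\wh{R}}$ hold, so the conclusion gives $\dim \wh{R}_0 = \infty$. In the degenerate case $\wh{R} = T_{(g)}$, one has $\wh{R}_0 \supseteq T^\circ$, and $T^\circ$ is infinite-dimensional because $\GKdim T^\circ = 2$. Either way, $\dim \wh{R}_0 = \infty$.

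There is no substantive obstacle here: all the real work has been packaged into Theorem~\ref{thm:3.5}, and the present corollary just unpacks the two conclusions of that theorem for the very concrete overring $\kk\langle T, x\rangle$. The only mildly delicate point is the splitting into the two subcases for $\wh{R}$, and that is dispatched by the trivial observation that $(T_{(g)})_0 \supseteq T^\circ$.
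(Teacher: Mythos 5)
Your argument for part (1), and the setup verifying that $R = \kk\langle T, x\rangle$ is cg (hence finitely graded) with $T \subsetneq R$ when $x \notin T$, is correct and matches the intended reading — the corollary, as stated, does implicitly require $x \notin T$ (otherwise $R = T$ is noetherian), and you were right to flag this. The application of Theorem~\ref{thm:3.5}(1) to rule out noetherianity is exactly the intended step.

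For part (2), however, there is a genuine error in your handling of the ``degenerate case'' $\wh{R} = T_{(g)}$. You claim $(T_{(g)})_0 \supseteq T^\circ$, but this is false: $T_{(g)}$ is the localisation of $T$ at the set of homogeneous elements in $T \ssm gT$ (Definition~\ref{min-model defn}), so $g$ itself is \emph{not} inverted and $g^{-1} \notin T_{(g)}$. An element $ag^{-n} \in T^\circ$ with $a \in T_n$ and $g \nmid a$ cannot be written as $bc^{-1}$ with $c \notin gT$ (this would force $g \mid a$ since $gT$ is completely prime), so $T^\circ \not\subseteq T_{(g)}$. You appear to have conflated $T_{(g)}$ with $T[g^{-1}]$; these are very different localisations, corresponding respectively to localising at the generic point of $E$ versus deleting $E$. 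That said, the case split is avoidable: the paper's notation uses $\subsetneq$/$\subsetneqq$ for strict inclusion and $\subset$ for a possibly non-strict one (compare Hypothesis~\ref{hyp:2.0}(2), which writes $T \subsetneqq R \subset T_{(g)}$), so the hypothesis ``$T \subseteq R \subset T_{(g)}$'' in Theorem~\ref{thm:3.5}(2) already covers $R = T_{(g)}$, and one can simply apply Theorem~\ref{thm:3.5}(2) to $\wh{R}$ directly since $\wh{R}$ is $g$-divisible with $T \subsetneq R \subseteq \wh{R} \subseteq T_{(g)}$. If you do wish to treat $\wh{R}=T_{(g)}$ separately, the correct observation is that $(T_{(g)})_0$ surjects onto $(T_{(g)}/gT_{(g)})_0 \cong Q_{gr}(B)_0 \cong \kk(E)$, which is infinite dimensional — not that it contains $T^\circ$.
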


Finally, we note for future reference the following sufficient condition, due to Simon Crawford, for when a ring has an $A_1$ singularity.  
Note that the hypothesis that $M$ is a generator is automatic if $A$ is simple.

\begin{proposition}\label{prop:Simon}
{\rm (\cite[Theorem~4.5.7]{Simonthesis})}
 Let $A$ be  a left and right noetherian $\kk$-algebra of injective dimension at most 2. 
Suppose that  $A$ has an MCM right module $M$ which is a generator  such that $\gldim \End_A (M) \leq 2$ and $\uEnd_A(M) = \End_A(M)/MM^* \cong \kk$.
 Then $\gldim A=\infty$  and $A$  has an $A_1$ singularity.    \end{proposition}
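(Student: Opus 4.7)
The plan is to treat the two conclusions separately. For $\gldim A = \infty$: first note that $M$ cannot be projective, since if it were then $MM^*$ would equal $\End_A(M)$, forcing $\uEnd_A(M) = 0$ rather than $\kk$. If $\gldim A$ were finite, then combined with $\injdim A \leq 2$, $A$ would be Iwanaga--Gorenstein of finite global dimension, in which case every MCM module is projective (as MCM coincides with Gorenstein-projective, and the latter is projective whenever $\gldim A < \infty$). This contradicts the existence of the non-projective MCM module $M$, so $\gldim A = \infty$.

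For the $A_1$-singularity claim, set $E = \End_A(M)$ and view $E$ as a non-commutative (crepant) resolution of $A$, the condition $\gldim E \leq 2$ being exactly the standard finiteness condition. The strategy is to show that $\Sing(A)$ is triangle-generated by the image of $M$, with $\Omega M \cong M$ in $\Sing(A)$; combined with the hypothesis $\uEnd_A(M) \cong \kk$, this identifies $\Sing(A) \simeq \operatorname{Vect}$ with trivial suspension, i.e.\ an $A_1$ singularity. For triangle-generation, the functor $F = \Hom_A(M,-) : \on{MCM}(A) \to \mmod E$ is fully faithful, and any MCM module $N$ gives an $E$-module $FN$ admitting a projective resolution of length $\leq 2$ in $\on{add}(E)$; an Auslander/Iyama-type argument (lifting this resolution back through $-\otimes_E M$ and using the Cohen--Macaulay condition to control failure of exactness up to projective $A$-summands) places $N$ in the thick subcategory of $\Sing(A)$ generated by $M$.

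The decisive step is establishing $\Omega M \cong M$ in $\Sing(A)$. Applying $\Hom_A(M,-)$ to $0 \to \Omega M \to P \to M \to 0$ with $P$ a projective cover of $M$ yields an exact sequence in which the connecting map $\Hom_A(M,P) \to \End_A(M)$ has image exactly $MM^*$, so its cokernel is $\uEnd_A(M) \cong \kk$ and hence $\Ext^1_A(M,\Omega M) \cong \kk$. Dually, $\uHom_A(\Omega M, M)$ can be computed from the same short exact sequence using $\gldim E \leq 2$, and the rigidity $\uEnd(M) = \kk$ combined with the generation result forces $\Omega M$ to be isomorphic to $M$ in $\Sing(A)$ (rather than some $M^{\oplus r}$ or a nontrivial extension). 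The hardest part is precisely this last identification: one must simultaneously exploit the $\gldim E \leq 2$ constraint (ruling out long syzygy chains), the rigidity $\uEnd(M) = \kk$ (ruling out nontrivial direct-sum splittings), and the injective-dimension-$\leq 2$ condition on $A$ (governing the relation between $\Omega$ and the projective cover) to pin down $\Omega M \cong M$ in $\Sing(A)$. Once this is done, $\Sing(A)$ is a triangulated category generated by a single object with $\uEnd = \kk$ and trivial suspension, hence equivalent to $\operatorname{Vect}$, giving the $A_1$ singularity.
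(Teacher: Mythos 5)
This proposition is quoted from Crawford's thesis (\cite[Theorem~4.5.7]{Simonthesis}) and the paper supplies no proof of its own, so I assess your argument on its own merits. The first part is correct: $M$ projective would force $\uEnd_A(M)=0$, and over a ring of finite global dimension every MCM module is projective, so $\gldim A=\infty$. The genuine gap lies in the step $\Omega M\cong M$ in $\Sing(A)$, which you assert but do not prove. The one computation you carry out is circular: $\Ext^1_A(M,\Omega M)\cong\uHom_A(M,\Sigma\Omega M)\cong\uEnd_A(M)$ holds tautologically in the triangulated category $\Sing(A)$, so finding $\Ext^1_A(M,\Omega M)\cong\kk$ is just the hypothesis $\uEnd_A(M)\cong\kk$ restated and constrains $\Omega M$ not at all. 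The claim that ``the rigidity $\uEnd(M)=\kk$ combined with the generation result forces $\Omega M\cong M$'' is the conclusion you want, not an argument, and your final paragraph only lists which hypotheses ought to interact without giving a mechanism.

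The mechanism is precisely the Auslander-style correspondence between $\on{add}(M)$ and $E$-projectives that you invoke (under the name ``Auslander/Iyama-type argument'') only for the generation step, but which is what closes the main gap. Apply $\Hom_A(M,-)$ to $0\to\Omega M\to P\to M\to 0$: since $M$ is MCM, $\Ext^1_A(M,P)=0$, the image of $\Hom_A(M,P)\to E:=\End_A(M)$ is exactly $MM^*$, and one obtains the exact sequence of $E$-modules $0\to\Hom_A(M,\Omega M)\to\Hom_A(M,P)\to E\to\kk\to 0$ in which $\Hom_A(M,P)$ is $E$-projective because $P\in\on{add}(M)$ (as $M$ is a generator). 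Thus $\Hom_A(M,\Omega M)$ is a second syzygy of $\kk$ over $E$; since $\gldim E\leq 2$ it is $E$-projective, hence of the form $\Hom_A(M,M')$ for some $M'\in\on{add}(M)$, and because $\Hom_A(M,-)$ reflects isomorphisms (as $A$ is a summand of some $M^n$) this yields $\Omega M\cong M'\in\on{add}(M)$. In $\Sing(A)$ that means $\Omega M\cong M^{\oplus a}$ for some $a\geq 1$, and since $\Omega$ is an autoequivalence of $\Sing(A)$, $\kk\cong\uEnd_A(\Omega M)\cong\on{Mat}_a(\kk)$ forces $a=1$, giving $\Omega M\cong M$ in $\Sing(A)$. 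Your generation step also needs to be written out (two successive right $\on{add}(M)$-approximations, together with $\gldim E\leq 2$, give every MCM module an $\on{add}(M)$-resolution of length two), but once both steps are done, your closing reduction to $\on{Vect}$ does go through.
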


  
\section{Sklyanin algebras and Van den Bergh quadrics}\label{ALGEBRAS}

In this section we apply  Theorem~\ref{thm:3.5} to  some well-known elliptic algebras $T$: Sklyanin algebras and
 the quadric algebras  
  defined  by Van den Bergh in \cite{VdB3}.  The algebras break into two cases, the first of which is when $T$  satisfies the hypotheses of Theorem~\ref{thm:3.5}. The other case is when $\gldim T^\circ=2$. In the latter case we show that there is a Morita context between
   $T$ and a second elliptic algebra $T'$ to which Theorem~\ref{thm:3.5} can be applied. This is enough to prove that $T$ is also a minimal    elliptic surface.

  The formal definitions are as follows.

\begin{example}\label{eg:sklyanin} The (quadratic) Sklyanin algebra is defined to be 
$$S :=\Skl(a,b,c):=\kk\{x_1,x_2,x_3\}/(ax_ix_{i+1}+bx_{i+1}x_i+cx_{i+2}^2 : i\in \ZZ_3),$$
where  $[a\hskip-2pt : \hskip-2pt b\hskip-2pt  :\hskip-2pt c] \in \mathbb{P}^2\smallsetminus \Sigma$ for a (known)   set $\Sigma$. 
Here, $S$ contains a canonical central element $g\in S_3$ such that $S/gS\cong B(E,\mathcal{L},\sigma)$ for an elliptic curve $E$
with a line bundle $\mc{L}$ of degree 3. In this paper we assume that $|\sigma|=\infty$, from which 
it follows  that  the 3-Veronese ring $T=S^{(3)}$ is an elliptic algebra, which we sometimes call \emph{the Sklyanin elliptic algebra}. See, for example, 
\cite[Theorem~6.8(1)]{ATV1990} for the details.
\end{example}

\begin{example} \label{eg:cubic-sklyanin}
The cubic Sklyanin algebra is defined as
\[ S' := \Skl'(a,b,c) = \kk\{x_1, x_2\}/(a(x_{i+1}^2x_i+x_i^2x_{i+1})+bx_{i+1} x_ix_{i+1}+ cx_i^3 : i \in \ZZ_2),\]
for  $[a\hskip-2pt : \hskip-2pt b\hskip-2pt  :\hskip-2pt c] \in \PP^2 \ssm \Sigma$ for  a known   set $\Sigma$.
Then $S'$ contains a central element $g \in S'_4$ so that $S'/gS' \cong B(E, \sL', \sigma)$ for an elliptic curve $E$ with a line bundle $\sL'$ of degree 2.  As above, we assume that $|\sigma| = \infty$, and so $(S')^{(4)}$ is an elliptic algebra.
Details are in \cite{ATV1990} as above. We will not need to treat this case separately, since the Veronese ring $(S')^{(2)}$ is a noncommutative quadric in the sense of the next example (this is implicit in \cite{VdB3} and proven explicitly in \cite[Theorem~7.1]{DL}). 
\end{example}

 \begin{example}\label{eg:quadric}
 Assume now that $\chrr \kk  =0 $.
  For the purposes of this paper we define noncommutative quadrics as follows. First, 
 let $\SK$ denote a 4-dimensional Sklyanin algebra; thus $\SK$ is the $\kk$-algebra
with  $4$ generators $x_0,\dots ,x_3$ and the $6$ relations
$$x_0x_i-x_ix_0=\alpha_i(x_{i+1}x_{i+2}+x_{i+2}x_{i+1}),\quad 
x_0x_i+x_ix_0=
x_{i+1}x_{i+2}-x_{i+2}x_{i+1},$$ where the subscripts are taken to be
$\{1,2,3\}$ mod $3$ and the $\alpha_i$ satisfy $\alpha_1\alpha_2\alpha_3
+ 
\alpha_1+\alpha_2+\alpha_3=0$ and $\{\alpha_i\}\cap\{0,\pm
1\}=\emptyset$.
Significant properties from \cite{SSf} are  that $S=\SK$ is a noetherian Artin-Schelter 
regular domain of dimension $4$, as defined in \cite{SSf},   with the Hilbert series $(1-t)^{-4}$ of a polynomial ring in 4 variables. Moreover,   $S$
has a two-dimensional space of  central  homogeneous
elements
$V\subset S_2$. The factor ring 
$S/SV$ is isomorphic to a twisted homogeneous
coordinate
ring $B(E,\mathcal{L},\sigma)$, where $E$ is a
 smooth elliptic curve and
$\sigma$ is an automorphism given by 
translation by a point also denoted $\sigma\in E$. However,  $\mathcal{L}$ is now a line
bundle of degree $4$.  For the purposes of this paper we again assume that $|\sigma|=\infty$.

 Given a non-zero element $\Omega\in V$ we define \emph{the Van den Bergh quadric}
  $\QVB:=\QVB(\Omega):=S/S\Omega$.  \label{VDB-defn} 
  As explained,  for example in \cite{SV} this is a domain which is Artin-Schelter Gorenstein in the sense of \cite{ASc}.  
 Fixing  a basis element $g$ for the image of $V$ in $\QVB$ ensures 
  that   $T :=\QVB^{(2)}$
 is an elliptic algebra, which we term  \emph{a quadric elliptic algebra}.

 The structure of $\QVB$ depends upon the choice of  $\Omega$  and so  we refine our notation as follows.
 We use the notation from  \cite[(10.3)]{SV},  where the reader is referred for more details. First identify  $E\subset \mathbb{P}(S_1^*)$. 
 Given two points $p,q\in E$ we can define a left $S$-line module $L(\overline{pq}):=S/SW$, where $W\subset S_1$ consists of the forms
  vanishing on the line $\overline{pq}  \subset  \mathbb{P}(S_1^*)$. It is known that, up to scalar multiple,   there exists    a unique   
   $\Omega\in V$ vanishing on $L(\overline{pq})$.
In fact, $\Omega$ depends only on the sum $r=p+q$ under the group law on $E$, so we write $\Omega = \Omega(r)$.
 Finally, we define $\QVB(r) := S/S\Omega(r)$.   As noted in \cite[(10.3)]{SV}, $\QVB(r)=\QVB(-r-2\sigma)$. \end{example} 
     
       \begin{remark}\label{rem:char0} 
       The characteristic zero hypothesis is  only  needed  in Example~\ref{eg:quadric} because this   hypothesis is assumed in the  results 
       we cite from  \cite{LS,VdB1}, and we conjecture that all the results hold over arbitrary fields. Certainly  our results on  quadratic 
       Sklyanin algebras are characteristic-free, and one can give characteristic-free proofs for the results on cubic Sklyanin algebras.               \end{remark}

 We remark that in \cite{VdB3},  Van den Bergh actually defines his quadrics as certain categories deforming the category of 
 quasicoherent sheaves on a commutative quadric surface. It is only after his classification that one can identify his ``elliptic'' quadrics 
 as $\rqgr R$ for factors $R$ of $\SK$. Since this relationship is not relevant to the present paper, we refer the interested  reader to \cite{VdB3} and \cite[Sections~11 and 12]{StV} for the details.  These quadrics  include, but are much more general than,  the noncommutative analogue of $\mathbb{P}^1\times\mathbb{P}^1$ (see \cite[Introduction]{VdB3})  as well as a sort of noncommutative analogue of a Hirzebruch surface $F_2$ (see \cite[Section~10.5]{SV}).

   Much is known about the Van den Bergh quadrics, and we begin by collecting some of those known facts.
    We start with another definition.
   
  \begin{definition}\label{defn:smooth} If $T$ is a cg noetherian  algebra, then $\rqgr T$ is   \emph{smooth} if it has finite homological dimension. By \cite[Lemma~6.8]{RSSblowdown},  if $T$ is an elliptic algebra, this is equivalent to asserting that $\gldim T^\circ < \infty$.
  \end{definition}

    \begin{lemma}\label{lem:Smith-VdB}  Let $Q  :=\QVB(r) = S/\Omega(r)$   for some $r\in E$, and let $E_2$ denote the points of order 2 in $E$. Then:
    \begin{enumerate} 
    \item $\rqgr Q$ is smooth if and only if $r+\sigma\not\in E_2$. 
    \item Assume that  $r+\sigma\not\in E_2$. Then either $Q^\circ$ is simple hereditary or $\gldim Q^\circ=2$. In the latter case $Q^\circ$  has, up to isomorphism, a unique finite dimensional simple module, say of dimension $d$. 
    This occurs if and only if $r=\omega+n\sigma$ for some $\omega\in E_2$, $n \in \ZZ \ssm \{-1\}$, with $d = \abs{n+1}$. 
    
    \item    If  $r+\sigma\in E_2$, then $Q^\circ$ is simple of infinite global homological dimension. 

\item All the above results also hold for $Q=\QVB(r)^{(2)}$. \end{enumerate}
 \end{lemma}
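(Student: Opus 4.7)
The plan is to recognise this as a compilation of known facts from the Smith--Van den Bergh classification of noncommutative quadrics \cite{SV,VdB3}, together with input from \cite{LS, VdB1}, and to assemble them using the general machinery of Sections~\ref{MINIMAL} and~\ref{A1SINGULARITIES}.

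I would dispose of Part~(4) first. Since the central element $g$ sits in $Q_2$ while $g \in T_1$ for $T = Q^{(2)}$, a typical degree-$0$ element $xg^{-n} \in Q[g^{-1}]_0$ has $x \in Q_{2n} = (Q^{(2)})_n$; thus $Q^\circ$ and $T^\circ$ coincide as rings. Combined with the Veronese equivalence $\rqgr Q^{(2)} \simeq \rqgr Q$ (which preserves smoothness by Definition~\ref{defn:smooth}), this reduces (4) to (1)--(3).

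For Part~(1), smoothness of $\rqgr Q$ is equivalent to $\gldim Q^\circ < \infty$ by Definition~\ref{defn:smooth}. The characterisation in terms of the condition $r + \sigma \notin E_2$ is a direct consequence of the classification in \cite[Section~10]{SV}, which in turn relies on results of \cite{VdB1}; this is the point where Remark~\ref{rem:char0} forces the characteristic-zero hypothesis. For Parts~(2) and~(3), I would identify the finite-dimensional simple $Q^\circ$-modules with (the images under localisation of) the fat point modules of $Q$, and then read off the classification from \cite[(10.3)]{SV}: the fat points at $r = \omega + n\sigma$ with $\omega \in E_2$ and $n \neq -1$ have multiplicity $d = |n+1|$, and these become singular exactly when $r + \sigma \in E_2$. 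The description of the associated simple $Q^\circ$-modules, including their dimensions, can then be extracted from \cite{LS}.

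To finish, I would combine this classification with the general principle, from Lemmas~\ref{lem:2.1} and~\ref{lem:hereditary-simple}, that a finitely generated prime noetherian Auslander--Gorenstein $\kk$-algebra of injective dimension at most $2$ and finite global dimension is simple hereditary precisely when it has no finite-dimensional simple modules, and is CM of global dimension $2$ otherwise. Since $Q^\circ$ is always Auslander--Gorenstein and CM with $\id Q^\circ \leq 2$ by Lemma~\ref{lem:2.1}, the trichotomy in~(2)--(3) follows at once. The main obstacle is not mathematical depth but bookkeeping: pinpointing the exact statements in \cite{SV,LS,VdB1} that provide the smoothness criterion, the classification of fat points, and the multiplicity formula $d = |n+1|$, and checking that the dictionary between fat points of $Q$ and finite-dimensional simples of $Q^\circ$ is uniform across the three cases.
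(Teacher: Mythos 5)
Your proposal follows essentially the same route as the paper: Part~(1) is quoted from \cite[Theorem~10.2]{SV}; Part~(4) is a Veronese bookkeeping reduction; and the heart of Part~(2) is the identification of finite-dimensional simple $Q^\circ$-modules with fat points of $Q$, read off from the literature, combined with the Auslander--Gorenstein/CM structure of $Q^\circ$ from Lemma~\ref{lem:2.1}. Two places, however, need tightening.

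First, the classification of fat points is not in \cite[(10.3)]{SV}; that reference concerns line modules and the parameterisation $\Omega(r)$. The paper instead assembles the relevant facts from \cite[Proposition~2.4]{SSf} (exceptional $g$-torsionfree point modules), \cite[Theorem~5.7]{LS} (which values of $r$ they occur at), and \cite[Proposition~4.4 and Remark~p.84]{SS} (fat points of multiplicity $d>1$ and uniqueness of the fat point). The uniqueness statement from \cite{SS} is the step that lets you conclude ``up to isomorphism, a \emph{unique} finite-dimensional simple module'' in Part~(2); your proposal does not explain where uniqueness comes from, and it is not automatic from the multiplicity formula alone.

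Second, your ``general principle'' from Lemmas~\ref{lem:2.1} and~\ref{lem:hereditary-simple} is formulated for algebras of \emph{finite} global dimension and so does not cover Part~(3), where $\gldim Q^\circ=\infty$. Lemma~\ref{lem:hereditary-simple} has ``hereditary'' as a hypothesis, so it cannot be invoked to conclude simplicity in the singular case. What the paper does is re-run the argument of Lemma~\ref{lem:hereditary-simple} by hand: assume a proper nonzero prime factor $A=Q^\circ/I$, note $\GKdim A\leq 1$ by \cite[Proposition~3.15]{KL}, deduce from \cite{SW} and \cite[Corollary~10.9]{KL} that $A$, and hence $Q^\circ$, has a nonzero finite-dimensional module, and then contradict the computation from Part~(2) (which showed no fat points exist when $r+\sigma\in E_2$). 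Your proposal should make this explicit rather than subsuming (3) into a trichotomy that is only stated for the smooth case.
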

 
 \begin{proof}  (1)  This  is \cite[Theorem~10.2]{SV}.  
 
 (2)  By Lemma~\ref{lem:2.1}, $Q^\circ$ is CM of finite injective dimension and Gelfand-Kirillov dimension 2, for any choice of $r$. As such,
  if $\rqgr Q$ is smooth then, necessarily,  either $Q^\circ $ is  simple or $\gldim Q^\circ=2$, in which case $Q^\circ$ has at least one finite dimensional simple module. So we need to identify when the latter happens.

 Recall that a \emph{fat point} over $Q$ (or $S$)
 is a 1-critical graded $Q$-module $M$. As such, there exists $d$ such that 
 $\dim_{\kk}M_n=d$, for $n\gg 0$, called the \emph{multiplicity of $M$}. \label{multiplicity-defn}
  Note that if $M$ is a 1-critical $S$-module, then either $M$ is a $g$-torsion point module (and hence a $B$-module) or $M$ is killed by at most one $\Omega\in V$, up to scalar multiples. 
 
   If $M$ is a $g$-torsionfree point module, then  \cite[Proposition~2.4]{SSf} implies that $M$ is   one of 4 exceptional  point modules   and, by  \cite[Theorem~5.7]{LS} this implies that $r\in \{\omega, \omega-2\sigma\}$ for some $\omega\in E_2$. On the other
    hand, if $M$ is a fat point of multiplicity $d>1$, then $r\in \{\omega -1 \pm d\sigma\}$ by
  \cite[Proposition~4.4]{SS}.   Moreover, by  \cite[Remark~p.84]{SS} if  $r\in \{\omega -1 \pm d\sigma\}$ for $d\geq 1$,   there is only one fat 
  point in $\rqgr Q$. 
  By \cite[Lemma~2.1]{RSS}, $Q^\circ \cong Q^{(2)}/(g-1)$. Thus  $Q^\circ$ then  has exactly  one finite dimensional, simple module 
  $M^\circ$ and, moreover, $\dim_{\kk} M^\circ =d$.

  (3)  The proof is similar to that of Lemma~\ref{lem:hereditary-simple}. 
   Assume that $r+\sigma \in E_2$.  By (1) $\gldim Q^\circ=\infty$ and, by (2), it cannot have any finite dimensional modules.
  Suppose, however, that $Q^\circ$ is not a simple ring; thus it has a proper, nonzero prime factor ring $A=Q^\circ/I$. 
  By \cite[Proposition~3.15]{KL}, $\GKdim A\leq 1$ and so,  by \cite{SW} and  \cite[Corollary~10.9]{KL},
    $A$ has a nonzero finite dimensional module, a contradiction. 
 Thus $Q^\circ$ is simple. 
  
 (4) This follows immediately from the fact that $\rqgr Q=\rqgr Q^{(2)}$ (see for example \cite[Proposition~6.2]{AS})
 and $Q^\circ=(Q^{(2)})^\circ$.
 \end{proof}

If $r' = r+n\sigma$ for some $n \in \ZZ$ then Van den Bergh shows there is a Morita context between $\QVB(r)$ and $\QVB(r')$.
In the next  result, we show that if $n=1$, this Morita context is given by a \emph{line ideal}:  a right ideal $J$ of $Q= \QVB(r)$ so that  
 $Q/J $ is a line module.
 
  As a matter of notation, suppose that $A$ is a noetherian  domain with  division ring of fractions $\Fr(A)$ and a finitely generated 
   fractional right ideal $J$. Then we will identify $J^* =\Hom_A(J,A) $ with $\{\theta\in \Fr(A) : \theta J\subseteq A\}$ and 
   $\End_A(J) $ with $\{\theta\in \Fr(A) : \theta J\subseteq J\}$, with similar conventions for left modules.   The following well-known lemma will be used frequently.

\begin{lemma}\label{mo-lemma} Let $A$ and $A'$ be noetherian maximal orders with the same division ring of fractions $\Fr(A)$ and suppose that 
there is an $(A,A')$-bimodule $M$ contained in $\Fr(A)$ and finitely generated on both sides. Then;
\begin{enumerate} 
\item $A'=\End_{A}(M)=\End_{A}(M^{**})$;
\item $\Hom_A(M,A)=\Hom_{A'}(M,A')$, so the notation $M^*$ is unambiguous.
\end{enumerate}
\end{lemma}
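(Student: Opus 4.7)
I would identify every endomorphism and Hom space in the statement with a concrete subset of the common division ring $Q := \Fr(A) = \Fr(A')$, via left or right multiplication, and then exploit the maximal-order hypothesis on $A$ and $A'$ to squeeze out the desired equalities.

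For Part (1), I would identify $\End_A(M)$ with $B := \{\theta \in Q : M\theta \subseteq M\}$ by the observation that any left-$A$-linear endomorphism of $M \subseteq Q$ extends uniquely to a $Q$-linear endomorphism of $Q$, hence is right multiplication by some $\theta \in Q$. The right $A'$-action on $M$ immediately gives $A' \subseteq B$. For the reverse inclusion, I would use that $M$ is finitely generated as a right $A'$-module with $M \subseteq \Fr(A')$: a common-denominator argument via the Ore condition in $A'$ produces a nonzero $c \in A'$ with $cM \subseteq A'$. Fixing any nonzero $m_0 \in M$ and setting $a := cm_0 \in A' \setminus \{0\}$, I would observe that for $\theta \in B$ one has $a\theta = c(m_0\theta) \in cM \subseteq A'$, since $m_0\theta \in M$. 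Thus $aB \subseteq A'$, and taking the second denominator in the maximal-order definition to be $1 \in A'$ forces $B = A'$. The equality $\End_A(M^{**}) = A'$ would follow by applying the same argument to $M^{**}$, after checking that $M^{**}$ remains an $(A,A')$-bimodule finitely generated on both sides (iterated duals of finitely generated modules over noetherian rings remain finitely generated).

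For Part (2), I would first invoke the symmetric form of Part (1)---with the roles of $A$ and $A'$, and of left and right, swapped---to obtain $A = \{q \in Q : qM \subseteq M\}$. Then I would identify $\Hom_A(M,A) = \{\theta \in Q : M\theta \subseteq A\}$ and $\Hom_{A'}(M,A') = \{\eta \in Q : \eta M \subseteq A'\}$ as subsets of $Q$ via right and left multiplication, respectively. For $\theta$ with $M\theta \subseteq A$ and any $m \in M$, the computation $M(\theta m) = (M\theta)m \subseteq Am \subseteq M$ gives $\theta m \in A'$ by Part (1), whence $\theta M \subseteq A'$. The reverse inclusion is symmetric: for $\eta M \subseteq A'$ and $m \in M$, the computation $(m\eta)M = m(\eta M) \subseteq mA' \subseteq M$ gives $m\eta \in A$, whence $M\eta \subseteq A$. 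So the two subsets of $Q$ coincide.

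\textbf{Main obstacle.} No single step is especially deep, but the observation that makes the whole argument clean is that the bimodule structure on $M$, combined with a single one-sided denominator $c \in A'$ with $cM \subseteq A'$, lets one convert any $\theta \in \End_A(M)$ into $(cm_0)\theta \in A'$ via an arbitrary nonzero $m_0 \in M$. This sidesteps any need to manufacture two independent two-sided denominators in $A'$; once this core identification is established, both the $M^{**}$ statement and Part (2) reduce to formal bookkeeping using the symmetric version of the argument.
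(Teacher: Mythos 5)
Your proof of Part (1) for $\End_A(M)=A'$ and your Part (2) are both correct, and the overall strategy (realize everything as subsets of $Q=\Fr(A)$ and invoke the maximal order condition) is the same one the paper uses. The paper packages the maximal-order step by quoting an external lemma (that two equivalent maximal orders with one containing the other must coincide), whereas you build the denominators by hand with $b=1$; that is a perfectly good and slightly more self-contained version of the same idea. Your argument for Part (2) via $\End_{A'}(M)=A$, $\theta m\in\End_A(M)=A'$ and $m\eta\in\End_{A'}(M)=A$ is essentially the paper's argument ("$M^*M\subseteq A'=\End_A(M)$... by symmetry").

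The one genuine gap is in the $M^{**}$ clause. You justify that $M^{**}$ is an $(A,A')$-bimodule finitely generated on both sides by appealing to "iterated duals of finitely generated modules over noetherian rings remain finitely generated." That principle gives you that $M^*=\Hom_A(M,A)$ is a finitely generated right $A$-module and $M^{**}=\Hom_A(M^*,A)$ is a finitely generated left $A$-module; it says nothing about the right $A'$-module structure on $M^{**}$, which is what your argument actually needs in order to produce $c'\in A'$ with $c'M^{**}\subseteq A'$. (The natural way to get finite generation over $A'$ is to compute the double dual as $\Hom_{A'}(\Hom_{A'}(M,A'),A')$ — but that this gives the same subset of $Q$ is precisely Part (2), applied to $M$ and then to $M^*$, so stating it this way makes Parts (1) and (2) entangled in a way you would have to untangle explicitly.) The simplest fix is to drop the requirement that $M^{**}$ be finitely generated over $A'$ and instead take both denominators nontrivial in the maximal order condition: with $m_0\in M$ and $d_0\in M^*$ nonzero one has $m_0\theta\in M^{**}$ and $m_0\theta d_0\in M^{**}M^*\subseteq A$ for every $\theta\in\End_A(M^{**})$; combining with $cM\subseteq A'$ and a nonzero element $j\in M^*M\subseteq A'$ yields $a\,\End_A(M^{**})\,b\subseteq A'$ with $a=cm_0$, $b=j$ in $A'\setminus\{0\}$, and the maximal order property of $A'$ then finishes without any finiteness claim on $M^{**}$ over $A'$.
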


\begin{remark}\label{mo-remark}  All the rings that concern us, in particular Sklyanin   elliptic algebras,   quadric elliptic algebras as well as 
the Sklyanin and quadric algebras themselves are maximal orders by \cite[Theorem~6.7]{R-Sklyanin}  and so the 
 lemma applies to them.
\end{remark} 

\begin{proof} (1) By hypothesis $A$ and $A'$ are equivalent orders in the sense that there exists non-zero elements $a,b,c,d\in \Fr(A)$
such that $aA'b\subseteq A$ and $cAd\subseteq A'$. Similarly, $A$ and $\End_A(M)$ are equivalent orders. Moreover $A'\subseteq \End_{A}(M)$. Now apply  \cite[Lemma~6.2]{RSSlong}.

(2) Note that $M^*=\Hom_A(M,A)$ satisfies $MM^*\subseteq A'=\End_A(M)$ and so $M^*\subseteq \Hom_{A'}(M,A')$. So 
$M^*=\Hom_{A'}(M,A')$ by symmetry.
\end{proof}

\begin{lemma}\label{lem:translation}
Let $Q := \QVB(r)$ be a Van den Bergh quadric, and let $r' = r+\sigma$.
Then there is a right ideal $J$ of $Q$ so that $\End_Q(J) \cong \QVB(r') $ and $\End_{\QVB(r')}(J) \cong Q$.  
Thus  $J$  is a $(\QVB(r'),\, Q)$-bimodule.   
 Further, we may choose $J$ to be a   line ideal that is MCM.

\end{lemma}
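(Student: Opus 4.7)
The plan is to construct $J$ explicitly as a line ideal of $Q$ and then identify $\End_Q(J)$ with $\QVB(r+\sigma)$ using Van den Bergh's translation principle. For the construction, I choose any $p \in E$ and set $q = r - p$ under the group law on $E$. Let $W \subset S_1$ be the $2$-dimensional subspace of linear forms vanishing on the line $\overline{pq} \subset \PP(S_1^*)$, and let $L = S/WS$ be the corresponding right $S$-line module. By the characterisation of $\Omega(r)$ recalled from \cite[(10.3)]{SV} (namely, that $\Omega(r)$ is, up to scalar, the unique element of $V$ annihilating $L(\overline{pq})$ when $p+q = r$), the element $\Omega(r)$ annihilates $L$, so $L$ descends to a right $Q$-module. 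Setting $J := WQ \subseteq Q$, we obtain a line ideal with $Q/J \cong L$.

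Next I would verify that $J$ is MCM. Since $Q$ is AS-Gorenstein of dimension $3$ (by \cite{SV}, or by transferring Auslander-Gorenstein and CM from the elliptic algebra $T = Q^{(2)}$ using Lemma 2.2), and the line module $L$ is $2$-pure of Gelfand-Kirillov dimension $2$, the CM condition on $Q$ forces $\Ext^i_Q(L,Q) = 0$ for $i \neq 1$. The long exact sequence associated to $0 \to J \to Q \to L \to 0$ then gives $\Ext^i_Q(J,Q) = 0$ for all $i \geq 1$, so $J$ is MCM.

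To identify $R' := \End_Q(J)$ with $\QVB(r+\sigma)$, set $J$ to be viewed as an $(R', Q)$-bimodule inside $\Fr(Q)$. By Remark 5.7 both $Q$ and $R'$ are maximal orders with the same division ring of fractions, and by Lemma 5.6 the ring $R'$ is uniquely determined by the bimodule structure of $J$. Van den Bergh's translation principle \cite{VdB3} asserts that for each $n \in \ZZ$ there is a Morita context between $\QVB(r)$ and $\QVB(r + n\sigma)$, so the overrings of $Q$ obtained in this way are precisely the $\QVB(r + n\sigma)$. To pin down the value $n = 1$ for our line ideal, one computes $\Ext^1_Q(L, Q)$ as a left $Q$-module: this is a shifted left line module whose associated line, under the $\sigma$-twisting coming from the Auslander-Gorenstein duality on $Q$, corresponds to the translated point pair $(p - \sigma, q)$ (or equivalent), whose sum is $r + \sigma$. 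Hence $R' \cong \QVB(r+\sigma)$. The symmetric identification $\End_{R'}(J) \cong Q$ follows by applying the same argument with $r$ and $r+\sigma$ interchanged, using $(r+\sigma) - \sigma = r$.

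The main obstacle will be Step 3: pinning down that the endomorphism ring is \emph{precisely} $\QVB(r+\sigma)$ and not some other translate $\QVB(r+n\sigma)$. This amounts to a careful bookkeeping of how $L$ transforms under $\Ext$-duality on $Q$, either by invoking the explicit form of Van den Bergh's Morita context in \cite{VdB3} or by computing the point scheme of $R'$ and matching it with the known point scheme of $\QVB(r+\sigma)$ from \cite{SV, LS}.
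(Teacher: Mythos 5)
Your approach runs in the opposite direction from the paper's, and that reversal is exactly where the difficulty concentrates. The paper starts from the translation principle of \cite{VdB1} (note: not \cite{VdB3}): it takes the specific $(Q',Q)$-bimodule $J$ constructed in \cite[Lemma~7.4.3]{VdB1}, for which $\End_Q(J) \cong Q'$ is already part of the package, and then proves that after replacing $J$ by its bireflexive hull $J^{**}$ one obtains a line ideal that is MCM, via the $\Ext^2_A$-duality over $A = \SK$ from \cite{LS} together with \cite[Proposition~7.2]{SV}. The identification of the endomorphism ring is never computed from scratch. You instead write down an explicit line ideal $J = WQ$ first and then try to recognise $\End_Q(J)$ as $\QVB(r+\sigma)$ afterwards; this is a genuinely different route, and a harder one.

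The central gap is the one you flag yourself: you never actually establish that $\End_Q(J) \cong \QVB(r+\sigma)$ rather than some other $\QVB(r+n\sigma)$, or indeed that it is any $\QVB$ at all. The translation principle asserts that Morita contexts between the various $\QVB(r+n\sigma)$ exist; it does not say that the endomorphism ring of an arbitrary line ideal is one of these quadrics, and no such argument appears in your sketch. Compounding this, before Lemma~\ref{mo-lemma} can be applied you need $\End_Q(J)$ to be a maximal order; Remark~\ref{mo-remark} gives this for the algebras $\QVB$ themselves, not for the endomorphism ring of a line ideal constructed by hand, which is precisely what you are trying to prove $\End_Q(J)$ equals. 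Two smaller issues: the parametrisation in Example~\ref{eg:quadric} concerns the \emph{left} line module $L(\overline{pq}) = S/SW$, while you use the right module $S/WS$, so the annihilation fact does not transfer verbatim; and being $2$-pure does not by itself imply CM, so your MCM argument silently assumes that $L$ is already CM over $Q$ — this is exactly why the paper passes to $J^{**}$ and invokes the duality of \cite{LS} before appealing to \cite[Proposition~7.2]{SV}.
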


\begin{proof}
Set $Q' := \QVB(r')$ and $A :=\SK$.
By \cite[(7.12) and Lemma~7.4.3]{VdB1}, there is a $(Q', Q)$-bimodule $J$ with 
\[ \hilb J \ =\ \hilb Q - 1/(1-t)^2 + H(t) \quad \mbox{ for some some Laurent polynomial  $H(t)$}\]
so that $\End_Q(J) \cong Q'$. 
Technically, $J$ is only defined as an element of $\rqgr Q$, so we take $J$ to be some representative in $\rgr Q$ of this equivalence class. 
In particular, from the proof and notation of  \cite[Lemma~7.4.3]{VdB1}  we can take 
\[ J \ :=\ \bigoplus_m \Gamma(\PP^1, \sO_{\PP^1}(-1) \otimes_{\PP^1} (\sB_{z+2\sigma})_m)\ 
 \subseteq\ \bigoplus_m \Gamma(\PP^1, (\sB_{z+2\sigma})_m) \ =\ Q,\]
so $J$ is a right ideal of $Q$. Next, as $Q'$ is a maximal order  by Remark~\ref{mo-remark}, 
 it follows from Lemma~\ref{mo-lemma} 
that $Q'=\End_Q(J^{**})$, and so we can replace $J$ by $J^{**}$.   Necessarily,  
$ \GKdim(Q/J)=2$ still holds and $L=Q/J$ still has multiplicity one. However, by the CM condition, $L$ now has no submodules of GKdim $\leq 1$.

Now  consider $L$ as an $A$-module.  By the CM condition for $A$, we have  $j_A(L)=\GKdim A-\GKdim L=2$. Thus, by   \cite[Proposition~2.1(f)]{LS},  both 
$L^{\hskip -2pt\vee}  := \Ext^2_A(L, A) $ and  $L^{\hskip -2pt\vee\vee}  :=\Ext^2_A(\Ext^2_A(L, A), A)$ 
are CM over $A$  with GK-dimension 2. Moreover,    \cite[Proposition~2.1(d)]{LS} implies that 
  the natural map $\psi: L\to L^{\hskip -2pt\vee\vee} $ has $\GKdim(\Ker(\psi))\leq 1$ and 
  $\GKdim(\coker(\psi))\leq 0$. Hence
  $\Ker(\psi)=0$ and     $ L^{\hskip -2pt\vee \vee}$ also has multiplicity 1. By  \cite[Proposition~2.12]{LS}
 $L^{\hskip -2pt\vee\vee} $ is therefore a line module and, in particular, is cyclic.
  Since $L^{\hskip -2pt\vee\vee} $ only differs from $L$ by a finite dimensional module, and has a prime annihilator \cite[Theorem~6.3]{LS},  it follows that $L^{\hskip -2pt\vee\vee} $ is a $Q$-module, say $L^{\hskip -2pt\vee\vee} =Q/J'$. Moreover,  $J'\supseteq  JQ_{\geq m}$ for some $m$. Since neither  $Q/J'$ nor $Q/J$ has a non-zero,  finite dimensional submodule, it follows that $J=J'$.  
  By \cite[Proposition~7.2]{SV}  this also ensures that $J$ is MCM,  and so $J$ is the 
  right ideal we seek.

   Finally, the assertion that  $\End_{Q'}(J) \cong Q$   follows from Lemma~\ref{mo-lemma} and Remark~\ref{mo-remark}.  
   \end{proof}

\begin{lemma}\label{lem:translation2}
Keep the set-up from Lemma~\ref{lem:translation} and let $J$ be the $(Q',Q)$-bimodule constructed there. Then 
 $J=J_1Q=Q'J_1$.
\end{lemma}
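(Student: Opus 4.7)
\medskip
\noindent
\textbf{Proof plan.} The right identity $J = J_1 Q$ is an instance of the standard fact that a line ideal in a Van den Bergh quadric is generated in degree~$1$. Because $L := Q/J$ is a line module, it has Hilbert series $1/(1-t)^2$ and is cyclic in degree~$0$, so $\dim_{\kk} J_1 = 2$. I would then appeal to the minimal linear presentation of line modules over the four-dimensional Sklyanin algebra, which descends to $Q$: the minimal projective resolution of $L$ begins
\[
Q(-1)^{2} \longrightarrow Q \longrightarrow L \longrightarrow 0,
\]
with the left map having image $J$ (see the line-ideal analysis in \cite{LS}, together with the MCM statement for line ideals in quadrics in \cite[Proposition~7.2]{SV}, which guarantees that the presentation of $L$ is truly linear at the first step and that $J$ has no hidden generators in higher degrees). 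This gives $J = J_1 Q$.

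For $J = Q' J_1$, the plan is to exploit the Morita duality packaged in Lemma~\ref{lem:translation} together with the identification $Q'=\End_Q(J)=\End_Q(J^{**})$ from Lemma~\ref{mo-lemma}. Applying $\Hom_Q(-,Q)$ to $0\to J\to Q\to L\to 0$ and using that $L$ is CM over $Q$ of codimension~$1$, so that $\Ext^i_Q(L,Q)=0$ for $i\neq 1$, yields a short exact sequence of left $Q$-modules
\[
0 \longrightarrow Q \longrightarrow J^{*} \longrightarrow \Ext^{1}_{Q}(L,Q) \longrightarrow 0,
\]
in which $\Ext^{1}_{Q}(L,Q)$ is (a shift of) a left line module for $Q$. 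By Lemma~\ref{mo-lemma}, $J^{*}=\Hom_{Q'}(J,Q')$, so this is also an exact sequence of right $Q'$-modules. The right $Q'$-module $\Ext^{1}_{Q}(L,Q)$ has a linear presentation by the left-sided analogue of the first paragraph; together with the fact that $Q$ sits inside $J^{*}$ as the scalar endomorphisms, this shows that $J^{*}$ is generated in its minimal degree as a right $Q'$-module.

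Finally, I would dualize back using reflexivity of $J$ (which holds because $J$ is MCM, as arranged in the proof of Lemma~\ref{lem:translation}): applying $\Hom_{Q'}(-,Q')$ to the above sequence recovers a left $Q'$-module presentation of $J=J^{**}$ whose first term is concentrated in degree~$1$, which is exactly the statement $J = Q' J_{1}$. The main obstacle, and the step that requires the most care, will be verifying that the gradings and the bimodule identifications are compatible — one must confirm that $J_{1}\subseteq J$ is precisely the image of the leading generators under the reflexivity isomorphism on the left, rather than some shift, and that the two applications of $\Hom(-,Q)$ (respectively $\Hom(-,Q')$) really land in the same copy of $J_{1}$ inside $J$.
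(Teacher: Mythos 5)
Your treatment of the right identity $J = J_1 Q$ is a reasonable alternative: the paper quotes \cite[Lemma~5.6(2)]{RSSblowdown} for this, while you reduce it to the first step of a linear presentation of $L$ together with the fact that $J$ is MCM. Both boil down to the same point — $J$ has no minimal generators in degrees $\ge 2$ — and that half of your plan is sound.

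The left identity $J = Q'J_1$ is where the argument breaks. From $0 \to J \to Q \to L \to 0$, applying $\Hom_Q(-,Q)$ does give the short exact sequence of \emph{left} $Q$-modules
\[
0 \longrightarrow Q \longrightarrow J^{*} \longrightarrow \Ext^{1}_{Q}(L,Q) \longrightarrow 0,
\]
with the first map the inclusion $Q\hookrightarrow J^{*}\subset \Fr(Q)$. But this is not a sequence of right $Q'$-modules. Under the identification $J^{*}=\{\theta\in\Fr(Q):\theta J\subseteq Q\}$, the right $Q'$-action coming from $\Hom_{Q'}(J,Q')$ is just right multiplication by $Q'$ inside $\Fr(Q)$, and the subset $Q\subseteq J^{*}$ is \emph{not} closed under right multiplication by $Q'$ — indeed $Q\cdot Q'\not\subseteq Q$, since $Q$ and $Q'$ are distinct maximal orders. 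Consequently the quotient $\Ext^1_Q(L,Q)\cong J^{*}/Q$ does not inherit a right $Q'$-module structure, and the subsequent claim that $J^{*}$ is generated in lowest degree as a right $Q'$-module, followed by dualizing back, has no foundation. The paper instead works modulo $g$: it shows $\overline{J}=J/Jg$ is saturated and torsionfree as a right $B$-module \emph{and} as a left $B'$-module (the key trick being that right saturation forces the putative left-saturation $X\supseteq\overline{J}$ to be a right $B$-module, hence equal to $\overline{J}$), then invokes Artin--Van den Bergh's classification of such modules over a TCR, computes $\deg\mathbf{p}=-2$ by Riemann--Roch, concludes degree-one generation of $\overline{J}$ as a left $B'$-module from \cite[Lemma~3.1]{R-Sklyanin}, and lifts via graded Nakayama. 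To repair your approach you would need a genuine two-sided statement relating $J_1$ to $J$ as a $(Q',Q)$-bimodule, and that is precisely what the reduction modulo $g$ and the TCR analysis supply.
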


\begin{proof}  For $J_Q$, the result follows from \cite[Lemma~5.6(2)]{RSSblowdown}, but a little more work is needed on the left.

Let $B := Q/Qg= B(E, \mc{N}, \sigma)$ and $B' := Q'/Q'g = B(E, \mc{N}',\sigma)$ for the appropriate   invertible sheaves 
$\mc{N}, \mc{N}'$ of  degree 4 on $E$.  
By \cite[Lemma~5.6(2) and Notation~2.2]{RSSblowdown}, $\overline{J}$ is a \emph{saturated} right $B$-module, in the sense that it has no finite dimensional extensions. 
Moreover it is a torsion-free $B$-module as $J$ is $g$-divisible.  As $g$ acts centrally,  
  $\overline{J}$ 
 is a therefore a torsion-free left $B'$-module. We first prove that $\overline{J}$  is also saturated on the left.
By \cite[(4.6.6) and Remark~5.8(4)]{Lev1992},  $X :=\Hom_{B'}(\Hom_{B'}(\overline{J},B'),B')$ is the    maximal essential extension
 of $\overline{J}$ by finite dimensional left $B'$-modules. Moreover,  $X/\overline{J}$ is finite dimensional, say with 
 $KX\subset \overline{J}$ for $K :=B'_{\geq m}$.  Clearly $X\subset Q_{\gr}(B')=Q_{\gr}(B)$ and  so if $y\in B$ then
 $KXy\subseteq \overline{J}y\subseteq \overline{J}$. Hence $Xy+\overline{J}$ is a finite dimensional extension of 
 $\overline{J}$  and so $Xy\subseteq X$. Thus $X$ is a right $B$-module which, as $\overline{J}_B$ is saturated, 
 implies that  $X=\overline{J}$; in other words, $\overline{J}$  is saturated as a left $B'$-module. Therefore, by
\cite[Theorem~1.3]{AV}, $\overline{J}=\bigoplus_{n\geq 0}H^0(E, \mc{N}'_n\otimes \mc{O}(\mathbf{p})^{\sigma^{n}})$,
 for some divisor $\mathbf{p}$.   Since $\overline{J}_Q$ is a line ideal, $\dim \overline{J}_1=\dim Q_1-2=2$,  and so 
$\deg \mathbf{p}=-2$,  by Riemann-Roch. Therefore, 
 \cite[Lemma~3.1]{R-Sklyanin} implies that $\overline{J}$  is generated in degree 1  as a left $B'$-module.
By the graded Nakayama lemma, $J$ is  also generated in degree 1 as  a left $Q'$-module.
\end{proof}

We immediately obtain:

\begin{corollary}\label{cor:translation}
Let $Q := \QVB(r)$ be a Van den Bergh quadric,
 and let $r' = r+n\sigma$ for some $n \in \ZZ$.
Then there is a graded rank one torsionfree right $Q$-module $M = M_{r',r}$ so that $Q':=   \QVB(r') \cong \End_Q(M)$,  while 
$\End_{Q'}(M)=Q$ and both ${}_{Q'}M$ and $M_Q$ are finitely generated.  
\end{corollary}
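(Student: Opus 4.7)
The plan is to iterate Lemma~\ref{lem:translation}, which settles the case $n=1$, by taking products of the resulting bimodules inside the common graded quotient ring, and to identify the endomorphism rings via Lemma~\ref{mo-lemma} (applicable by Remark~\ref{mo-remark}). The case $n=0$ is trivial: take $M=Q$.

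For $n>0$, set $Q_k := \QVB(r+k\sigma)$ for $0\leq k\leq n$ and, for each $k$, invoke Lemma~\ref{lem:translation} to obtain a $(Q_k, Q_{k-1})$-bimodule $J^{(k)}$ that is finitely generated on both sides and satisfies $\End_{Q_{k-1}}(J^{(k)})=Q_k$ and $\End_{Q_k}(J^{(k)})=Q_{k-1}$. Since the $Q_k$ are maximal orders sharing a common graded quotient ring $Q_{gr}(Q)$, the bimodules $J^{(k)}$ live inside this common ambient ring, and one may form
\[M := J^{(n)}J^{(n-1)}\cdots J^{(1)} \subseteq Q_{gr}(Q),\]
the product taken inside the quotient ring. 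As a product of bimodules with matching endpoints, $M$ is a $(Q_n,Q_0)=(Q',Q)$-bimodule. It is finitely generated on both sides since each factor is and the product of finitely generated bimodules is finitely generated; it has graded rank one (each $J^{(k)}$ does) and is torsionfree as it embeds in the quotient division ring.

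For $n<0$, write $n=-m$ with $m>0$ and run the analogous iteration on the indices $0,-1,\ldots,-m+1$ to obtain a $(Q_0,Q_{-m})$-bimodule $N$ with the same properties. Then set $M := \Hom_{Q_0}(N,Q_0)$. By Lemma~\ref{mo-lemma}(2), this coincides with $\Hom_{Q_{-m}}(N,Q_{-m})$, giving $M$ the structure of a $(Q_{-m},Q_0)=(Q',Q)$-bimodule. Finite generation on both sides is preserved under dualising between noetherian maximal orders: any surjection $Q_0^{\oplus k}\twoheadrightarrow N$ induces an embedding of $N^*$ into the left-noetherian module $Q_0^{\oplus k}$, and symmetrically on the other side. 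Graded rank one and torsionfreeness carry over, since $M$ again embeds in the common quotient division ring.

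In either case, the endomorphism identifications $\End_Q(M)=Q'$ and $\End_{Q'}(M)=Q$ follow from Lemma~\ref{mo-lemma}(1): $Q$ and $Q'$ are noetherian maximal orders with the same quotient division ring, $M$ is a $(Q',Q)$-bimodule finitely generated on both sides, so Lemma~\ref{mo-lemma}(1) upgrades the automatic inclusions $Q'\subseteq\End_Q(M)$ and $Q\subseteq\End_{Q'}(M)$ to equalities. The only genuinely delicate point is the negative-$n$ case, where one must check that dualising the iterated bimodule $N$ preserves the two-sided finite generation and the rank-one torsionfree property; this is handled by the noetherian maximal-order hypothesis together with Lemma~\ref{mo-lemma}(2).
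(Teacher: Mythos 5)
Your proof is correct and follows the same route as the paper's one-sentence proof, which simply says to ``take the appropriate product of the bimodules $J$'' and invoke Lemma~\ref{mo-lemma}. For $n>0$ you iterate Lemma~\ref{lem:translation} and take the product inside the common graded quotient ring, exactly as intended; your verification that finite generation on both sides propagates through the product (writing $J^{(n)}P = \sum a_i P$ using a finite generating set of $J^{(n)}$ over $Q_{n-1}$) is a detail the paper leaves tacit but which works because each $J^{(k)}$ is generated by $J^{(k)}_1$ on both sides by Lemma~\ref{lem:translation2}. The one place where you go beyond what the paper spells out is the case $n<0$, where ``product of $J$'s'' cannot literally produce a $(Q',Q)$-bimodule going in the downward direction; your remedy---build the upward bimodule $N$ from $Q_0$ to $Q_{-m}$ and then dualise, using Lemma~\ref{mo-lemma}(2) to see that $\Hom_{Q_0}(N,Q_0)=\Hom_{Q_{-m}}(N,Q_{-m})$ carries the right $(Q',Q)$-bimodule structure---is a valid and clean way to close that gap. (An alternative the authors may have had in mind is the symmetry $\QVB(r)=\QVB(-r-2\sigma)$, which converts the downward translation into an upward one in the case $\omega\in E_2$ actually used in Theorem~\ref{thm:3.6}, but your dualisation argument works for arbitrary $r$ and is self-contained.) Your finite-generation argument for the dual---$N^*$ embeds into $Q_0^{\oplus k}$ via dualising a surjection---is standard and correct over noetherian rings, and rank one plus torsionfreeness hold because $M=N^*\subseteq Q_{gr}(Q)$ is a nonzero right $Q$-submodule of the graded quotient ring.
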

 
\begin{proof}   Take $M$ to be the appropriate product of the bimodules $J$ constructed by  Lemma~\ref{lem:translation}.
Again, Lemma~\ref{mo-lemma} and  Remark~\ref{mo-remark} ensures that $\End_{Q'}(M)=Q$.   \end{proof}

\begin{proposition}\label{prop:3.2}  Let $T:=\QVB^{(2)}$ be the 2-Veronese of a Van den Bergh quadric. If  $T^\circ$ is simple, then either $T^\circ$ is hereditary, or  $T^\circ$ has an $A_1$ singularity, as in Definition~\ref{sing-defn}.

If $T=\Skl^{(3)}$ or $T = \Skl'^{(4)}$, then $T^\circ$ is simple and  hereditary.
\end{proposition}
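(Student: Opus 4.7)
My strategy for the Van den Bergh quadric case, which is the core of the proposition, is to use the trichotomy of Lemma~\ref{lem:Smith-VdB}. Write $T = Q^{(2)}$ with $Q = \QVB(r)$ and observe that $T^\circ = Q^\circ$ by Lemma~\ref{lem:Smith-VdB}(4). Under the hypothesis that $T^\circ$ is simple, the three cases from Lemma~\ref{lem:Smith-VdB} collapse to two viable ones. If $r + \sigma \notin E_2$ and $r$ is not of the form $\omega + n\sigma$ with $\omega \in E_2$ and $n \in \ZZ \ssm \{-1\}$, then $Q^\circ$ is already simple hereditary. If $r + \sigma \in E_2$, then $Q^\circ$ is simple of infinite global dimension and we still must produce an $A_1$ singularity. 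The remaining subcase of Lemma~\ref{lem:Smith-VdB}(2) — $\gldim Q^\circ = 2$ with a unique finite-dimensional simple $V$ of dimension $d = |n+1|$ — cannot occur under our hypothesis: the annihilator $\ann_{Q^\circ}(V)$ is a nonzero proper two-sided ideal (the quotient embeds into the finite-dimensional ring $\End_\kk(V)$ while $\GKdim Q^\circ = 2$), directly contradicting simplicity.

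In the remaining case $r + \sigma \in E_2$, I would write $r = \omega - \sigma$ and apply Corollary~\ref{cor:translation} with $n = 1$, taking $r' = r + \sigma = \omega$. This produces a graded rank-one torsionfree $(Q', Q)$-bimodule $M$ with $\End_Q(M) \cong Q' := \QVB(\omega)$ and $\End_{Q'}(M) \cong Q$, finitely generated on both sides. Localising, $M^\circ$ is finitely generated torsionfree and hence MCM over $Q^\circ$ by Lemma~\ref{lem:2.1}(3), and $\End_{Q^\circ}(M^\circ) = (Q')^\circ$. Now $\omega = \omega + 0\cdot \sigma$ places $(Q')^\circ$ in the gldim-$2$ subcase of Lemma~\ref{lem:Smith-VdB}(2) with $n' = 0$ and $d' = 1$, so $(Q')^\circ$ has global dimension exactly $2$ and a unique finite-dimensional simple module of dimension $1$; its annihilator $\mf m$ is the unique maximal two-sided ideal of $(Q')^\circ$, and $(Q')^\circ/\mf m \cong \kk$. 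I would then invoke Proposition~\ref{prop:Simon} with $A = Q^\circ$ and the MCM module $M^\circ$, checking the hypotheses: $\injdim Q^\circ \leq 2$ is Lemma~\ref{lem:2.1}; $M^\circ$ is a generator because $Q^\circ$ is simple and the trace ideal $(M^\circ)^* M^\circ$ of $Q^\circ$ is manifestly nonzero (it contains the images of nonzero maps $M \hookrightarrow Q$ arising from the line-ideal description in Lemma~\ref{lem:translation}), so equals $Q^\circ$; $\gldim \End_{Q^\circ}(M^\circ) = \gldim (Q')^\circ = 2$; and finally $\uEnd_{Q^\circ}(M^\circ) = (Q')^\circ/M^\circ (M^\circ)^* \cong \kk$.

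The main technical obstacle is this last identity $M^\circ (M^\circ)^* = \mf m$ inside $(Q')^\circ$. The intuition is that the Morita context obtained by translation by $\sigma$ becomes a Morita equivalence away from the exceptional finite-dimensional simple of $(Q')^\circ$, so the trace ideal is precisely the annihilator of that exceptional point, namely $\mf m$. I would try to establish this by exploiting the explicit line-ideal form of $M$ in Lemma~\ref{lem:translation} together with Van den Bergh's analysis of translation bimodules in \cite{VdB1} and \cite[Section~10]{SV}; in any case, the proof should ultimately rest on \cite[Theorem~4.5.7]{Simonthesis}, which is the source cited for Proposition~\ref{prop:Simon}.

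For the Sklyanin cases, the cubic case $T = (\Skl')^{(4)}$ reduces to the quadric case: by Example~\ref{eg:cubic-sklyanin}, $(\Skl')^{(2)}$ is itself a Van den Bergh quadric $Q$, and AS-regularity of $\Skl'$ (global dimension $3$) forces $\gldim Q^\circ < \infty$, ruling out the non-smooth case (ii) above and placing us in the smooth regime of Lemma~\ref{lem:Smith-VdB}(1); absence of fat (i.e.\ $g$-torsionfree $1$-critical) modules for cubic Sklyanin algebras with $|\sigma| = \infty$ then excludes the gldim-$2$ subcase, so $T^\circ$ is simple hereditary. The quadratic case $T = \Skl^{(3)}$ admits a direct argument along the same lines: $\gldim \Skl = 3$ gives $\gldim T^\circ < \infty$; absence of fat points for quadratic $3$-dimensional Sklyanin algebras with $|\sigma|=\infty$ implies $T^\circ$ has no finite-dimensional simples; an argument as in Lemma~\ref{lem:hereditary-simple} (combining the Small--Warfield theorem that GK-dim $\leq 1$ prime factors are PI with \cite[Corollary~10.9]{KL}) then forces $T^\circ$ to be simple; finally Lemma~\ref{lem:2.1}(1) gives $\injdim T^\circ = 1$, so finite global dimension upgrades to $\gldim T^\circ = 1$, i.e.\ hereditary.
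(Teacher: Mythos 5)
Your high-level strategy for the Van den Bergh quadric case coincides with the paper's: reduce to the case $Q = \QVB(\omega - \sigma)$, use the translation bimodule $J$ to $Q' = \QVB(\omega)$, and feed the MCM module $M := J^\circ$ into Proposition~\ref{prop:Simon}. You correctly identify the hypotheses that must be checked and correctly observe that everything hinges on the equality $\uEnd_{Q^\circ}(M) = (Q')^\circ / MM^* \cong \kk$. However, at precisely this point your proposal stops: you call it ``the main technical obstacle,'' state the (correct) intuition that the Morita context degenerates only at the exceptional simple of $(Q')^\circ$, and then say you ``would try to establish this by exploiting\dots'' and that the proof ``should ultimately rest on'' the cited result. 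That is not an argument. The paper has to do real work here: it first proves $J = J_1 Q = Q' J_1$ (Lemma~\ref{lem:translation2}, which in turn requires showing $\overline{J}$ is saturated as a left $B'$-module), then invokes \cite[Lemma~7.1]{SV} to get $\dim(J^*)_0 = 2$, combines this with Riemann--Roch and the $g$-divisibility of $J$ and $J^*$ (via \cite[Lemma~5.6]{RSSblowdown} and \cite[Lemma~2.12]{RSSlong}) to pin down $\overline{J^*} = (\overline{J})^*$ and hence $(\overline{JJ^*})_n = B_n$ for $n \geq 2$, and finally uses the non-Morita-equivalence of $Q^\circ$ and $(Q')^\circ$ (from the gldim contrast) to get that $Q'/JJ^*$ is infinite-dimensional and hence a $g$-torsionfree point module $\cong \kk[g]$. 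None of this is routine, and your sketch does not indicate how you would obtain the codimension-one bound in low degrees, which is the crux.

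For the Sklyanin cases your route is genuinely different from the paper's, which simply cites \cite[Theorem~7.3]{ATV2} for simplicity and \cite[Proposition~2.18]{Ajitabh} for the hereditary property. Your alternative --- finite global dimension from AS-regularity, plus absence of fat points, plus the Small--Warfield argument of Lemma~\ref{lem:hereditary-simple}, plus $\gldim = \injdim$ for noetherian rings of finite global dimension --- is a reasonable self-contained scheme, and the reduction of $\Skl'$ to a Van den Bergh quadric via Example~\ref{eg:cubic-sklyanin} is a nice observation. But it rests on the assertion that quadratic (respectively cubic) three-dimensional Sklyanin algebras at infinite-order $\sigma$ have no fat points and no $g$-torsionfree point modules, which you state without justification; that fact is essentially what the cited results of Artin--Tate--Van den Bergh and Ajitabh provide, so in effect you have swapped two citations for one slightly different unjustified claim. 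In short: the overall architecture is right and matches the paper, but the central computation establishing the $A_1$ singularity is a genuine gap that the paper fills with a nontrivial Hilbert-series argument, and your Sklyanin argument has a smaller but real citation gap.
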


\begin{proof}    
By  \cite[Theorem~7.3]{ATV2} localised Sklyanin elliptic algebras are simple (this uses our standing hypothesis that $|\sigma| = \infty$).
  If $T = \Skl^{(3)}$ then $T^\circ$ is  hereditary  by \cite[Proposition~2.18]{Ajitabh}.
 The same proof   works for $T = \Skl'^{(4)}$.    
  If $T = \QVB^{(2)}$, then the result is clear by Lemma~\ref{lem:Smith-VdB}, 
 unless $T^\circ$ has infinite global dimension.  By Lemma~\ref{lem:Smith-VdB} this happens only if $T = Q^{(2)}$, where $Q = \QVB(\omega-\sigma)$ for some $\omega \in E_2$.

So, fix such a $Q := \QVB(\omega-\sigma)$, set $Q' := \QVB(\omega)$ and let $J$ be the right ideal of $Q$   constructed in Lemma~\ref{lem:translation}; thus $J$ is a $(Q',Q)$-bimodule.
Let $M := J^\circ$.
Since $\End_{(Q')^\circ}(M) = \End_{Q'}(J)^\circ = Q^\circ$ is simple by Lemma~\ref{lem:Smith-VdB}(3), 
$M$ is a projective left $(Q')^\circ$-module by the Dual Basis Lemma.
As $\gldim (Q')^\circ=2$ and $\gldim Q^\circ=\infty$, certainly $(Q')^\circ$ and $Q^\circ$ are not Morita equivalent. Thus ${}_{(Q')^\circ}M$ is not a generator; equivalently  $MM^*\not=(Q')^\circ$  and so  $Q'/JJ^*$ is infinite-dimensional.
We claim that  $Q'/JJ^* \cong \kk[g]$.

By Lemma~\ref{lem:translation2}, ${}_{Q'} J $ and $J_Q$ are generated by $J_1$, which has dimension 2.
In particular,  $J[1]$ 
 is in the set $\mb M$ of right MCM $Q$-modules defined in \cite[5.3]{SV}.  
Thus by  \cite[Lemma~7.1]{SV},  $ \dim (J^*)_0  =2$, say with basis $\{a,b\}$. 
If $Ja=Jb$, then $Jab^{-1}=J$ and so $ab^{-1}\in \End_{Q'}(J)_0=Q_0=\kk$, which is impossible. Thus $Ja\not=Jb$.  Since $J=Q'J_1$   it follows that      $\dim (J J^*)_1\geq 3$; equivalently  $\codim(JJ^*)_1\leq 1$.  We note also that $J^*_{<0}=0$ as $J$ is not cyclic.

By  \cite[Lemma~5.6S(2)]{RSSblowdown},  $\overline{J}$ is saturated and so, as in the proof of Lemma~\ref{lem:translation2},   we may write  
  $B := Q/Qg = B(E, \mc N, \sigma)$,   and $\overline{J} = \bigoplus_{n \geq 0} H^0(E, \mc{O}(\mathbf{q})\otimes\mc{N}_n)$,  for some 
   invertible sheaf $\mc{N}$ and divisor $\mathbf{q}$ with $\deg\mathbf{q}=-2$. By \cite[Lemma~6.14]{RSSlong}
\begin{equation}\label{transequ}
 \overline{J^*} \subseteq (\overline J)^* = \bigoplus_{n \geq 0} H^0(E, \mc{N}_n\otimes \mc{O}(-\mathbf{q})^{\sigma^n}).
 \end{equation}
By \cite[Lemma~5.6]{RSSblowdown} and \cite[Lemma~2.12]{RSSlong}, $J$ and hence $J^*$ are $g$-divisible.
Thus $ \dim J^*_0 = \dim \overline{J^*}_0 = 2$  by the last paragraph, while $\dim (\overline J)^*_0=2$ by \eqref{transequ} and Riemann-Roch. Thus,  $ \overline{J^*}_0 = (\overline J)^*_0$. By \cite[Lemma~3.1]{R-Sklyanin} and \eqref{transequ}, $(\overline{J})^* $ is generated in degree 
zero and so $ \overline{J^*} = (\overline J)^*$.
By \cite[Lemma~3.1]{R-Sklyanin}, again, $(\overline{J J^*})_n = B_n$ for $n \geq 2$.  
Since $(J J^*)_{n+2} \supseteq g (J J^*)_n$, it follows that
 $\codim(JJ^*)_{n+2} \leq \codim (JJ^*)_{n}$ for all $n \in \NN$.
 This codimension is bounded by 1 in degrees 0 and 1, so  $\codim (J J^* )_n \leq 1$ for all $n\geq 0$.  
Conversely, as  $\dim Q'/JJ^* = \infty$, clearly $(J J^*)_n \neq Q'_n$ holds for all $n\geq 0$.
 Thus $P = Q'/JJ^*$ is a point module, from which it follows that  $P$ is $g$-torsionfree since  $P^\circ\not=0$. Thus $P \cong \kk[g]$, as claimed.
   
  Since $P =  \End_Q(J)/JJ^* \cong \kk[g]$, the stable endomorphism ring of $M = J^\circ$ is  
 \[\underline{\End}_{Q^\circ}(M) = (Q')^\circ/MM^* = \kk.\] 
We are now ready to apply   Proposition~\ref{prop:Simon} to $A=Q^\circ$.  Bt the second paragraph of the proof,  $\gldim Q^\circ=\infty$ while   $(Q')^\circ = \End_{Q^\circ}(M)$  has $\gldim((Q')^\circ)=2$.  Moreover,  as $Q^\circ$ is simple, $M$ is automatically a generator. Finally,  $\id(Q^\circ)=1$ and    $M$ is a MCM $Q^\circ$-module by Lemma ~\ref{lem:2.1}. By  Proposition~\ref{prop:Simon} 
and the last display,  $Q^\circ $  therefore has an $A_1$ singularity.  
\end{proof}
 
We next note that these elliptic algebras never have line modules.

\begin{lemma}\label{lem:nolines}
Let $A$ be a cg noetherian algebra of finite global dimension such that $\hilb A = (1-t)^{-n}$ for some $n\geq 2$.
Then for $d >1$, the Veronese ring $A^{(d)}$ has no  linear modules other than point modules.
\end{lemma}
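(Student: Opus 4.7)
Suppose for contradiction that $A^{(d)}$ admits a linear module $M$ with $\hilb M = (1-t)^{-p}$ for some $p \geq 2$. Writing $R = A^{(d)}$ and $M = R/I$ for a graded right ideal $I \subseteq R$, I would transport the problem to the ambient ring $A$ by forming the cyclic graded right $A$-module $N := A/(IA)$, and then derive a contradiction from the Hilbert series of $N$, using that $\gldim A < \infty$.

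The key preliminary calculation is that $(IA)_{dk} = I_k$ for every $k \geq 0$: expanding $(IA)_{dk} = \sum_{\ell=1}^{k} I_\ell \cdot A_{d(k-\ell)}$ and using $A_{d(k-\ell)} = R_{k-\ell}$ together with the right $R$-ideal property of $I$, each summand sits inside $I_k$, while the reverse inclusion is immediate from the $\ell = k$ term. Consequently $N_{dk} \cong M_k$, so $\dim N_{dk} = \binom{k+p-1}{p-1}$ for every $k$. This is the only step that uses anything module-theoretic; the rest will be combinatorics of Hilbert series.

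Since $N$ is a finitely generated $A$-module and $\gldim A < \infty$, it has a finite graded free resolution, and the alternating-sum of Hilbert series gives $\hilb N = P(t)/(1-t)^n$ for some $P(t) \in \ZZ[t]$. I would then factor $P(t) = (1-t)^{n-\alpha} Q(t)$ with $Q(t) \in \ZZ[t]$ and $Q(1) \neq 0$; this factorization lives in $\ZZ[t]$ because $(1-t)^{n-\alpha}$ has leading coefficient $\pm 1$. The standard asymptotic $\dim N_m \sim Q(1)\, m^{\alpha-1}/(\alpha-1)!$, matched against $\dim N_{dk} \sim k^{p-1}/(p-1)! = m^{p-1}/(d^{p-1}(p-1)!)$ with $m = dk$, forces $\alpha = p$ and $Q(1) = 1/d^{p-1}$.

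The contradiction is then automatic: $Q(1) \in \ZZ$, and $Q(1) > 0$ because $\dim N_m > 0$ for all large $m$, so $Q(1) \geq 1$, whereas $Q(1) = 1/d^{p-1} \leq 1/2$ when $d, p \geq 2$. Hence $p = 1$, and every linear $A^{(d)}$-module is a point module. I expect the only real work to be the clean identification $(IA)_{dk} = I_k$; once the $A$-module $N$ is set up so as to agree with $M$ on the $d\mathbb{Z}$-graded pieces, the remainder is manipulation of Hilbert series together with the positive integrality of the Hilbert--Samuel-type multiplicity $Q(1)$ of a module of finite projective dimension.
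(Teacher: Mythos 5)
Your proof is correct and follows essentially the same path as the paper's: induce the linear module up to $A$, observe that the graded pieces in degrees $\equiv 0 \pmod d$ recover the original module, use finite global dimension to write the Hilbert series as $Q(t)/(1-t)^{\alpha}$ with $Q(1)\in\ZZ\smallsetminus\{0\}$, and compare leading coefficients to force $Q(1)d^{p-1}=1$, which is impossible for $d>1$, $p\geq 2$. The only cosmetic difference is that you work with the cyclic presentation $N=A/IA$ and verify $(IA)_{dk}=I_k$ by hand, whereas the paper takes $\wt{L}:=L\otimes_{A^{(d)}}A$ (which equals $A/IA$ when $L=R/I$) and simply observes $L=\wt{L}^{(d)}$; these are the same object, so the arguments coincide.
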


\begin{proof}
Let $L$ be a finitely generated graded $A^{(d)}$-module with Hilbert series $(1-t)^{-p}$.
Let $\wt{L} := L \otimes_{A^{(d)} }A$, and note that $L=\wt{L}^{(d)}$.
Let $P_{\bullet} \to \wt{L} \to 0$ be a finite graded free resolution of $\wt{L}$.
Thus, each $P_j \cong \bigoplus_{i=1}^{\ell_j} A[-a_{ij}]$ for some natural numbers 
 $\ell_j, a_{ij}$ and so  $\hilb \wt{L} = F(t)/(1-t)^n$ for 
 $F = \sum_j (-1)^j \sum_i t^{a_{ij}}$.
Since $p = \GK(\wt{L}) = \GK L$, we can cancel  common factors of $(1-t)$ from this equation  to obtain  
a polynomial  $G\in \mathbb{Z}[t]$,
with $G(1) \neq 0$, such that   $\hilb \wt{L} = G(t)/(1-t)^p$.
Rewriting $G$ as an integer polynomial in $1-t$ gives 
\[ \hilb \wt{L} = H(t) + \sum_{k=1}^p c_k/(1-t)^k,\]
where $H \in \ZZ[t]$ and the $c_k \in \ZZ$.
Thus there exists  $f(t)\in \mathbb{Q}[t]$ with leading term $c_p t^{p-1}/(p-1)!$ with 
$\dim \wt{L}_n = f(n)$ for $n \gg 0$. Thus $\dim L_n = \dim \wt{L}_{dn} = f(dn)$ 
 for $n \gg 0$. But  
 $\dim L_n = \binom{n+p-1}{p-1}$ by hypothesis.  Thus, taking leading terms and multiplying by $(p-1)!$  gives 
$c_p (dn)^{p-1}  = n^{p-1}$ for $ n \gg 0$.
As $d> 1$ and $c_p \in \ZZ$ this is only possible if $p=1$.
\end{proof}

\begin{corollary}\label{cor:nolines}
If  $T=\Skl^{(3)}$ or $T = \Skl'^{(4)}$ or $T=\QVB^{(2)}$, then $T$ has no line modules.
\end{corollary}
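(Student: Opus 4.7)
The plan is to handle each of the three cases by invoking Lemma~\ref{lem:nolines} applied to an appropriate AS--regular parent algebra with Hilbert series of the form $(1-t)^{-n}$. A line module has Hilbert series $(1-t)^{-2}$ and is cyclic, so it is a linear module with parameter $p=2$; since point modules have $p=1$, ruling out linear modules with $p\geq 2$ will suffice.

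First I would dispose of the case $T=\Skl^{(3)}$. The quadratic Sklyanin algebra $\Skl$ (Example~\ref{eg:sklyanin}) is AS--regular of global dimension $3$ with $\hilb\Skl=(1-t)^{-3}$, so Lemma~\ref{lem:nolines} applies directly with $A=\Skl$, $n=3$, $d=3$ and gives that $\Skl^{(3)}$ has no linear modules other than point modules.

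Next I would reduce the cubic Sklyanin case to the Van den Bergh quadric case. By Example~\ref{eg:cubic-sklyanin}, $(\Skl')^{(2)}$ is isomorphic to some Van den Bergh quadric $\QVB$, so $(\Skl')^{(4)}=\bigl((\Skl')^{(2)}\bigr)^{(2)}=\QVB^{(2)}$ and the cubic Sklyanin case reduces to the third case.

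For the remaining case $T=\QVB^{(2)}$, I cannot invoke Lemma~\ref{lem:nolines} with $A=\QVB$ itself because $\hilb\QVB=(1+t)/(1-t)^3$ is not of the form $(1-t)^{-n}$. This is the one genuine obstacle, and the workaround is to pull back to the ambient $4$-dimensional Sklyanin algebra. Recall from Example~\ref{eg:quadric} that $\QVB=\SK/\SK\Omega$, so the surjection $\SK\twoheadrightarrow\QVB$ restricts to a surjection $\SK^{(2)}\twoheadrightarrow\QVB^{(2)}=T$. Given a line module $L$ of $T$, view $L$ as an $\SK^{(2)}$--module by restriction of scalars. Since $L=L_0\cdot T$ and $T$ is a quotient of $\SK^{(2)}$, we still have $L=L_0\cdot\SK^{(2)}$, so $L$ is a cyclic $\SK^{(2)}$--module with $\hilb L=(1-t)^{-2}$, i.e.\ a linear $\SK^{(2)}$--module with $p=2$. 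Now $\SK$ is AS--regular of global dimension $4$ with $\hilb\SK=(1-t)^{-4}$ (Example~\ref{eg:quadric}), so Lemma~\ref{lem:nolines} applies to $A=\SK$ with $n=4$, $d=2$: the only linear modules over $\SK^{(2)}$ are point modules, which have Hilbert series $1/(1-t)\neq(1-t)^{-2}$. This contradiction completes the proof.

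The main obstacle, as noted, is purely the Hilbert-series mismatch in the $\QVB$ case, and it is handled essentially for free by the existence of a degree-preserving surjection from a larger AS--regular algebra; no further analysis of $\QVB$ or of its singular loci (as classified in Lemma~\ref{lem:Smith-VdB}) is needed for this corollary.
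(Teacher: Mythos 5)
Your proof is correct and follows essentially the same approach as the paper's: handle $\Skl^{(3)}$ by applying Lemma~\ref{lem:nolines} directly to $\Skl$, identify $(\Skl')^{(2)}$ with a Van den Bergh quadric to reduce the cubic case, and handle $\QVB^{(2)}$ by restricting scalars through the surjection $\SK^{(2)}\twoheadrightarrow\QVB^{(2)}$ and applying Lemma~\ref{lem:nolines} to $\SK$ with $n=4$, $d=2$. You spell out the restriction-of-scalars step (cyclicity and Hilbert series preserved) in a bit more detail than the paper's terse proof, but the idea and key lemma are identical.
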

\begin{proof} 
For $T= \Skl^{(3)}$ the result is immediate from Lemma~\ref{lem:nolines}. For the other cases    use
 the fact that $\QVB$ (and hence  $T = \Skl'^{(2)}$)  is, by definition,  a factor of the Sklyanin algebra  $A=\SK$
 (see Example~\ref{eg:quadric}). 
   \end{proof}

 Putting together the results of  this section gives our main theorem.

\begin{theorem}\label{thm:3.6}  Let $T=\QVB^{(2)}$ or
$T=\Skl^{(3)} $
or $T = \Skl'^{(4)}$.  Then $T$ is a minimal  elliptic surface.   
 
Indeed,  suppose that $T\subseteq R\subset T_{(g)}$ is a finitely graded 
 overring such that    either  $R$ is noetherian or    $\widehat{R}$ is   finitely graded.  
   Then $R=T$.
\end{theorem}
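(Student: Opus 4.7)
The plan is to reduce the theorem to Theorem \ref{thm:3.5} by verifying, for each of the three families, that (i) $T^\circ$ is hereditary or is simple with an $A_1$ singularity, and (ii) $T$ has no line modules. Hypothesis (ii) is handled uniformly by Corollary \ref{cor:nolines}. Hypothesis (i) is delivered by Proposition \ref{prop:3.2} whenever $T^\circ$ is simple, so the bulk of the proof is a case analysis determining when $T^\circ$ is simple.

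For $T = \Skl^{(3)}$, Proposition \ref{prop:3.2} gives $T^\circ$ simple and hereditary, so Theorem \ref{thm:3.5} applies directly. For $T = \Skl'^{(4)}$, one may either invoke Proposition \ref{prop:3.2} or reduce to the quadric case by Example \ref{eg:cubic-sklyanin}, which identifies $(S')^{(2)}$ with a Van den Bergh quadric. For $T = \QVB^{(2)}$ with $Q = \QVB(r)$, Lemma \ref{lem:Smith-VdB} splits the situation into three regimes according to the position of $r$ on $E$. If $r+\sigma \in E_2$, then $Q^\circ$ is simple of infinite global dimension and Proposition \ref{prop:3.2} yields an $A_1$ singularity, so Theorem \ref{thm:3.5} applies. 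If $r$ does not lie in $E_2 + \mathbb{Z}\sigma$, then $Q^\circ$ is simple hereditary, and again Theorem \ref{thm:3.5} applies.

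The remaining regime is $r = \omega + n\sigma$ with $\omega \in E_2$ and $n \neq -1$: here $\gldim Q^\circ = 2$ but $Q^\circ$ has a unique finite-dimensional simple module and is not a simple ring, so Theorem \ref{thm:3.5} does not apply directly. To bridge this gap I would use the bimodule of Corollary \ref{cor:translation}: setting $r'' := \omega - \sigma$, we have $r'' + \sigma = \omega \in E_2$, so by Proposition \ref{prop:3.2} the localisation $(Q'')^\circ$ is simple with $A_1$ singularity, and hence Theorem \ref{thm:3.5} applies to $\wt{T} := (Q'')^{(2)}$. Corollary \ref{cor:translation} provides a torsionfree $(Q'', Q)$-bimodule $M$, finitely generated on both sides, with $\End_Q(M) \cong Q''$ and $\End_{Q''}(M) \cong Q$. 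Given a putative counterexample $T \subsetneq R \subset T_{(g)}$, I would construct a corresponding overring $\wt{T} \subseteq \wt{R} \subset \wt{T}_{(g)}$, for instance by setting $\wt{R} := \{x \in \wt{T}_{(g)} : xM \subseteq MR\}$ (or by considering a suitable endomorphism ring of $M \otimes_T R$), and derive a contradiction from Theorem \ref{thm:3.5} applied to $\wt{T}$.

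The technical heart of the argument is this final Morita-context step: one must check that the transfer $R \mapsto \wt{R}$ preserves both the noetherian (resp.\ finitely-graded $\widehat{R}$) hypothesis and, crucially, strict containment, so that a non-trivial overring of $T$ indeed produces a non-trivial overring of $\wt{T}$. This is the main obstacle; the remaining cases reduce almost mechanically to Theorem \ref{thm:3.5} once Proposition \ref{prop:3.2} and Corollary \ref{cor:nolines} are in hand.
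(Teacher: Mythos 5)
Your reduction to Theorem~\ref{thm:3.5} via Proposition~\ref{prop:3.2} and Corollary~\ref{cor:nolines} is exactly the paper's first step, and your case analysis for $\QVB(r)$ via Lemma~\ref{lem:Smith-VdB} is correct. The gap is in the non-simple case, where you correctly identify that a Morita-context transfer is needed but leave the key step unresolved, and moreover you frame the difficulty in a way that does not match how the paper actually closes the argument.

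You set up $\wt{R}$ (in the paper's notation, $U := \Tinf + KRK^*$ where $K$ is the $(\Tinf, T)$-bimodule from Corollary~\ref{cor:translation}) and worry that you must verify $\wt{R}$ is noetherian and that the transfer preserves \emph{strict} containment $\wt{R} \supsetneqq \Tinf$ so as to contradict Theorem~\ref{thm:3.5}. The paper sidesteps both concerns. It does not attempt to prove $U$ is noetherian or that $U \supsetneqq \Tinf$. Instead, after reducing to the case $R = \wh{R}$ $g$-divisible, it shows directly that $\wh{U}$ is \emph{finitely graded}: setting $\mf{m} = KK^*$ (an ideal of $\Tinf$ containing $g^n$ by simplicity of $\Tinf^\circ$), one checks that for $v \in \wh{U}$ one has $K^*vK \subseteq \wh{R} = R$, whence $\mf{m} v \mf{m} \subseteq U$ and so $g^{2n}\wh{U} \subseteq U$. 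Theorem~\ref{thm:3.5} then forces $\wh{U} = \Tinf$ regardless of whether $U$ was a proper extension. From $\wh{U} = \Tinf$ one deduces $(K^*K)R(K^*K) \subseteq K^*\Tinf K = K^*K \subseteq T$, so $R$ and $T$ are equivalent orders, and the conclusion $R = T$ then follows from $T$ being a maximal order (\cite[Theorem~6.7]{R-Sklyanin}, recorded in Remark~\ref{mo-remark}). This equivalent-order/maximal-order finish is the missing idea: the Morita context does not produce a contradiction by manufacturing a strictly larger overring of $\Tinf$, it produces the equivalent-order relation $aRb \subseteq T$ directly, which is what you need.

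A second, smaller point: you also need to pass from the $(Q'', Q)$-bimodule $M$ of Corollary~\ref{cor:translation} to a reflexive bimodule $K$ over the Veronese rings. The paper takes $N = M^{(2)}$ and then $K = N^{**}$, using Lemma~\ref{mo-lemma} to guarantee $\End_T(K) = \Tinf$ and $\End_{\Tinf}(K) = T$; without this reflexivisation the identity $\End_{\Tinf}(K) = T$, which underlies the maximal-order finish, is not immediate.
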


\begin{proof}  Suppose first that $T^\circ $ is simple.  By  Proposition~\ref{prop:3.2} and  Corollary~\ref{cor:nolines}, $T \subseteq R$ satisfies   the hypotheses of   Theorem~\ref{thm:3.5} and so the   result follows from that theorem. 

Now  suppose that $T^\circ$ is not simple.  
Then Proposition~\ref{prop:3.2} and  Lemma~\ref{lem:Smith-VdB} imply that $T = Q^{(2)}$, where  
  $Q := Q_{VdB}(\omega + k\sigma)$ for
 some $\omega \in E_2$ and $k \in \NN$.
Let $\Tinf := (Q_{VdB}(\omega - \sigma))^{(2)}$. Let $M$ denote the 
   $((Q_{VdB}(\omega - \sigma),\, Q)$-bimodule  constructed by 
 Corollary~\ref{cor:translation}  and set $N=M^{(2)}.$ Clearly, $N $ is a $(\Tinf, T)$-bimodule  that is finitely generated on both side 
 and contained in $Q_{gr}(T)$. It follows from Lemma~\ref{mo-lemma} that $K=N^{**}$
   is a $(\Tinf, T)$-bimodule  contained in $ Q_{gr}(T)=Q_{gr}(\Tinf)$ 
   such that 
 $\End_T(K)=\Tinf$ and, conversely, $\End_{\Tinf}(K) = T$. Suppose that the theorem is false, say for the overring 
 $T\subsetneqq R\subset T_{(g)}$.
If $R$ is noetherian, then  Lemma~\ref{lem:1.7prime} implies that $\wh{R}$ is a finitely generated $R$-module and hence is finitely graded. Thus,  in all cases,  we can replace  $R$ by $\wh{R}$ and assume  that $R$  is finitely graded with $R =\widehat{R}$.  
   We aim for a contradiction.

By Lemma~\ref{mo-lemma},  $K^*=\Hom_T(K,T)= \Hom_{\Tinf}(K,\Tinf)$.   Proposition~\ref{prop:Simon}
Set $\mathfrak{m}=KK^*$; this  is an ideal of $\Tinf$ which,  since $\Tinf^\circ$ is simple, 
  contains  $g^n$ for some $n\geq 1$.  Let $U :=\Tinf + KRK^*$, which is certainly a finitely graded 
    ring with $\Tinf\subseteq U\subseteq (\Tinf)_{(g)} = T_{(g)}$.  
If  $v\in \widehat{U}$, then   $g^mv\in U$ for some $m\geq 1$ and so $g^m(K^* v K ) \subseteq K^*UK\subseteq R=\widehat{R}$. 
Thus $K^*vK\subseteq R $ and so $\mathfrak{m} v \mathfrak{m}=KK^*vKK^*\subseteq U$. Thus 
$g^{2n}\widehat{U}\subseteq \mathfrak{m} \widehat{U} \mathfrak{m}\subseteq  U$. Thus $\widehat{U}$ is finitely graded. 
By Theorem~\ref{thm:3.5} this forces $\wh{U}= \Tinf$ and so \[(K^*K)R(K^*K)\subseteq K^*\Tinf K = K^*K\subseteq T.\]
Therefore, $T$ and $R$ are equivalent orders which, by  \cite[Theorem~6.7]{R-Sklyanin}, implies that $T=R$.
 \end{proof}

 \begin{remark}\label{rem:notasgoodasIdlike}
 Suppose that  $T$ is one of the algebras in the Theorem, and that $T \subsetneqq R=\wh{R}  \subseteq T_{(g)}$ is a $g$-divisible graded overring.  Then we can further apply Theorem~\ref{thm:3.5} to conclude that 
 $\dim R_n = \infty$ for some $n\geq 0$.  If $T^\circ $ is simple one can even show that  $\dim R_0 = \infty$ and we conjecture that  this holds without the assumption of simplicity.
\end{remark}

The conclusion of Theorem~\ref{thm:3.6}  also extends to    Sklyanin algebras and Van den Bergh's quadrics themselves.

\begin{corollary}\label{cor:3.56} 
  Let $S=\Skl$ or $S = \Skl'$  or $S=\QVB$.  Then $S$ is a minimal   surface in the  sense that  if $S\subseteq U\subset S_{(g)}$ for some cg noetherian ring  $U$ then $S=U$. 
 \end{corollary}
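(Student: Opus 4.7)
The plan is to reduce to Theorem~\ref{thm:3.6} by passing to the $n$-th Veronese, and then to bootstrap back from $U^{(n)}=S^{(n)}$ to $U=S$ by exploiting that each of $\Skl,\,\Skl',\,\QVB$ is generated in degree~$1$ and is Artin--Schelter Gorenstein of dimension~$3$. Set $n=\deg g$, so $n\in\{2,3,4\}$ and $T:=S^{(n)}$ is one of the elliptic algebras covered by Theorem~\ref{thm:3.6}.

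First I would show that $V:=U^{(n)}$ is a cg noetherian ring with $T\subseteq V\subseteq T_{(g)}$. Noetherianity of $V$ follows from \cite[Lemma~4.10(2)]{AS}, and Remark~\ref{rem:2.33}(2) upgrades $V$ to cg. The inclusion $T\subseteq V$ is immediate. For $V\subseteq T_{(g)}$, take any homogeneous $v\in V\subseteq S_{(g)}$, say of $S$-degree $nk$, and write $v=st^{-1}$ with $t\in S\smallsetminus gS$ homogeneous; since $g$ is central and $S/gS$ is a domain, $t^n\in T\smallsetminus gT$, and the rewriting $v=(st^{n-1})(t^n)^{-1}$ exhibits $v\in T_{(g)}$ because the numerator has degree $\equiv \deg s-\deg t\equiv 0\pmod n$. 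Theorem~\ref{thm:3.6} then forces $V=T$, i.e.\ $U_i=S_i$ whenever $n\mid i$.

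Next, suppose for contradiction that some $v\in U_j\smallsetminus S_j$ exists, and write $j\equiv r\pmod n$ with $1\leq r\leq n-1$. Since $S$ is generated in degree~$1$, $S_{n-r+k}=S_{n-r}\,S_k$ for every $k\geq 0$, so the inclusion $v\cdot S_{n-r}\subseteq U_{j+n-r}=S_{j+n-r}$ (the last equality by the previous step, since $n\mid j+n-r$) propagates to $v\cdot S_{\geq n-r}\subseteq S$. Thus left multiplication by $v$ defines an element of $\Hom_S(S_{\geq n-r},\,S)_j$ inside $Q_{gr}(S)$. Each of our rings $S$ is AS-Gorenstein of dimension~$3$, so $\Ext^i_S(\kk,S)=0$ for $i\leq 2$, and by d\'evissage on composition series $\Ext^1_S(S/S_{\geq n-r},\,S)=0$; the long exact sequence attached to $0\to S_{\geq n-r}\to S\to S/S_{\geq n-r}\to 0$ therefore collapses to the isomorphism $\Hom_S(S_{\geq n-r},\,S)\cong S$ given by left multiplication. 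Hence $v$ agrees with left multiplication by some $s\in S_j$ on all of $S_{\geq n-r}$, and since $Q_{gr}(S)$ is a domain and $S_{\geq n-r}\neq 0$, this forces $v=s\in S_j$, contradicting the choice of~$v$.

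The substantive step is the bootstrap: the Veronese reduction is routine given the centrality of $g$, but the second part relies crucially on both generation in degree~$1$ (so that the right ideal $S_{\geq n-r}$ can be controlled from $S_{n-r}$ alone) and on AS-Gorensteinness of dimension~$3$ (which forces the reflexive closure of $S_{\geq n-r}$ in $Q_{gr}(S)$ to be $S$ itself via $\Ext^1$-vanishing). Both properties hold uniformly for $\Skl$, $\Skl'$, and $\QVB$, so the argument is genuinely uniform across the three cases.
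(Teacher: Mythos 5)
Your proof is correct and follows essentially the same route as the paper: reduce via the Veronese to Theorem~\ref{thm:3.6} to get $U^{(n)}=T$, then show any $v\in U\smallsetminus S$ multiplies a tail $S_{\geq m}$ into $S$ and invoke the Gorenstein property to conclude $v\in S$. The only (cosmetic) difference is in the final step, where the paper argues that $(vS+S)/S$ is finite dimensional and appeals to the CM property via Levasseur to rule out finite-dimensional essential extensions of $S$, while you dualize and compute $\Hom_S(S_{\geq m},S)\cong S$ directly from $\Ext^1_S(\kk,S)=0$ -- two spellings of the same AS-Gorenstein fact.
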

 
\begin{proof}  Fix such a ring $U$ and pick  $d$  so that $T=S^{(d)}$ is an elliptic algebra.  Then
  $T\subseteq U^{(d)}\subset T_{(g)}$.  By \cite[Lemma~4.10]{AS} $U^{(d)}$ is noetherian and cg and so, by Theorem~\ref{thm:3.6}, 
  $U^{(d)} = T$.   
If $U\not=S$, pick $v\in U_{nd+r}\smallsetminus S_{nd+r}$ for some $n$ and $0<r<d$. Then 
$vS_1^{d-r}\subseteq U^{(d)}=S^{(d)}\subset S$. Since $S$ is generated in degree 1 it follows that 
$vS_{\geq d}\subseteq S$ and hence that $(vS+S)/S$ is finite dimensional. By the CM property for $S$ (see \cite[(4.6.6) and Remark~5.8(4)]{Lev1992}) this forces $v\in S$ and hence $U=S$. 
\end{proof}


\section{Overrings not contained in \texorpdfstring{$T_{(g)}$}{LG}}\label{section-overrings}

 Let $T$ be one of the algebras from Theorem~\ref{thm:3.6}, or indeed any minimal elliptic surface; thus by definition there is no noetherian cg ring $T\subsetneqq U\subset T_{(g)}$. There do, however, exist cg noetherian overrings of $T$ inside $Q_{gr}(T)$. Indeed, for any integer
  $n\geq 2$ one has
 $T\subseteq R:=\kk\ang{T_n g^{1-n}}$. This is of course a rather ``cheap'' counterexample since
 after a change of grading,  $
 \kk\ang{T_n g^{1-n}}\cong T^{(n)}$ under the homomorphism $xg^{1-n}\mapsto x$ for all $x\in T_n$. As we show in this section the 
 possible cg noetherian overrings of $T$ are tightly constrained, and are all quite similar to the ``cheap'' example above.  We do, however, 
 need the   technical assumption that the overring is a maximal order.

\begin{theorem}\label{thm:arbitraryoverring}
\begin{enumerate}
\item
 Let $T$ be a minimal elliptic surface and let $T \subseteq R \subset Q_{gr}(T)$ be a cg noetherian overring
  of $T$ that is a maximal 
order in $Q_{gr}(T)$. Then for each $n$ there is $\ell \geq n$ so that $R_n = T_\ell g^{n-\ell}$, and 
there exist  integers $M \geq N \geq 1$   so that $R^{(N)} = \kk \ang{T_M g^{N-M}} \cong T^{(M)}$.  
\item   Similarly, let $S=\QVB$ or $S=\Skl$ or $S = \Skl'$ and let $d = \deg g$.  
 Let $S \subseteq A \subset Q_{gr}(S)$ be a cg noetherian overring of $S$
  that is a maximal order in $Q_{gr}(S)$.
Then for all $n$, there is $\ell \in \ZZ$ so that $A_n = g^{\ell} S_{n-d\ell}$, and there are integers $N, M \geq 1$ so that $A^{(N)} \cong S^{(M)}$.
\end{enumerate}
   \end{theorem}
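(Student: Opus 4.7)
My strategy is to pass from the unknown structure of $R$ in $Q_{gr}(T)$ to an explicit description via the degree-zero localization $R^\circ := R[g^{-1}]_0$. Note that $R[g^{-1}]$ is strongly $\ZZ$-graded with central unit $g$ of degree one, so $R^\circ$ is noetherian, $R[g^{-1}] = R^\circ[g,g^{-1}]$, and $T^\circ \subseteq R^\circ \subseteq \Fr(T^\circ)$. The first critical step is to prove $R^\circ = T^\circ$. For this, I consider the graded $g$-divisible subring $\Phi(R^\circ) \subseteq T_{(g)}$, which coincides with the $g$-divisible hull (in $T_{(g)}$) of $R \cap T_{(g)}$ and contains $T$. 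Using the noetherianness and maximal-order hypotheses on $R$, and applying Lemma~\ref{lem:1.7prime} to a suitable Veronese of $R \cap T_{(g)}$ (noting $g^n \in R\cap T_{(g)}$ and that $T \subseteq R \cap T_{(g)}$ ensures $R \cap T_{(g)} \not\subseteq \kk + gT_{(g)}$), one establishes that $\Phi(R^\circ)$ is cg noetherian. The minimality of $T$ (Definition~\ref{min-model defn}) then forces $\Phi(R^\circ) = T$, and hence $R^\circ = T^\circ$.

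With $R^\circ = T^\circ$ in hand, $R \subseteq R[g^{-1}] = T[g^{-1}]$. Each $R_n$ is therefore a finite-dimensional subspace of $T[g^{-1}]_n = \bigcup_{k \ge 0} T_{n+k} g^{-k}$, so there is a minimal $\ell(n) \ge n$ with $R_n \subseteq T_{\ell(n)} g^{n-\ell(n)}$. To upgrade containment to equality, I introduce $R' := \bigoplus_n T_{\ell(n)} g^{n-\ell(n)} \subseteq T[g^{-1}]$. From $R_n R_m \subseteq R_{n+m}$ and the minimality of $\ell$, one checks that $\ell$ is superadditive ($\ell(n+m) \ge \ell(n) + \ell(m)$), which is precisely the condition for $R'$ to be a graded subring of $T[g^{-1}]$ containing $R$. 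Bilateral multiplication by an appropriate power of $g$ then witnesses that $R'$ and $R$ are equivalent orders in $Q_{gr}(T)$, so the maximal-order assumption on $R$ forces $R = R'$, giving the required $R_n = T_{\ell(n)} g^{n-\ell(n)}$.

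For the Veronese isomorphism, since $R$ is finitely generated cg, I choose $N$ to be the least common multiple of the degrees of a homogeneous generating set; then $R^{(N)}$ is generated in degree $N$ (relative to the induced grading), and setting $M := \ell(N)$ gives $R_N = T_M g^{N-M}$. The assignment $x g^{N-M} \mapsto x$ on $T_M$ extends to a graded $\kk$-algebra map $R^{(N)} \to T^{(M)}$, since $(x g^{N-M})(y g^{N-M}) = (xy) g^{2(N-M)}$ with $xy \in T_{2M}$; bijectivity follows because $T^{(M)}$ is generated by $T_M$ (for $M$ a sufficiently large multiple, enlarging $N$ if necessary). Part (2) then reduces to Part (1): apply Part (1) to $T := S^{(d)}$ (a minimal elliptic surface by Theorem~\ref{thm:3.6}) together with the overring $A^{(d)}$, and transfer the conclusion back to $A$ via an argument analogous to Corollary~\ref{cor:3.56}, which uses that $S$ is generated in degree one and is Cohen-Macaulay to recover $A_n$ from $A^{(d)}$ for $d \nmid n$.

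The principal obstacle is the first step: showing that $\Phi(R^\circ)$ is cg noetherian so that minimality of $T$ can be invoked. Although $R$ is noetherian, the subring $R \cap T_{(g)}$ need not obviously inherit this, and the maximal-order hypothesis on $R$ must be combined delicately with Lemma~\ref{lem:1.7prime} (or a Veronese variant thereof) to control the $g$-divisible hull. A secondary subtlety is making the equivalent-order argument in the second stage precise, since the naive multiplication-by-$g^k$ bounds on $R'$ land in $T_{\ell(n+k)} g^{n+k-\ell(n+k)}$, which a priori is a larger subspace than $R_{n+k}$; the maximal-order hypothesis must again be applied with care to close this gap.
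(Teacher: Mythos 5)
Your overall plan tracks the paper's proof fairly closely in outline: establish $R^\circ = T^\circ$ (so that $R \subseteq T[g^{-1}]$), introduce a superadditive exponent function, write down the "ideal form" $R' = \bigoplus_n T_{\ell(n)} g^{n-\ell(n)}$, and invoke the maximal-order hypothesis to force $R = R'$. But both gaps you flag at the end are genuine, and they are precisely where the paper does its real work. For the first, intersecting $R$ with $T_{(g)}$ does not obviously preserve noetherianness, and Lemma~\ref{lem:1.7prime} has nothing to bite on. The paper circumvents this by defining $\phi_j(f) = fg^{j\deg f}$, an injective graded algebra endomorphism of $Q_{gr}(T)$; since $R$ is finitely generated, $U := \phi_j(R)$ lands inside $T_{(g)}$ for $j \gg 0$, while remaining isomorphic to $R$ and hence noetherian. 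After extracting a further Veronese via the subadditive function $h(n) = \max\{i : U_n \subseteq g^i T_{(g)}\}$ (your $\ell$ corresponds to $n - h(n)$), one gets a noetherian cg subring $V$ of $T_{(g)}$ with $g^M \in V$, to which Lemma~\ref{lem:1.7prime} and then minimality actually apply. Replacing this with "$R \cap T_{(g)}$" loses exactly the property you need.

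The second gap is equally real. As you observe, $g^k R'_n g^k$ lands in $T_{\ell(n)} g^{n+2k-\ell(n)}$, and since $\ell(n) - n$ grows (generically linearly), no fixed $k$ makes this a subspace of $R_{n+2k}$; "bilateral multiplication by a power of $g$" fails. The paper instead proves that $W$ (its version of your $R'$) is a finitely generated module over $U$ on both sides, which is the correct route to equivalent orders. This uses the structural identity \eqref{eq:h}: for each residue $i \bmod K$, the function $n \mapsto h(nK+i)$ is eventually arithmetic with common difference $h(K)$, so $W$ is generated over $W^{(K)}$ in bounded degree, and $W^{(K)} \cong T^{(M)}$ is finite over $V = \theta(U^{(K)})$ by the minimality/Lemma~\ref{lem:1.7prime} step. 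A similar remark applies to your part (2): bootstrapping from $A^{(d)}$ via an argument "analogous to Corollary~\ref{cor:3.56}" is not straightforward when $A^{(d)} \neq S^{(d)}$, and the paper essentially reruns the whole argument with $\psi_j$ and a new $Y$ in place of $W$ rather than transferring from Part (1).
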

 
\begin{proof}
(1)
For $j \in \NN$, define 
\begin{equation}\label{arb-defn}
\phi_j: Q_{gr}(T) \to (Q_{gr}(T))^{(j+1)}\quad  \text{by $\phi_j(f) = f g^{jn}$ for all $f \in Q_{gr}(T)_n$.}\end{equation}  
Since $g$ is central, $\phi_j$ is a $\kk$-algebra homomorphism. 
Since $R$ is cg, and hence a finitely generated algebra, 
 there  exists $j\geq 0$ with $\phi_j(R) \subset T_{(g)}$.  
Let   $U:=\phi_j(R)$.
Define  $h:\mathbb{N}\to \mathbb{N}\cup\{\infty\}$ by 
\[\text{$h(n) :=   \max \{i : U_n \subseteq g^{i} T_{(g)} \}$, \  with  $ h(n) := \infty$ if $ U_n = 0 $.}\] 

We begin by following the proof of  \cite[Proposition~9.1]{RSSlong}, although as our notation is slightly different we give most of the details. 

Let   $U$ be generated in degrees $\leq r$, so 
$ U_n = \sum_{i=1}^r U_i U_{n-i}$ for all $n > r$.  As in   \cite{RSSlong}, the fact that $gT_{(g)}$ is a completely prime ideal 
implies that  $h(n) + h(m) \geq h(n+m)$ for all $n, m \in \NN$ and, moreover, that 
\beq\label{666} 
h(n) = \min \{ h(n-i) + h(i) : 1 \leq i \leq r\} \quad \mbox{for $n>r$,}
\eeq
with the obvious convention if any of these numbers equals $\infty$.
Now choose $1 \leq K \leq r$ so that $\lambda = h(K)/K$ is minimal.
Applying induction to \eqref{666}, we have $h(n) \geq \lambda n$ for all $n \in \NN$, and this forces 
\beq\label{h-mult} h(nK) = n h(K) \quad \mbox{ for all $n \in \NN$.}
\eeq

Since $U = \phi_j(R) \subseteq (T_{(g)})^{(j+1)}$ and $U_K \neq 0$, the number $N := K/(j+1)$ is an integer.
Let $D := h(K)$, and note that  $D < K$, since if $D=K$ then 
$U_K \subseteq g^K T_{(g)}$, and so $R_N \subseteq g^N T_{(g)}$, contradicting 
$T_N \subseteq R_N$.  Let $M = K-D$.
By \eqref{h-mult}, $U_{nK} \subseteq g^{nD} T_{(g)}$ for all $n \in \NN$.
Thus the function
$U_{nK} \to (T_{(g)})_{nM}$ given by 
$x \mapsto x g^{-nD}$ is well-defined, and  induces
 an injective vector space  homomorphism $\theta : U^{(K)} \to T_{(g)}^{(M)}$ with $\theta(U^{(K)})\not\subseteq \kk+gT_{(g)}$.  
It is routine to see that $\theta$ is an algebra homomorphism.  

Let $V:= \theta(U^{(K)})$ and  $Z := \wh{V[g]}\subset T_{(g)}$, recalling Notation~\ref{not:2.31}.   
Since $T_1 \subset R$, both $g^j T_1 $ and $g^{K-1} T_1$ are contained in $U$.  
Thus $g^{M-1} T_1 = \theta(g^{K-1}T_1) \subset V$, and in particular $g^{M }\in V$.
By Lemma~\ref{lem:1.7prime}, $Z$ and $Z^{(M)}$ are  finitely generated left and right $V$-modules and are thus cg noetherian.
Further, as $g^{M-1} T_1 \subseteq V$, we have  $Z \supseteq T$ which, as $T$ is a minimal elliptic surface, forces 
$T=Z$.  
Thus $V=\theta ( U^{(K)}) \subseteq T$ and so $U^{(K)} \subseteq T$.

   We claim that in fact $U\subseteq T$. To see this,    take $U_n\not=0$.  Since $R\supseteq T$, clearly $U_n\supseteq g^aT_b$ for some $a,b\geq 1$. Therefore,
 $g^{a(K-1)}U_n T_{b(K-1)} = U_n(g^aT_b)^{K-1}\subseteq U_{nK}\subseteq T$. As $T$ is $g$-divisible, it follows that 
 $U_n T_c\subseteq T$ for $c={b(K-1)}$.  Hence $U_nT_{\geq c} =U_nT_cT\subseteq T$ and so $(U_nT+T)/T$ is a finite dimensional  extension of $T$. As in the proof of Corollary~\ref{cor:3.56},  the CM property therefore forces  $U_n\subseteq T$. Hence $U\subseteq T$, as claimed.

It remains to get a detailed understanding of the graded pieces $U_n$ of $U$. To this end, define a graded subspace $W$ of $T$ by 
\[ W_n := \begin{cases} g^{h(n)} T_{n-h(n)} & \mbox{if }U_n \neq 0 \\
 0 & \mbox{otherwise.} 
 \end{cases} \]
 Let $n, m$ be  such that $W_n, W_m \neq 0$.  
 Using the equation $h(n+m) \leq h(n)+h(m)$ we have 
  \[ W_n W_m = g^{h(n)+h(m)} T_{n-h(n)}T_{m-h(m)}\subseteq  g^{h(n+m)}  T_{m+n-h(n+m)} = W_{n+m}.\]
 Thus $W$ is a ring.
 The  definition of $h$ and $g$-divisibility of $T$ force 
   $U \subseteq W$ and we may clearly extend $\theta$ to   an isomorphism 
   $\theta:  W^{(K)} \stackrel{\cong}{\to} T^{(M)}$.
  
  We next want to understand the structure of $W$ over its Veronese subalgebras. 
  By \cite[Lemma~4.10]{AS}, $U$ is a finitely generated left and right module over $U^{(K)}$.
For $1 \leq i \leq K-1$, let $U(i) = \bigoplus_n U_{nK+i}$.  This is finitely generated as a right $U^{(K)}$-module, say by $\bigoplus_{n=0}^{n_i} U_{nK+i}$.  Using \eqref{666}, it follows  for $n \geq n_i$ that 
\beq\label{eq:h}
\begin{split}
h(nK+i) & =\  \min_{n' \leq n_i} (h(n'K+i) + h((n-n')K)) \\ &\ =\  (n-n_i)h(K) +  \min_{n' \leq n_i} (h(n'K+i)+h((n_i-n')K))   
\ =\  (n-n_i) h(K) +  h(n_i K +i).
 \end{split}
\eeq
Consequently,  $W_{nK+i} = W_{n_i K+i} W_{(n-n_i)K} = W_{(n-n_i)K} W_{n_i K+i} $ for $1 \leq i \leq K-1$ and $n \geq n_i$, and so $W$ is a finitely generated   left and right $W^{(K)}$-module.  
 Now  $\theta (W^{(K)}) = T^{(M)}$ is a  finitely generated  module over $V=\theta(U^{(K)})$   on both sides, and so $W^{(K)}$ is a finitely generated  $U^{(K)}$-module  on both sides.
 Thus $W$ is  finitely generated as  a $U^{(K)}$-module and hence as  a $U$-module on both sides.
 Since $T \subseteq R$, it is clear that $Q_{\gr}(U) = Q_{\gr}(W) = (Q_{\gr}(T))^{(j+1)}$.   
 \footnote{As an aside for later use, we remark that so far we have used only that $R$ is  cg noetherian  and that $T$ is a minimal elliptic surface.} 
 As $R$ and therefore $U$ are maximal orders, $U=W$.
 It follows that $V = \theta(W^{(K)}) = T^{(M)}$.
  
 Finally, for $n \in \NN$, we have 
 \[R_n \ = \  \phi_j^{-1}(W_{(j+1)n}) \ = \   g^{ h((j+1)n)-nj} T_{(j+1)n-h((j+1)n)}.\]
Pick $m\in \NN$. Then,   as $(j+1) N = K$ and $h(Km)=h(K)m$, we have 
 \[ R_{Nm} \ = \  g^{h(Km)-Njm} T_{Km-h(Km)} \ = \    g^{(N-M)m} T_{Mm}\]
 and so $R^{(N)} = \kk \ang{g^{N-M} T_{M}} \cong T^{(M)}$.
 As $R_N \supseteq T_N$ we have $M\geq N$ and the result is proved.

 (2)
 Our notation in this part of the proof is that Veronese rings are not regraded;  so $(A^{(K)})_{Kn} = A_{Kn}$ for all $n\in \NN$
 and $g\in T_d=S_d$. A number of the steps of the proof exactly parallel those from Part (1), in which case the proof is left to the reader.

 For $j \in \NN$, extend $\phi_j$ to a map $\psi_j:  Q_{\gr}(S) \to Q_{\gr} (S^{(jd+1)})$ 
 by $\psi_j(f) = f g^{jn}$ for all $f \in Q_{\gr}(S)_n$.  As in Part~(1), each $\psi_j$ is an injective graded algebra homomorphism, and  $\psi_j(A) \subset S_{(g)}$ for some $j$.  
 Set   $X = \psi_j(A)$.  
  Define $h:  \NN \to \NN \cup \{ \infty\}$  by
  \[\text{$h(n) := \max \{ i: X_n \subseteq g^iS_{(g)} \}$, \  with  $ h(n) = \infty$ if $ X_n = 0 $.}\] 
   As in Part~(1),  $h(n+m) \leq h(n)+h(m)$ for all $n, m \in \NN$ and $h(n)$ satisfies the analogue of \eqref{666}.
  
Set $R:=A^{(d)}\supseteq T:=S^{(d)}$.   Then $R$ and  $U := \psi_j(A^{(d)}) \subset T_{(g)}$  have the same properties  as their counterparts in Part~(1) of the proof. 
 In particular,   there is an integer $K $, which we may take to be a multiple of $ d$, so
  that $h(nK) = nh(K)$ for all $n \in \NN$.
Further, there is a graded ring homomorphism $\theta:  U^{(K)} \to T_{(g)}$ with $\theta(x) = g^{-h(\deg x)} x$ for all homogeneous 
$x \in U^{(K)}$ and, moreover,   $V = \theta(U^{(K)}) \not \subseteq \kk + g T_{(g)}$.
Set $Z = \wh{V[g]}$.   Using the fact that $T$ is a minimal elliptic surface by Theorem~\ref{thm:3.6},  the arguments of Part~(1) show that  
$Z=T$  is a finitely generated $V$-module on both sides.  Moreover   $U \subseteq T$.
 
 We next claim that $X\subseteq S$. Certainly there exists $L$ such that $X^{(L)}=U^{(L)}\subseteq T^{(L)}$
  (in fact $L=(dj+1)d$ will work).  As $A\supseteq S$, if    $X_n\not=0$ then  $X_n \supseteq g^a S_b$,
   for some $a,b\geq 1$. Now the proof from Part (1) that $U\subseteq T$ can be used essentially unchanged to show that $X\subseteq S$.

 Now define $Y = \bigoplus_{n\geq 0} Y_n$ where 
 \[ Y_n := \begin{cases} g^{h(n)} S_{n-dh(n)} & \mbox{if }X_n \neq 0 \\
 0 & \mbox{else.} 
 \end{cases} \]
Clearly $Y_n \supseteq X_n$ for all $n$ by $g$-divisibility of $S$.  
Using the inequality $h(n+m) \leq h(n)+h(m)$ gives
\[Y_n Y_m = g^{h(n)+h(m)} S_{n+m-d(h(n)+h(m))}   \subseteq g^{h(n+m)} S_{n+m - dh(n+m)} = Y_{n+m},\]
and so  $Y$ is a $\kk$-algebra.
  
 Recall that $T=Z$ is a finitely generated module over $V= \theta(U^{(K)}). $
 As in Part~(1),  each $U(i) = \bigoplus_n U_{nK+i}$ is finitely generated as a right $U^{(K)}$-module, say by $\bigoplus_{n=0}^{n_i} U_{nK+i}$.  Equation  \eqref{eq:h} then follows formally and this now ensures that 
 $Y$ is a finitely generated  $Y^{(K)}$-module on both sides.  
It follows that $Y^{(K)}$ is a finitely generated  $U^{(K)}$-module on both sides 
 and thus $Y$ is a finitely generated  $X$-module on both sides.
 As in Part~(1), $Q_{\gr}(Y) = Q_{\gr} (X)$ and as $X \cong A$ is a maximal order, $Y=X$.   This proves the first assertion of Part (2). 
 The final  sentence follows as in Part~(1).
  \end{proof}

 One of the significant consequences of  \cite{RSSlong} is that graded maximal orders contained in  $T$ are automatically noetherian. 
We conjecture that in the main theorems of this paper,  Theorems~\ref{thm:3.5} and \ref{thm:arbitraryoverring} as well as 
 Corollary~\ref{cor:3.56},  the same is true for overrings. More precisely:

\begin{conjecture} \label{conj:mo=noetherian}  Theorem~\ref{thm:arbitraryoverring}(1) holds even if $R$, respectively $A$, is not assumed to be noetherian. In particular, and in the notation of the theorem, finitely graded  
 maximal orders $T\subseteq R\subset Q_{\gr}(T)$ are automatically noetherian. Similar comments hold  for overrings of $S$.
\end{conjecture}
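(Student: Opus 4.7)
The plan is to run the proof of Theorem~\ref{thm:arbitraryoverring}(1) while replacing each use of the noetherian hypothesis on $R$ by an appeal to the maximal-order hypothesis. The argument naturally splits into three phases: a reduction to a subring of $T_{(g)}$, a containment $V \subseteq T$, and a final upgrade of $R$ from maximal order to noetherian via the results of~\cite{RSSlong}.

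For the first phase, fix $j$ so that $U := \phi_j(R) \subseteq T_{(g)}$, where $\phi_j$ is the injection of \eqref{arb-defn}; since $\phi_j$ is a graded algebra monomorphism, $U$ inherits the maximal-order property of $R$ in $(Q_{gr}(T))^{(j+1)}$. Define $h(n) := \max\{ i : U_n \subseteq g^i T_{(g)}\}$, choose $K$ minimizing $h(K)/K$, and set $M := K - h(K)$. Exactly as in the existing proof of Theorem~\ref{thm:arbitraryoverring}(1), one obtains $h(nK) = n h(K)$ and a graded algebra monomorphism $\theta: U^{(K)} \to (T_{(g)})^{(M)}$, together with the containment $g^{M-1} T_1 \subseteq V := \theta(U^{(K)})$ and hence $g^M \in V$. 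This phase uses only that $U$ is cg.

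The crucial phase is to show $V \subseteq T$. In the noetherian case one sets $Z := \wh{V[g]} \subseteq T_{(g)}$, invokes Lemma~\ref{lem:1.7prime} to conclude that $Z$ is a cg noetherian $V[g]$-module containing $T$, and then uses Theorem~\ref{thm:3.6} to force $Z = T$, whence $V \subseteq T$. The natural substitute is a maximal-order version of Lemma~\ref{lem:1.7prime}: for a cg algebra $A \subseteq (T_{(g)})^{(n)}$ which is either itself a maximal order or is equivalent to one, with $g^n \in A$ and $A \not\subseteq \kk + gT_{(g)}$, the $g$-divisible hull $\wh{A}$ is again finitely graded. Granting this for $A = V[g]$ (noting that $V$, being isomorphic to the Veronese $U^{(K)}$ of a maximal order, enjoys good order-theoretic properties), $Z$ is finitely graded and $g$-divisible, so $Z = T$ by Theorem~\ref{thm:3.6}. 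The deduction $U \subseteq T$ then proceeds verbatim as in the proof of Theorem~\ref{thm:arbitraryoverring}(1), via $g$-divisibility and the CM property of $T$. Once $U \subseteq T$, the result of~\cite{RSSlong} quoted just before the conjecture applies: graded maximal orders inside an elliptic algebra are automatically noetherian. Hence $U$, and therefore $R$, is noetherian.

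The main obstacle is the maximal-order analogue of Lemma~\ref{lem:1.7prime}. Its proof traverses Lemma~\ref{lem:1.6}, which uses noetherianness to show that an ascending chain of graded sub-bimodules of $G = \kk(E)[t,t^{-1};\tau]$ that are finitely generated on each side must stabilize; in turn this appeals to \cite[Theorem~5.9]{AS} and rests on module-finiteness over the noetherian ring $A$. Replacing this by maximal-order hypotheses would require proving that an ascending chain of graded $A$-sub-bimodules of $G$ stabilizes whenever $A$ is a cg maximal order of $\GKdim 2$ and each term is finitely generated over $A$ on both sides. Such a statement seems plausible given the rigidity of maximal orders over an elliptic function field, but establishing it appears to demand techniques beyond those developed here, and constitutes the principal gap to be closed before the conjecture can be proved along these lines.
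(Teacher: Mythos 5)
The statement you were asked to prove is labelled a Conjecture and the paper offers no proof of it --- it is an open problem, so there is nothing of the authors' to compare your attempt against. You are transparent about this: what you have written is a proof strategy together with an honest identification of where it breaks down, not a claimed proof. Your diagnosis of the principal obstruction is accurate. The noetherian hypothesis on $R$ enters essentially through Lemma~\ref{lem:1.7prime}, whose proof rests on the chain-stabilisation result Lemma~\ref{lem:1.6}; the latter uses \cite[Theorem~5.9]{AS} and finite generation of modules over a noetherian twisted homogeneous coordinate ring $Z$, both of which presuppose noetherianness of the base ring. Replacing this machinery with something phrased purely in terms of the maximal-order condition would indeed require new techniques, and this is the crux of the conjecture.

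Beyond the gap you name, two further points deserve attention. First, the remark that $V = \theta(U^{(K)})$ ``enjoys good order-theoretic properties'' because $U$ is a maximal order is not automatic: Veronese subrings of maximal orders need not themselves be maximal orders, and the paper has to cite \cite[Proposition~3.37]{Hipwood-thesis} in Example~\ref{eg:counter-overring1} to establish this for a specific Veronese. Second, even after the containment $U \subseteq T$ is secured, the proof of Theorem~\ref{thm:arbitraryoverring}(1) continues to use noetherianness of $U$ --- notably through \cite[Lemma~4.10]{AS} to show $U$ is module-finite over $U^{(K)}$, and in the finite generation of each $U(i)$ --- so ``the rest proceeds verbatim'' understates what remains. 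If the intended sequencing is to first extract noetherianness of $U$ from the result of \cite{RSSlong} you quote and only then re-run the body of the original proof, this should be made explicit, together with a check that the \cite{RSSlong} result applies to a graded maximal order of $(Q_{\gr}(T))^{(j+1)}$ sitting inside $T$ and concentrated in degrees divisible by $j+1$.
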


We next give a couple of examples that show that one cannot easily improve on Theorem~\ref{thm:arbitraryoverring}.  
As usual, given a subset $V$ in a $k$-algebra $A$ we write $k\langle V\rangle $ for the $k$-algebra generated by $V$.
Following the discussion at the beginning of the section, and by  analogy with \cite[Proposition~9.1]{RSSlong}, one might 
hope that any cg  noetherian overring $T\subseteq U\subset Q_{\gr}(T)$ would have the form 
$U=\kk\langle T_{n+1}g^{-n}\rangle$ for some $n$. As the next example shows, this is not the case.

\begin{example}\label{eg:counter-overring1}  Let $T:=\Skl^{(3)}$, the 3-Veronese of a quadratic Sklyanin algebra and set
\[U:=\kk\langle T_1,g^{-1}T_3\rangle\ = \  \kk + T_1 + T_3g^{-1} + T_4 g^{-1} + T_6 g^{-2} + T_7 g^{-2} + ...\]
Then $U$ is a noetherian cg maximal order with $U\supsetneq T$ that cannot be 
written as  $U=\kk\langle T_{n+1}g^{-n}\rangle$ 
for any~$n$. However, up to a change of grading, $U^{(2)}\cong T^{(3)}$.
\end{example}

\begin{proof} 
Note that $\phi_1(U) = R = \kk\langle gT_1,gT_3\rangle$, in the notation from the proof of Theorem~\ref{thm:arbitraryoverring}.  
Since $g$ is central and 
$T$ is generated in degree one it is easy to see that $(gT_1)^2=g^2T_2\subseteq gT_3$ and hence that $R^{(4)}=\kk\langle gT_3\rangle \cong T^{(3)}$.
Since $T^{(3)}$ is noetherian, so  are $R^{(4)}$ and hence $R$ by \cite[Lemma~4.10(3)]{AS}. It is an easy exercise to see that $U$ is not generated by any set $T_{n+1}g^{-n}$.

It remains to prove that $R$ is a maximal order.  Throughout the proof we keep
 the grading from $T$; thus $R$ is generated in degrees 2 and 4, while $\deg(g)=1$.  
First, by \cite[Proposition~3.37]{Hipwood-thesis}, $R^{(4)}\cong T^{(3)}$ is a maximal order.  
So, suppose that $R\subseteq A$ is an equivalent order.  Then $A^{(4)}$ is equivalent to $R^{(4)}$ 
 by \cite[Lemma~3.32]{Hipwood-thesis}  
 and hence $A^{(4)}=R^{(4)}$. We next show that $A\subseteq T$.
Let $a\in A\smallsetminus T$ be homogeneous. 
Since $A^{(4)}=R^{(4)}$,  clearly $\deg(a)=4e+2$ for some $e\in \NN$. 
 Since $g^2\in R$ we have $g^2a\in R^{(4)}\subseteq \kk+gT$ and so $a\in g^{-1}T$. 
 As $a\not\in T$, then $a=g^{-1}x$ for some $x\in T\smallsetminus gT$. Notice that $g$ does not divide $x^2$ since $gT$ is a
  completely prime ideal.  Hence $a^2=g^{-2}x^2 \not\in T$; a contradiction. Thus $A\subseteq T$. 
   
   Now let $a \in A_{4e+2}$ for some $e \in \NN$.
 Write $a=g^uv$ with $u$ as large as possible. Writing $a$ as a sum of terms we may also assume that $v=v_1v_2$ with
 $ v_1 , v_2 \in T$ and $\deg v_1 = 1$.   
 Then $g^{2u} v^2 \in A_{8e+4} = R_{8e+4} = g^{2e+1} T_{6e+3}$.
 As $gT$ is completely prime, $g$ does not divide $v^2$ and so $2u \geq 2e+1$, hence $u > e$.
 Thus we can rewrite $a=g^{e+1}v_1w_2=(gv_1)(g^ew_2)$ where, now, $ w_2 \in T_{3e}$.
 Thus, $g^e w_2\in g^e T_{3e} = R_{4e}$ and $a\in R^{(4)}\langle gT_1\rangle = R$, as required.\end{proof}

As the next example shows, if one merely assumes that $U$ is a noetherian cg   overring of $T$ in
 Theorem~\ref{thm:arbitraryoverring},  then more complicated examples can arise.

\begin{example}\label{eg:counter-overring} Let $T:=\Skl^{(3)}$, the 3-Veronese of a quadratic Sklyanin algebra with factor $B:=T/gT$. 
Set $R:=(gT)^{(2)}+T^{(4)}$ and let $U:=\phi_1^{-1}(R)$, where $\phi_1$ is defined by \eqref{arb-defn}.
 Then $U$ and $R$ are noetherian cg   
 rings such that $\wh{R}^{(2)}=T^{(2)}$  is a finitely generated $R$-module on both sides. Similarly, 
$T\subseteq U\subseteq V:= \phi_1^{-1}(T^{(2)}) $, with $V$ a finitely generated $U$-module on both sides.
However, both 
 $\wh{R}^{(2)}/R$ and $V/U$ are infinite dimensional, so $U$  has noetherian overrings that are substantially larger than itself.
\end{example}
 
\begin{proof} Clearly $\phi_1(T)=\kk\langle gT_1\rangle\subseteq \kk+(gT)^{(2)}\subseteq R\subseteq T^{(2)}=\phi_1(V)$. Thus
 $T\subseteq U\subseteq V$
and all the assertions about $U$ follow from the corresponding assertions about $R$.

Since $g^2T^{(2)}\subset R\subseteq T^{(2)} = \wh{T}^{(2)}$, certainly $\wh{R}^{(2)}=T^{(2)}$. On the other hand as
 $T^{(4)}\subset R\subseteq T^{(2)}$ and $T^{(2)}$ is a (left and right)  noetherian  $T^{(4)}$-module, so is $R$. Hence 
 $R$ is a noetherian ring and $\wh{R}$ is a noetherian $R$-module on both sides.  It remains to prove that   $\dim_{\kk}T^{(2)}/R=\infty$. 
  Since $(gT)^{(2)}$ is an ideal of $R$, it suffices to prove that  $T^{(2)}/(gT)^{(2)} \cong B^{(2)}$ is not a finite dimensional extension 
  of $R/(gT)^{(2)}$. Since 
\[ R/(gT)^{(2)} = \frac{(gT)^{(2)}+T^{(4)} }{(gT)^{(2)} } \cong \frac{ T^{(4)} }{ (gT)^{(2)}\cap T^{(4)} } = \frac{ T^{(4)} }{ (gT)^{(4)} }= B^{(4)},\]
the assertion follows. \end{proof}

\begin{remark}\label{rem:further rings} 
Let $S := \Skl$ be a quadratic Sklyanin algebra, and let $T := S^{(3)}$.  
For the purposes of this remark, define a \emph{minimal model} to be  a cg noetherian algebra $T'$ containing $g$ and birational to $T$ with the property that if  $R'$ is cg noetherian  
with $T'\subsetneqq R'\subset T'_{(g)}$, then $T'=R'$ (thus we are not assuming that $T'$ is elliptic).
The ultimate aim in the present project is, of course,  to classify all the minimal models birational to $T$ (or $S$)
and then  to prove that any  finitely graded  
 maximal order $R$ birational to $S$ can be obtained from such a minimal model by blowing up
(including virtual blowing up as in \cite{RSSlong}) finitely many points on the elliptic curve~$E$.

  However, unlike the commutative situation, we expect there to be more minimal models than just the noncommutative projective plane and quadrics.   
 More precisely, reflexive right ideals $P$ of $S$ have been classified through formal moduli spaces \cite{NSt}, with a discrete invariant 
 $c(P )$ analogous to a second Chern class and a continuous one deforming a Hilbert scheme of points. By analogy with work on the
  Weyl algebra (see for example \cite{BW}), we hope  that 
$$  \End_S(P ) \cong \End_S(P')\iff c(P ) = c(P') \qquad\text{for reflexive right ideals}\  P, P'.$$
 Analogous  results should hold if $S$ is replaced by $T$ or $\QVB$ or their Veronese rings.
 
The  expectation is that  the corresponding endomorphism rings $\End(P )$ will then  give all minimal models birational to $S$.
For elliptic algebras a stronger conjecture will be given in Conjecture~\ref{final-conj}. \end{remark}


  \section{General overrings in the locally hereditary case}\label{OVERRING}

  The arguments of  Section~\ref{MINIMAL}   can also be used to  obtain information on  the structure of  arbitrary cg noetherian 
  overrings of  non-minimal elliptic surfaces $T$ provided that  $T $ is {\em locally hereditary}\label{loc-her-ring}
    in the sense that $\gldim T^\circ=1$.  
  The main result of this section is an analogue of Theorem~\ref{ithm:B}; that is, the classical result that any birational morphism of smooth projective surfaces is a 
  composition of finitely many monoidal transformations. 
  
  We   recall that  for elliptic algebras $T$ there is a good analogue of Castelnuovo's theorem on contracting  rational curves of self-intersection $(-1)$.  In order 
  to state this we note that,  if  $T$ is an elliptic algebra such that $\rqgr T$ is smooth in the sense of Definition~\ref{defn:smooth},  then  there 
  is a well-defined intersection product  \cite{MS}, on $\rqgr T$.   
 This is given by  $(M \cdot N) := \sum_{i=0}^\infty (-1)^{i+1} \dim \Ext^i_{\rqgr T}(M, N)$, for $M, N \in \rgr T$.
  The noncommutative version of Castelnuovo's theorem is as follows.
  
  \begin{theorem}\label{thm:Castelnuovo}
  {\rm (\cite[Theorems~1.4, 1.5, and 8.1, Lemma~8.2]{RSSblowdown})}
  Let $T$ be an elliptic algebra so that $\rqgr T$ is smooth, and let $L$ be a line module so that $L$ has 
  self-intersection $(L\cdot L)=-1$.
  Then there is an elliptic algebra $\wt{T}$ with $T \subset \wt{T} \subset T_{(g)}$ and so that $\wt{T}/T \cong \bigoplus_{i \geq 1} L[-i]$ 
  as right $T$-modules.
  Further, $\wt{T}$ is the maximal submodule of $Q_{gr}(T)$ so that $\wt{T}/T$ is isomorphic to a direct sum of shifts of $L$, and $\rqgr \wt{T}$ is smooth.
  \end{theorem}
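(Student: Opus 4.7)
The plan is to build $\widetilde{T}$ inside $Q_{gr}(T)$ as a graded right $T$-module first, then promote it to a subring, and finally verify the ellipticity and smoothness claims. I would exploit the Auslander-Gorenstein and CM properties of $T$, together with the Serre-duality-type statement that on smooth $\rqgr T$ the canonical object is a shift of $T$ itself, in order to compute the Ext groups between $L$ and $T$. Combined with the self-intersection hypothesis $(L\cdot L) = -1$, a Riemann-Roch-style calculation should force
\[
\dim_\kk \Ext^1_{\rqgr T}(L, T[n]) - \dim_\kk \Hom_{\rqgr T}(L, T[n]) - \dim_\kk \Ext^2_{\rqgr T}(L, T[n]) \;=\; 1
\]
for each $n \geq 1$, and in particular $\Ext^1_T(L[-n], T) \neq 0$ in the appropriate internal degree.

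Next, for each $i \geq 1$, I would pick a non-split extension $0 \to T \to M_i \to L[-i] \to 0$. Since $T$ is CM of GK-dimension $3$ and $L[-i]$ has GK-dimension $2$, Lemma~\ref{general lemma} (or its direct analogue) shows that $M_i$ is automatically essential over $T$, so $M_i$ embeds canonically inside $Q_{gr}(T)$. Setting $\widetilde{T} := \sum_{i \geq 1} M_i \subset Q_{gr}(T)$, the graded vector-space decomposition follows because the subquotients $L[-i]$ occupy pairwise disjoint ranges of the Hilbert series $t^i/(1-t)^2$ modulo $T$; this sum is forced to equal the maximal submodule of $Q_{gr}(T)$ whose quotient by $T$ is a direct sum of shifts of $L$, giving the desired presentation $\widetilde{T}/T \cong \bigoplus_{i \geq 1} L[-i]$.

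The crux, and where I expect the main obstruction, is showing that $\widetilde{T}$ is closed under multiplication. The natural strategy is to argue that for $x \in \widetilde{T}_m$ and $y \in \widetilde{T}_n$, the product $xy \in Q_{gr}(T)_{m+n}$ has the property that $(xy) T + T$ is isomorphic to an extension of $T$ by shifts of $L$, and therefore is contained in $\widetilde{T}$ by its maximality. To make this rigorous, I would reduce modulo $g$ and work inside the TCR $B = T/gT$: the image of $L$ in $\rqgr B$ corresponds to a fixed divisor class on $E$, and multiplication of line-bundle sections on $E$ is well-behaved. Lifting back using that both $T$ and $L$ are $g$-divisible (in a suitable sense) should show that products land in the correct $M_{i+j}$-type extension.

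Finally I would verify the remaining structural claims. Centrality of $g$ and the identification $\widetilde{T}/g\widetilde{T} \cong B(E, \widetilde{\mathcal{N}}, \tau)$ for a new invertible sheaf $\widetilde{\mathcal{N}}$ should follow from pushing the filtration by $M_i$ through the quotient by $g$ and using that the image of $L$ in $\rqgr B$ is a twisted structure sheaf of a single point-orbit whose contraction modifies $\mathcal{N}$ in a controlled way; this makes $\widetilde{T}$ elliptic. Smoothness of $\rqgr \widetilde{T}$ I would deduce either by checking directly that $\widetilde{T}^\circ$ has finite global dimension or, more conceptually, by exhibiting the line ideal dual to $L$ as a progenerator giving a Morita equivalence between $\widetilde{T}^\circ$ and a localization of $T^\circ$; the point is that in $\rqgr \widetilde{T}$ the contracted line $L$ becomes projective (in fact invertible), which is exactly the content of Castelnuovo's criterion in this noncommutative setting.
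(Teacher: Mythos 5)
This statement is not proved in the paper; it is a citation to results of \cite{RSSblowdown} (Theorems~1.4, 1.5, 8.1, Lemma~8.2), so there is no in-paper proof to compare against. Judged on its own merits, though, your sketch has several concrete gaps.

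First, the claimed direct-sum decomposition $\widetilde{T}/T \cong \bigoplus_{i\geq 1}L[-i]$ does not follow from the Hilbert-series argument you give. The shifts $L[-i]$ do \emph{not} occupy pairwise disjoint degree ranges: $\hilb L[-i] = t^i/(1-t)^2$, so every $L[-i]$ with $1\le i\le n$ contributes to degree $n$, and the contributions overlap in every degree $\geq 2$. Thus there is nothing preventing $M_i\cap\bigl(\sum_{j\neq i}M_j\bigr)$ from strictly containing $T$, and summing a priori unrelated extensions $M_i$ of $T$ by $L[-i]$ need not produce a module whose quotient by $T$ is a direct sum. One has to choose the extensions compatibly (or define $\widetilde{T}$ globally, not one graded piece at a time), which is where the real content of the construction lies. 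Second, the ring-closure step is circular: you invoke the maximality of $\widetilde{T}$ among submodules of $Q_{gr}(T)$ with $\widetilde{T}/T$ a sum of shifts of $L$ to conclude that products land back in $\widetilde{T}$, but that maximality is a \emph{conclusion} of the theorem, not a property you have established for your $\sum_i M_i$. Closure under multiplication for a module defined by Ext-extensions is precisely the hard part here, and the ``reduce mod $g$ and lift'' heuristic needs to be replaced by an actual argument.

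Third, the role of the hypothesis $(L\cdot L)=-1$ is misplaced. You use it to deduce nonvanishing of $\Ext^1_T(L,T[n])$, but for an elliptic algebra that nonvanishing already follows for \emph{any} line module from the Cohen--Macaulay property and Lemma~\ref{lem:exths} (for a line module, $\hilb\Ext^1_T(L,T)=s/(1-s)^2$, which is nonzero in all degrees $n\geq 1$). By \eqref{zero}, the condition $(L\cdot L)=-1$ is equivalent to $\Ext^1_T(L,L)=0$, i.e.\ to the rigidity of $L$; that vanishing is what lets one control the extension structure of $\widetilde{T}/T$ and obtain the clean direct sum, and it is also what one expects to feed into showing $\widetilde{T}$ is again elliptic and $\rqgr\widetilde{T}$ smooth. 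Your smoothness argument (a Morita or localization comparison between $\widetilde{T}^\circ$ and $T^\circ$) is plausible in spirit but is stated only as a hope; since $T^\circ\subseteq\widetilde{T}^\circ$ with $\widetilde{T}^\circ/T^\circ$ a direct sum of copies of the simple $L^\circ$, one does expect $\widetilde{T}^\circ$ to be a torsion-theoretic localisation of $T^\circ$, which preserves finiteness of global dimension --- but that requires first knowing that $\widetilde{T}$ is a ring with the stated quotient, so it cannot be used to bootstrap the construction.
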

  
  We refer to the construction of $\wt{T}$ from $T$ given in Theorem~\ref{thm:Castelnuovo} as {\em blowing down} or {\em contracting} the line $L$.
  
We now state the main result of this section.  

  \begin{theorem}\label{thm:manchester}
  Let $T$ be an elliptic algebra of degree $\geq 3$ such that $T^\circ$  is  hereditary and let $T\subseteq R\subset T_{(g)}$ be 
  any $g$-divisible finitely graded   overring. Then $R$ is obtained from $T$ by successively blowing down finitely many line modules 
  $L$ of self-intersection $ (L\cdot L)= -1$.  In particular, $R$ is elliptic. 
  \end{theorem}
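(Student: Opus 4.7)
The plan is to peel off one line module of self-intersection $-1$ at a time, by induction on the finite quantity $\dim_\kk R_1 - \dim_\kk T_1$. Suppose $R \neq T$; since $R = \wh R$, one has $R^\circ \supsetneq T^\circ$. By Lemmas~\ref{lem:hereditary-simple} and~\ref{lem:2.1}, $T^\circ$ is a simple hereditary ring of injective dimension one. Choose a simple $T^\circ$-submodule $\Ss \subseteq \Soc(R^\circ/T^\circ)$. Since $T^\circ$ is hereditary, $\pd \Ss < \infty$ and Hypothesis~\ref{hyp:2.0}(3) holds with $\Xx(\Ss) = \Ss$; if Hypothesis~\ref{hyp:2.0}(4) also held, Proposition~\ref{prop:2.3}(2) would contradict the finitely graded hypothesis on $R$. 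Hence $\Ss \cong M^\circ$ for some $T$-line module $M$.

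The heart of the proof is showing that such an $M$ may be chosen with $(M \cdot M) = -1$; this is where the hypothesis $\deg T \geq 3$ enters. I would invoke the enhancement of Theorem~\ref{thm:3.5} promised in Remark~\ref{rem:111} (i.e.\ Theorem~\ref{thm111}), whose proof shows that a finitely graded proper $g$-divisible overring forces the existence of a $(-1)$-line module whose localisation embeds in $R^\circ/T^\circ$. Absorbing this into the dichotomy of Proposition~\ref{prop:linesonly}, I may assume the line module $M$ produced above already satisfies $(M \cdot M) = -1$.

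Let $\wt T \supsetneq T$ be the blowdown of $M$ given by Theorem~\ref{thm:Castelnuovo}, so that $\wt T/T \cong \bigoplus_{i \geq 1} M[-i]$. To embed $\wt T$ into $R$, let $\Cc$ be the preimage of $\Ss$ in $R^\circ$, so $T^\circ \subsetneq \Cc \subseteq R^\circ$ with $\Cc/T^\circ = \Ss$. Then $\Cc$ pulls back under $\Phi$ to a $T$-submodule $\Phi(\Cc) \subseteq \Phi(R^\circ) = R$ realising (an essential extension of) $M$ inside $R/T$, possibly after replacing by $E^{11}(\Phi(\Cc)/T)$ as in Lemma~\ref{lem:M-modify}. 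Iterating using the ring structure and $g$-divisibility of $R$, one generates the successive shifts $M[-i]$ inside $R/T$; the universal characterisation of $\wt T$ in Theorem~\ref{thm:Castelnuovo} as the maximal submodule of $Q_{\gr}(T)$ whose quotient by $T$ is a direct sum of shifts of $M$ then forces $\wt T \subseteq R$.

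Now $\wt T$ inherits the hypotheses: $\wt T^\circ$ is a further overring of the hereditary Noetherian prime ring $T^\circ$ (hence simple hereditary by Goodearl's theorem on overrings of HNP rings), and $\deg \wt T \geq \deg T \geq 3$. The inductive hypothesis applied to $\wt T \subseteq R$ completes the proof, provided termination holds. Termination is immediate from the computation $(\wt T/T)_1 \supseteq (M[-1])_1 = M_0 = \kk$, which gives $\dim_\kk \wt T_1 \geq \dim_\kk T_1 + 1$ at each step, together with the a priori bound $\dim_\kk R_1 < \infty$ from the finitely graded hypothesis. The main technical obstacle I foresee lies in the second paragraph: pinpointing that the line module supplied by a given simple of $R^\circ/T^\circ$ can be arranged to have self-intersection exactly $-1$ requires a genuinely intersection-theoretic refinement beyond the apparatus of Section~\ref{MINIMAL}, namely the full content of Theorem~\ref{thm111}.
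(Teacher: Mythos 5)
Your overall skeleton matches the paper's: peel off one $(-1)$-line at a time, blow down by Theorem~\ref{thm:Castelnuovo}, embed the blowdown into $R$, note $\wt T^\circ$ remains hereditary, and induct on $\dim R_1$. The first paragraph and the termination argument are fine. But there are two genuine gaps, one of them fatal.

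The fatal gap is the appeal to Theorem~\ref{thm111} in your second paragraph. In the paper, Theorem~\ref{thm111} is \emph{deduced from} Theorem~\ref{thm:manchester}: its entire proof reads ``replace $R$ by $\wh R$ and apply Theorem~\ref{thm:manchester}.'' Remark~\ref{rem:111} is likewise just a forward pointer to that corollary. So invoking Theorem~\ref{thm111} to prove that the line module has self-intersection $-1$ is circular. What the paper actually does here is Proposition~\ref{prop:heart}, whose proof occupies most of Section~\ref{OVERRING}: using \eqref{zero}, the goal becomes $\Ext^1_T(L,L)=0$. Lemma~\ref{lem:two} pins $\Ext^1_T(L,L)$ down to one of four $g$-torsionfree possibilities; Lemma~\ref{lem:one} rules out anything in degree $-1$ by constructing, from a putative nonsplit extension, a Goldie-torsionfree module $X$ with $T\subseteq X\subseteq T_{(g)}$ and $\dim X_0=2$ whose shift-$L$ filtration forces $X\subseteq R$ via Lemma~\ref{lem:zero.5}, contradicting $R_0=\kk$; and if $\Ext^1_T(L,L)\cong\kk[g]$, Proposition~\ref{prop:three} builds an infinite tower $L(n)$ whose lifts $X(n)\subseteq R$ force $\dim R_1=\infty$. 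You have correctly identified that this is the crux, but you have not supplied an argument for it, and the one you gesture at does not exist independently of the theorem being proved.

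The second gap, smaller but real, is in your third paragraph. Having produced shifts of $M$ inside $R/T$ and formed the submodule $P$ they generate, the ``universal characterisation'' in Theorem~\ref{thm:Castelnuovo} gives you only $P\subseteq\wt T$, not $\wt T\subseteq R$; maximality of $\wt T$ runs in the wrong direction. Even if $P/T$ and $\wt T/T$ were abstractly isomorphic to $\bigoplus_{i\geq 1}M[-i]$, that would not make $P$ all of $\wt T$. The paper closes this by citing Goodearl \cite{Goodearl}: every overring of the HNP ring $T^\circ$ is a torsion-theoretic localisation at a set of simples, so the overring of $T^\circ$ generated by a lift of $\Ss$ is forced to equal $\wt T^\circ$, whence $\wt T=\Phi(\wt T^\circ)\subseteq\Phi(R^\circ)=R$. (Relatedly, you attribute the hereditarity of $\wt T^\circ$ to Goodearl; the paper uses Kuzmanovich \cite{Kuzmanovich} for that step.)
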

  
  We immediately note a simple corollary obtained by combining the theorem with Lemma~\ref{lem:1.7prime}:
    
  \begin{corollary}\label{cor:manchester}
   Let $T$ be an elliptic algebra of degree $\geq 3$ such that $T^\circ$  is  hereditary and let $T\subseteq R\subset T_{(g)}$ be any noetherian cg overring. Then there is  an  extension $R\subseteq R'\subset T_{(g)}$, finitely generated as a left and  
   right $R$-module, such that   $R'$ is obtained from $T$ by successively blowing down finitely many line 
   modules $L$ of self-intersection $(-1)$.\qed
  \end{corollary}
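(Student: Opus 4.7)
The plan is to take $R' := \widehat{R}$, the $g$-divisible hull of $R$ defined in \eqref{hull-defn}, and verify that this choice satisfies the hypotheses of Theorem~\ref{thm:manchester}. Everything then follows immediately by that theorem applied to $\widehat{R}$.

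First I would check that Lemma~\ref{lem:1.7prime} is applicable to $R$ with $n=1$. Since $T$ is an elliptic algebra, its distinguished central element $g$ lies in $T_1 \subseteq R$, so the condition $g^n \in R$ holds. Moreover, the inclusion $T \subseteq R$ together with the fact that $T$ is not contained in $\kk + gT_{(g)}$ (as $T$ maps onto the TCR $B = T/gT$, which is infinite-dimensional in every degree) gives $R \not\subseteq \kk + gT_{(g)}$. Hence Lemma~\ref{lem:1.7prime} applies, and we conclude that $\widehat{R}$ is noetherian as both a left and right $R$-module, with $g^m \widehat{R} \subseteq R \subseteq \widehat{R}$ for some $m\geq 0$, and by part~(2) of that lemma $\widehat{R}$ is itself a noetherian cg $\kk$-algebra.

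Next I would observe that $R' := \widehat{R}$ satisfies the hypotheses of Theorem~\ref{thm:manchester}: by construction $\widehat{R} \subseteq T_{(g)}$ and $\widehat{R}$ is $g$-divisible (this is immediate from Definition~\ref{g-divisible-defn}); it is finitely graded (in fact cg) by the previous step; and $T \subseteq R \subseteq \widehat{R}$. Applying Theorem~\ref{thm:manchester} then yields that $\widehat{R}$ is obtained from $T$ by successively blowing down finitely many line modules of self-intersection $(-1)$, which is exactly the conclusion we want for $R'$.

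The proof is essentially a one-line reduction, so there is no real obstacle: the only thing to verify is that passing to the $g$-divisible hull does not enlarge $R$ beyond $T_{(g)}$ nor destroy the noetherian/finite-generation properties needed for Theorem~\ref{thm:manchester}, both of which are built into the definition of $\widehat{R}$ and guaranteed by Lemma~\ref{lem:1.7prime}. If anything might require a sentence of care, it is the verification that $R$ is not contained in $\kk + gT_{(g)}$, but this is automatic from $T \subseteq R$ and the structure of elliptic algebras noted above.
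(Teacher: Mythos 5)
Your proposal is correct and is essentially identical to the paper's intended argument: the paper offers no written proof but explicitly says the corollary is "obtained by combining the theorem with Lemma~\ref{lem:1.7prime}," which is exactly the reduction you carry out by taking $R' = \widehat{R}$.
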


  The proof of Theorem~\ref{thm:manchester} will take the whole section and we assume throughout  that  the hypotheses 
  of the theorem are satisfied. To begin, we may assume that $T\not=R$ and, 
 by  Proposition~\ref{prop:linesonly},  pick a critical module $L$ so that some shift of $L$ is contained in $R/T$ and so that $\Ss:=L^\circ$ is a  simple submodule  of  $R^\circ/T^\circ$. Without loss of generality, we may shift $L$ so that $\min\{n: L_n\not=0\}=0$.   If  no such $L$ is  a   line module (by Proposition~\ref{prop:linesonly} this is equivalent to saying that $L$ has multiplicity $d(L)>1$), then the conditions of Hypothesis~\ref{hyp:2.0} are automatically satisfied. In this case   Proposition~\ref{prop:2.3} applies and leads to a contradiction. Thus, we can and will  assume that $L$ is a line module. 
 
 \begin{remark}\label{line-remark}  We note that by   \cite[Lemmas~5.2 and 5.4]{RSSblowdown},   $L$ is CM with $j(M)=1$.
 \end{remark}

The heart of the proof will be to prove the following fact. 

\begin{proposition}\label{prop:heart} In the situation above,  
   $(L\cdot L) =-1$.  
\end{proposition}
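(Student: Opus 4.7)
My plan is to combine the hereditary hypothesis with the finite-gradedness of $R$ to pin down $(L \cdot L)$ to exactly $-1$.  The first step is to exploit that $T^\circ$ is simple hereditary, so by Goodearl's result (the same one that underlies the proof of Proposition~\ref{prop:2.3}) the overring $R^\circ$ is a torsion-theoretic localization of $T^\circ$, and $R^\circ/T^\circ$ is a direct sum of copies of the simple $T^\circ$-module $\Ss = L^\circ$.  Passing back through $R = \wh{R} = \Phi(R^\circ)$, this should produce an ascending filtration of $R/T$ whose subquotients are all shifts of~$L$.

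Next I would carry out a Hilbert series calculation.  Each shift $L[-n]$ contributes Hilbert series $t^n/(1-t)^2$ to $R/T$, while by Lemma~\ref{lem:exths} the space $\Ext^1(L,T)$ has Hilbert series $s/(1-s)^2$, which bounds the number of nonsplit extensions $0 \to T \to M \to L[-n] \to 0$ that can sit inside $R$ in each degree.  The requirement that $\dim R_m < \infty$ for all $m$, combined with this extension count, should force the $L$-filtration of $R/T$ to have precisely the growth rate of the Castelnuovo extension $\wt T/T \cong \bigoplus_{i \geq 1} L[-i]$ from Theorem~\ref{thm:Castelnuovo}.

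Finally I would convert this growth rate into the intersection number.  Using that $\Hom_{\rqgr T}(L,L) = \kk$ (which follows because $L$ is $2$-critical with cyclic generator in degree $0$), an Auslander--Gorenstein/Serre duality computation to handle $\Ext^2_{\rqgr T}(L,L)$, and the value of $\Ext^1_{\rqgr T}(L,L)$ obtained from the line-ideal resolution $0 \to I \to T \to L \to 0$ combined with the Hilbert series of $\Ext^1(L,T)$ above, the defining formula $(L\cdot L) = \sum_i (-1)^{i+1} \dim \Ext^i_{\rqgr T}(L,L)$ should evaluate to $-1$.  The main obstacle, I expect, will be the second step: rigorously showing that every subquotient of the $R/T$ filtration is a shift of $L$ (and not some other $g$-torsion $T$-module with the same $\circ$-localization), and that the number of copies of each shift $L[-n]$ appearing inside $R/T$ matches exactly the growth rate associated to $(L \cdot L) = -1$, ruling out both slower and faster growth.
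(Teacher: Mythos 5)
Your proposal correctly identifies two pieces that the paper also uses: the equivalence $(L\cdot L) = -1 \iff \Ext^1_T(L,L) = 0$ (the paper's display~\eqref{zero}), and the fact that Goodearl's theorem makes the localization $R^\circ \supset T^\circ$ torsion-theoretic, so that modules built from copies of $\Ss = L^\circ$ are forced into $R$ (this is the content of Lemma~\ref{lem:zero.5}). However, there is a genuine gap in the middle: the Hilbert series bookkeeping you propose cannot distinguish between the cases $\Ext^1_T(L,L) = 0$ and $\Ext^1_T(L,L) \cong \kk[g]$. Lemma~\ref{lem:exths} fixes the Hilbert series of $\Ext^1_T(L,T)$, not of $\Ext^1_T(L,L)$, and there is no way to extract the latter from the former by shifting and counting. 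Moreover, the phrase ``the number of copies of each $L[-n]$ in $R/T$ matches the growth rate associated to $(L \cdot L) = -1$'' has the logic running in a circle: that growth rate is only defined (via Theorem~\ref{thm:Castelnuovo}) after one already knows $(L\cdot L) = -1$, so it cannot serve as the yardstick that establishes the intersection number.

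The key idea you are missing is constructive: assuming $\Ext^1_T(L,L) \neq 0$, the paper (after first narrowing the options down to $\kk[g]$ in Lemma~\ref{lem:two}, Lemma~\ref{lem:one} and Corollary~\ref{cor:two}) uses Proposition~\ref{prop:three} to build an infinite tower of iterated self-extensions $L = L(1) \subset L(2) \subset \cdots$, each $L(n)$ a CM module of length $n$ (filtered by copies of $L$) with $L(n)^\circ$ essential in $L(n+1)^\circ$. One then lifts a nonsplit extension by $T$ along this tower to produce torsionfree modules $X(n)$ with $T \subset X(n)$ and $X(n)/T \cong L(n)[-1]$. Since each $X(n)/T$ is filtered by shifts of $L$, Lemma~\ref{lem:zero.5} forces $X(n) \subseteq R$ for every $n$; but $\dim X(n)_1 \geq n$, contradicting $\dim R_1 < \infty$. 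This is the mechanism that converts the qualitative fact ``$R$ is finitely graded'' into the vanishing $\Ext^1_T(L,L) = 0$, and hence $(L\cdot L) = -1$. Without this tower, the finiteness of each $\dim R_m$ does not directly control $\Ext^1_T(L,L)$, which is exactly the obstacle you flagged at the end of your proposal.
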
 

Before proving the proposition, we will show that this quickly implies  the theorem.

\begin{proof}[Proof of Theorem~\ref{thm:manchester}] 
   Let $T \subset M \subseteq R$ so that $M/T \cong L[-i]$ for some $i$.

Applying  Theorem~\ref{thm:Castelnuovo}, we can blow
down $T$ at $L$ to obtain a second elliptic algebra $U := \wt{T} \supsetneqq T$ such that  $U/T\cong\bigoplus_{n\geq 1}L[-n]$. 
Therefore $U^\circ/T^\circ \cong\bigoplus_{n\geq 1}\Ss_n$ with $\Ss_n\cong L^\circ$ for all $n$. 
However, by \cite[Theorem~5]{Goodearl} every  overring  of $T^\circ$ is obtained by a torsion-theoretic localisation at some set $X$ of simple modules. 
In particular, the overring of $T^\circ $ generated by $M^\circ$ is such an overring and hence must equal $U^\circ$. 
Consequently, $U^\circ\subseteq R^\circ$.
Since $U$ is $g$-divisible, $U=\Phi(U^\circ) \subseteq \Phi(R^\circ)=R$.  

By \cite[Proposition~1.6]{Kuzmanovich},  $U^\circ$ is hereditary.   
Since  $\infty > \dim R_1\geq \dim U_1>\dim T_1$,   we may
now  induct on $\dim U_1$ to conclude that $R$ is obtained from $U$ (and hence $T$) by blowing down a finite number of line modules of self-intersection $(-1)$. This completes the proof of the theorem.
\end{proof}

It remains to prove Proposition~\ref{prop:heart}, for which we need several lemmas.  Note that, by 
\cite[Corollary~6.6 and Lemma~5.5]{RSSblowdown}, 
\begin{equation}\label{zero}
(L\cdot L)\ = -1 \ \iff \  \Ext^1_{T}(L,\, L)=0.
\end{equation}
  So the proof of  Proposition~\ref{prop:heart} amounts to describing this Ext group.  We note that Goodearl's result 
  also applies to the rings $T^\circ\subseteq R^\circ$ and implies, in particular, the following fact: Suppose that  
  $T^\circ\subseteq N^\circ\subseteq \Fr(T^\circ)$ is a finitely generated module extension such that $N^\circ/T^\circ$ 
  has a composition series  with factors consisting entirely of copies of $\Ss=L^\circ$. Then $N\subseteq R^\circ$. 
  
  Applying $\Phi$ this gives:
  
  \begin{lemma}\label{lem:zero.5}
Suppose that $P$  is a $T$-module with $T\subseteq P\subseteq T_{(g)}$  and assume that $P/T$ has a finite composition series with all factors isomorphic to shifts $L[r]$ of $L$. Then $P\subseteq R$.\qed\end{lemma}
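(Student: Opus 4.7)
The plan is to reduce the statement to the claim about $T^\circ$-modules in the paragraph immediately preceding the lemma (which itself is a consequence of Goodearl's localization theorem \cite[Theorem~5]{Goodearl}), and then ``delocalize'' via the functor $\Phi$. Concretely, first I would apply $(-)^\circ = (-)[g^{-1}]_0$ to the given composition series $T = P_0 \subsetneqq P_1 \subsetneqq \cdots \subsetneqq P_n = P$, whose successive quotients are shifts $L[r_i]$ of the fixed line module $L$. Since $L$ is $2$-critical with $L^\circ = \Ss \neq 0$, the $g$-torsion submodule of $L$ is proper and hence zero by criticality, so $L$ is $g$-torsionfree; hence $(-)^\circ$ is exact on the chain. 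As $L[r]^\circ \cong L^\circ = \Ss$ for every shift $r$, this produces an ascending chain of finitely generated $T^\circ$-submodules $T^\circ \subseteq P_1^\circ \subseteq \cdots \subseteq P_n^\circ = P^\circ$ of $\Fr(T^\circ)$ whose successive quotients are all isomorphic to $\Ss$.

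Next I would invoke the preceding paragraph (which, as noted in the text, is an immediate application of Goodearl's theorem to the hereditary ring $T^\circ$) to conclude $P^\circ \subseteq R^\circ$. Finally, using the identity $\Phi(X^\circ) = \widehat{X}$ from Notation~\ref{not:2.31}, together with the hypothesis that $R$ is $g$-divisible (so $R = \widehat{R} = \Phi(R^\circ)$), I obtain
\[
P \ \subseteq\ \widehat{P} \ =\ \Phi(P^\circ) \ \subseteq\ \Phi(R^\circ) \ =\ R,
\]
which is the desired conclusion.

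The argument is essentially bookkeeping: there is no substantive obstacle. The only subtlety worth flagging is the use of $g$-torsionfreeness of each shifted line module to ensure exactness of $(-)^\circ$ on the filtration, and the fact that finite generation is preserved under the localization so that the Goodearl-based input actually applies. Once these two points are recorded, everything else is driven by the formal adjoint-like properties of the pair $((-)^\circ, \Phi)$ already established in the paper.
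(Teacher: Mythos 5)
Your argument matches the paper's intended proof exactly: the lemma carries a proof marker in its statement precisely because it follows by applying $\Phi$ to the Goodearl-based observation recorded in the paragraph immediately above, which is what you do. One small imprecision: $g$-torsionfreeness of $L$ does not follow from $2$-criticality alone as you suggest (criticality controls quotients rather than submodules, and one would in addition need that modules of $\GKdim\le 1$ localize to finite-dimensional $T^\circ$-modules); but this is moot here, since Proposition~\ref{prop:linesonly} produces $L$ $g$-torsionfree by construction, and in any case $(-)[g^{-1}]_0$ is an exact functor unconditionally, so the filtration of $P^\circ/T^\circ$ by copies of $\Ss$ follows regardless.
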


We note also that the possibilities for $\Ext^1_T(L, L)$ are quite limited.

\begin{lemma}\label{lem:two}
If $T$ is any elliptic algebra and $L$ is any right 
$T$-line module, then as a $g$-module $\Ext^1_T(L,L)$ is isomorphic to one of:       
\[ 0, \quad \kk[g], \quad g^{-1} \kk[g], \quad g^{-1} \kk[g] \oplus \kk[g].\]
In particular, $\Ext^1_T(L,L) $ is $g$-torsionfree.
\end{lemma}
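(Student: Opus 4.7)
The plan is to use the central element $g \in T_1$ to relate $\Ext^1_T(L,L)$ to Ext groups involving a point module, and then to separately control the $g$-torsion and the $\kk[g]$-rank.

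First I would reduce to the case that $L$ is $g$-torsionfree. If $Lg = 0$, then $L$ is a $B$-module and multiplication by $g$ acts as zero on $\Ext^1_T(L,L)$, so the conclusion forces $\Ext^1_T(L,L) = 0$; this can be verified by a change-of-rings spectral sequence for $T \twoheadrightarrow B = T/gT$ coming from the resolution $0 \to T[-1] \xrightarrow{\cdot g} T \to B \to 0$. In the generic case when $L$ is $g$-torsionfree (which is the case relevant to the application in Proposition~\ref{prop:heart}, where $L$ arises via Proposition~\ref{prop:linesonly}), multiplication by $g$ gives a short exact sequence
\[ 0 \to L[-1] \xrightarrow{\cdot g} L \to P \to 0,\]
and a Hilbert series computation identifies $P := L/Lg$ as a point module.

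Next, applying $\Hom_T(L,-)$ to this sequence yields a long exact sequence containing
\[ \End_T(L) \xrightarrow{\pi_*} \Hom_T(L, P) \to \Ext^1_T(L,L)[-1] \xrightarrow{\cdot g} \Ext^1_T(L,L) \to \Ext^1_T(L, P),\]
in which the labeled maps $\cdot g$ are multiplication by $g$. Since $Pg = 0$, every map $L \to P$ factors through $L/Lg = P$, so $\Hom_T(L, P) = \End_T(P)$ and $\pi_*$ is induced by the projection $\pi:L \to P$. Exactness identifies the $g$-torsion of $\Ext^1_T(L,L)$ with $\coker(\pi_*)$, so $g$-torsionfreeness reduces to showing $\pi_* : \End_T(L) \to \End_T(P)$ is surjective. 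I would establish this using the elliptic curve geometry underlying $T$: a graded endomorphism of the point module $P$ of degree $n$ exists only when $P[n] \cong P$ as $B$-modules, which constrains $n$ to a restricted set, and in each such case the required isomorphism of $P$ lifts to an endomorphism of $L$ given by right multiplication by a suitable element of $T$.

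Finally, I would pin down the exact $\kk[g]$-module structure by bounding the rank. Localizing at $g$ gives $\Ext^1_T(L,L)[g^{-1}] \cong \Ext^1_{T^\circ}(L^\circ, L^\circ) \otimes_\kk \kk[g, g^{-1}]$, and since $T^\circ$ is Auslander-Gorenstein CM of injective dimension at most $2$ by Lemma~\ref{lem:2.1}, one can bound $\dim_\kk \Ext^1_{T^\circ}(L^\circ, L^\circ) \leq 2$. Combined with the injection $\Ext^1_T(L,L)/g\Ext^1_T(L,L) \hookrightarrow \Ext^1_T(L, P)$ and a direct computation showing $\Ext^1_T(L, P)$ has total $\kk$-dimension at most $2$ concentrated in degrees $\geq -1$, this forces $\Ext^1_T(L,L)$ to be a free $\kk[g]$-module of rank at most $2$ with generators in degrees $0$ or $-1$, matching exactly one of the four listed options. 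The main obstacle will be the surjectivity of $\pi_*: \End_T(L) \to \End_T(P)$: this requires concrete identification of both endomorphism rings using the elliptic geometry and the structure of $T$ as a $g$-deformation of the TCR $B$, and is the technical heart of the argument; the accompanying $\kk$-dimension bound on $\Ext^1_{T^\circ}(L^\circ, L^\circ)$ should be more tractable via the homological properties of $T^\circ$ established in Lemma~\ref{lem:2.1}.
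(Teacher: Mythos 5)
Your overall strategy --- relating $\Ext^1_T(L,L)$ to $\Ext^1$ over $B = T/gT$ via the short exact sequence $0 \to L[-1] \xrightarrow{\cdot g} L \to P \to 0$, establishing $g$-torsionfreeness from the $\Hom$--$\Ext$ long exact sequence, and then enumerating the possibilities from a bound on $\Ext^1/g\Ext^1$ --- is essentially the paper's. The paper cites \cite[Lemma~4.7]{RSSblowdown} for the exact sequence $0 \to \Ext^1_T(L,L[-1]) \xrightarrow{\cdot g} \Ext^1_T(L,L) \xrightarrow{\delta} \Ext^1_B(\bbar{L},\bbar{L})$ and \cite[Prop.~3.6(2)]{RSSblowdown} for the identification $\Ext^1_B(\bbar{L},\bbar{L}) = \kk \oplus \kk[1]$, then argues that a left-bounded, $g$-torsionfree graded $\kk[g]$-module whose reduction mod $g$ lands in $\kk\oplus\kk[1]$ must be one of the four modules listed.

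There are, however, some real problems in your write-up. First, the reduction step is wrong as stated: when $Lg=0$, the change-of-rings spectral sequence has a nonzero contribution from $\Tor^T_1(L,B)\cong L[-1]$, so it does not force $\Ext^1_T(L,L)=0$. What actually matters is that this case never arises: $T/gT$ is a $2$-critical domain with $\dim(T/gT)_1 = \deg T \geq 2$, so it admits no cyclic module of Hilbert series $(1-t)^{-2}$, hence $Lg\neq 0$; and then, since $L$ is CM by Remark~\ref{line-remark} and hence $2$-pure, the $g$-torsion submodule of $L$ (of strictly smaller GK-dimension) vanishes. Second, you identify the surjectivity of $\pi_*:\End_T(L)\to \End_T(P)$ as ``the technical heart,'' to be proved via a lifting argument; in fact it is immediate, because $\End_T(P)=\End_B(P)=\kk$ --- $P$ is a $B$-point module and $\tau$ has infinite order, so $\Hom_B(P,P)_n=0$ for $n\neq 0$ --- and $\pi_*(\mathrm{id}_L)=\mathrm{id}_P$ already spans the target. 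The real technical input is the computation $\Ext^1_B(\bbar{L},\bbar{L})=\kk\oplus\kk[1]$, which you leave as ``a direct computation'' and the paper cites. Finally, the detour through $T^\circ$ is superfluous: once $g$-torsionfreeness, left-boundedness, and the embedding $\Ext^1_T(L,L)/g\Ext^1_T(L,L)\hookrightarrow \kk\oplus\kk[1]$ are in hand, a direct count of graded dimensions (injectivity of $\cdot g$ forces $\dim_\kk\Ext^1_T(L,L)_n$ to be nondecreasing, and the cokernel bound confines the jumps to degrees $-1$ and $0$, each of size at most $1$) already yields exactly the four listed possibilities.
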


\begin{proof}
 Let $\overline{L}=L/Lg$, and note that,  by \cite[Lemma~4.7]{RSSblowdown},  there is an  exact sequence
\[ \xymatrix{0 \ar[r] & \Ext^1_T(L,L[-1])  \ar[r]^{\cdot g} & \Ext^1_T(L,L) \ar[r]^{\delta} & \Ext^1_{T/gT}(\bbar{L}, \bbar{L}).} \]
This immediately shows that  $\Ext^1_T(L,L)$ is $g$-torsionfree.
By \cite[Proposition~3.6(2)]{RSSblowdown}, $  \Ext^1_{T/gT}(\bbar{L}, \bbar{L}) = \kk\oplus \kk[1]$.  
 Thus, $\im \delta $ is a graded subspace of $ \kk \oplus \kk[1]$. 
  Since $\Ext^1_T(L,L)$ is left bounded and $g$-torsionfree, 
    this gives the four claimed possibilities.   \end{proof}

\begin{lemma}\label{lem:one}
$\Ext^1_T(L,L)_{-1} = 0$.
\end{lemma}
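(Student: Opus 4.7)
Suppose for contradiction that $\Ext^1_T(L,L)_{-1} \neq 0$; this gives a non-split extension $\epsilon: 0 \to L[-1] \to E \to L \to 0$ of $T$-modules, with $E$ being $2$-pure, $g$-torsionfree, and of Hilbert series $(1+t)/(1-t)^2$.  The plan is to exploit the hypothesis that some shift of $L$ embeds in $R/T$ to lift $\epsilon$ into an extension inside $R$, and thereby produce a composition factor of $R/T$ at a strictly lower shift than allowed, yielding a contradiction with minimality.

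I will choose $i$ to be the smallest integer for which $L[-i]$ embeds in $R/T$, and fix such an embedding $M_1/T \cong L[-i]$ with $T \subsetneq M_1 \subseteq R$; since $R$ is finitely graded (so $R_0 = \kk$) one has $i \geq 1$.  I then apply Lemma~\ref{general lemma} to the inclusion $L[-i] \subseteq E[-(i-1)]$ and the torsionfree extension $0 \to T \to M_1 \to L[-i] \to 0$, yielding a torsionfree extension $0 \to T \to P \to E[-(i-1)] \to 0$ with $P \subseteq T_{(g)}$.  The required essentiality of $L[-i]$ in $E[-(i-1)]$ follows from the $g$-torsionfreeness of $\Ext^1_T(L,L)$ in Lemma~\ref{lem:two}: if some $L[-n]$ split off from $L[-1]$ inside $E$, then $g^n\cdot [\epsilon] = 0$ in $\Ext^1_T(L,L)_{n-1}$, impossible since $[\epsilon]\neq 0$ and $\Ext^1_T(L,L)$ is $g$-torsionfree.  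The vanishing $\Ext^2_T(L[-(i-1)],T)=0$ needed by Lemma~\ref{general lemma} holds since $T$ is Auslander--Gorenstein and Cohen--Macaulay of injective dimension $3$ with $j(L)=1$.  By Lemma~\ref{lem:zero.5}, since $P/T\cong E[-(i-1)]$ is filtered by shifts of $L$, the module $P$ lies in $R$.

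Since $P/M_1 \cong L[-(i-1)]$ is generated in degree $i-1$, any preimage $\beta \in P_{i-1}$ lies in $R_{i-1} \setminus T_{i-1}$, so $(R/T)_{i-1} \neq 0$.  To close the argument, I must upgrade this to an actual embedding $L[-j] \hookrightarrow R/T$ for some $j \leq i-1$, contradicting minimality.  I plan to achieve this by analyzing the cyclic submodule $N := (T\beta + T)/T$ of $R/T$, which is a quotient of some shift of $L$: by adjusting $\beta$ modulo $M_1$ to arrange that $\Ann_T(\beta + T)$ equals the line ideal of $L$, and by using $g$-divisibility of $R$ together with the fact from Proposition~\ref{prop:linesonly} that every simple subobject of $R^\circ/T^\circ$ is isomorphic to $\Ss = L^\circ$, I aim to conclude that $N$ contains a line module $L[-j]$ with $j \leq i-1$ as a submodule.

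The main obstacle will be this last step: rigorously extracting an embedded line-module shift $L[-j]$ with $j \leq i-1$ in $R/T$ from the mere non-vanishing of $(R/T)_{i-1}$.  Because $R^\circ/T^\circ$ need not be semisimple as a $T^\circ$-module -- localisations of HNP rings can produce non-semisimple torsion quotients -- the desired composition factor need not automatically appear as a submodule, and careful use of the $g$-divisibility of $R$, the precise submodule structure of $T_{(g)}/T$, and the freedom to adjust $\beta$ modulo $M_1$ will be required to complete the minimality contradiction.
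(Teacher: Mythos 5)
Your high-level plan — lift the nonsplit extension of $L$ by $L[-1]$ to a torsionfree extension of $T$ sitting inside $R$, then derive a contradiction with $R_0 = \kk$ — is the right idea, and your invocations of Lemma~\ref{general lemma} and Lemma~\ref{lem:zero.5} are apt. However, as you yourself acknowledge in the final paragraph, you cannot close the argument: having picked $i$ minimal with $L[-i]\hookrightarrow R/T$ and having produced $P\subseteq R$ with $P/T \cong E[-(i-1)]$, you need $L[-(i-1)]$ to appear as a \emph{submodule} of $R/T$ to contradict minimality, but $L[-(i-1)]$ appears only as the top \emph{quotient} of the extension $P/T$. The degree-$0$ dimension count gives a contradiction only when $i=1$, and nothing in your construction guarantees that.

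The paper's proof sidesteps this entirely by not taking the given embedding $L[-i]\hookrightarrow R/T$ as the starting point. Instead it uses Lemma~\ref{lem:exths}, which gives $\Ext^1_T(L,T)_1 = \kk$, to produce \emph{directly} a nonsplit (and, by a CM argument, torsionfree) extension $0\to T\to Y\to L[-1]\to 0$. Since $L$ is CM ($j(L)=1$, so $\Ext^2_T(L,T)=0$), the map $\Ext^1_T(M,T)\to\Ext^1_T(L[-1],T)$ is onto, so $[Y]$ lifts to $[X]\in\Ext^1_T(M,T)_0$, giving $0\to T\to X\to M\to 0$. Torsionfreeness of $X$ is then extracted from Lemma~\ref{general lemma} applied to the \emph{localised} sequence (using Lemma~\ref{lem:two} to ensure $0\to\Ss\to M^\circ\to\Ss\to 0$ is nonsplit), and $X\subseteq R$ by Lemma~\ref{lem:zero.5}. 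Because $M_0\cong L_0 = \kk$, one has $\dim X_0 = 2$, contradicting $R_0 = \kk$. If you adopted this construction of $Y$ in place of picking the ``minimal'' $i$, your proof would close: $Y\subseteq R$ forces $L[-1]\hookrightarrow R/T$, hence $i=1$, and your $P$ then has $\dim P_0 = 2$. The missing ingredient in your proposal is precisely this use of $\Ext^1_T(L,T)_1\neq 0$ to manufacture a shift-$(-1)$ extension by hand rather than hoping one is handed to you.

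One secondary point: your justification that $L[-1]$ is essential in $E$ (via $g$-torsionfreeness of $\Ext^1_T(L,L)$) is heuristic as written. In the paper, what needs essentiality is the localised inclusion $\Ss\subseteq M^\circ$, and that nonsplitness of $0\to\Ss\to M^\circ\to\Ss\to 0$ follows cleanly from $g$-torsionfreeness of $\Ext^1_T(L,L)$ together with simplicity of $\Ss$; essentiality of $L[-1]$ in $E$ over $T$ itself does not follow quite so directly from a splitting-off argument, since a nonzero submodule of $E$ meeting $L[-1]$ trivially need not yield a direct-sum decomposition of $E$.
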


\begin{proof}
Suppose the contrary:  so
$  \Ext^1_T(L, L[-1])_0= \Ext^1_T(L,L)_{-1} \neq 0 $ and 
  there is a nonsplit extension
\[ 0 \to L[-1] \to M \to L \to 0.\]
Since $L$ is CM by Remark~\ref{line-remark},   
 this induces  an exact sequence
\[ 0 \to \Ext^1_T(L, T) \to \Ext^1_T(M, T) \to \Ext^1_T(L[-1], T) \to 0.\]
By Lemma~\ref{lem:exths} $\Ext^1_T(L,T)_1=\kk$, say given by the nonsplit extension 
$[Y]: 0 \to T \to Y \to L[-1] \to 0$.   By the displayed equation this lifts to a (necessarily nonsplit) extension,
 say
$0 \to T \to X \to M \to 0$.

We claim that $Y$ is   (Goldie) torsionfree. Indeed, if $Y$ has a nonzero  torsion submodule $N$,  then 
$N$ would be canonically isomorphic to a submodule of $L[-1]$ and hence $\Ext^1_T(L[-1]/N,T)\not=0$. As $L[-1]$ is 2-critical, 
$\GKdim(L[-1]/N)<2$ and so this contradicts the fact that $T$ is CM \cite[Proposition~2.4]{RSSshort}. This proves the claim;
in particular $Y^\circ$ is also torsionfree.

By Lemma~\ref{lem:two},  $\Ext^1_T(L, L)$   is $g$-torsionfree
and so  the extension $0 \to \Ss \to M^\circ \to \Ss \to 0$ is nonsplit.  
Thus by Lemma~\ref{general lemma}, $X^\circ $ is (Goldie) torsionfree.  
As $L$ is $g$-torsionfree, so are $M$ and $X$. Thus, if $X$ had nonzero Goldie torsion, then 
so  would $X[g^{-1}]$ and $X^\circ$, giving a contradiction. Therefore, $X$  must also be torsionfree.   
Finally,  by Lemma~\ref{lem:zero.5}, this implies that $X \subseteq R$. Since $\dim X_0 = 2$ but  $R_0=\kk$, this is a contradiction.
\end{proof}

Combining the last two lemmas gives:   

\begin{corollary}\label{cor:two}
Either $\Ext^1_T(L,L)=0$ or $\Ext^1_T(L,L) = \kk[g]$.
\qed
\end{corollary}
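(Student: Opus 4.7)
The plan is simply to combine the preceding two lemmas, since between them they already constrain $\Ext^1_T(L,L)$ enough to force the conclusion. First I would invoke Lemma~\ref{lem:two} to reduce to one of the four possibilities
\[ \Ext^1_T(L,L) \in \{0,\ \kk[g],\ g^{-1}\kk[g],\ g^{-1}\kk[g] \oplus \kk[g]\}. \]
Each of the last two options, viewed as a graded $\kk[g]$-module with $\deg g = 1$, has a nonzero component in degree $-1$: namely $g^{-1} \in g^{-1}\kk[g]$ sits in degree $-1$. Then I would apply Lemma~\ref{lem:one}, which says $\Ext^1_T(L,L)_{-1} = 0$, to rule out those two cases. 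What remains is exactly $0$ or $\kk[g]$, which is the statement of the corollary.

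There is essentially no obstacle here: all the real work has already been done in Lemmas~\ref{lem:two} and \ref{lem:one}. The only thing to double-check is the grading convention --- that $g^{-1}\kk[g]$ is indeed being graded so that its minimal-degree generator lies in degree $-1$ --- but this is consistent with the usage elsewhere in the paper (in particular with the short exact sequence in the proof of Lemma~\ref{lem:two}, where multiplication by $g$ shifts degree by $+1$). Thus the proof reduces to a one-line deduction and needs no further machinery.
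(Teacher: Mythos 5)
Your argument is exactly the paper's intended one: the corollary carries a \qed with no written proof because it is the immediate combination of Lemma~\ref{lem:two} (which restricts $\Ext^1_T(L,L)$ to the four listed $\kk[g]$-modules) and Lemma~\ref{lem:one} (which kills the degree $-1$ component, eliminating the two options containing $g^{-1}\kk[g]$). Nothing to add.
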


We now come to the key step in the proof of  Proposition~\ref{prop:heart}.

\begin{proposition}\label{prop:three}
In Corollary~\ref{cor:two}, suppose that $\Ext^1_T(L,L) \neq 0$.
Then for all $n \in \ZZ_{\geq 1}$, there is a graded CM $T$-module $L{(n)}$, where:  
\begin{enumerate}
\item $L{(1)}  = L$;
\item there is a nonsplit extension
$ 0 \to L{(n)} \to L{(n+1)} \to L \to 0;$
\item $L(n)^\circ$ is essential in $L(n+1)^\circ$.    
\end{enumerate}
\end{proposition}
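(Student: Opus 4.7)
The plan is to prove Proposition~\ref{prop:three} by induction on $n$, with the base case $L(1) = L$ immediate (CM by Remark~\ref{line-remark}). Assuming $L(k)$ has been constructed with the three properties for all $k \leq n$, I would produce $L(n+1)$ in three moves: first show $\Ext^1_T(L, L(n)) \neq 0$, then form the corresponding nonsplit extension, and finally verify CM-ness and the essentiality condition at the localization.

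To obtain a nonzero class in $\Ext^1_T(L, L(n))$, apply $\Hom_T(L, -)$ to the inductive sequence $0 \to L(n-1) \to L(n) \to L \to 0$ (set $L(0) = 0$), yielding
\[
\Ext^1_T(L, L(n-1)) \too \Ext^1_T(L, L(n)) \too \Ext^1_T(L,L) \buildrel{\delta}\over\too \Ext^2_T(L, L(n-1)).
\]
By Corollary~\ref{cor:two}, $\Ext^1_T(L, L) = \kk[g]$ is $g$-torsionfree of infinite rank, whereas $\Ext^2_T(L, L(n-1))$ is $g$-torsion: localising at $g$, one computes $\Ext^2_{T^\circ}(L^\circ, L(n-1)^\circ) = 0$ because $T^\circ$ is hereditary, and standard flatness of localisation for finitely generated modules over the noetherian ring $T$ identifies this with $\Ext^2_T(L, L(n-1))[g^{-1}]$. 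Hence $\ker \delta$ contains $g^r \cdot 1$ for all $r \gg 0$, and lifts to a nonzero element $\eta_n \in \Ext^1_T(L, L(n))$ which is itself not $g$-torsion (choose the lift compatibly with $\eta_{n-1}$ under $\Ext^1_T(L, L(n-1)) \to \Ext^1_T(L, L(n))$).

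Now let $L(n+1)$ be the extension classified by $\eta_n$. For the Cohen--Macaulay property, apply $\Hom_T(-, T)$ to $0 \to L(n) \to L(n+1) \to L \to 0$: since both $L$ and $L(n)$ are CM of grade $1$, the long exact sequence forces $\Ext^i_T(L(n+1), T) = 0$ for $i \neq 1$, so $L(n+1)$ is CM. This gives (1) and (2).

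The main obstacle will be (3), essentiality of $L(n)^\circ$ in $L(n+1)^\circ$. The localised sequence $0 \to L(n)^\circ \to L(n+1)^\circ \to \Ss \to 0$ corresponds to the image of $\eta_n$ under the natural map $\Ext^1_T(L, L(n)) \to \Ext^1_{T^\circ}(\Ss, L(n)^\circ)$, which factors through the $g$-localisation. Because $\eta_n$ was chosen to be non-$g$-torsion, this image is nonzero, so the localised extension does not split. Since by induction $L(n)^\circ$ is a uniserial essential extension of $\Ss = L(1)^\circ$, any nonzero $T^\circ$-submodule of $L(n+1)^\circ$ that met $L(n)^\circ$ trivially would provide a splitting, contradicting non-splitness; hence $L(n)^\circ$ is essential in $L(n+1)^\circ$. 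The delicate point, and the step I expect to require the most care, is confirming that the image of $\eta_n$ in $\Ext^1_{T^\circ}(\Ss, L(n)^\circ)$ is indeed nonzero---this is where the $g$-torsion-free choice of lift and the hereditary hypothesis on $T^\circ$ interact, and where Goodearl's theorem (as used around Lemma~\ref{lem:zero.5}) controls the localised extension structure.
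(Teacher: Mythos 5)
Your overall strategy is close in spirit to the paper's — induction on $n$, producing the extension class from a long exact sequence, using hereditariness of $T^\circ$ to kill $\Ext^2$ after inverting $g$, and deducing essentiality from a non-$g$-torsion class — and the individual observations (localisation commutes with $\Ext$ for f.g.\ modules over noetherian $T$; grade-$1$ CM passes to extensions; a nonzero submodule of $L(n+1)^\circ$ missing $L(n)^\circ$ would split the sequence since $\Ss$ is simple) are all correct. The paper differs mainly in that it strengthens the induction to additionally carry $\Hom_T(L,L(n)) \cong \Ext^1_T(L,L(n)) \cong \kk[g]$ and $\Ext^1_{T^\circ}(\Ss,L(n)^\circ) \cong \kk$, and the essentiality of $L(n)^\circ \subseteq L(n+1)^\circ$ then falls out immediately from $g$-torsionfreeness of $\Ext^1_T(L,L(n))$ rather than needing a separate analysis of the localisation map.

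However, there is a genuine gap in your argument: you do not control the \emph{internal degree} of the class $\eta_n$. From the exact sequence $\Ext^1_T(L,L(n)) \to \Ext^1_T(L,L) \xrightarrow{\delta} \Ext^2_T(L,L(n-1))$ you deduce only that $g^r\cdot 1 \in \ker\delta$ for $r \gg 0$ (because $\Ext^2_T(L,L(n-1))$ is $g$-torsion and finitely generated, so killed by some $g^N$), and you lift $g^r\cdot 1$. But $g^r\cdot 1$ lives in degree $r$, so $\eta_n \in \Ext^1_T(L,L(n))_r$ classifies a graded extension of the form $0 \to L(n) \to L(n+1) \to L[-r] \to 0$, not the unshifted sequence $0 \to L(n) \to L(n+1) \to L \to 0$ required by the statement. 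Nothing in your argument forces $\delta(1) = 0$, i.e., that $\Ext^2_T(L,L(n-1))$ vanishes in degree $0$; knowing it is $g$-torsion is not enough. This degree issue is not cosmetic: the application of Proposition~\ref{prop:three} in the proof of Proposition~\ref{prop:heart} needs $\dim L(n)_0 \geq n$, which requires the $n$ copies of $L$ in the filtration to be unshifted. The paper closes precisely this gap in the final paragraph of its proof: it writes $\Ext^1_T(L,M) \cong g^m\kk[g]$, and then uses the sequence $\Ext^1_T(L,M)[-1] \xrightarrow{\cdot g} \Ext^1_T(L,M) \xrightarrow{\gamma} \Ext^1_{T/gT}(\bbar L, \bbar M)$ from \cite[Lemma~4.7]{RSSblowdown} together with the TCR computation $\Ext^1_{T/gT}(\bbar L, \bbar L) = \kk \oplus \kk[1]$ of \cite[Proposition~3.6(2)]{RSSblowdown} and graded Nakayama to force $m=0$. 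Your proposal has no analogue of this TCR input, and without it the induction does not produce the degree-zero extensions the proposition asserts.
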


\begin{proof}  
We will show by induction that $L{(n)}$ exists with the claimed properties and    that $\Ext^1_T(L, L{(n)}) \cong  \Hom_T(L, L{(n)}) \cong \kk[g]$.   The base case $n =1$ holds by Remark~\ref{line-remark} and Corollary~\ref{cor:two}.  

Suppose that $N=L{(n)}$ has been constructed with the given properties.   Since  $\Ext^1_T(L, N) =\kk[g]$, 
  there is a nonsplit exact sequence
$ 0 \to N \to M \to L \to 0$
and so  certainly $M = L{(n+1)}$ exists, and is CM by induction.
By Lemma~\ref{lem:two}, $\Ext^1_T(L, N)$ is $g$-torsionfree, so  
$M^\circ\not\cong N^\circ \oplus \Ss$.
As $L^\circ=\Ss$ is simple,  $N^\circ$ is essential in $M^\circ$.  

Consider the localised exact sequence  
\[\begin{matrix}
&{ \xymatrix{0 \ar[r]& \Hom_{T^\circ}(\Ss, N^\circ) \ar[r]& \Hom_{T^\circ}(\Ss, M^\circ) \ar[r]^{\alpha} & \Hom_{T^\circ}(\Ss, \Ss) \ar[r]^{\ \quad \quad \beta} &
}} \\ &\qquad 
{  \xymatrix{\ar[r]& \Ext^1_{T^\circ}(\Ss, N^\circ) \ar[r] & \Ext^1_{T^\circ}(\Ss, M^\circ) \ar[r] & \Ext^1_{T^\circ}(\Ss, \Ss) \ar[r] & 0.}}
\end{matrix}\]
As noted above, $M^\circ$ is   a nonsplit extension, 
and so \[\Hom_{T^\circ}(\Ss, M^\circ) \cong \Hom_{T^\circ}(\Ss, N^\circ) = \kk.\]  Thus, $\alpha=0$.
Thus $\beta$ is injective, and since $\Ext^1_{T^\circ}(\Ss, N^\circ) = \kk$ by induction,   $\beta$ is even an isomorphism.
It follows that $\Ext^1_{T^\circ}(\Ss, M^\circ)\cong \Ext^1_{T^\circ}(\Ss, \Ss) = \kk$.

Consider now the long exact sequence
\[ \begin{matrix}
&{\xymatrix{ 0 \ar[r] & \Hom_{T}(L, N) \ar[r] & \Hom_{T}(L, M)\ar[r]^{\wh{\alpha}} & \Hom_{T}(L,L) \ar[r]^{\quad\quad \wh{\beta}}&}} \\
&{\xymatrix{ \ar[r]& \Ext^1_{T}(L,N) \ar[r]& \Ext^1_{T}(L,M)\ar[r] & \Ext^1_{T}(L,L)  }}\end{matrix}\]
 Since $\alpha = 0$, the image of $\wh{\alpha}$ must be a $g$-torsion submodule of $\Ext^1_T(L,L) \cong \kk[g]$,
 and so $\wh{\alpha}  =0$.
 Thus $\Hom_T(L,M) \cong \Hom_T(L,N) \cong \kk[g]$ by induction, again.
Further,  $\wh{\beta}$ is an injective map between the graded spaces $\Hom_T(L,L) \cong \kk[g]$ and $\Ext^1_T(L,N) \cong \kk[g]$ and so is an isomorphism.
 Thus $\Ext^1_T(L,M)\hookrightarrow \Ext^1_T(L,L) \cong \kk[g]$ and so $\Ext^1_T(L, M) \cong g^m \kk[g]$ for some $m \geq 0$.
 
 Finally, write $\bbar{L}:=L/Lg$ and  consider   the exact sequence
 \[   \Ext^1_T(L,M)[-1]\  \buildrel{\cdot g}\over{\longrightarrow}\
  \Ext^1_T(L,M) \  \buildrel{\gamma}\over{\longrightarrow}\
 \Ext^1_{T/gT}(\bbar{L}, \bbar{M}),\]
 given by \cite[Lemma~4.7]{RSSblowdown}.
 Since $\bbar{M}$ has a filtration whose factors are $n+1$ copies of the point module $\bbar{L}$, 
 \cite[Proposition~3.6(2)]{RSSblowdown} implies 
 that $  \im \gamma \subseteq (\kk[1]\oplus \kk)^{(n+1)}$.  
Thus, since $\Ext^1_T(L,M)$ is  left bounded and  $g$-torsionfree the map $\cdot g$ is an injection and the graded version of Nakayama's Lemma implies that generators of $\text{Im}(\gamma)$ pull back to generators of  $\Ext^1_T(L,M)$. 
Hence   $m = 0$, completing the proof of the induction step.
\end{proof}

\begin{proof}[Proof of Proposition~\ref{prop:heart}]
Suppose that $\Ext^1_T(L,L) \neq 0$.  Then modules $L{(n)}$ as in Proposition~\ref{prop:three} exist for all $n$.
We claim that there is a torsionfree extension 
$0 \to T \to X{(n)} \to L{(n)}[-1] \to 0$
for all $n$.

As in the proof of Lemma~\ref{lem:one}, certainly $X(1)$ exists. By induction, assume that $X(n)$ exists and let $\xi_n $ be the corresponding element of $ \Ext^1_T(L(n), T)$.
As $L(n)$ is CM we have  the exact sequence \[0 \to \Ext^1_T(L, T) \to \Ext^1_T(L(n+1),T) \to \Ext^1_T(L(n),T) \to 0. \] Let $\xi_{n+1}$ be a preimage of $\xi_n$ in $\Ext^1_T(L(n+1),T)$ and let $0 \to T \to X(n+1) \to L(n+1) \to 0$ be the corresponding extension.  
Under localisation this gives the extension $0 \to T^\circ \to X(n+1)^\circ \to L(n+1)^\circ \to 0$.
By Lemma~\ref{general lemma} and Proposition~\ref{prop:three}(3), $X(n+1)^\circ$ is torsionfree, and as in the final paragraph of the proof of Lemma~\ref{lem:one} $X(n+1)$ must be torsionfree, as claimed.

 Thus  $X{(n)} \subseteq R$ for all $n$, by Lemma~\ref{lem:zero.5}.  
But $\dim X{(n)}_1 \geq n$,   whence $\dim R_1 = \infty$. This gives the required  contradiction and 
 completes the proof of the proposition and hence that of Theorem~\ref{thm:manchester}.
\end{proof}

We also note that Theorem~\ref{thm:manchester}  provides the following variant of Theorem~\ref{thm:3.5} for elliptic algebras of degree at least $3$.

\begin{theorem}  \label{thm111}
Let $T$ be an elliptic algebra of degree $\geq 3$ such that $T^\circ$    is hereditary and $T$ has no line modules of self-intersection $(-1)$. 
Then $T$ is a minimal elliptic surface. \end{theorem}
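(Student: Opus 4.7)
The plan is to derive Theorem~\ref{thm111} directly from Theorem~\ref{thm:manchester} after reducing to the $g$-divisible case via Lemma~\ref{lem:1.7prime}.

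Suppose for contradiction that $T \subsetneqq R \subset T_{(g)}$ with $R$ a cg noetherian overring. First I would pass to the $g$-divisible hull $R' := \wh{R}$. The hypotheses of Lemma~\ref{lem:1.7prime} are easy to check with $n = 1$: we have $g \in T \subseteq R$, and $R \not\subseteq \kk + gT_{(g)}$ because $T_1 \not\subseteq gT_{(g)}$ (which follows from $B = T/gT$ being generated in degree one). The lemma then gives that $R'$ is cg noetherian and finitely generated on both sides over $R$. Because $T$ is itself $g$-divisible, $T = \wh{T} \subsetneqq R \subseteq R'$, so in particular $T \subsetneqq R'$.

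Next I would apply Theorem~\ref{thm:manchester} to the pair $T \subseteq R' \subset T_{(g)}$, noting that the hypotheses $\deg T \geq 3$, $T^\circ$ hereditary, and $R'$ $g$-divisible and finitely graded are all in hand. The theorem concludes that $R'$ is obtained from $T$ by successively blowing down finitely many line modules of self-intersection $-1$. Inspecting the opening of that proof, the very first blow-down is performed on a line module $L$ of $T$ itself, chosen via Proposition~\ref{prop:linesonly} so that some shift of $L$ lies in $R'/T$, and shown by Proposition~\ref{prop:heart} to satisfy $(L \cdot L) = -1$. Under our standing assumption that $T$ admits no line modules of self-intersection $-1$, no such $L$ can exist, so the chain of blow-downs has length zero and $R' = T$, contradicting $T \subsetneqq R'$. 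This forces $R = T$ in the first place, proving that $T$ is a minimal elliptic surface.

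The main obstacle would ordinarily be the content of Theorem~\ref{thm:manchester} itself (in particular, Proposition~\ref{prop:heart}), but since we are free to invoke it the proof reduces to a verification. The only real point to check carefully is that the reduction to the $g$-divisible case via Lemma~\ref{lem:1.7prime} preserves cg noetherianity, which the lemma guarantees, so that the ambient hypotheses of Theorem~\ref{thm:manchester} are genuinely satisfied by $R'$.
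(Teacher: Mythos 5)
Your argument is correct and follows the same route as the paper's own proof: pass to the $g$-divisible hull $\wh{R}$ via Lemma~\ref{lem:1.7prime} (your verification of its hypotheses with $n=1$ is sound, using $g \in T$ and $T_1 \not\subseteq gT$), then apply Theorem~\ref{thm:manchester} and observe that with no line modules of self-intersection $(-1)$ the chain of blow-downs must be empty, forcing $\wh{R} = T$ and hence $R = T$. The paper's version is terser and does not need to inspect the internals of Theorem~\ref{thm:manchester} — the statement alone already locates the first blow-down on $T$ — but the substance is identical.
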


\begin{proof}  Suppose there exists a cg noetherian ring $R$ with  $T\subsetneq R\subset T_{(g)}$. By 
 Lemma~\ref{lem:1.7prime} we can replace $R$ by $\wh{R}$ and assume that $R$ is also $g$-divisible. Now Theorem~\ref{thm:manchester} implies that $R=T$.
 \end{proof}

We end by making the following conjecture.     This may be compared with Remark~\ref{rem:further rings} where the algebras are not assumed to be elliptic.

\begin{conjecture}\label{final-conj}
If $T, T'$ are two elliptic algebras with $Q_{gr}(T) = Q_{gr}(T')$, then they are related, up to isomorphism, by a finite series of blowdowns and blowups.  \end{conjecture}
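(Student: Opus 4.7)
My strategy is to reduce the conjecture to Theorem~\ref{thm:manchester} by constructing, given two elliptic algebras $T, T'$ with $Q_{gr}(T) = Q_{gr}(T')$, a common elliptic overring $R$ with $T \subseteq R$ and $T' \subseteq R$. Once $R$ is in hand, both inclusions land in $R \subset T_{(g)} = T'_{(g)}$, so Theorem~\ref{thm:manchester} (or its hoped-for extension beyond the locally hereditary case) exhibits $R$ as obtained from $T$ and from $T'$ by finite sequences of blowdowns of $(-1)$-line modules. Reversing the $T$-side yields a finite zigzag $T \leftarrow \cdots \to R \leftarrow \cdots \to T'$ realised entirely by noncommutative blowups and blowdowns.

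For the construction of $R$, after aligning the distinguished central elements of $T$ and $T'$ (possibly by first passing to a common Veronese), I would form the graded subring $R_0 = \kk\langle T \cup T' \rangle$ of $Q_{gr}(T)$ and pass to its $g$-divisible hull $R := \widehat{R_0}$. Provided $R_0$ is noetherian, Lemma~\ref{lem:1.7prime} forces $R$ to be a cg noetherian overring that is finitely generated on both sides over $R_0$, and hence over $T$. A further saturation step --- enlarging $R$ to its maximal essential extension by finite-dimensional modules, as in Lemma~\ref{lem:M-modify} --- should upgrade $R$ to an actual elliptic algebra, assuming the Cohen--Macaulay condition survives.

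An alternative, minimal-model-centric strategy runs as follows. First blow down both $T$ and $T'$ to minimal elliptic surfaces $\widehat{T}$ and $\widehat{T'}$, iteratively contracting $(-1)$-line modules until none remain. Then show that any two minimal elliptic surfaces with the same graded quotient ring are linked by a finite sequence of elementary transformations (each a blowup at a point followed by a blowdown of the resulting $(-1)$-line), in analogy with the classical fact that the Hirzebruch surfaces $\mathbb{F}_n$ are pairwise connected in this way. The rigid description of overrings of minimal models in Theorem~\ref{thm:arbitraryoverring} should constrain the possibilities tightly enough to make this step tractable.

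The principal obstacle in either approach is establishing termination together with noetherianity. In the common-overring approach, proving that $R_0 = \kk\langle T \cup T' \rangle$ is noetherian is not automatic and may require substantive new input. In the minimal-model approach, proving that successive blowdowns terminate demands a strictly decreasing nonnegative numerical invariant --- a noncommutative analogue of Picard rank, built from $K_0(\rqgr T)$ modulo numerical equivalence for the intersection pairing of \cite{MS}. A secondary obstacle is the singularity type of $T^\circ$, which may jump along a chain of blowdowns (witness Lemma~\ref{lem:Smith-VdB} and Proposition~\ref{prop:3.2}); extending Theorem~\ref{thm:manchester} beyond the locally hereditary setting --- to include $A_1$-singular localisations --- is a prerequisite to applying this programme to the full class of Van den Bergh quadrics.
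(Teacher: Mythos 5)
This statement is labelled as a \emph{conjecture} in the paper, and no proof is offered there; the authors explicitly describe it as the ultimate goal of ongoing work (cf.\ Remark~\ref{rem:further rings} and the last paragraph of Section~\ref{section-intro}). So your task is not to reproduce a proof the paper already contains, and what you have written is, by your own account, a research programme rather than a proof. The gaps you flag are genuine and are precisely why the statement remains conjectural.

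Let me be concrete about where each of your two strategies breaks down. In the common-overring approach, before one can even form $\kk\langle T\cup T'\rangle$ inside a single localisation one must reconcile the central elements $g\in T_1$ and $g'\in T'_1$: having $Q_{gr}(T)=Q_{gr}(T')$ does \emph{not} automatically give $T_{(g)}=T'_{(g')}$, and Theorem~\ref{thm:manchester} is a statement about overrings inside $T_{(g)}$, not inside $Q_{gr}(T)$. (For overrings not contained in $T_{(g)}$ the only available tool is Theorem~\ref{thm:arbitraryoverring}, which applies only when $T$ is already a \emph{minimal} elliptic surface and requires the overring to be a maximal order.) Supposing the localisations do match, noetherianity of $\kk\langle T\cup T'\rangle$ is not a formal matter --- the paper repeatedly emphasises (Corollary~\ref{cor:weird}) that adding even a single element of $T_{(g)}$ to $T$ typically destroys noetherianity --- and Lemma~\ref{lem:1.7prime} is only useful once noetherianity is already in hand. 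Even if $R$ is noetherian and $g$-divisible, Theorem~\ref{thm:manchester} only applies when $T^\circ$ is hereditary, which fails for an open locus of Van den Bergh quadrics (Lemma~\ref{lem:Smith-VdB}, Proposition~\ref{prop:3.2}); extending it to the $A_1$-singular case is itself an unproved step. Finally, Theorem~\ref{thm:manchester} produces $R$ as obtained from $T$ by blowdowns, so $R$ is elliptic, but this does not show that $R$ is independent of whether we start from $T$ or $T'$ --- the ring you generate a priori depends on both inputs, and you still need the two chains of blowdowns to land on the \emph{same} ring.

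In the minimal-model approach, termination of the blowdown chain requires a strictly decreasing nonnegative integer invariant; you correctly suggest something like a noncommutative Picard rank, but no such invariant is constructed in the paper, and the intersection pairing of \cite{MS} is only defined when $\rqgr T$ is smooth, a property that Theorem~\ref{thm:Castelnuovo} preserves but that need not hold for a general elliptic algebra. More seriously, the step ``any two minimal elliptic surfaces with the same graded quotient ring are linked by elementary transformations'' is wholly unproved and is itself a substantial conjecture --- indeed Remark~\ref{rem:further rings} indicates the authors expect the class of minimal models to be \emph{larger} than the commutative analogy suggests, consisting of endomorphism rings $\End_S(P)$ for reflexive ideals $P$, so the classical $\PP^2$-and-$\mathbb{F}_n$ picture does not transfer verbatim. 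Theorem~\ref{thm:arbitraryoverring} only constrains overrings \emph{of a single minimal model}, and gives Veronese-type rigidity rather than linking distinct minimal models. Both of your strategies are sensible directions and align with the programme the authors themselves sketch, but neither closes the gaps you identify, and those gaps are exactly the content of the conjecture.
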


\section{Gelfand-Kirillov dimension}\label{GKDIM}

In this final  section, we consider the Gelfand-Kirillov (GK) dimension of overrings of elliptic algebras.  See  
\cite{KL} for the basic theory of GK-dimension.   
Let $T$ be a minimal elliptic surface; by definition $\GKdim T  = 3$.  
Theorem~\ref{thm:3.6} and Corollary~\ref{cor:3.56} can be viewed as saying that any noetherian graded algebra  lying strictly between
 $T$ and $Q_{gr}(T)$ must be significantly larger than $T$.  
In this section we prove that this is true in the sense of GK-dimension as well:   any proper noetherian graded   
  overring of $T$ contained in $T_{(g)}$ has GK-dimension $\geq 4$.  
We also show that, when  $T=\Skl^{(3)}$ or $T= \Skl'^{(4)}$ or $T=\QVB^{(2)}$,   any proper overring of 
$T^\circ$ has GK-dimension $\geq 3$.

We first give some elementary computations on linear systems on elliptic curves.
Let $E$ be an elliptic curve and let $K = \kk(E)$.  If $f \in K$, we write 
\beq\label{div} (f) = (f)_0 - (f)_\infty \eeq 
for the divisor of $f$, where both $(f)_0$ and $ (f)_\infty$ are effective and of minimal degree.
If $D$ is a divisor on $E$, we write $\abs{D} = H^0(E, \sO(D)) $, which we 
 identify with $\{ f \in K : (f) + D \geq 0\}$. 

If  $y \in K$  then 
\beq\label{note}
y \abs{D} = \{ f \in K : y^{-1}f \in \abs{D}\} = \{ f : (f)  + D \geq (y)\} = \abs{D-(y)} \subseteq \abs{D+(y)_\infty}.
\eeq

We need the following elementary lemmas.

 \begin{lemma}\label{lem:GK3}
 Let $D, D'$ be divisors on $E$ such that $\deg \inf\{D, D'\} >0$.  Then $\abs{D}+ \abs{D'} = \abs{ \sup \{D, D'\}}$.
 \end{lemma}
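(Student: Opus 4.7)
The plan is to reduce the lemma to a standard short exact sheaf sequence on $E$. Write $M = \sup\{D,D'\}$ and $m = \inf\{D,D'\}$, so that $D+D' = M+m$. The inclusion $\abs{D}+\abs{D'} \subseteq \abs{M}$ is immediate from $D, D' \leq M$, so only the reverse inclusion requires work.

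First I would establish the short exact sequence of $\struct_E$-modules
\[ 0 \too \struct(m) \ \xrightarrow{\ s\, \mapsto\, (s,s)\ } \ \struct(D) \oplus \struct(D') \ \xrightarrow{\ (s,t)\, \mapsto\, s-t\ } \ \struct(M) \too 0. \]
Exactness is a local computation: at any point $p \in E$ with local parameter $t_p$, the stalk $\struct(F)_p$ is the fractional ideal $t_p^{-n_p(F)} \struct_{E,p}$, so the stalk identities $\struct(D)_p \cap \struct(D')_p = \struct(m)_p$ and $\struct(D)_p + \struct(D')_p = \struct(M)_p$ follow from $n_p(m) = \min(n_p(D), n_p(D'))$ and $n_p(M) = \max(n_p(D), n_p(D'))$.

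Next I would take the long exact sequence in cohomology. Since $\deg m > 0$ and $E$ is elliptic, Riemann–Roch gives $H^1(E, \struct(m)) = 0$, so the resulting sequence is
\[ 0 \too \abs{m} \too \abs{D} \oplus \abs{D'} \ \xrightarrow{\ (f,g)\, \mapsto\, f-g\ } \ \abs{M} \too 0. \]
Surjectivity of the right-hand map exactly says $\abs{D} - \abs{D'} = \abs{M}$, and since each $\abs{D'}$ is a $\kk$-vector space (so $\abs{D'} = -\abs{D'}$), this is precisely $\abs{D} + \abs{D'} = \abs{M}$.

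There is no real obstacle; the only subtle point is ensuring the hypothesis $\deg m > 0$ is used in exactly the right place, namely to kill $H^1(\struct(m))$, which is the only thing that could prevent the sum map from being surjective on global sections.
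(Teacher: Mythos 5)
Your proof is correct. It takes a somewhat different route from the paper's: the paper computes $\abs{D}\cap\abs{D'}=\abs{\inf\{D,D'\}}$ directly, then does a dimension count using inclusion--exclusion together with Riemann--Roch applied to each of $D$, $D'$, $\inf\{D,D'\}$ and $\sup\{D,D'\}$ (each has positive degree since $\deg\inf\{D,D'\}>0$), whereas you package the same information as a Mayer--Vietoris-type short exact sequence of sheaves and use the single vanishing $H^1(E,\struct(\inf\{D,D'\}))=0$ to get surjectivity on global sections. Both approaches rest on the same underlying cohomological vanishing; your version is arguably cleaner in that it only invokes Riemann--Roch (via Serre duality) once, while the paper's is more elementary bookkeeping that avoids writing down a sheaf sequence. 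One small cosmetic point: when passing from surjectivity of $(f,g)\mapsto f-g$ to the statement $\abs{D}+\abs{D'}=\abs{M}$, it reads more smoothly to define the second map as $(f,g)\mapsto f+g$ and the first as $s\mapsto(s,-s)$; the sequence is still exact and you avoid the aside about $\abs{D'}=-\abs{D'}$.
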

 \begin{proof}
 Certainly $\abs{D}, \abs{D'} \subseteq  \abs{ \sup \{D, D'\}}$.  For the other inclusion, we count dimensions.
 We have:
\[
 \abs{D} \cap \abs{D'} = \{ f \in K : (f) \geq -D, -D'\} = 
  \{ f : (f) \geq \sup\{ -D, -D'\} \}
  = \abs{\inf\{D, D'\}}.
  \]
 So
 \begin{align*}
 \dim (\abs{D} + \abs{D'}) & = \dim\abs{D} + \dim \abs{D'} - \dim(\abs{D} \cap \abs{D'}) \\
 & = \deg D + \deg D' -\deg \inf\{ D, D'\} \quad \mbox{by Riemann-Roch and  hypothesis on $\inf\{D, D'\}$} \\
 & = \deg \sup \{D, D'\}  = \dim \abs{\sup\{D, D'\}}.
 \end{align*}
The lemma follows.
 \end{proof}

\begin{lemma}\label{lem:GK1}
Let $x,y \in K \ssm \kk$  and let $a = \deg (x)_0 = \deg(x)_\infty$ and $b = \deg (y)_0 = \deg (y)_\infty$.
If $(x)$ and $(y)$ have disjoint supports and  $\deg D> a+b$ then
\[ x \abs{D} + y\abs{D} = \abs{D + (x)_\infty + (y)_\infty}.\]
\end{lemma}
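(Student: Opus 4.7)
The plan is to reduce the claim to Lemma~\ref{lem:GK3} via the identity in \eqref{note}, and the main task is a routine pointwise computation of the supremum and infimum of two divisors.

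First I would rewrite both summands on the left-hand side. By \eqref{note}, $x\abs{D} = \abs{D-(x)}$ and $y\abs{D} = \abs{D-(y)}$. The plan then is to compute
\[ \sup\{D-(x),\, D-(y)\} \qquad \text{and} \qquad \inf\{D-(x),\, D-(y)\} \]
pointwise, using the hypothesis that $(x)$ and $(y)$ have disjoint supports. Writing $(x) = (x)_0 - (x)_\infty$ and $(y) = (y)_0 - (y)_\infty$, the supports of the four effective divisors $(x)_0,(x)_\infty,(y)_0,(y)_\infty$ are pairwise disjoint. At any point $p$, at most one of these four divisors is nonzero, so a case analysis immediately gives
\[ \sup\{D-(x),\, D-(y)\} = D + (x)_\infty + (y)_\infty \qquad \text{and} \qquad \inf\{D-(x),\, D-(y)\} = D - (x)_0 - (y)_0. \]

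Next I would verify the hypothesis of Lemma~\ref{lem:GK3} for the pair $(D-(x),\,D-(y))$: since $\deg (x)_0 = a$ and $\deg (y)_0 = b$, we get $\deg \inf\{D-(x),\,D-(y)\} = \deg D - a - b > 0$, which is exactly our assumption on $\deg D$. Applying Lemma~\ref{lem:GK3} then yields
\[ \abs{D-(x)} + \abs{D-(y)} = \abs{D + (x)_\infty + (y)_\infty}, \]
and combining with the first step finishes the proof.

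The only place where any care is needed is in the pointwise sup/inf computation, but this is completely elementary once the disjoint-supports hypothesis is unpacked, so I do not expect a genuine obstacle.
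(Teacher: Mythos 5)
Your proposal is correct and follows essentially the same route as the paper: rewrite $x\abs{D}$ and $y\abs{D}$ via \eqref{note}, compute $\inf$ and $\sup$ of $D-(x)$ and $D-(y)$ using the disjoint-supports hypothesis, check the degree condition, and invoke Lemma~\ref{lem:GK3}. The only difference is that you spell out the pointwise sup/inf case analysis a bit more explicitly than the paper does.
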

\begin{proof}
By \eqref{note} we have $x \abs{D} = \abs{D - (x)}$, $y\abs{D} = \abs{D-(y)}$.
We have $\inf \{ D -(x), D-(y) \} = D -(x)_0 -(y)_0$ by hypothesis and so
$\deg \inf \{ D -(x), D-(y) \} = \deg D - a-b > 0$.
Thus by Lemma~\ref{lem:GK3} we have
\[
x \abs{D} + y \abs{D} 
 = \abs{D - (x)} + \abs{D-(y)} 
 = \abs{\sup\{ D-(x), D-(y)\}}  
= \abs{D + (x)_\infty  + (y)_\infty},
\]
again using our assumption on the supports of $(x)$ and $(y)$.
\end{proof}

In the next result, let  $\sigma$ be an infinite order translation automorphism of $E$.  
We work inside the Ore extension $K[t; \sigma]$.  

\begin{lemma}\label{lem:GK2}
Let $y \in K \ssm \kk$ be such that all points in the support of $(y)$ have disjoint $\sigma$-orbits, and let $d := \deg(y)_0$.  
Let $D$ be a divisor with $\deg D > 2d$.  
Let $V := \abs{D} \cdot t$ and let $W := \kk + \kk  y + V$.
If $n \geq m \geq 1$, then 
$W^n \cap K t^m = \abs{F(n,m)} t^m$,
where
\[ F(n,m) := 
D + \sigma^{-1}D + \dots + \sigma^{-m+1} D+ (n-m)\Bigl( (y)_\infty + (y^\sigma)_\infty + \dots + (y^{\sigma^m})_\infty \Bigr).\]
\end{lemma}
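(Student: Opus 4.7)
The plan is to induct on $n \geq m \geq 1$, using the recursion
\[ W^{n+1} \cap Kt^m \ = \ (W^n \cap Kt^{m-1}) \cdot \abs{D}t \ + \ (W^n \cap Kt^m) \cdot (\kk + \kk y),\]
which comes from $W^{n+1} = W^n \cdot W$ together with the Ore commutation $t a = a^\sigma t$ for $a \in K$. A key auxiliary input is that on the elliptic curve $E$, for effective divisors $D_1, D_2$ of degree at least $2$, the multiplication map $\abs{D_1} \otimes \abs{D_2} \to \abs{D_1 + D_2}$ is surjective (Mukai's base-point-free pencil trick); the hypothesis $\deg D > 2d \geq 2$ guarantees this applies to every product of translated copies of $\abs{D}$ that arises below.

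The base case $n = m$ is direct: a product of $m$ factors from $W$ lying in $Kt^m$ must use the $\abs{D}t$ summand in every slot, giving (via the Ore relation) the coefficient space $\abs{D}\cdot \abs{\sigma^{-1}D} \cdots \abs{\sigma^{-(m-1)}D}$, which the surjectivity input identifies with $\abs{F(m,m)}$. The case $m = n+1$ in the inductive step is handled identically, using that only $(W^n \cap Kt^n)\cdot \abs{D}t$ contributes.

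For the inductive step with $2 \leq m \leq n$, the Ore relation and the inductive hypothesis yield
\[ (W^n \cap Kt^{m-1}) \cdot \abs{D}t \ = \ \abs{F(n,m-1) + \sigma^{-(m-1)}D}\, t^m \]
and
\[ (W^n \cap Kt^m) \cdot (\kk + \kk y) \ = \ \bigl( \abs{F(n,m)} + \abs{F(n,m) - (y^{\sigma^m})} \bigr) t^m. \]
I would first apply Lemma~\ref{lem:GK3} to the second line (the inf $F(n,m) - (y^{\sigma^m})_0$ has degree $\deg F(n,m) - d > 0$), collapsing it to $\abs{F(n,m) + (y^{\sigma^m})_\infty}\, t^m$. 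A second application of Lemma~\ref{lem:GK3} combining this with the first line then produces $\abs{F(n+1,m)}\, t^m$: a direct calculation shows $\sup\{F(n,m)+(y^{\sigma^m})_\infty,\ F(n,m-1)+\sigma^{-(m-1)}D\} = F(n+1,m)$, and the corresponding inf has degree at least $m \deg D > 0$.

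The remaining case $m = 1$ uses that $W^n \cap K = \kk\{1, y, \ldots, y^n\}$, so
\[ (W^n \cap K) \cdot \abs{D}t \ = \ \sum_{k=0}^n y^k \abs{D}\, t \ = \ \sum_{k=0}^n \abs{D - k(y)}\, t. \]
A sub-induction on $n$ (each step applying Lemma~\ref{lem:GK3} to two divisors whose inf $D + (k-1)(y)_\infty - k(y)_0$ has degree $\deg D - d > 0$) identifies this sum with $\abs{D + n(y)_\infty}\, t$. Combined with $(W^n \cap Kt)(\kk + \kk y) = \abs{F(n,1)+(y^\sigma)_\infty}\, t$ (obtained exactly as in the case $m \geq 2$) and one final application of Lemma~\ref{lem:GK3}, this yields $\abs{F(n+1,1)}\, t$.

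The main obstacle will be controlling these iterated applications of Lemma~\ref{lem:GK3}: naive pairwise summation of two divisors with a large difference can fail the positive-inf hypothesis, so the order of summation matters. The induction is arranged so that every invocation involves divisors differing by a controlled amount, and the hypothesis $\deg D > 2d$ is calibrated precisely to guarantee positivity of the critical infs while simultaneously enabling the multiplication-surjectivity input on global sections.
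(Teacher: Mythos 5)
Your argument is correct, but it takes a genuinely different route from the paper's. The paper proves the two inclusions separately: for $\subseteq$ it exhibits the spanning set $\{ y^{i_0} V (y^\sigma)^{i_1} V^\sigma \cdots V^{\sigma^{m-1}} (y^{\sigma^m})^{i_m} t^m : \sum i_j \leq n-m\}$; for $\supseteq$ it fixes $m$, inducts on $n$, and in the inductive step inserts an extra $y$-factor at each of the $m+1$ slots of a product in $W^{n-1} \cap Kt^m$, producing $y^{\sigma^j}\abs{F(n-1,m)}$ for $j = 0,\dots,m$, then merges adjacent pairs via Lemma~\ref{lem:GK1} and takes the overall sup via Lemma~\ref{lem:GK3}. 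Your recursion $W^{n+1} \cap Kt^m = (W^n \cap Kt^{m-1})\abs{D}t + (W^n \cap Kt^m)(\kk+\kk y)$, organized by the $t$-degree of the last factor, proves the equality directly (so $\subseteq$ comes for free), at the price of a split into the cases $m=1$ and $m \geq 2$ and of an explicit appeal to surjectivity of the multiplication map $\abs{D_1} \otimes \abs{D_2} \to \abs{D_1+D_2}$. That surjectivity input is in fact also needed by the paper in the base case $V^m = \abs{F(m,m)}t^m$, though the paper leaves it tacit, so making it explicit is a genuine improvement. One caution on its statement: for an elliptic curve the map can fail to surject when both degrees equal $2$ and $\sO(D_1) \cong \sO(D_2)$ (the basepoint-free pencil trick then gives a one-dimensional kernel and corank-one image); the threshold one actually needs is one degree $\geq 3$ and the other $\geq 2$, which is what $\deg D > 2d \geq 2$ delivers, so your application is unaffected but the blanket ``degree at least $2$'' should be tightened. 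Both proofs ultimately rest on iterated uses of Lemma~\ref{lem:GK3} with controlled infima, and both use $\deg D > 2d$ in essentially the same way to keep those infima positive.
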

\begin{proof}
Certainly  $W^n \cap K t^m $ is spanned by 
\[\{ y^{i_0} V (y^\sigma)^{i_1} V^\sigma \cdots V^{\sigma^{m-1}} (y^{\sigma^m})^{i_m} t^m : \sum i_j \leq n-m, i_j \geq 0\ \forall j \}.\]
Each of these is contained $\abs{F(n,m)} t^m$, giving one inclusion. 

We show the other inclusion by induction on $n \geq m$.  
Certainly $W^m \cap K t^m = V^m = \abs{ F(m,m)} t^m$, 
 so assume that  $n> m$.   By induction,
$ W^{n-1} \cap K t^m = \abs{F} t^m$, where  $F= F(n-1,m)$.

Choose $0 \leq j \leq m-1$.  Then
\begin{align*}
W^n \cap K t^m & \supseteq( y^{\sigma^j} W^{n-1} + y^{\sigma^{j+1}}W^{n-1})\cap K t^m 
=  (y^{\sigma^j} \abs{F} + y^{\sigma^{j+1}} \abs{F}) t^m \\
 & = \abs{F+(y^{\sigma^j})_\infty +(y^{\sigma^{j+1}})_\infty} t^m, \quad \text{by Lemma~\ref{lem:GK1}}, 
 \end{align*}
 where we have used that $\deg F \geq \deg D > 2d$.
 
 So 
 \begin{align*} W^n \cap K t^m & \supseteq \sum_{j=0}^{m-1} \abs{F+(y^{\sigma^j})_\infty +(y^{\sigma^{j+1}})_\infty} t^m 
 = \abs{\sup_j F+(y^{\sigma^j})_\infty +(y^{\sigma^{j+1}})_\infty} t^m \quad \text{ using Lemma~\ref{lem:GK3}}\\
 & = \abs{ F + \sum_{j=0}^m (y^{\sigma^j})_\infty}t^m \quad \text{ by assumption on $y$}\\
& = \abs{F(n,m)} t^m, 
 \end{align*}
 as needed.  
 \end{proof}
 
We now give  a result on the GK-dimension of overrings of TCRs of  elliptic curves.

\begin{proposition}\label{prop:GK4}
Let $B := B(E, \sL, \sigma)$ where $E$ is an elliptic curve, $\sigma$ has infinite order, and $\sL$ is an ample invertible sheaf on $E$,  and 
let $B \subsetneqq C \subseteq Q_{\gr}(B)$ where $C$ is $\mathbb{Z}$-graded with $\dim C_0 > 1$.
Then $\GKdim C \geq 3$.
\end{proposition}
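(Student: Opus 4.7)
The plan is to exhibit a finitely generated subalgebra $A \subseteq C$ with cubic growth, whence $\GKdim C \geq \GKdim A \geq 3$. The key input is the explicit dimension formula of Lemma~\ref{lem:GK2}.

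Identify $Q_{gr}(B) = K[t,t^{-1};\sigma]$, with $K = \kk(E)$ and $B_n = H^0(E,\sL_n)\cdot t^n \subseteq Kt^n$. The degree-$0$ component of $Q_{gr}(B)$ is $K$, so the hypothesis $\dim C_0 > 1$ allows us to pick $y \in C_0 \cap (K \smallsetminus \kk)$. Since $\kk$ is algebraically closed and $K$ has transcendence degree one over $\kk$, $y$ is transcendental over $\kk$ and $d := \deg(y)_0 \geq 1$.

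Next, I would pass to a Veronese to arrange the hypotheses of Lemma~\ref{lem:GK2}: namely (i) $\deg \sL > 2d$, and (ii) distinct points of $\Supp(y)$ lie in disjoint $\sigma$-orbits. Since $\GKdim C^{(N)} = \GKdim C$ and $B^{(N)} = B(E, \sL_N, \sigma^N)$ with $\deg \sL_N = N\deg\sL$, (i) holds for any $N$ with $N\deg\sL > 2d$. For (ii), writing $\sigma$ as translation by an infinite-order point $\tau$, the finite set of pairs $(p,q)$ of distinct points of $\Supp(y)$ with $p - q \in \langle\tau\rangle$ gives finitely many integers $k$ with $p - q = k\tau$, so any prime $N$ exceeding all $|k|$ suffices. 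Replacing $(B,C,\sigma,\sL)$ by $(B^{(N)}, C^{(N)}, \sigma^N, \sL_N)$, I may assume both conditions hold from the outset.

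Now fix a divisor $D$ with $\sO(D) \cong \sL$, so that $V := B_1 = |D| \cdot t$ and $\deg D > 2d$. Put $W := \kk + \kk y + V$ and let $A \subseteq C$ be the subalgebra it generates. For $n \geq m \geq 1$, Lemma~\ref{lem:GK2} together with Riemann-Roch (since $\deg F(n,m) > 0$) yields
\[ \dim(W^n \cap Kt^m) \ = \ \dim|F(n,m)| \ = \ m\deg D + (n-m)(m+1)d.\]
Summing over $1 \leq m \leq n$,
\[ \dim W^n \ \geq \ \sum_{m=1}^n \bigl[m\deg D + (n-m)(m+1)d\bigr] \ = \ \tfrac{d}{6}n^3 + O(n^2),\]
since $\sum_{m=1}^n (n-m)(m+1) \sim n^3/6$. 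Hence $\GKdim A \geq 3$, giving $\GKdim C \geq 3$. The only delicate step is the Veronese reduction to the hypotheses of Lemma~\ref{lem:GK2}, but this is easily dispatched by taking $N$ to be a sufficiently large prime; the rest is just Lemma~\ref{lem:GK2} together with Riemann-Roch.
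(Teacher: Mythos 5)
Your argument is correct and follows the paper's proof in substance: pick $y \in C_0 \smallsetminus \kk$, pass to a suitable Veronese so that the hypotheses of Lemma~\ref{lem:GK2} hold, then sum the resulting dimensions over $m$ to exhibit cubic growth. The only cosmetic difference is that you replace the whole algebra $C$ by $C^{(N)}$ and compute $\dim|F(n,m)|$ exactly via Riemann-Roch, whereas the paper works with the subalgebra $(B\langle y\rangle)^{(c)}$ and is content with the cruder bound $\deg F(n,m) \geq (n-m)m$; both give the same $n^3/6$ leading term.
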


\begin{proof}
We write $K := \kk(E)$ and $C = \bigoplus C_i t^i \subseteq K[t, t^{-1}; \sigma]$.  
By hypothesis there exists $y \in C_0 \ssm \kk$.
Let  $d := \deg (y)_0 > 0$.

Choose a positive integer $c$ so that all points in the support of $(y)$ have disjoint $\sigma^c$-orbits and so that $\deg \sL_c > 2 d$.
Let $C' := (B \ang{y})^{(c)}$.   
It suffices to prove that $\GKdim C' \geq 3$.

We note that $C'= \kk\ang{W}$ where $W := \kk + \kk y + H^0(E, \sL_c)t^c$.
By Lemma~\ref{lem:GK2} we have
\begin{align*}
\dim W^n & \geq \sum_{m=1}^n \dim (W^n \cap Kt^{mc}) 
\geq \sum_{m=1}^n (n-m)m = n \sum_{m=1}^n m - \sum_{m=1}^n m^2 \\
& = n \binom{n+1}{2} - \frac{n(n+1)(2n+1)}{6} = \frac{n^3}{6} + O(n^2),
\end{align*}
and so $\GKdim C' \geq 3$.
(A little more work will show that $\GKdim C' = 3$ but this is all that is required.)
\end{proof}

Applying this to minimal elliptic surfaces,  we obtain:

\begin{corollary}\label{cor:GK5}
Let $T$ be a minimal elliptic surface and let $T \subsetneqq R \subset T_{(g)}$ be a noetherian graded overring of $T$.
Then $\GKdim R \geq 4$.  In particular, by Theorem~\ref{thm:3.6}, this holds for
$T=\QVB^{(2)}$ or
$ \Skl^{(3)} $
or $  \Skl'^{(4)}$.  
\end{corollary}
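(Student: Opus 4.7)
The strategy is to reduce modulo the central regular element $g$, apply Proposition~\ref{prop:GK4} to the quotient, and recover one extra unit of GK-dimension from $g$ being central regular.

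First, I would show that $\dim_\kk R_0 = \infty$; in particular $\dim R_0 \geq 2$.  Since $R \supseteq T$ has $\dim R_n \geq 2$ for $n \gg 0$, Remark~\ref{rem:2.33}(1) gives $\dim R_0 = \infty$ whenever $R$ is not $\NN$-graded.  Otherwise $R$ is $\NN$-graded, and if $R_0 = \kk$ then Remark~\ref{rem:2.33}(2) forces $R$ to be cg, contradicting the minimality of $T$ since $R \supsetneqq T$; so $R_0 \supsetneqq \kk$, and because $R_0$ is a subdomain of the degree-zero division ring $Q_{gr}(T)_0$ over the algebraically closed field $\kk$, it must be infinite-dimensional.

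Next, set $\mathfrak I := R \cap g T_{(g)}$, a two-sided ideal of $R$ because $g$ is central in $Q_{gr}(T)$.  The graded quotient ring $\bar R := R/\mathfrak I$ embeds into $T_{(g)}/gT_{(g)} \cong Q_{gr}(B) = K[t, t^{-1}; \tau]$, where $K = \kk(E)$.  Because $T$ is $g$-divisible (Definition~\ref{g-divisible-defn}), $T \cap g T_{(g)} = gT$, and there is an induced inclusion $B = T/gT \hookrightarrow \bar R$.  Since $\deg g \geq 1$, $R_0 \cap g T_{(g)} = 0$, and therefore $\bar R_0 \cong R_0$ has dimension at least $2$; in particular $B \subsetneqq \bar R$, so Proposition~\ref{prop:GK4} applies and gives $\GKdim \bar R \geq 3$.

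Finally, since $gR \subseteq \mathfrak I$, $\bar R$ is a quotient of $R/gR$, so $\GKdim(R/gR) \geq \GKdim \bar R \geq 3$.  The ring $R$ is a domain (being a subring of $Q_{gr}(T)$), and $g$ is central, so $g$ is a central regular element of $R$; the standard inequality $\GKdim R \geq \GKdim(R/gR) + 1$ then yields the desired $\GKdim R \geq 4$.  I expect the main subtlety to be this last inequality, given that $\dim R_0 = \infty$ so Hilbert-series arguments are not directly available; the route I would take is to lift each finitely generated subalgebra $C$ of $R/gR$, together with $g$, to a finitely generated subalgebra $C'$ of $R$, apply the inequality in the finitely-generated setting to $C'$ and $g$, and pass to the supremum over $C$.
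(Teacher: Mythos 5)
Your step 3 contains a fatal error. You claim that $\deg g \geq 1$ forces $R_0 \cap gT_{(g)} = 0$, but this would require $T_{(g)}$ to be $\NN$-graded, which it is not: $T_{(g)}$ is $\ZZ$-graded with nonzero negative components, so $(gT_{(g)})_0 = g\cdot(T_{(g)})_{-1} \neq 0$ (for example, if $a \in T_n \ssm gT$ and $b \in T_{n+1} \ssm gT$ then $g b^{-1}a$ is a nonzero degree-zero element of $gT_{(g)}$). Consequently $\mathfrak I_0 = R_0 \cap gT_{(g)}$ may well be nonzero, $\bar R_0$ may be a proper quotient of $R_0$, and a priori one could even have $\bar R_0 = \kk$ and $\bar R = B$. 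Without $\dim \bar R_0 > 1$, Proposition~\ref{prop:GK4} cannot be applied to $\bar R$ and the chain $\GKdim(R/gR) \geq \GKdim\bar R \geq 3$ collapses.

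The paper confronts exactly this point and its fix is to pass to the $g$-divisible hull $\wh{R}$: given $r \in R_0 \ssm \kk$, write $r = zg^m$ with $m \geq 0$ and $z \in T_{(g)} \ssm gT_{(g)}$, so $z \in \wh{R}_{-m}$ with $\bar z \neq 0$ of degree $-m \leq 0$; then $\overline{T}\langle \bar z\rangle \supsetneqq B$ and Proposition~\ref{prop:GK4} yields $\GKdim \wh{R}/g\wh{R} \geq 3$. Note, however, that this fix does not graft onto your steps 4--5, since $\bar R$ embeds in $\wh{R}/g\wh{R}$ as a (possibly proper) subalgebra, so $\GKdim\wh{R}/g\wh{R} \geq 3$ does not give $\GKdim \bar R \geq 3$, and the inclusion $R \subseteq \wh{R}$ gives only the wrong inequality $\GKdim R \leq \GKdim \wh{R}$. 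The paper instead abandons $R/gR$ and identifies $\wh{R}/g\wh{R}$ with $\gr(R^\circ)$ for the filtered ring $R^\circ \cong R/(g-1)R$, getting $\GKdim R^\circ \geq 3$ and then $\GKdim R \geq \GKdim R^\circ + 1 \geq 4$. Your step 5 (the $+1$ from the central regular non-unit $g$ via lifting a finite-dimensional subspace and counting along the $g$-adic filtration) is itself sound and is essentially \cite[Proposition~3.15]{KL}; the gap is entirely in step 3 and in the mismatch between $\bar R$ and $\wh{R}/g\wh{R}$.
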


\begin{proof}   
By definition, $R$   cannot be cg.   
Note that one cannot have $R_{\leq 0}=\kk$, as this would contradict   Remark~\ref{rem:2.33}(2). 
Thus  Remark~\ref{rem:2.33}(1) implies that $R_0\supsetneq \kk$.

So, suppose that   ${R}_0\ni x_1^{-1}x_2g^{m}$, for some $m\geq 0$ and  $x_i\in T\smallsetminus gT$. Then
$\wh{R}_{\leq 0}\ni z=x_1^{-1}x_2$ with $z\in T_{(g)}\smallsetminus gT_{(g)}$. Thus $\wh{R}+gT_{(g)}\supseteq T\langle z\rangle$ and hence
 $\overline{\wh{R}}\supseteq \overline{T}\langle \overline z\rangle$.  Therefore,  by Proposition~\ref{prop:GK4},  $\GKdim \wh{R}/g\wh{R}\geq 3$.

Note that $R^\circ=(\wh{R})^\circ$. Therefore, by \cite[Lemmas~2.1 and 2.2]{RSS},    there is a   filtered isomorphism 
$\theta{\hskip -2pt}:{\hskip -0.5pt} R^\circ  \buildrel{\sim}\over{\longrightarrow} {R}/(g-1){R}$, with $\gr(R^\circ) = \wh{R}/g\wh{R}$.  Thus,  
 $\GKdim \gr(R^\circ)  \geq 3$.
     By \cite[Lemma~6.5]{KL}, $\GKdim R^\circ \geq \GKdim \gr(R^\circ) $. Therefore, by \cite[Proposition~3.15]{KL}, 
     $\GKdim R \geq \GKdim R^\circ + 1$. 
     Putting this together gives $\GKdim R \geq 4$.
\end{proof}

 We also have:   

\begin{theorem}\label{thm:GK6}
Let $T = \Skl^{(3)}$ or $T= (\Skl')^{(4)}$ or $T = \QVB^{(2)}$. 
If  $A$ is an algebra with $T^\circ \subsetneqq A \subseteq Q(T^\circ)$ 
then $\GKdim A \geq 3 = \GKdim T^\circ + 1$.   
\end{theorem}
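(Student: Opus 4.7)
The plan is to put a filtration on a finitely generated subalgebra of $A$ extending the canonical $g$-adic filtration on $T^\circ$ (whose associated graded is $B$), to identify that associated graded with a graded subring of $Q_{gr}(B)$ strictly containing $B$, and then to invoke Proposition~\ref{prop:GK4} (or its evident higher-degree variant) together with \cite[Lemma~6.5]{KL}.

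First I would fix $x \in A \smallsetminus T^\circ$ and set $C := T^\circ\langle x \rangle$, so that $\GKdim A \geq \GKdim C$ and it suffices to bound $C$. Define
\[ F_n C := \{a \in C : ag^n \in T_{(g)}\}, \qquad n \geq 0; \]
this is multiplicative because $g$ is central, exhausts $C$ since any element of $Q(T^\circ)$ clears its $g$-denominators after multiplication by a sufficient power of $g$, and has finite-dimensional pieces because $C$ is finitely generated and the filtration degrees of its generators are bounded. Its restriction to $T^\circ$ recovers $F_n T^\circ = T_n g^{-n}$ with $\gr T^\circ \cong B$, and the map $a \mapsto \overline{ag^n}$ from $F_n C$ to $(T_{(g)}/gT_{(g)})_n = Q_{gr}(B)_n$ has kernel $F_{n-1}C$. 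So $\gr C \hookrightarrow Q_{gr}(B)$ injectively as graded algebras, and the image contains $B$.

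Next I would argue that $\gr C \supsetneqq B$ by iteratively stripping leading terms that happen to lie in $B$. If $N$ is the smallest integer with $xg^N \in T_{(g)}$ and $\overline{xg^N}$ lies in $B_N$, write $xg^N = t + gs$ for some $t \in T_N$ and $s \in (T_{(g)})_{N-1}$; then $x - tg^{-N} = sg^{-(N-1)} \in A \smallsetminus T^\circ$ lies in $F_{N-1}C$. Iterating at most $N$ times reaches $F_{-1}C = 0$ and forces the original $x \in T^\circ$, contradicting the choice of $x$. Hence at some step the leading class lies in $Q_{gr}(B)_k \smallsetminus B_k$ for some $k \geq 0$.

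If this strict containment sits in degree $0$, i.e.\ $\dim(\gr C)_0 \geq 2$, Proposition~\ref{prop:GK4} immediately delivers $\GKdim \gr C \geq 3$; otherwise there is $\tilde y \in (\gr C)_k \smallsetminus B_k$ for some $k > 0$, and I expect a parallel computation — with $\tilde y$ playing the role of the degree-zero element $y$ in the proof of Proposition~\ref{prop:GK4}, and using the same divisor arithmetic of Lemmas~\ref{lem:GK1}--\ref{lem:GK3} — to again produce $\GKdim \gr C \geq 3$. Then \cite[Lemma~6.5]{KL} yields $\GKdim C \geq \GKdim \gr C \geq 3$, and therefore $\GKdim A \geq 3$. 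The hardest step will be this positive-degree variant of Proposition~\ref{prop:GK4}: one must verify that the poles of the iterated powers $\tilde y^j$ accumulate along $j$ distinct $\sigma^k$-orbits (using $|\sigma|=\infty$) and produce a divisor strictly larger than $D_{jk}$ by an amount linear in $j$, which is what ultimately drives the required cubic growth of $\dim(\gr C)_{jk+m}$.
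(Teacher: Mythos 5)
Your filtration is the paper's construction in disguise: writing $F_nC = C \cap T_{(g)}g^{-n}$, one checks $F_nC \cong (\Phi C)_n$ via $a \mapsto ag^n$, so your associated graded $\gr C$ is exactly $\overline{\Phi C}$, the object the paper's proof studies. The setup is therefore sound (and closely parallel to the paper's), but there is a genuine gap at the step you flag as "the hardest."

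Your stripping argument only shows that $\gr C \supsetneqq B$ in \emph{some} degree $k \in \ZZ$. If $k=0$ you invoke Proposition~\ref{prop:GK4}, and if $k<0$ you are in fact fine, since then for $0 \neq \alpha \in (\gr C)_k$ and $B_{-k}$ of dimension $\geq 2$ the product $\alpha B_{-k}$ already forces $\dim(\gr C)_0 > 1$. But the case $k>0$ is the crux, and there you have no proof, only an expectation that "a parallel computation" works. This is not a small detail. Proposition~\ref{prop:GK4} is stated and proved precisely under the hypothesis $\dim C_0 > 1$, and the paper's proof of Theorem~\ref{thm:GK6} supplies that hypothesis by invoking Remark~\ref{rem:notasgoodasIdlike}, i.e.\ by applying the main minimality result Theorem~\ref{thm:3.5} (itself built on the Goodearl localisation argument, the $A_1$-singularity analysis, and, for the non-simple Van den Bergh quadrics, a Morita reduction via Corollary~\ref{cor:translation}). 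That machinery is exactly what rules out the $k>0$ case: it shows $\dim(\overline{\Phi A})_0 = \infty$, so your positive-degree scenario never arises for the algebras in question. By trying to bypass Theorem~\ref{thm:3.5}, you are left needing a self-contained positive-degree variant of the divisor computation in Lemmas~\ref{lem:GK1}--\ref{lem:GK2}, which you neither state precisely nor prove; the accumulation of poles along the $\sigma^k$-orbit is plausible, but the required cubic lower bound on $\dim (\gr C)_n$ is not immediate, since products of $\tilde y$ with elements of $B$ now live in mixed degrees and one must track how the excess divisor interacts with $\sigma$.

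Two smaller points. First, the claim $F_{-1}C = 0$ is unjustified: $C \cap gT_{(g)} = g(T_{(g)})_{-1} \cap C$ need not vanish, since $(T_{(g)})_{-1} \ne 0$. (As noted above, landing in negative degree is actually harmless, so this does not doom the argument, but as written the termination step is wrong.) Second, you do not address the case where $T^\circ$ fails to be simple, which occurs for some Van den Bergh quadrics by Lemma~\ref{lem:Smith-VdB}; the paper handles this separately via the Morita context in Corollary~\ref{cor:translation} before applying the simple-case result.
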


\begin{proof}
We first establish the result if $T^\circ$ is simple.  
Using Notation~\ref{not:2.31} and \cite[p.2099]{RSSlong}, $A=(\Phi A)^\circ$ and so    $T \subsetneqq \Phi A \subseteq T_{(g)}$.  
Since $\Phi A=\wh{\Phi A}$,  Remark~\ref{rem:notasgoodasIdlike} implies that    $\dim (\overline{\Phi A})_0 > 1$.
Thus by Proposition~\ref{prop:GK4}, $\GKdim \overline{\Phi A} \geq 3$.
 
 Now suppose that $\GKdim A:=\alpha<3$.  Since $\Phi(A) \subseteq A[g,g^{-1}]$, it follows from \cite[Lemma~3.1 and Proposition~3.5]{KL} that 
  $\beta:=\GKdim \Phi(A)\leq \GKdim A[g,g^{-1}]\leq \alpha+1$. Now, as $gT_{(g)}\cap \Phi(A) = g \Phi(A)$ is a nonzero ideal of the domain 
  $\Phi A$, it follows from  \cite[Proposition~3.15]{KL} that $\GKdim \overline{\Phi A}\leq \beta-1\leq \alpha<3$. This contradicts the 
  first paragraph of the proof. 
  
  Now suppose that $T^\circ$ is not simple.  As in the proof of Theorem~\ref{thm:3.6}, there are a simple elliptic algebra
  $\Tinf$ and a 
  $(\Tinf, T)$-bimodule $K$, which is finitely generated on both sides by Corollary~\ref{cor:translation}, so that 
 $\End_T(K)=\Tinf$ and $\End_{\Tinf}(K) = T$.
 Set $\Kk := K^\circ$; thus  $\Ll:=\Kk^* \Kk \subset T^\circ$ and $\Kk \Kk^* =\Tinf^\circ$ as $\Tinf^\circ$ is simple.
 Let $C := \Tinf^\circ + \Kk A \Kk^* $. If $C= \Tinf^\circ$, then $\Ll A\Ll \subseteq  \Kk^*\Tinf^\circ\Kk\subseteq T^\circ$.  
 As $T^\circ$ is a maximal order by Remark~\ref{mo-remark},   
  this implies that $A\subseteq T^\circ$, a contradiction. Thus
  $C\supsetneqq \Tinf^\circ$ and so, by      the first part of the proof, $\GKdim C  \geq 3$.
   
 Let $A' := T^\circ + \Kk^* C \Kk \subseteq A$.  
 Now,
$\Kk A' = \Kk + \Kk\Kk^* C \Kk = C \Kk, $
 and so 
   $C \Kk$ is a $(C, A')$-bimodule which is finitely generated, and clearly torsionfree, on both sides. 
  Thus by \cite[Corollary~5.4]{KL}, 
  \[ 3\leq \GKdim C = \GKdim {}_C (C \Kk) = \GKdim (C \Kk)_{A'} = \GKdim A' \leq \GKdim A,\]
  giving the result.
     \end{proof}
     
     We conjecture that Theorem~\ref{thm:GK6} holds if $T$ is any minimal elliptic surface.  
  
  \begin{remark}\label{rem:ML}
  Theorem~\ref{thm:GK6} is reminiscent of a striking result of Makar-Limanov  \cite{ML}  on the localised Weyl algebra.  
  If $\chrr \kk = 0$ and $A := \kk(x)[\partial,\partial^{-1}]$, for $\partial:=\frac{d}{dx}$ he shows  that if $B$ is a ring with 
  $A \subsetneqq  B \subseteq \Fr(A)$ then $\GKdim B > \GKdim A$; in fact $\GKdim B = \infty$.
  \end{remark}


 
 \appendix
\section{Commutative algebras}\label{commutative}
 
We end this paper with a few comments on the commutative analogues of the results in this paper; thereby justifying some of the comments from the introduction by noting that elliptic algebras are noncommutative versions of anticanonical (homogeneous coordinate) rings of del Pezzo surfaces, and by exploring some of the properties of these rings. This result will not be used in the body of the paper, so can be skipped on first reading.


 \begin{lemma}\label{lem:dP}
 Let $T$ be a cg commutative domain that is generated by $T_1$ and so that there is $g \in T_1$ with $T/gT$  isomorphic to the homogeneous coordinate ring $B := B(E, \mc L)$ of an elliptic curve $E$ with respect to an ample line bundle $\mc L$ on $E$.
 Suppose also that $X := \Proj T$ is nonsingular.
 Then $X$ is a del Pezzo surface of degree $\geq 3$, and $T$ is isomorphic to the anticanonical coordinate ring of $X$.
 \end{lemma}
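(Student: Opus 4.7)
Set $d = \deg \mathcal{L}$ and $B = T/gT \cong B(E,\mathcal{L})$. My plan is to compute the Hilbert series of $T$ explicitly, use Gorenstein duality to identify the canonical module, and conclude that $X$ is anticanonically embedded in $\mathbb{P}^d$. First, since $T$ (and hence its quotient $B$) is generated in degree one, and since for an elliptic curve $B(E,\mathcal{L})$ is generated in degree one if and only if $\deg \mathcal{L} \geq 3$, I obtain $d \geq 3$. A Riemann--Roch computation on $E$ yields $h_B(t) = (1 + (d-2)t + t^2)/(1-t)^2$, and since $g \in T_1$ is a nonzerodivisor in the domain $T$, the short exact sequence $0 \to T(-1) \xrightarrow{\,\cdot g\,} T \to B \to 0$ gives
\[ h_T(t) \;=\; \frac{1 + (d-2)t + t^2}{(1-t)^3}, \]
whose numerator is palindromic of degree $2$.

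Next, because $\omega_E = \mathcal{O}_E$, the ring $B = B(E, \mathcal{L})$ is Gorenstein; by the standard criterion for passage across a regular element, $T$ is Gorenstein as well. Combining the palindromic symmetry of the numerator with Krull dimension $3$ identifies the $a$-invariant of $T$ as $-1$, and hence $\omega_T \cong T(-1)$ as graded $T$-modules. Sheafifying gives $\omega_X \cong \mathcal{O}_X(-1)$, i.e., $-K_X = \mathcal{O}_X(1)$. Since $T$ is commutative, cg and generated in degree one, there is a closed embedding $X \hookrightarrow \mathbb{P}(T_1^*) \cong \mathbb{P}^d$ under which $\mathcal{O}_X(1)$ is very ample; therefore $-K_X$ is very ample, so $X$ is a del Pezzo surface, embedded anticanonically, of degree $(-K_X)^2 = \deg_{\mathbb{P}^d}(X) = d \geq 3$, as read off from the leading coefficient of $h_T(t)$.

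Finally, to identify $T$ itself with the anticanonical coordinate ring $\bigoplus_{n\geq 0} H^0(X,-nK_X)$, I use that the smoothness of $X$ implies $T$ has at worst an isolated singularity at the irrelevant ideal, a closed point of codimension $3$. Thus $T$ is regular in codimension one, and combined with Cohen--Macaulayness this forces normality. A normal cg commutative domain coincides with its saturation $\bigoplus_n H^0(X,\mathcal{O}_X(n))$, which by the identification of the preceding paragraph is the anticanonical ring. The main obstacle I anticipate is the shift bookkeeping in the Gorenstein step: one must verify that the precise twist is $\omega_T \cong T(-1)$ (rather than some other shift), so that passage to $\Proj$ yields exactly $\omega_X \cong \mathcal{O}_X(-1)$ and the anticanonical identification goes through. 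Everything else reduces to standard Riemann--Roch on elliptic curves and elementary graded duality.
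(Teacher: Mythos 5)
Your proof is correct, but it takes a genuinely different route from the paper's. The paper works geometrically on $X$: it uses adjunction to get $E\cdot(E+K_X)=0$, applies Riemann--Roch on the surface together with the Hilbert series of $T$ to compute $\chi(\mathcal{O}_X)=1$, deduces $h^1(\mathcal{O}_X)=h^2(\mathcal{O}_X)=0$ because $K_X$ is not effective, and then chases the long exact sequence for $0\to\mathcal{O}_X(-E)\to\mathcal{O}_X\to\mathcal{O}_E\to 0$ to show $K_X+E$ is effective and hence zero. Your proof instead works purely with graded commutative algebra: you compute the Hilbert series, use $\omega_E\cong\mathcal{O}_E$ and projective normality to see $B$ is Gorenstein with $a$-invariant $0$, deduce $T$ is Gorenstein with $a(T)=-1$ by passage across the regular element $g$, sheafify to get $\omega_X\cong\mathcal{O}_X(-1)$, and then use Serre's criterion (CM plus regularity off the vertex) to see $T$ is normal and hence saturated. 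Both routes arrive at the same identifications. The paper's intersection-theoretic proof stays in the surface-geometry language used throughout, and also explicitly records $\chi(\mathcal{O}_X)=1$ and $h^i(\mathcal{O}_X)=0$ along the way; your argument is more self-contained on the commutative-algebra side and makes the Gorenstein structure transparent, which is thematically natural given the role of Gorenstein rings elsewhere in the paper. One place where you are slightly terse is the assertion that $B(E,\mathcal{L})$ is Gorenstein: this requires knowing $B$ is Cohen--Macaulay (which follows from projective normality of elliptic normal curves, equivalently $H^1(E,\mathcal{L}^n)=0$ for $n\geq 1$) in addition to the canonical-module computation $\omega_B\cong\bigoplus_n H^0(E,\omega_E\otimes\mathcal{L}^n)=B$; this is standard, but worth spelling out, especially because the analogous noncommutative fact used elsewhere in the paper (Lemma~\ref{lem:Bdual}) is stated only for $|\tau|=\infty$ and does not cover the commutative case.
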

 
 \begin{proof}  We need to prove that $\omega_X^{-1}$ is ample..
 We use $C \cdot C'$ to denote the intersection product on $X$.
 From the setup, $g$ defines the elliptic curve $E \subset X$, and $\mc O_X(E) \cong \mc O_X(1)$ with $\mc L \cong \mc O_X(E) |_E$.  
 Since $B(E, \mc L) $ is generated in degree 1, $d: = \deg \mc L = E^2$, where $E^2:= E\cdot E \geq 3$.
 Letting $K = K_X$ be the canonical divisor on $X$, by adjunction  \cite[Proposition~V.1.5]{Ha} we have 
 \beq \label{adjunct}
 0 =  E \cdot (E+K).
 \eeq
 
  For a sheaf $\mc M$ on $X$, let $h^i(X, \mc M) = \dim H^i(X, \mc M)$.
 Then we have
 \begin{align*}
 1 + d \frac{n(n+1)}{2} =& \sum_{k=0}^n \dim B_k = \dim T_n \qquad \qquad\qquad \mbox{as $B = T/gT$} \\
 =& h^0(X, \mc O_X(nE)) = \chi(\mc O_X(nE)) \qquad \mbox{for $n \gg 0$} \\
   =& \chi(\mc O_X) + \frac{1}{2} nE \cdot (nE-K) \qquad\qquad \mbox{by Riemann-Roch} \\
  =& \chi(\mc O_X) + \frac{n(n+1)}{2} E^2 \qquad\qquad\qquad \mbox{by \eqref{adjunct}.}
  \end{align*}
  Since $E^2 = d$, $\chi(\mc O_X) = 1$.
  
  Further, note that 
  $K$ is not effective, as $E$ is ample and 
  $K \cdot E = - E^2 < 0$.
  In particular, $h^2(X, \mc O_X) = h^0(X, K) = 0$, and as $1 = \chi(\mc O_X) = h^0(X, \mc O_X)$, we have $h^1(X, \mc O_X) = 0$.
  
  Consider the exact sequence
  \[
  \xymatrix@R=5pt{
  H^1(X,\mc O_X) \ar[r] & H^1(X, \mc O_E) \ar[r]   & H^2(X, \mc O_X(-E)) \ar[r] & H^2(X, \mc O_X) .    }
 \]
Since the outside terms are zero, it follows that  that $\kk \cong H^2(X, \mc O_X(-E)) \cong H^0(X, \mc O_X(K+E))$.
Thus $K+E$ is effective.
By \eqref{adjunct} and the fact that $E$ is ample, 
$K+E = 0$, so $\omega_X^{-1} \cong \mc O_X(E)$.
In particular, $\omega_X^{-1}$ is ample.
Thus, by definition,  $X$ is del Pezzo, while $T$ and $B(X, \omega_X^{-1}) = B(X, \mc O_X(E))$ are  equal in large degree.  
But the Hilbert series of  $B(X, \omega_X^{-1})$ is the same as that of $T$, which was calculated above, by \cite[Corollary~III.3.2.5]{Kollar}.
Thus $T = B(X, \omega_X^{-1})$ is the anticanonical coordinate ring of $X$.
 \end{proof}

 \begin{remark}\label{rem:dP2}
 Line modules of self-intersection $(-1)$ play a crucial role in the noncommutative theory we are developing.
 We remark that in the   context of Lemma~\ref{lem:dP}, all line modules correspond to lines of self-intersection $(-1)$.
  Indeed, let $X$ be a smooth del Pezzo surface of degree $\geq 3$, and let $T := B(X, \omega_X^{-1})$ be the anticanonical ring.  
 Let $L$ be a line module over $T$.  
 Then there is a curve $C$ on $X$ so that, for $n$ sufficiently large, $L_{n} = H^0(C, \omega_X^{-n}|_C)$, by Serre's Theorem \cite[Exercise~II.5.9]{Ha}.
 From the Hilbert series of $L$ and Riemann-Roch on $C$, we obtain that $-K_X \cdot C = 1$ and that $\chi(\mc O_C) = 1$.  Thus by \cite[Lemma~III.3.6.1]{Kollar} $C$ is a smooth rational curve of self-intersection $-1$.
 \end{remark}
 
 It may seem counterintuitive that blowing up a point corresponds to constructing a subalgebra, but it can be quite natural in the commutative case, as the following example illustrates.

  \begin{remark}\label{rem:dP3}
 Let $X$ be the blowup of $\PP^2$ at $p$.
 Then the anticanonical ring of $X$ is the subalgebra of $\kk[x,y,z]^{(3)}$ generated by 3-forms vanishing at $p$.
 \end{remark}
 \begin{proof}
 To see this, let $\pi:  X\to \PP^2$ be the blowdown morphism and let $L$ be the exceptional line.
 By \cite[Proposition~V.3.3]{Ha}, $-K_X = \pi^*(-K_{\PP^2})-L$.
 Thus
 \begin{align*}
 H^0(X, \omega_X^{-1}) = H^0(X, -K_X) & = \{ f \in H^0(X, \pi^*(-K_{\PP^2})) : f|_L \equiv 0\} \\
  & = \{ f \in H^0(\PP^2, -K_{\PP^2}) : f(p) =  0\}.
  \end{align*}
  As the anticanonical ring of a degree 8 del Pezzo surface is generated in degree 1, this is sufficient.    \end{proof}

  \section*{Index of Notation}\label{index}
\begin{multicols}{2}
{\small  \baselineskip 14pt

Auslander-Gorenstein algebra \hfill\pageref{def:gor}

 associated elliptic curve \hfill\pageref{degree-defn}

birational algebras  \hfill\pageref{cg-defn}

Cohen-Macaulay (CM) ring   \hfill\pageref{CM-defn}

CM and MCM modules    
\hfill\pageref{CM-defn} 

connected graded (cg) algebra   \hfill\pageref{cg-defn}

critical and pure modules \hfill\pageref{critical-defn}

 degree of an elliptic algebra \hfill\pageref{degree-defn}

$E^{11}(M)=\Ext^1_T(\Ext^1_T(M,T),T)$  \hfill\pageref{E11-notation}

elliptic algebra $T$ \hfill\pageref{ellipticalg-defn}

finitely graded algebra \hfill\pageref{rem:2.331}

$\GKdim M$ Gelfand-Kirillov dimension  \hfill\pageref{GK-defn}

$g$-torsion and Goldie torsion modules   \hfill\pageref{linear-defn}

$g$-divisible module, $g$-divisible  hull $\widehat{X}$  \hfill\pageref{hull-defn}
 
Hilbert series $\hilb (M)$  \hfill\pageref{Hilbert-defn}

$j(M)$  homological grade of a module  \hfill\pageref{grade-defn}

linear, line and point modules \hfill\pageref{linear-defn}

locally hereditary algebra  \hfill\pageref{loc-her-ring}

locally simple algebra \hfill\pageref{locally simple defn}

$M^\circ$, $T^{\circ}$  localisations \hfill\pageref{Tcirc-defn}

maximal order  \hfill\pageref{maxorder-defn}

minimal elliptic surface  \hfill\pageref{min-model defn}

multiplicity of $M$  \hfill\pageref{multiplicity-defn}

$ \Phi  \Mm :=\bigoplus_{n\in \mathbb{Z}} (\Phi \Mm)_n$,  \hfill\pageref{not:2.31}

quadric elliptic algebra  $T=\QVB^{(2)} $    \hfill\pageref{VDB-defn}

$\rgr R$, $\rqgr R$  (quotient) module category  \hfill\pageref{rqgr-defn}

$Q_{gr}(R)$ graded quotient ring \hfill\pageref{rqgr-defn}

$\Tinf$ special quadric elliptic algebra \hfill\pageref{thm:3.6}

singularity category, $A_1$ singularity   \hfill\pageref{sing-defn}

Sklyanin  algebra, $S= \Skl(a,b, c) $   \hfill\pageref{eg:sklyanin}

Sklyanin  elliptic algebra,  $T=S^{(3)} $  \hfill\pageref{eg:sklyanin}

smooth $\rqgr T$  \hfill\pageref{defn:smooth}

TCR  twisted coordinate ring $B(X,\mathcal{L},\theta)$  \hfill\pageref{TCR-defn}
 
   Van den Bergh quadric $\QVB=\QVB(r)$  \hfill\pageref{VDB-defn}

} \end{multicols}

\bibliographystyle{amsalpha}

\def\cprime{$'$}
\providecommand{\bysame}{\leavevmode\hbox to3em{\hrulefill}\thinspace}
\providecommand{\MR}{\relax\ifhmode\unskip\space\fi MR } 
\providecommand{\MRhref}[2]{%
  \href{http://www.ams.org/mathscinet-getitem?mr=#1}{#2}
}
\providecommand{\href}[2]{#2}

\end{document}